\documentclass[11pt]{article}

\usepackage[T1]{fontenc}
\usepackage{lmodern}
\usepackage[margin=1in]{geometry}
\usepackage{amsmath,amssymb,amsthm,mathtools}
\usepackage{enumitem}
\usepackage{booktabs,array}
\usepackage{xcolor}
\usepackage[protrusion=true,expansion=false]{microtype}
\usepackage{fvextra}
\usepackage{xurl}
\usepackage{authblk}

\usepackage{tikz}
\usetikzlibrary{arrows.meta,fit,positioning}

\usepackage[colorlinks=true,linkcolor=blue,citecolor=blue,urlcolor=blue]{hyperref}
\hypersetup{
  pdftitle={A Lean Formalization of Hamilton's Three-Manifold Theorem},
  pdfauthor={Bennett Chow, Yuan Liao, and Ziyang Qin}
}

\theoremstyle{plain}
\newtheorem{theorem}{Theorem}[section]
\newtheorem{lemma}[theorem]{Lemma}
\newtheorem{proposition}[theorem]{Proposition}
\newtheorem{corollary}[theorem]{Corollary}
\theoremstyle{definition}
\newtheorem{definition}[theorem]{Definition}
\theoremstyle{remark}
\newtheorem{remark}[theorem]{Remark}

\newcommand{\Ric}{\operatorname{Ric}}
\newcommand{\Rm}{\operatorname{Rm}}
\newcommand{\tr}{\operatorname{tr}}
\newcommand{\Div}{\operatorname{div}}
\newcommand{\Vol}{\operatorname{Vol}}

\newcommand{\Lie}{\mathcal L}
\newcommand{\ip}[2]{\left\langle #1,#2\right\rangle}
\newcommand{\abs}[1]{\left|#1\right|}
\newcommand{\norm}[1]{\left|#1\right|}

\newcommand{\eps}{\varepsilon}
\newcommand{\del}{\delta}
\newcommand{\releasecommit}{8bd406e35c33a200e9b88895cf11ee8429194e15}
\newcommand{\releaseblobbase}{https://github.com/qinz1yang/differential-geometry/blob/\releasecommit/DifferentialGeometry/}
\newcommand{\releasetreebase}{https://github.com/qinz1yang/differential-geometry/tree/\releasecommit/DifferentialGeometry/}
\newcommand{\declareleanfile}[3]{%
  \expandafter\def\csname leanfile@#1\endcsname{#2}%
  \expandafter\def\csname leanline@#1\endcsname{#3}}
\newcommand{\leanref}[1]{%
  \ifcsname leanfile@#1\endcsname
    \href{\releaseblobbase\csname leanfile@#1\endcsname\#L\csname leanline@#1\endcsname}{\nolinkurl{#1}}%
  \else
    \nolinkurl{#1}%
  \fi}
\newcommand{\leanrefas}[2]{%
  \ifcsname leanfile@#1\endcsname
    \href{\releaseblobbase\csname leanfile@#1\endcsname\#L\csname leanline@#1\endcsname}{\nolinkurl{#2}}%
  \else
    \nolinkurl{#2}%
  \fi}
\newcommand{\leanfile}[1]{\nolinkurl{#1}}
\newcommand{\rffile}[1]{\href{\releaseblobbase#1}{\leanfile{#1}}}
\newcommand{\rfdir}[1]{\href{\releasetreebase#1}{\leanfile{#1/}}}
\declareleanfile{BaseInjBound}{Geometry/Compactness/CheegerGromov/Pointed/InjectivityRadius.lean}{160}
\declareleanfile{CanonicalMetricCompactness}{Geometry/Flow/RicciFlow/Compactness/Metric/CanonicalConstruction.lean}{856}
\declareleanfile{CompleteInput}{Geometry/Flow/RicciFlow/Compactness/Foundations/Defs.lean}{114}
\declareleanfile{CurvBoundInput}{Geometry/Flow/RicciFlow/Compactness/Foundations/Defs.lean}{130}
\declareleanfile{ExtendsPastEndpoint}{Geometry/Flow/RicciFlow/Extension/MaximalTime.lean}{45}
\declareleanfile{FlowMetricBall.IsKappaNoncollapsed}{Geometry/Flow/RicciFlow/Perelman/Noncollapsing/Defs.lean}{60}
\declareleanfile{FlowMetricBall.IsRmControlled}{Geometry/Flow/RicciFlow/Perelman/Noncollapsing/Defs.lean}{55}
\declareleanfile{FlowTo}{Geometry/Flow/RicciFlow/Extension/MaximalFlow.lean}{25}
\declareleanfile{FlowTo.joint}{Geometry/Flow/RicciFlow/Extension/MaximalFlow.lean}{30}
\declareleanfile{FlowCover}{Geometry/Flow/RicciFlow/Extension/MaximalFlow.lean}{39}
\declareleanfile{FlowUpgrade}{Geometry/Flow/RicciFlow/Compactness/Limits/Upgrade.lean}{119}
\declareleanfile{FlowerScaleInjBound}{Geometry/Flow/RicciFlow/Compactness/Foundations/InjectivityRadius.lean}{18}
\declareleanfile{FlowerScaleVolData}{Geometry/Flow/RicciFlow/Compactness/Foundations/NoncollapseInjectivity.lean}{72}
\declareleanfile{Geometry.RoundQuotientData}{Geometry/Metric/Sphere/QuotientDescent.lean}{42}
\declareleanfile{HamiltonBlowup}{Geometry/Flow/RicciFlow/DimensionThree/PositiveRicci/Defs.lean}{95}
\declareleanfile{HamiltonFiniteTimeFlow}{Geometry/Flow/RicciFlow/DimensionThree/PositiveRicci/Defs.lean}{55}
\declareleanfile{HamiltonFiniteTimeFlow.curvUnbounded}{Geometry/Flow/RicciFlow/DimensionThree/PositiveRicci/Defs.lean}{60}
\declareleanfile{HasInjRadiusAt}{Geometry/Compactness/CheegerGromov/Pointed/InjectivityRadius.lean}{59}
\declareleanfile{HasPrincipalSymbol}{Analysis/Parabolic/DeTurckRicci/PrincipalSymbol.lean}{197}
\declareleanfile{IsFlowerScaleVolBound}{Geometry/Flow/RicciFlow/Compactness/Foundations/NoncollapseInjectivity.lean}{80}
\declareleanfile{IsFlowerScaleVolBound.curvature}{Geometry/Flow/RicciFlow/Compactness/Foundations/NoncollapseInjectivity.lean}{83}
\declareleanfile{IsFlowerScaleVolBound.noncollapsed}{Geometry/Flow/RicciFlow/Compactness/Foundations/NoncollapseInjectivity.lean}{97}
\declareleanfile{IsMaximalAtEndpoint}{Geometry/Flow/RicciFlow/Extension/MaximalTime.lean}{58}
\declareleanfile{IsMetricCompatible}{Geometry/Connection/LeviCivita/MetricCompatible.lean}{31}
\declareleanfile{IsSolutionOn}{Geometry/Flow/RicciFlow/Solution/Basic.lean}{381}
\declareleanfile{IsStrictlyParabolicMetricRHS}{Analysis/Parabolic/QuasiLinear/QuasilinearMetricShortTimeExistence.lean}{35}
\declareleanfile{LeviCivita}{Geometry/Connection/LeviCivita/Defs.lean}{180}
\declareleanfile{LeviCivita_chart_apply}{Geometry/Connection/LeviCivita/Defs.lean}{192}
\declareleanfile{LeviCivita_eq_leviCivitaConnectionOfMetric}{Geometry/Connection/LeviCivita/Defs.lean}{186}
\declareleanfile{LeviCivita_isMetricCompatible}{Geometry/Connection/LeviCivita/Defs.lean}{252}
\declareleanfile{LeviCivita_torsion_eq_zero}{Geometry/Connection/LeviCivita/Defs.lean}{239}
\declareleanfile{LeviCivita_unique}{Geometry/Connection/LeviCivita/Defs.lean}{311}
\declareleanfile{MetricCompactSeed.higherRegularityCanonicalMetricCompactness}{Geometry/Flow/RicciFlow/Compactness/Metric/HigherRegularityEndpoint.lean}{24}
\declareleanfile{MetricCompactnessConclusion}{Geometry/Compactness/CheegerGromov/Pointed/Compactness.lean}{1303}
\declareleanfile{PinchEigen3.q_sub_nonneg}{Geometry/Curvature/DimensionThree/PinchingAlgebra.lean}{323}
\declareleanfile{PinchEigen3Unordered.q_sub_nonneg}{Geometry/Curvature/DimensionThree/PinchingAlgebra.lean}{434}
\declareleanfile{PinchFlowWMPData.ofShiftNClosed}{Geometry/Flow/RicciFlow/Preservation/RicciPinching.lean}{991}
\declareleanfile{PinchInitLt}{Geometry/Flow/RicciFlow/Preservation/PositiveRicci.lean}{138}
\declareleanfile{PointedFlowData}{Geometry/Flow/RicciFlow/Compactness/Foundations/Defs.lean}{24}
\declareleanfile{PointedFlowSeq}{Geometry/Flow/RicciFlow/Compactness/Foundations/Defs.lean}{92}
\declareleanfile{RealTimeInterval.openInterval}{Analysis/TimeInterval.lean}{273}
\declareleanfile{RicciNormHeatEquationOn}{Geometry/Flow/RicciFlow/Solution/RicciNorm.lean}{220}
\declareleanfile{RiemannFromRicci3DTraceDataAt}{Geometry/Curvature/DimensionThree/RiemannFromRicci.lean}{515}
\declareleanfile{SmoothCGHConverges}{Geometry/Flow/RicciFlow/Compactness/Foundations/PointedMaps.lean}{1316}
\declareleanfile{SmoothRiemannianMetric}{Geometry/Metric/Basic.lean}{12}
\declareleanfile{SolutionAgreesOn}{Geometry/Flow/RicciFlow/Extension/MaximalTime.lean}{35}
\declareleanfile{SolutionOn}{Geometry/Flow/RicciFlow/Solution/Basic.lean}{168}
\declareleanfile{SphericalSpaceFormQuotientModel}{Geometry/Metric/Sphere/SpaceForm.lean}{28}
\declareleanfile{Tensor0SModel}{Tensor/RSTensor/Defs.lean}{33}
\declareleanfile{Tensor0SSpace}{Tensor/RSTensor/Defs.lean}{43}
\declareleanfile{TensorRSModel}{Tensor/RSTensor/Defs.lean}{38}
\declareleanfile{TensorRSSpace}{Tensor/RSTensor/Defs.lean}{196}
\declareleanfile{admitsConstantPositiveSectionalCurvature}{Geometry/Curvature/MetricConditions.lean}{33}
\declareleanfile{admitsPositiveRicci}{Geometry/Curvature/MetricConditions.lean}{23}
\declareleanfile{ccTensor02Symm}{Analysis/Spectral/Tensor/CovGrad/RicciDeTurckSectionDifferenceSymmetrizedReindexedCoeff.lean}{49}
\declareleanfile{chartChristoffel}{Geometry/Operator/Hessian.lean}{144}
\declareleanfile{chartLeviCivita}{Geometry/Connection/LeviCivita/LeviCivitaChartLocal.lean}{243}
\declareleanfile{chartLeviCivitaGoodSet}{Geometry/Connection/LeviCivita/LeviCivitaChartLocal.lean}{26}
\declareleanfile{chartLeviCivitaInnerCLM}{Geometry/Connection/LeviCivita/LeviCivitaChartLocal.lean}{223}
\declareleanfile{chartLeviCivita_chart_overlap}{Geometry/Connection/LeviCivita/Defs.lean}{53}
\declareleanfile{christoffelAlongInFrame}{Geometry/Connection/Chart/Christoffel.lean}{32}
\declareleanfile{christoffelCorrection}{Geometry/Connection/LeviCivita/LeviCivitaChartLocal.lean}{127}
\declareleanfile{compactnessSol}{Geometry/Flow/RicciFlow/Compactness/Limits/Hamilton.lean}{44}
\declareleanfile{component0S}{Tensor/RSTensor/Coordinates/CoordinateBasis.lean}{131}
\declareleanfile{componentRS}{Tensor/RSTensor/Coordinates/Components.lean}{62}
\declareleanfile{conjugating_diffeo_family_jointsmooth}{Geometry/Flow/RicciFlow/ShortTime/GaugeRecovery/ConjugatingDiffeoFamily.lean}{66}
\declareleanfile{connCoeff_eq_christoffelAlong_coord}{Geometry/Connection/Chart/NablaComponents/Basic.lean}{49}
\declareleanfile{connectionEndomorphismInChart}{Tensor/RSTensor/Derivation/NablaOnTensors.lean}{452}
\declareleanfile{constantPositiveSectionalCurvatureMetric}{Geometry/Curvature/MetricConditions.lean}{26}
\declareleanfile{constant_positive_sectional_curvature_iff_spherical_space_form}{Geometry/Metric/Sphere/SpaceForm.lean}{78}
\declareleanfile{constant_positive_sectional_curvature_implies_spherical_space_form}{Geometry/Metric/Sphere/SpaceForm.lean}{43}
\declareleanfile{constant_positive_sectional_curvature_of_smooth_cgh}{Geometry/Flow/RicciFlow/DimensionThree/PositiveRicci/Compactness/Limit.lean}{371}
\declareleanfile{coordComponent0SAt}{Geometry/Connection/Chart/CoordinateFrame.lean}{162}
\declareleanfile{coordComponentRSAt}{Geometry/Connection/Chart/CoordinateFrame.lean}{175}
\declareleanfile{coordInvEvol}{Geometry/Flow/RicciFlow/Evolution/Metric/Evolution.lean}{111}
\declareleanfile{coordRicciEvol}{Geometry/Flow/RicciFlow/Evolution/Ricci/CoordinateIdentities.lean}{828}
\declareleanfile{covariantDeriv_tensor0SModelWithin_apply_basis_slots}{Tensor/RSTensor/Derivation/NablaOnTensors.lean}{207}
\declareleanfile{cubicQ_pinchOn}{Geometry/Flow/RicciFlow/Preservation/Pinching/Definitions.lean}{151}
\declareleanfile{deTurckRicciRHS_hasPrincipalSymbol_at_self}{Analysis/Parabolic/DeTurckRicci/RHSStrictParabolic.lean}{417}
\declareleanfile{deTurckRicci_forcingBootstrap_symm}{Analysis/Spectral/Intrinsic/HeatSemigroup/MaxRegSolutionRegularity.lean}{4274}
\declareleanfile{deTurckSobolevNonlinearitySymm}{Analysis/Spectral/Intrinsic/DeTurck/SobolevNonlinearityExistence.lean}{1138}
\declareleanfile{deTurckSymbolCoeff}{Analysis/Parabolic/PrincipalSymbol.lean}{123}
\declareleanfile{deTurckSymbolCoeff_apply}{Analysis/Parabolic/PrincipalSymbol.lean}{127}
\declareleanfile{displayedRiemannFromRicci3D_of_algebraic_curvature_symmetries}{Geometry/Curvature/DimensionThree/CurvatureAlgebra.lean}{288}
\declareleanfile{evol_christoffel_inFrame}{Geometry/Flow/RicciFlow/Evolution/Connection/Evolution.lean}{345}
\declareleanfile{evol_inverse_metric_inFrame}{Geometry/Flow/RicciFlow/Evolution/Metric/Evolution.lean}{256}
\declareleanfile{exists_bounded_geometry_normal_data}{Geometry/Compactness/CheegerGromov/BoundedGeometry/NormalData.lean}{1002}
\declareleanfile{exists_hamilton_vol}{Geometry/Flow/RicciFlow/DimensionThree/PositiveRicci/Compactness/FlowUpgrade.lean}{1824}
\declareleanfile{exists_max_flow}{Geometry/Flow/RicciFlow/Extension/MaximalFlow.lean}{165}
\declareleanfile{exists_pinching_estimate_of_smooth_solution}{Geometry/Flow/RicciFlow/Preservation/Pinching/Estimate.lean}{989}
\declareleanfile{exists_uniform_jointly_smooth_ricciDeTurck_metric_solution}{Geometry/Flow/RicciFlow/ShortTime/LowRegularity/Background/Bootstrap.lean}{267}
\declareleanfile{extendInputs_of_soln}{Geometry/Flow/RicciFlow/Extension/ShiInputs.lean}{505}
\declareleanfile{extend_construction_of_restart}{Geometry/Flow/RicciFlow/Extension/Construction.lean}{158}
\declareleanfile{extends_of_rmBounded}{Geometry/Flow/RicciFlow/Extension/MaximalTime.lean}{144}
\declareleanfile{finiteTime3D}{Geometry/Flow/RicciFlow/Estimates/FiniteTime/Scalar.lean}{265}
\declareleanfile{flowInj_of_vol}{Geometry/Flow/RicciFlow/Compactness/Foundations/NoncollapseInjectivity.lean}{130}
\declareleanfile{flow_end_le}{Geometry/Flow/RicciFlow/Estimates/FiniteTime/Solution.lean}{99}
\declareleanfile{flow_to_agree}{Geometry/Flow/RicciFlow/Extension/MaximalFlow.lean}{60}
\declareleanfile{flow_to_eq}{Geometry/Flow/RicciFlow/Extension/MaximalFlow.lean}{87}
\declareleanfile{flow_to_extend}{Geometry/Flow/RicciFlow/Extension/MaximalFlow.lean}{96}
\declareleanfile{flow_to_seed}{Geometry/Flow/RicciFlow/Extension/MaximalFlow.lean}{46}
\declareleanfile{gluedFamily}{Geometry/Flow/RicciFlow/Extension/SmoothLimit.lean}{149}
\declareleanfile{hamiltonBlowupPointSelection}{Geometry/Flow/RicciFlow/DimensionThree/PositiveRicci/Blowup.lean}{47}
\declareleanfile{hamiltonCubicQ3}{Geometry/Curvature/DimensionThree/PinchingAlgebra.lean}{38}
\declareleanfile{hamiltonCubicQ3_factorized}{Geometry/Curvature/DimensionThree/PinchingAlgebra.lean}{51}
\declareleanfile{hamiltonCubicQ3_lower_bound_ordered_nonnegative_eigenvalues}{Geometry/Curvature/DimensionThree/PinchingAlgebra.lean}{225}
\declareleanfile{hamiltonCubicQFactorized3}{Geometry/Curvature/DimensionThree/PinchingAlgebra.lean}{45}
\declareleanfile{hamiltonPinchingEstimate}{Geometry/Flow/RicciFlow/DimensionThree/PositiveRicci/Blowup.lean}{152}
\declareleanfile{hamiltonRiemannCurvatureBound}{Geometry/Flow/RicciFlow/DimensionThree/PositiveRicci/Blowup.lean}{289}
\declareleanfile{hamiltonSourceSequence}{Geometry/Flow/RicciFlow/DimensionThree/PositiveRicci/Compactness/FlowUpgrade.lean}{232}
\declareleanfile{hamiltonWindow}{Geometry/Flow/RicciFlow/DimensionThree/PositiveRicci/Blowup.lean}{281}
\declareleanfile{hamilton_admits_constant_positive_sectional_curvature}{Geometry/Flow/RicciFlow/DimensionThree/PositiveRicci/Classification.lean}{167}
\declareleanfile{hamilton_constant_positive_sectional_curvature_of_injectivity_radius_bound}{Geometry/Flow/RicciFlow/DimensionThree/PositiveRicci/Classification.lean}{32}
\declareleanfile{hamilton_constant_positive_sectional_curvature_of_pinching}{Geometry/Flow/RicciFlow/DimensionThree/PositiveRicci/Classification.lean}{144}
\declareleanfile{hamilton_exists_blowup_point_sequence}{Geometry/Flow/RicciFlow/DimensionThree/PositiveRicci/Blowup.lean}{505}
\declareleanfile{hamilton_extinction_time_bound}{Geometry/Flow/RicciFlow/DimensionThree/PositiveRicci/Flow.lean}{537}
\declareleanfile{hamilton_finite_time_flow_exists_on_closed_open}{Geometry/Flow/RicciFlow/DimensionThree/PositiveRicci/Flow.lean}{34}
\declareleanfile{hamilton_flow_upgrade_of_metric_compactness}{Geometry/Flow/RicciFlow/DimensionThree/PositiveRicci/Compactness/FlowUpgrade.lean}{1392}
\declareleanfile{hamilton_pinching_implies_pinch_estimate}{Geometry/Flow/RicciFlow/DimensionThree/PositiveRicci/Blowup.lean}{977}
\declareleanfile{hamilton_positive_ricci}{Geometry/Flow/RicciFlow/DimensionThree/PositiveRicci/Hamilton.lean}{29}
\declareleanfile{hamilton_reference_radius}{Geometry/Flow/RicciFlow/DimensionThree/PositiveRicci/Defs.lean}{118}
\declareleanfile{hamilton_reference_radius_window}{Geometry/Flow/RicciFlow/DimensionThree/PositiveRicci/Blowup.lean}{1110}
\declareleanfile{hamilton_rescaled_curvature_bound}{Geometry/Flow/RicciFlow/DimensionThree/PositiveRicci/Blowup.lean}{1012}
\declareleanfile{hamilton_rescaled_ricci_nonnegative}{Geometry/Flow/RicciFlow/DimensionThree/PositiveRicci/Blowup.lean}{808}
\declareleanfile{hamilton_ricci_nonnegative}{Geometry/Flow/RicciFlow/DimensionThree/PositiveRicci/Blowup.lean}{756}
\declareleanfile{hamilton_scalar_blowup}{Geometry/Flow/RicciFlow/DimensionThree/PositiveRicci/Blowup.lean}{470}
\declareleanfile{hamilton_scalar_weak_maximum_principle_regularity_on_interval}{Geometry/Flow/RicciFlow/DimensionThree/PositiveRicci/Flow.lean}{89}
\declareleanfile{hamilton_tensor_wmp_section}{Analysis/Parabolic/MaximumPrinciple/Tensor/Weak.lean}{367}
\declareleanfile{hell_of_soln}{Geometry/Flow/RicciFlow/Extension/ShiInputs.lean}{109}
\declareleanfile{isClosedThreeManifold}{Topology/ThreeManifold/Closed.lean}{16}
\declareleanfile{isSolutionOn_of_extendData}{Geometry/Flow/RicciFlow/Extension/Regularity.lean}{1139}
\declareleanfile{isSphericalSpaceForm}{Geometry/Metric/Sphere/SpaceForm.lean}{39}
\declareleanfile{iteratedCovGrad}{Analysis/Sobolev/Embedding/SobolevEmbeddingCm.lean}{34}
\declareleanfile{koszulCovectorField}{Geometry/Connection/LeviCivita/KoszulFormula.lean}{447}
\declareleanfile{koszulCovectorField_apply_of_mdiff}{Geometry/Connection/LeviCivita/KoszulFormula.lean}{713}
\declareleanfile{koszulNablaAt}{Geometry/Connection/LeviCivita/KoszulFormula.lean}{698}
\declareleanfile{koszulNablaField}{Geometry/Connection/LeviCivita/KoszulFormula.lean}{459}
\declareleanfile{koszulScalar}{Geometry/Connection/LeviCivita/KoszulFormula.lean}{145}
\declareleanfile{koszul_identity}{Geometry/Connection/LeviCivita/Koszul.lean}{118}
\declareleanfile{koszul_local_uniqueness}{Geometry/Connection/LeviCivita/Koszul.lean}{139}
\declareleanfile{leviCivitaConnectionCandidateAt}{Geometry/Connection/LeviCivita/KoszulFormula.lean}{894}
\declareleanfile{leviCivitaConnectionCandidateAt_agreesWithDescended}{Geometry/Connection/LeviCivita/KoszulFormula.lean}{974}
\declareleanfile{leviCivitaConnectionOfMetric}{Geometry/Connection/LeviCivita/KoszulFormula.lean}{988}
\declareleanfile{leviCivitaConnectionOfMetric_contMDiffCovariantDerivativeLocally}{Geometry/Connection/LeviCivita/Smooth/Connection.lean}{63}
\declareleanfile{leviCivitaConnectionOfMetric_isMetricCompatible}{Geometry/Connection/LeviCivita/KoszulFormula.lean}{1075}
\declareleanfile{leviCivitaConnectionOfMetric_isTorsionFree}{Geometry/Connection/LeviCivita/Torsion.lean}{372}
\declareleanfile{leviCivitaStitched}{Geometry/Connection/LeviCivita/Defs.lean}{98}
\declareleanfile{lichnerowiczRHSInFrame}{Geometry/Flow/RicciFlow/Evolution/Ricci/Lichnerowicz.lean}{195}
\declareleanfile{limit_to_orig}{Geometry/Flow/RicciFlow/DimensionThree/PositiveRicci/LimitRoundness.lean}{345}
\declareleanfile{metricCompactness}{Geometry/Flow/RicciFlow/Compactness/Metric/Endpoint.lean}{48}
\declareleanfile{metricCov}{Geometry/Curvature/Metric.lean}{43}
\declareleanfile{metricCovectorNormSq_pos}{Analysis/Parabolic/PrincipalSymbol.lean}{88}
\declareleanfile{metricEquiv_of_ricBound}{Geometry/Flow/RicciFlow/Extension/Construction.lean}{354}
\declareleanfile{metricRicciAt}{Geometry/Curvature/Metric.lean}{84}
\declareleanfile{metricRm04StdAt}{Geometry/Curvature/Metric.lean}{123}
\declareleanfile{metricSeedOfBG}{Geometry/Flow/RicciFlow/Compactness/Metric/BoundedGeometryCompactness.lean}{24}
\declareleanfile{modelDeriv_eq_coordDeriv0SAt}{Geometry/Connection/Chart/NablaComponents/Basic.lean}{207}
\declareleanfile{nabla0SFun}{Tensor/RSTensor/Derivation/NablaOnTensors.lean}{855}
\declareleanfile{nablaRSFun}{Tensor/RSTensor/Derivation/NablaOnTensors.lean}{865}
\declareleanfile{Nabla0SRegular}{Tensor/RSTensor/Derivation/NablaOnTensors.lean}{899}
\declareleanfile{NablaRSRegular}{Tensor/RSTensor/Derivation/NablaOnTensors.lean}{911}
\declareleanfile{nabla0S_coordFrame_slots}{Geometry/Connection/Chart/NablaComponents/Basic.lean}{324}
\declareleanfile{nabla0S_coordFrame_slots_of_smooth}{Geometry/Connection/Chart/NablaComponents/Basic.lean}{392}
\declareleanfile{nablaRS_coordFrame_slots_of_smooth}{Geometry/Coordinates/NablaComponents/TensorRS/Formula.lean}{175}
\declareleanfile{negative_region_parabolic_lower_bound}{Analysis/Parabolic/MaximumPrinciple/Scalar/Weak.lean}{496}
\declareleanfile{no_local_open}{Geometry/Flow/RicciFlow/Perelman/Noncollapsing/EarlyTime.lean}{90}
\declareleanfile{normSqLeOfFirstTrace}{Geometry/Curvature/DimensionThree/RicciControlsRm.lean}{805}
\declareleanfile{parabolic_const_sub}{Analysis/Parabolic/MaximumPrinciple/Scalar/Weak.lean}{47}
\declareleanfile{pinchBarrierReg}{Geometry/Flow/RicciFlow/Preservation/RicciPinching.lean}{615}
\declareleanfile{pinchParabolic}{Geometry/Flow/RicciFlow/Preservation/RicciPinching.lean}{82}
\declareleanfile{pinchQuot_slab_bound}{Geometry/Flow/RicciFlow/Preservation/Pinching/Estimate.lean}{763}
\declareleanfile{pinchQuotient_grad_pos}{Geometry/Flow/RicciFlow/Preservation/Pinching/Estimate.lean}{688}
\declareleanfile{pinchQuotient_initial_bound}{Geometry/Flow/RicciFlow/Preservation/Pinching/Estimate.lean}{416}
\declareleanfile{pinchQuotient_parabolic_nonpos}{Geometry/Flow/RicciFlow/Preservation/Pinching/Estimate.lean}{289}
\declareleanfile{pinchQuotient_space_pos}{Geometry/Flow/RicciFlow/Preservation/Pinching/Estimate.lean}{652}
\declareleanfile{pinchShiftNull_ge}{Geometry/Flow/RicciFlow/Preservation/PositiveRicciReaction.lean}{189}
\declareleanfile{pinch_init_sol_lt}{Geometry/Flow/RicciFlow/Preservation/RicciPinching.lean}{1327}
\declareleanfile{pinch_init_wmp_lt}{Geometry/Flow/RicciFlow/Preservation/RicciPinching.lean}{1304}
\declareleanfile{pinch_quotient_evolution_of_heat_equations}{Geometry/Flow/RicciFlow/Preservation/Pinching/HamiltonReaction.lean}{48}
\declareleanfile{pinch_quotient_evolution_of_solution_data}{Geometry/Flow/RicciFlow/Preservation/Pinching/SolutionEvolution.lean}{793}
\declareleanfile{pinch_quotient_evolution_of_tensor_sections}{Geometry/Flow/RicciFlow/Preservation/Pinching/HamiltonReaction.lean}{945}
\declareleanfile{pinch_sol_closed}{Geometry/Flow/RicciFlow/Preservation/RicciPinching.lean}{1197}
\declareleanfile{pinch_sol_closed_nonneg}{Geometry/Flow/RicciFlow/Preservation/RicciPinching.lean}{1222}
\declareleanfile{positiveRicciMetric}{Geometry/Curvature/MetricConditions.lean}{19}
\declareleanfile{quasilinear_maxreg_solution_of_nemytskii}{Analysis/Spectral/Intrinsic/DeTurck/DeTurckQuasilinearExistence.lean}{543}
\declareleanfile{quasilinear_metric_short_time_existence_of_nemytskii_data}{Geometry/Flow/RicciFlow/ShortTime/Construction/QuasilinearExistence.lean}{31}
\declareleanfile{quotHeat}{Geometry/Flow/RicciFlow/Preservation/Pinching/QuotientEvolution.lean}{231}
\declareleanfile{quotHeat1_of_nonneg}{Geometry/Flow/RicciFlow/Preservation/Pinching/QuotientEvolution.lean}{367}
\declareleanfile{quotHeatDiv}{Geometry/Flow/RicciFlow/Preservation/Pinching/QuotientEvolution.lean}{591}
\declareleanfile{reaction_difference_lower_bound_on_negative_region}{Analysis/Parabolic/MaximumPrinciple/Scalar/Weak.lean}{482}
\declareleanfile{ric_quad_le_of_soln}{Geometry/Flow/RicciFlow/Extension/ShiInputs.lean}{372}
\declareleanfile{ricciGradCoupleSq}{Geometry/Flow/RicciFlow/Preservation/Pinching/HamiltonReaction.lean}{125}
\declareleanfile{ricciGradSq}{Geometry/Flow/RicciFlow/Solution/Basic.lean}{334}
\declareleanfile{ricciHeatSmooth}{Geometry/Flow/RicciFlow/Evolution/Ricci/NormEvolution.lean}{598}
\declareleanfile{ricciMixed_eq_gradNorm}{Geometry/Flow/RicciFlow/Preservation/Pinching/HamiltonReaction.lean}{200}
\declareleanfile{ricciMixed_eq_tfGrad}{Geometry/Flow/RicciFlow/Preservation/Pinching/HamiltonReaction.lean}{317}
\declareleanfile{ricciNorm}{Geometry/Flow/RicciFlow/Solution/Basic.lean}{327}
\declareleanfile{ricciNormLap}{Geometry/Flow/RicciFlow/Solution/Basic.lean}{356}
\declareleanfile{ricciNorm_slabCont}{Geometry/Flow/RicciFlow/Preservation/Pinching/Estimate.lean}{566}
\declareleanfile{ricciReact}{Geometry/Flow/RicciFlow/Solution/Basic.lean}{373}
\declareleanfile{ricci_flow_forward_unique}{Geometry/Flow/RicciFlow/Extension/Construction.lean}{126}
\declareleanfile{ricci_flow_interior_restart}{Geometry/Flow/RicciFlow/Extension/Construction.lean}{61}
\declareleanfile{ricci_flow_pde_at_zero}{Geometry/Flow/RicciFlow/ShortTime/GaugeRecovery/RicciFlowPdeAtZero.lean}{61}
\declareleanfile{ricci_flow_short_time_existence}{Geometry/Flow/RicciFlow/ShortTime/Existence.lean}{35}
\declareleanfile{ricci_flow_uniform_existence}{Geometry/Flow/RicciFlow/Extension/Construction.lean}{30}
\declareleanfile{ricci_gauge_of_dt}{Geometry/Flow/RicciFlow/ShortTime/LowRegularity/GaugeRemoval.lean}{141}
\declareleanfile{ricci_nonnegative_of_closed_solution_wmp_data}{Geometry/Flow/RicciFlow/Preservation/RicciPinching.lean}{1249}
\declareleanfile{rm04Comp_displayedRiemannFromRicci3D_at}{Geometry/Curvature/DimensionThree/RiemannFromRicci.lean}{624}
\declareleanfile{rm04Realizes_metric}{Geometry/Flow/RicciFlow/Extension/MaximalTime.lean}{77}
\declareleanfile{rmBounded_of_not_unbounded}{Geometry/Flow/RicciFlow/Extension/MaximalTime.lean}{129}
\declareleanfile{rmUnbounded_of_maximal}{Geometry/Flow/RicciFlow/Extension/MaximalTime.lean}{277}
\declareleanfile{round_at_zero_of_smooth_cgh}{Geometry/Flow/RicciFlow/DimensionThree/PositiveRicci/Compactness/Limit.lean}{265}
\declareleanfile{scalarLowerBarrier}{Geometry/Flow/RicciFlow/Preservation/ScalarLowerBound.lean}{30}
\declareleanfile{scalarLowerBarrier_hasDerivWithinAt}{Geometry/Flow/RicciFlow/Preservation/ScalarLowerBound.lean}{44}
\declareleanfile{scalarRegOfSmooth}{Geometry/Flow/RicciFlow/Preservation/ScalarLowerBound.lean}{271}
\declareleanfile{scalar_curvature_lower_bound_of_parabolic_inequality}{Geometry/Flow/RicciFlow/Preservation/ScalarLowerBound.lean}{117}
\declareleanfile{scalar_curvature_lower_bound_of_scalarEvolution}{Geometry/Flow/RicciFlow/Preservation/ScalarLowerBound.lean}{525}
\declareleanfile{scalar_curvature_lower_bound_of_scalarEvolution_initialMinimum}{Geometry/Flow/RicciFlow/Preservation/ScalarLowerBound.lean}{670}
\declareleanfile{scalar_endpoint_le_blowupTime_of_lower_barrier_bound}{Geometry/Flow/RicciFlow/Estimates/FiniteTime/Scalar.lean}{101}
\declareleanfile{scalar_evolution_of_smooth_solution}{Geometry/Flow/RicciFlow/Evolution/Scalar/Basic.lean}{91}
\declareleanfile{scalar_wmp_sub_theorem_7_2}{Analysis/Parabolic/MaximumPrinciple/Scalar/Weak.lean}{2258}
\declareleanfile{scalar_wmp_super_theorem_7_1}{Analysis/Parabolic/MaximumPrinciple/Scalar/Weak.lean}{2213}
\declareleanfile{scalar_wmp_supersolutions_of_lipschitz_on_value_set_of_regular_positive_time}{Analysis/Parabolic/MaximumPrinciple/Scalar/Weak.lean}{2167}
\declareleanfile{shiCovBound_of_soln}{Geometry/Flow/RicciFlow/Extension/ShiInputs.lean}{405}
\declareleanfile{shiftNAt}{Geometry/Flow/RicciFlow/Preservation/PositiveRicciReaction.lean}{1050}
\declareleanfile{shiftNRaw}{Geometry/Flow/RicciFlow/Preservation/PositiveRicciReaction.lean}{1060}
\declareleanfile{shiftNRaw_null_symm}{Geometry/Flow/RicciFlow/Preservation/PositiveRicciReaction.lean}{1581}
\declareleanfile{shiftNull3}{Geometry/Flow/RicciFlow/Preservation/PositiveRicciReaction.lean}{180}
\declareleanfile{shiftReact3_nonneg}{Geometry/Flow/RicciFlow/Preservation/PositiveRicciReaction.lean}{211}
\declareleanfile{shiftScal3_eq}{Geometry/Flow/RicciFlow/Preservation/PositiveRicciReaction.lean}{165}
\declareleanfile{solutionOn_of_joint}{Geometry/Flow/RicciFlow/Extension/Regularity.lean}{1064}
\declareleanfile{spherical_space_form_admits_constant_positive_sectional_curvature}{Geometry/Metric/Sphere/SpaceForm.lean}{55}
\declareleanfile{strictCert_sec}{Analysis/Parabolic/MaximumPrinciple/Tensor/Certification.lean}{286}
\declareleanfile{strict_barrier_nonnegative_of_positive_time}{Analysis/Parabolic/MaximumPrinciple/Scalar/Weak.lean}{668}
\declareleanfile{strict_pinch_sol_lt}{Geometry/Flow/RicciFlow/Preservation/RicciPinching.lean}{1562}
\declareleanfile{tangentConstAt}{Geometry/Connection/LeviCivita/KoszulFormula.lean}{32}
\declareleanfile{tensor0SModelAt_coordComponent0SAt}{Geometry/Connection/Chart/NablaComponents/Basic.lean}{142}
\declareleanfile{tensorRSBundle_fiber}{Tensor/RSTensor/Defs.lean}{647}
\declareleanfile{tensorRSBundle_smooth}{Tensor/RSTensor/Defs.lean}{670}
\declareleanfile{tensorRSBundle_topology}{Tensor/RSTensor/Defs.lean}{638}
\declareleanfile{tensorRSBundle_vector}{Tensor/RSTensor/Defs.lean}{658}
\declareleanfile{tensorSectionRealizeMetric_inner}{Analysis/Spectral/Intrinsic/MetricRealization/TensorHsRealize.lean}{372}
\declareleanfile{tensorHs}{Analysis/Spectral/Tensor/SobolevScale/Defs.lean}{143}
\declareleanfile{covGrad}{Analysis/Spectral/Tensor/CovGrad/Defs.lean}{164}
\declareleanfile{tensor_wmp}{Analysis/Parabolic/MaximumPrinciple/Tensor/Weak.lean}{321}
\declareleanfile{traceFreeRicciNormSq}{Geometry/Flow/RicciFlow/Preservation/Pinching/Definitions.lean}{85}
\declareleanfile{trace_free_ricci_norm_sq_heat_equation_of_smooth_solution}{Geometry/Flow/RicciFlow/Preservation/Pinching/SolutionEvolution.lean}{488}
\declareleanfile{trace_free_ricci_norm_sq_heat_equation_of_solution}{Geometry/Flow/RicciFlow/Preservation/Pinching/SolutionEvolution.lean}{564}
\declareleanfile{vec2}{Geometry/Curvature/Tensor.lean}{131}
\declareleanfile{wmp_section_sec}{Analysis/Parabolic/MaximumPrinciple/Tensor/Weak.lean}{251}
\makeatletter
\g@addto@macro{\UrlBreaks}{\do\_\do\.\do\/}
\makeatother
\title{A Lean Formalization of Hamilton's Three-Manifold Theorem}
\author[1]{Bennett Chow}
\author[1]{Yuan Liao}
\author[2]{Ziyang Qin}
\affil[1]{Department of Mathematics, University of California San Diego}
\affil[2]{Department of Mathematics, Cornell University}
\date{\today}

\begin{document}
\maketitle

\begin{abstract}

We describe a Lean formalization of Hamilton's 1982 theorem on
closed, connected three-manifolds with positive Ricci curvature.  The
development contains a short-time existence theorem for Ricci flow and
substantial geometric-analysis infrastructure: Riemannian tensor calculus,
the Levi--Civita connection, Ricci-flow evolution equations, scalar and tensor
maximum principles, three-dimensional curvature algebra, preservation of
Ricci pinching, and Hamilton's improved pinching estimate.  The formalization follows an alternative blow-up route, rather than Hamilton's
original normalized-flow proof.  Its time-uniform short-time existence,
maximal continuation, no-local-collapsing, and
Cheeger--Gromov--Hamilton compactness pipelines have been formalized and are
included in the artifact, while we give only a brief account of
these companion developments and record the interfaces and
consequences used by the Hamilton argument; a detailed exposition of their full
constructions is deferred to the second author's forthcoming thesis.    We interweave
representative Lean declarations with their mathematical meaning and record
the status and provenance of every major component.  All source-level status
claims are tied to the source release identified below.
\end{abstract}

\noindent\textbf{Keywords.}
Ricci flow; positive Ricci curvature; three-manifolds; geometric analysis;
formalized mathematics; Lean.

\medskip
\noindent\textbf{Mathematics Subject Classification (2020).}
Primary 53E20, 68V20; Secondary 35K59, 53C44.

\tableofcontents

\section{Introduction}

The Ricci flow is the equation
\begin{equation}
    \frac{\partial}{\partial t} g = - 2 \Ric
\end{equation}
for one-parameter families of Riemannian metrics $g(t)$ on smooth manifolds, where $\Ric$ denotes the Ricci tensor of $g(t)$.
It was introduced by Richard Hamilton in his seminal 1982 paper
\cite{MR664497}, where he used it to prove the following geometric-topological classification
result.
\begin{theorem}[Hamilton 1982]
\label{thm:main-hamilton-3d}
   If $M^3$ is a closed, connected smooth three-manifold that admits a metric
   $g_0$ with positive Ricci curvature, then $M^3$ admits a metric $g_\infty$
   with constant positive sectional curvature.
\end{theorem}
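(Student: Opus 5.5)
The plan is to follow the blow-up route indicated in the abstract rather than Hamilton's normalized flow. Starting from $g_0$ with $\Ric>0$, we take the maximal Ricci flow solution $g(t)$, $t\in[0,T)$, produced by short-time existence and maximal continuation. Since $M$ is closed, the maximum principle applied to the scalar curvature evolution
\begin{equation*}
\partial_t R=\Delta R+2|\Ric|^2\ge \Delta R+\tfrac23 R^2
\end{equation*}
gives $R\ge R_{\min}(0)/(1-\tfrac23 R_{\min}(0)t)$. Hence $T\le 3/(2R_{\min}(0))<\infty$. By the extension criterion, $\sup_M|\Rm|$ is unbounded as $t\to T$. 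In dimension three the Weyl tensor vanishes, so $\Rm$ is an algebraic function of $\Ric$. Therefore positive Ricci controls $|\Rm|\le CR$, and $R$ blows up.

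Next come the pinching estimates, which rely on the tensor maximum principle in $3$D. First I show that $\Ric>0$ is preserved and that the pinching $\Ric\ge\eps R g$ is preserved. The reaction term of $\partial_t\Ric$ in an orthonormal frame diagonalizing $\Ric$ satisfies Hamilton's null-eigenvector condition, which is where the cubic $Q$ algebra enters. Then I prove Hamilton's improved estimate
\begin{equation*}
\frac{|\Ric-\tfrac13 Rg|^2}{R^2}\le C R^{-\delta}
\end{equation*}
for some $\delta>0$. The method is to compute the evolution of $f=|\mathring{\Ric}|^2/R^{2-\delta}$. One shows that the gradient terms have a good sign and that the reaction term is $\le 0$ for small $\delta$, using the preserved pinching $\Ric\ge\eps Rg$. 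The scalar weak maximum principle then bounds $f$ by its initial maximum.

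The third step is the blow-up. I choose times $t_i\to T$ and points $x_i$ with $Q_i=R(x_i,t_i)=\max_{M\times[0,t_i]}R\to\infty$. I then rescale to $g_i(s)=Q_i\,g(t_i+s/Q_i)$ for $s\in[-Q_it_i,0]$. These rescaled flows have $R\le 1$ and hence $|\Rm|\le C$. Perelman's no-local-collapsing gives $\kappa$-noncollapsing at scale $1$, and with the curvature bound this yields a uniform injectivity radius lower bound at $x_i$. The Cheeger--Gromov--Hamilton compactness theorem then gives a subsequence converging to a complete pointed ancient limit flow $(M_\infty,g_\infty(s),x_\infty)$ with $R(x_\infty,0)=1$. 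The improved pinching scales as $|\mathring{\Ric}_i|^2\le C Q_i^{-\delta}R_i^{2-\delta}\to0$, so the limit has $\mathring{\Ric}\equiv0$. Being three-dimensional, it therefore has constant sectional curvature equal to $1/6$ at time $0$ (Schur, with $R=1$). It is then compact by Myers, hence diffeomorphic to $M_\infty$. Cheeger--Gromov convergence with a compact limit forces the embeddings to be eventually surjective diffeomorphisms onto $M$, since $M$ is connected. Thus $M\cong M_\infty$, and pulling back the round metric gives $g_\infty$, with $M$ a spherical space form.

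I expect the main obstacle to be the improved pinching estimate, specifically verifying that the reaction terms in the evolution of $|\mathring{\Ric}|^2/R^{2-\delta}$ are nonpositive. This reduces to an eigenvalue inequality for the cubic $Q$ under $\Ric\ge\eps R g$. My first attempt would be to diagonalize, set $\lambda\ge\mu\ge\nu>0$, and factor the reaction difference as a sum of products of squared eigenvalue differences with positive coefficients. The noncollapsing and injectivity-radius input to compactness is heavy, but it can be taken from the companion pipelines described in the abstract.
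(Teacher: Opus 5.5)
Your proposal is correct and follows essentially the same blow-up route as the paper. The shared steps are:
\begin{itemize}
\item the scalar barrier giving $T\le 3/(2R_{\min}(0))$;
\item curvature blow-up from the extension criterion, turned into scalar blow-up by $|\Rm|\le CR$ under preserved $\Ric\ge0$;
\item preserved and improved Ricci pinching, with the improvement coming from the eigenvalue lower bound for $Q$;
\item point selection normalized so that $R_{g_i}(x_i,0)=1$;
\item no-local-collapsing plus injectivity-radius control, then Cheeger--Gromov--Hamilton compactness;
\item an Einstein limit with $\sec=1/6$ via the contracted Bianchi identity;
\item Myers plus the open--closed argument, making the convergence embedding a diffeomorphism onto $M$.
\end{itemize}
The differences are cosmetic. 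The paper takes the limit only on the fixed closed backward window $[-r_0^2,0]$ with $r_0=1/10$, matching $|\Rm_{g_i}|\le 100$, rather than an ancient limit. Also, the quartic $Q$ enters only the improved estimate: the null-eigenvector reaction for preserving $\Ric\ge\del Rg$ is $\del^2(1-3\del)R^2+(1-\del)(\lambda_2-\lambda_3)^2$, not $Q$ as your second paragraph says.
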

Equivalently, the conclusion is that such a manifold is diffeomorphic to a spherical space form
\cite{MR2742530}, \cite[Corollary 12.5]{LeeRiemannianManifolds}.  This
theorem is the starting point of the Ricci-flow approach to
three-dimensional topology.

Table~\ref{tab:status-at-a-glance} separates the mathematical target from the
major producer and consumer layers in its checked Lean proof.  The
Hamilton assembly has the surface statement of
Theorem~\ref{thm:main-hamilton-3d}, and its fresh transitive axiom audit contains
only Lean/\texttt{mathlib}'s standard classical and quotient axioms.
\begin{table}[htbp]
\centering
\scriptsize
\renewcommand{\arraystretch}{1.12}
\begin{tabular}{@{}>{\raggedright\arraybackslash}p{0.32\linewidth}
                    >{\raggedright\arraybackslash}p{0.63\linewidth}@{}}
\toprule
\textbf{Component and role} & \textbf{Provenance and representative declarations} \\
\midrule
Hamilton's 3-dimensional positive Ricci theorem; mathematical target & Classical theorem \cite{MR664497}; Lean endpoint \leanref{hamilton_positive_ricci}. \\
\addlinespace[0.4em]
Short-time existence from a smooth initial metric & Project-local over Lean/\texttt{mathlib}; \leanref{ricci_flow_short_time_existence}. \\
\addlinespace[0.4em]
Time-uniform short-time existence & Project-local class-first fixed-background Ricci--DeTurck construction; \leanref{ricci_flow_uniform_existence}. \\
\addlinespace[0.4em]
Local geometry, evolution, maximum principles, and pinching & Project-local layers; representative declarations are cited beside their mathematical explanations. \\
\addlinespace[0.4em]
No-local-collapsing and injectivity radius control & Companion development included in the release; \leanref{no_local_open}, \leanref{exists_hamilton_vol}, and \leanref{flowInj_of_vol}. \\
\addlinespace[0.4em]
Riemannian metric and Ricci-flow compactness theorems & Companion developments included in the release; \leanref{metricCompactness}, \leanref{compactnessSol}, and \leanref{hamilton_constant_positive_sectional_curvature_of_pinching}. \\
\addlinespace[0.4em]
Topological endpoint: constant positive curvature implies quotient of sphere & Final theorems; \leanref{hamilton_admits_constant_positive_sectional_curvature} and \leanref{hamilton_positive_ricci}. \\
\bottomrule
\end{tabular}
\caption{Major components of the checked proof and their representative
declarations.  The release identifies the exact source revision for these
names and provenance claims.}
\label{tab:status-at-a-glance}
\end{table}

The checked route culminates in two declarations.  The theorem
\leanref{hamilton_admits_constant_positive_sectional_curvature} has the metric conclusion of
Theorem~\ref{thm:main-hamilton-3d}: existence of a metric of constant positive
sectional curvature.  The theorem \leanref{hamilton_positive_ricci} adds the
spherical-space-form conclusion.  Their proof assemblies, including uniform
existence, maximal continuation, noncollapsing, and compactness, are checked with exactly the transitive axioms
\texttt{propext}, \texttt{Classical.choice}, and \texttt{Quot.sound}.

During the 1980s and 1990s, Hamilton transformed the Ricci flow introduced in
his 1982 paper into a far-reaching program for proving Thurston's
geometrization conjecture, and hence the Poincar\'e conjecture.  In a sequence
of works, Hamilton developed tensor maximum principles and curvature-pinching
estimates, differential Harnack inequalities,
surface entropy,
compactness and blow-up methods
for analyzing singularities and their ancient or eternal models, a surgery
procedure in the four-dimensional positive-isotropic-curvature setting that
served as a model for the required three-dimensional theory, and an analysis
of nonsingular long-time three-dimensional flows, thereby formulating the
strategy of evolving an arbitrary metric on a closed three-manifold, cutting
along necklike regions to continue through singularities, and identifying the
long-time thick and thin pieces predicted by geometrization
\cite{MR664497,MR862046,MR954419,MR1198607,MR1231700,HamiltonCompactness,
Hamilton1995SDG,Ham97,MR1714939,CCCY}.
During this period, other mathematicians that contributed to this program include Bryant \cite{Bryant}, Ivey \cite{Ivey1993}, and Shi \cite{Shi1989a}.

In 2002--2003, Perelman introduced entropy and reduced-volume
monotonicity, proved no-local-collapsing and pseudolocality estimates,
a spacetime geometry,
the qualitative classification of singularity models,
developed canonical-neighborhood analysis,
a delicate modification of Hamilton's Ricci flow with surgery procedure,
established finite-time extinction for closed three-manifolds whose prime
decomposition has no aspherical factors, and stated without proof a local collapsing theorem asserting that sufficiently
locally collapsed three-manifolds with local lower sectional-curvature
bounds are graph manifolds
\cite{Perelman1,Perelman2,Perelman3}.
Perelman's advances supplied the essential
missing ingredients in Hamilton's program to form the Hamilton--Perelman proof of the geometrization and the
Poincar\'e conjectures.
Detailed expositions were subsequently given by Kleiner and Lott
\cite{MR2460872}, Morgan and Tian \cite{MR2334563,MR3186136}, and Cao and Zhu
\cite{MR2233789}.
A detailed exposition of Perelman's collapsing result was given by Shioya and
Yamaguchi \cite{MR2169831}.
An alternative proof of finite-time extinction was given by Colding and Minicozzi \cite{MR2460871}.
The Hamilton--Perelman theory in turn drew on a broad body of work in
geometric PDE, comparison geometry, Riemannian convergence and collapse,
and three-manifold topology.
Among its direct antecedents were Eells--Sampson's harmonic-map heat flow,
DeTurck's gauge-fixing argument, Li--Yau differential Harnack estimates,
Cheeger--Gromov convergence and collapse theory, and Thurston's
geometrization program.

Ricci flow has since become a central tool in geometric
analysis, with applications including the differentiable sphere theorem of
Brendle and Schoen \cite{BrendleSchoen}, the spherical space form theorem of
B\"ohm and Wilking \cite{BohmWilking}, and later work of Brendle and Bamler on
problems arising from Perelman's program \cite{Brendle2018U,Bamler0}.
Using Ricci flow through singularities, Bamler and Kleiner proved the
generalized Smale conjecture for spherical space forms and obtained further
results on diffeomorphism groups of three-manifolds
\cite{BamlerKleinerContractibility,BamlerKleinerDiffGroups}.  Bamler's work has
also developed a high-dimensional Ricci-flow theory
\cite{Bamler2020A,MR4623543,Bamler2020C}; his 2021 survey gives an overview of
these and related developments \cite{BamlerNotices2021}.
The Ricci flow continues to be an active field at the present time with Bamler's 4-manifold program being a major area of study.

The formalization of mathematics is the project of writing mathematical
definitions, statements, and proofs in languages whose correctness can be
checked by computer.  Early milestones include de Bruijn's Automath,
Trybulec's Mizar project, and Milner's LCF approach to interactive theorem
proving \cite{deBruijnAutomath,TrybulecMizar,GordonMilnerWadsworthLCF}.
Later systems such as HOL, Coq (now the Rocq Prover), Isabelle, and Lean developed powerful
foundational frameworks and extensive libraries of formalized mathematics
\cite{GordonHOL,Coq,PaulsonIsabelle,deMouraLean}.  Major formalization
achievements include Gonthier's formal proof of the four-color theorem, the Coq
formalization of the Feit--Thompson odd order theorem, and Hales's Flyspeck
formal proof of the Kepler conjecture
\cite{GonthierFourColor,GonthierOddOrder,HalesFlyspeck}.  More recently,
Lean's mathematical library \texttt{mathlib} and projects such as the Liquid
Tensor Experiment have shown that proof assistants can engage directly with
contemporary research mathematics \cite{mathlib,CommelinLiquidTensor}.
In mathematical physics, Douglas, Hoback, Mei, and Nissim formalized in
Lean~4 the construction of the free massive bosonic field in
four-dimensional Euclidean spacetime and proved that it satisfies the
Glimm--Jaffe formulation of the Osterwalder--Schrader axioms
\cite{douglas2026formalization}.
At a still larger scale, the FLT project led by Kevin Buzzard, following a
route developed with Richard Taylor, is formalizing a modern proof of Fermat's
Last Theorem in Lean \cite{BuzzardTaylorFLT}.

Geometric analysis is a natural and demanding frontier for formalization.
Modern proofs in the subject combine long chains of local tensor calculations,
analytic estimates, compactness arguments, limiting procedures, and global
geometric or topological inputs.  Hamilton's theorem is thus a useful
test case.  His original proof uses curvature evolution, scalar and tensor
maximum principles, three-dimensional curvature algebra, pinching estimates,
and convergence of the normalized flow.  Our final proof endpoint instead assembles a later-developed blow-up route, which also uses Perelman's
no-local-collapsing theorem and Cheeger--Gromov--Hamilton compactness.  A Lean
formalization forces the hypotheses and interfaces in either route to be made
explicit, and it separates reusable geometric-analysis infrastructure from the
argument specific to the three-manifold theorem.

Recent Lean formalizations have begun to reach geometry, analysis, and
geometric analysis more directly.  The higher-order Fr\'echet calculus
underlying \texttt{mathlib}'s smoothness predicates is described by
Gou\"ezel \cite{GouezelHigherOrderCalculus}; its design supports, among other
applications, manifolds with boundary and corners, manifolds over general
normed fields, and infinite-dimensional Banach manifolds.  Bordg and Cavalleri
give an early account of the formalization of Lie groups, vector bundles, and
Lie algebras in Lean \cite{BordgCavalleriDGLean}.

The manifold and Riemannian-geometry infrastructure at the current time is a community
achievement.  Rothgang's survey \cite{RothgangDGMathlib} records major
contributions by Floris van Doorn, S\'ebastien Gou\"ezel, Yury Kudryashov,
Heather Macbeth, Patrick Massot, and Winston Yin, together with substantial
review work by Oliver Nash, and describes ongoing joint work of Macbeth,
Massot, and Rothgang on the Levi--Civita connection, curvature, and geodesics.
The sphere-eversion project of van Doorn, Massot, and Nash formalized a
substantial part of Gromov's \(h\)-principle and deduced Smale's sphere
eversion theorem \cite{MassotVanDoornNashSphereEversion}.

In analysis, the Carleson project
formalized a generalized Carleson theorem on doubling metric measure spaces and
recovered the classical theorem as a corollary \cite{CarlesonLean}.  Armstrong
and Kempe formalized the core interior De Giorgi--Nash--Moser theory for weak
solutions of rough-coefficient elliptic equations \cite{ArmstrongKempeDGNM}.
Together, these developments suggest that the infrastructure needed for Ricci
flow is beginning to come within reach of Lean.

In \texttt{mathlib}, the
generalized Poincar\'e conjecture and the three-dimensional topological and
smooth Poincar\'e conjectures have already been expressed as Lean statements
\cite{XuPoincareMathlib}.  The Lean Millennium Prize Problems project likewise
gives Clay-aligned formal statements of the Millennium problems, including the
Poincar\'e conjecture \cite{LeanMillenniumPrizeProblems}.  Those projects are
important reference points, and the current project could serve as the first milestone in formalizing the proof of the Poincar\'e conjecture.  We focus on Hamilton's
specific Ricci-flow theorem at the beginning of the Hamilton--Perelman program,
and on the geometric-analytic proof infrastructure needed to support its proof.

\begin{samepage}
\medskip
\noindent
The Lean source is available at
\begin{center}
  \url{https://github.com/qinz1yang/differential-geometry}
\end{center}
The version described in this paper is
\href{https://github.com/qinz1yang/differential-geometry/releases/tag/arxiv-v1-preview}{release \texttt{0.1.0}},
corresponding to commit
\href{https://github.com/qinz1yang/differential-geometry/commit/8bd406e35c33a200e9b88895cf11ee8429194e15}{\nolinkurl{8bd406e35}};
it uses Lean~4 v4.29.0 and \texttt{mathlib} v4.29.0.  Full release details and
source links are given in Subsection~\ref{subsec:source-release}.
\medskip
\end{samepage}

At the methodological level, Kontorovich has proposed a human-supervised
multi-agent framework for ``quasi-autoformalization,'' organized around
decomposer, translator, solver, and conductor roles
\cite{KontorovichShape}.  Section~\ref{sec:auto-formalization-workflow}
describes the related top-down workflow used in the present project and its
project-specific dependency-management and verification mechanisms.

The source release contains $1,945,081$ lines of tracked Lean source, including
$57{,}317$ lines in the vendored \rfdir{External} tree; dependency packages and
build products are excluded.  This paper describes the current
state of that formalization. Our goals
are the following.
\begin{itemize}[leftmargin=2em]
  \item Give an exact release-bound account of the proof architecture and
  dependency closure, distinguishing project-local work from separately
  organized companion developments.
  \item Explain the project-local differential-geometric infrastructure,
  including tensor bundles, the Levi-Civita connection, induced tensor
  covariant derivatives, and the passage from invariant definitions to
  coordinate formulas.
  \item Record the closed one-metric short-time Ricci-flow theorem and the
    Ricci--DeTurck, spectral-Sobolev,
    regularity, and
    gauge-removal architecture behind it, including the quantitative upgrade
    to one common lifetime for a $C^3$-bounded metric class.
  \item Present the evolution equations, maximum-principle layers,
  three-dimensional curvature algebra, Ricci-preservation, and
  improved-pinching layers implemented by the project, then explain how the
    noncollapsing and compactness companion developments feed the
  Hamilton blow-up assembly.
\end{itemize}
The purpose is therefore not merely to display a final Lean declaration.  It
is to explain the geometric argument in a Lean-facing way and to state which
portions of the proof are internal to this development and which are organized
as companion formalizations, while documenting that the selected endpoint now
has no \texttt{sorryAx} or project-specific axiom in its transitive axiom report.

The paper is written for two overlapping audiences.  Differential geometers
may read each displayed Lean signature through the accompanying mathematical
explanation; Lean users may read the surrounding argument as the intended
meaning and scope of the interface.  Section~2 motivates the formalization and
fixes the target, conventions, and proof architecture.  We then develop the
differential-geometric foundation, the spectral proof of short-time existence,
the evolution and maximum-principle layers, and the checked blow-up
assembly.  The final sections delimit future library generalization work and describe
the development and verification methodology.  A directory-like catalogue of
declaration names is deliberately omitted: representative declarations are
cited where their types and mathematical roles are actually discussed.

\section{Motivation, formal target, and proof architecture}

\subsection{Motivation: reusable infrastructure and scalable verification}
\label{subsec:formalization-motivation}

Formalization serves here not only to produce a checked endpoint for Hamilton's
theorem, but also to prepare for a verification problem likely to become acute as
AI-assisted proof production scales.  AI systems are likely soon to generate
formal proofs in volumes far beyond the capacity of humans to review them line by
line.  In that setting, exhaustive human review of every generated proof script
cannot serve as a realistic acceptance mechanism.  Assurance must instead rest
on a small trusted kernel, explicit dependency and axiom audits, and human
scrutiny of the definitions and theorem statements that connect the formal
artifact to its intended mathematics.

At present, the Lean/\texttt{mathlib} ecosystem does not yet provide mature,
reusable interfaces for substantial parts of geometric analysis.  Replacing a
missing geometric-analytic argument by a project-specific axiom, a
\texttt{sorry}-based placeholder, or an opaque external proof oracle does not
eliminate the verification obligation; it moves the unverified mathematics to
that boundary.  Accordingly, a primary objective of this project is to make its
differential-geometric and analytic infrastructure as mathematically natural,
general, and reusable as practical.  During top-down proof development, minimal
interfaces first expose the exact needs of the Hamilton argument; stable results
can then be promoted to general-purpose library statements, allowing later
formalizations to reuse checked mathematics rather than introduce new
unverified assumptions.  We intend to continue maintaining and updating the
repository as Lean and \texttt{mathlib} evolve, improving these interfaces,
generalizing results where mathematically natural, and making the infrastructure
easier for other formalization projects to use.

In the longer term, this suggests a model of AI-assisted mathematics in which
agents produce Lean proofs whose transitive dependency closures contain no
project-specific unproved axioms or external proof oracles, Lean's kernel checks
the resulting proof terms, and natural-language accounts are generated from the
checked artifact for mathematical communication.  The natural-language
rendering is an expository layer rather than a source of formal validity.  Human
responsibility remains concentrated at the semantic boundary: choosing the
intended definitions and statements, judging the mathematical significance and
generality of the interfaces, and checking that the formal claims correspond to
the mathematics being advertised.

\subsection{Formal endpoint and global conventions}
\label{subsec:main-lean-endpoint}

All declaration names, source paths, and completion claims in this subsection
refer to the exact source release identified in
Subsection~\ref{subsec:source-release}.  We call a displayed endpoint
\emph{closed} only when its exact signature elaborates, its proof compiles, and
its transitive axiom report contains no \texttt{sorryAx} or project-specific
axiom.  Section~\ref{sec:artifact-verification} records the corresponding
evidence; this theorem-specific claim does not assert that every unrelated or
imported declaration in the repository is placeholder-free.

The mathematical target is Theorem~\ref{thm:main-hamilton-3d}.  We track two Lean conclusions.  The declaration
\leanref{hamilton_admits_constant_positive_sectional_curvature}
states that the manifold admits a metric of constant positive sectional
curvature.  The augmented declaration \leanref{hamilton_positive_ricci}, in
\rffile{Geometry/Flow/RicciFlow/DimensionThree/PositiveRicci/Hamilton.lean},
adds the spherical-space-form conclusion.  Its signature uses
\leanref{isClosedThreeManifold} and \leanref{admitsPositiveRicci}; its two
conclusions are \leanref{admitsConstantPositiveSectionalCurvature} and
\leanref{isSphericalSpaceForm}, with the final implication supplied by
\leanref{constant_positive_sectional_curvature_iff_spherical_space_form}:

\begin{Verbatim}[breaklines=true,breakanywhere=true,fontsize=\small]
theorem hamilton_positive_ricci
    (hM : isClosedThreeManifold (I := I) (M := M))
    (hpos : admitsPositiveRicci (I := I) (M := M)) :
    admitsConstantPositiveSectionalCurvature (I := I) (M := M) /\
      isSphericalSpaceForm (I := I) (M := M) := by
  haveI : I.Boundaryless := hM.2.2.1
  haveI : NeZero (Module.finrank Real E) := ⟨by
    rw [hM.2.2.2]
    norm_num⟩
  have hconst : admitsConstantPositiveSectionalCurvature (I := I) (M := M) :=
    hamilton_admits_constant_positive_sectional_curvature (I := I) (M := M) hM hpos
  exact ⟨hconst,
    (constant_positive_sectional_curvature_iff_spherical_space_form
      (I := I) (M := M) hM).1 hconst⟩
\end{Verbatim}

The surface signature contains only the manifold package and the existence of
a positive-Ricci metric.  The first expands to compactness, connectedness,
boundarylessness, and real model-space dimension three.  The second uses the
canonical Ricci tensor of the chosen smooth metric; Ricci curvature is not an
independent input.  The two conclusions are respectively the metric conclusion
and its global topological corollary.

The spherical-space-form conclusion is represented by an explicit quotient
model rather than by an unstructured classification predicate. A value of
\leanref{SphericalSpaceFormQuotientModel} consists of
\leanref{Geometry.RoundQuotientData} for the unit round three-sphere together
with a diffeomorphism from (M) to the resulting quotient manifold. The
quotient data records a finite group $\Gamma$, an orthogonal action of
$\Gamma$ on $\mathbb{R}^4$, a smooth projection
\[
S^3 \longrightarrow Q,
\]
whose fibers are precisely the $\Gamma$-orbits, and smooth local sections of
this projection. Thus \leanref{isSphericalSpaceForm} packages the global
conclusion that (M) is diffeomorphic to a finite round quotient of $S^3$.
For a closed connected three-manifold this formulation is formally equivalent
to the metric conclusion: \leanref{constant_positive_sectional_curvature_implies_spherical_space_form} constructs the quotient
model from a constant-positive-sectional-curvature metric, while
\leanref{spherical_space_form_admits_constant_positive_sectional_curvature} pulls the descended round metric back along
the identifying diffeomorphism. These two directions are assembled in
\leanref{constant_positive_sectional_curvature_iff_spherical_space_form}, which is the final step used by
\leanref{hamilton_positive_ricci}.

\begin{Verbatim}[
  commandchars=\\\{\},
  breaklines=true,
  breakanywhere=true,
  fontsize=\small
]
structure SphericalSpaceFormQuotientModel
    (I : ModelWithCorners Real E H) (N : Type u)
    [TopologicalSpace N] [ChartedSpace H N] : Type _ where
  data :
    Geometry.RoundQuotientData.{0, u, u}
      (EuclideanSpace Real (Fin 4)) 3
  equiv : \ensuremath{N \simeq{m} {\langle I,\mathcal{R} \;{3}\rangle}\mathsf{data.Q}}
def isSphericalSpaceFormQuotient
    (I : ModelWithCorners Real E H) (N : Type u)
    [TopologicalSpace N] [ChartedSpace H N] : Prop :=
  Nonempty (SphericalSpaceFormQuotientModel I N)

def isSphericalSpaceForm : Prop :=
  isSphericalSpaceFormQuotient I M
\end{Verbatim}
The displayed proof body is the final packaging and thus short.  Its
dependency chain contains the completed uniform-existence, maximal-flow,
noncollapsing, compactness, transfer, and global-topology developments.

For the Hamilton argument, $M$ is closed and connected.  The short-time theorem
of Section~\ref{sec:short-time-existence} is dimension-independent and does not
assume connectedness.  Our sign and scaling conventions are
\[
  \partial_tg=-2\Ric,
  \qquad
  \Delta T=g^{ij}\nabla_i\nabla_jT,
  \qquad
  \Rm_{04}(X,Y,Z,W)=\langle R(X,Y)Z,W\rangle.
\]
Indices are raised and lowered by the evolving metric.  Parabolic rescaling by
a factor $Q>0$ sends $g(t)$ to
$\widetilde g(s)=Q g(t_0+s/Q)$; scalar curvature then scales by $Q^{-1}$.
All pullbacks use the same curvature convention, and derivatives at an initial
time are understood relative to the appropriate one-sided time set.

\subsection{Choice of the global proof route}
\label{sec:proof-variants}

Hamilton's original proof evolves the volume-normalized Ricci flow for all
time.  Preservation and improvement of Ricci pinching are combined with a
gradient estimate of the form
\[
  |\nabla R|^2\le \eta R^3+C_\eta
\]
and higher derivative estimates to show that the normalized metrics converge
smoothly to constant positive sectional curvature
\cite{MR664497}.  This route gives a stronger asymptotic conclusion than the
bare existence of a constant-curvature metric, but its formalization would
require a long-time normalized-flow theory, the gradient estimate, and a
smooth convergence argument.

The formal proof instead follows a finite-time blow-up
route, which is useful for singularity analysis in general.
A scalar lower bound forces finite maximal time; an extension criterion
and three-dimensional curvature control produce scalar blow-up; point
selection and parabolic rescaling produce a normalized sequence; and
noncollapsing plus Cheeger--Gromov--Hamilton compactness produce a complete
limit.  Preserved Ricci nonnegativity and improved pinching are proved to pass to the limit,
forcing it to be Einstein, and hence of constant positive sectional curvature since we are in dimension 3.
Myers compactness and the convergence embeddings then transfer the constant curvature metric to
the original manifold.
\subsection{Proof and dependency map}
\label{subsec:proof-dependency-map}

For readers who want the global argument before the implementation details,
the proof assembled in the paper has the following structure.

\begin{enumerate}[leftmargin=2em]
  \item The direct metric target \leanref{hamilton_admits_constant_positive_sectional_curvature} states the
  constant-positive-curvature conclusion for a closed, connected
  three-manifold admitting a positive-Ricci metric.  The augmented endpoint
  \leanref{hamilton_positive_ricci} also states the spherical-space-form conclusion.

  \item The one-metric short-time theorem constructs a Ricci flow from any
  fixed smooth initial metric and is closed in the theorem-specific sense
  audited in Section~\ref{sec:artifact-verification}.  The stronger theorem
  \leanref{ricci_flow_uniform_existence} selects a common lifetime before
  the initial metric varies in a controlled class.
  \item The tensor-calculus, curvature-evolution, maximum-principle,
  three-dimensional algebra, Ricci-preservation, improved-pinching,
  continuation, finite-time, and point-selection layers are built on our tensor definitions.
  \item Scalar blow-up and point selection produce basepoint-scalar-normalized
  parabolic rescalings with scale-independent curvature control.
  \item The noncollapsing and Cheeger--Gromov--Hamilton projects
  supply volume control, injectivity control, metric compactness, the smooth
  Ricci-flow upgrade, and the complete pointed blow-up limit.  The selected
  route uses \leanref{exists_hamilton_vol}, \leanref{flowInj_of_vol},
  \leanref{metricCompactness}, and
  \leanref{hamilton_flow_upgrade_of_metric_compactness}.
  \item Improved pinching passes to the limit and forces the terminal metric to
  be Einstein.  The scalar normalization, contracted Bianchi identity, and
  three-dimensional curvature identity give constant positive sectional
  curvature.  Compactness and the convergence embeddings transfer this metric
  to the original manifold, after which the spherical-space-form conclusion
  follows.
\end{enumerate}
Sections~\ref{sec:conditional-assembly} give the
detailed assembly and compare it with Hamilton's original normalized-flow
proof.  The intervening sections explain the checked project-local and
companion layers in dependency order.

For the dependency graph of the current checked blow-up route, see Figure \ref{fig:hamilton-dependency-graph}.

\begin{figure}[htbp]
\centering
\begin{tikzpicture}[
  node distance=5mm and 8mm,
  depbox/.style={
    draw,
    rounded corners,
    align=center,
    font=\scriptsize,
    text width=43mm,
    inner sep=4pt
  },
  hypothesis/.style={depbox, fill=white},
  spine/.style={depbox, fill=orange!12, thick},
  support/.style={depbox, fill=blue!10},
  endpoint/.style={
    depbox,
    fill=orange!12,
    thick,
    double,
    double distance=0.7pt
  },
  dependency/.style={-{Stealth[length=2mm]}, semithick}
]

\node[hypothesis] (initial)
  {Closed three-manifold with a positive-Ricci initial metric};

\node[spine, below=of initial] (short)
  {Short-time Ricci-flow existence};

\node[spine, below=of short] (maximal)
  {Forward uniqueness, and supremum
   maximal-flow construction};

\node[support, right=of maximal] (maxneed)
  {Shi derivative of curvature bounds, time-uniform existence, interior restart, and
   time-extension gluing};

\node[spine, below=of maximal] (blowup)
  {Maximal solution curvature unboundedness and scalar-curvature blow-up};

\node[spine, below=of blowup] (selection)
  {Point selection and basepoint-scalar-normalized parabolic rescalings};

\node[spine, below=of selection] (limit)
  {Complete pointed blow-up limit};

\node[spine, below=of limit] (einstein)
  {3D positive Einstein blow-up limit, hence constant positive sectional
   curvature};

\node[endpoint, below=of einstein] (metric)
  {Constant-positive-sectional-curvature metric on the original
   manifold\\
   \leanref{hamilton_admits_constant_positive_sectional_curvature}};

\node[endpoint, below=of metric] (spherical)
  {Spherical-space-form conclusion\\
   \leanref{hamilton_positive_ricci}};

\node[support, left=of blowup] (local)
  {Evolution equations, weak maximum principles, three-dimensional
   algebra, Ricci preservation, scalar estimates, and improved pinching};

\node[support, right=of limit] (compactness)
  {No-local-collapsing, Shi estimates, injectivity-radius
   control, and Cheeger--Gromov--Hamilton compactness};

\node[support, left=of einstein] (transfer)
  {Ricci and pinching estimates transfer to the limit, limit scalar positivity};

\node[support, right=of metric] (globalize)
  {Bonnet--Myers compactness of the limit, globalization of the
   convergence embedding, and transfer of the limit metric to the
   original manifold};

\node[support, right=of spherical] (topology)
  {Final topological handoff\\
   \leanref{constant_positive_sectional_curvature_iff_spherical_space_form}};

\draw[dependency] (initial) -- (short);
\draw[dependency] (maxneed) -- (blowup);
\draw[dependency] (short) -- (maximal);
\draw[dependency] (maximal) -- (blowup);
\draw[dependency] (blowup) -- (selection);
\draw[dependency] (selection) -- (limit);
\draw[dependency] (limit) -- (einstein);
\draw[dependency] (einstein) -- (metric);
\draw[dependency] (metric) -- (spherical);

\draw[dependency] (local) -- (blowup);
\draw[dependency] (local) -- (transfer);
\draw[dependency] (compactness) -- (limit);
\draw[dependency] (transfer) -- (einstein);

\draw[dependency] (globalize) -- (metric);
\draw[dependency] (topology) -- (spherical);

\end{tikzpicture}

\caption{High-level dependency graph of the checked blow-up route
assembled by the project.  An arrow indicates that the target uses the
source.  }
\label{fig:hamilton-dependency-graph}
\end{figure}

\subsection{Release}
\label{subsec:source-release}

The formal artifact described in this paper is identified by the following
release:
\begin{center}
\begin{tabular}{@{}p{0.20\linewidth}p{0.74\linewidth}@{}}
Repository: & \url{https://github.com/qinz1yang/differential-geometry} \\
Release: & \url{https://github.com/qinz1yang/differential-geometry/releases/tag/arxiv-v1-preview} \\
Version: & \texttt{0.1.0} \\
Release tag: & \href{https://github.com/qinz1yang/differential-geometry/tree/arxiv-v1-preview}{\nolinkurl{arxiv-v1-preview}} \\
Commit: & \href{https://github.com/qinz1yang/differential-geometry/commit/8bd406e35c33a200e9b88895cf11ee8429194e15}{\nolinkurl{8bd406e35c33a200e9b88895cf11ee8429194e15}} \\
Source tree: & \url{https://github.com/qinz1yang/differential-geometry/tree/arxiv-v1-preview/DifferentialGeometry} \\
Lean toolchain: & \href{https://github.com/qinz1yang/differential-geometry/blob/arxiv-v1-preview/lean-toolchain}{\nolinkurl{leanprover/lean4:v4.29.0}} \\
\texttt{mathlib}: & \href{https://github.com/qinz1yang/differential-geometry/blob/arxiv-v1-preview/lakefile.toml}{\nolinkurl{v4.29.0}}
\end{tabular}
\end{center}
The full commit SHA (Secure Hash Algorithm) records the source revision represented by the release.
Every displayed project signature and source path in this manuscript was
checked against that revision.  Model-space assumptions were checked
declaration by declaration; in particular, the normed-model-space signature
of the headline short-time theorem is not used to infer a stronger
project-wide generality claim.

\section{Differential-geometric foundations in Lean}
\label{sec:formal-differential-geometry}

This section describes the project's Lean differential-geometric infrastructure,
taking the Levi-Civita connection and the induced tensor covariant derivatives
as a detailed running example.

Our construction builds on \texttt{mathlib}'s differential-calculus,
smooth-manifold, vector-bundle, and local-flow infrastructure
\cite{mathlib,GouezelHigherOrderCalculus,BordgCavalleriDGLean,
RothgangDGMathlib}.  These citations provide historical and mathematical
context; the \texttt{mathlib} version used by the project is listed in
Subsection~\ref{subsec:source-release}.

The project-local contribution is not a replacement for this broader library
work.  It is a Ricci-flow-oriented layer supplying the tensor model fibers and
tensor-bundle constructions, pointwise tensor fibers, and the particular
connection, tensor-nabla, coordinate-component, curvature, metric, and
regularity interfaces needed by Hamilton's proof and by the final theorem
statement itself.  As in the short-time chapter, we display only signatures
whose types reveal a stable mathematical interface.  Each is followed by an
account of its parameters, its content, and its downstream role; long
implementation-facing declarations are cited without being reproduced.
The displays use notation-normalized ASCII font for a few Lean symbols and
suppress only repetitive namespace parameters; the declaration links point to
the exact definitions in the release.

\subsection{Tensor bundles and theorem-facing predicates}
\label{subsec:theorem-facing-predicates}

We describe the structures over
which the final theorem is stated.
The selected theorem \leanref{hamilton_positive_ricci} is
formulated for a charted smooth manifold modeled on a normed space, smooth
Riemannian metrics, tensor fields on the tangent bundle, and pointwise
curvature tensors constructed from these data.  The manifold, model-space,
and metric layers are built from \texttt{mathlib} structures
\href{https://leanprover-community.github.io/mathlib4_docs/Mathlib/Geometry/Manifold/IsManifold/Basic.html}{Mathlib Manifold},
while the tensor and curvature layers contain central project-local
constructions.  Mathlib's manifold framework accommodates infinite-dimensional
Banach manifolds, whereas the tensor-bundle construction used here is
deliberately finite-dimensional.  Covariant tensors can still be modeled in
infinite dimensions as continuous multilinear forms.  For mixed tensors,
however, the familiar finite-dimensional identifications are no longer
canonical: algebraic tensor products must generally be completed using a
chosen cross norm, and the projective and injective tensor norms can produce
different Banach spaces.  Likewise, the finite-dimensional identification
\[
V^*\otimes W \cong \operatorname{Hom}(V,W)
\]
does not extend to an identification with all bounded linear maps in general.
Since Hamilton's theorem is finite-dimensional, our construction uses the
continuous-linear-map realization of mixed tensors, where these distinctions
cause no ambiguity.

The corresponding pointwise tensor fibers \leanref{Tensor0SSpace} and
\leanref{TensorRSSpace} are defined by
\begin{Verbatim}[breaklines=true,breakanywhere=true,fontsize=\small]
def Tensor0SSpace (s : Nat) (I : ModelWithCorners K E H)
    [IsManifold I 1 M] (x : M) : Type _ :=
  Bundle.continuousMultilinearMap K s E
    (TangentSpace I) x

def TensorRSSpace (r s : Nat) (I : ModelWithCorners K E H)
    [IsManifold I 1 M] (x : M) : Type _ :=
  Tensor0SSpace r I x ->L[K] Tensor0SSpace s I x
\end{Verbatim}
Thus \leanref{Tensor0SSpace} replaces the model space \(E\) by the
tangent space \(T_xM\), while \leanref{TensorRSSpace} realizes the
mixed \((r,s)\)-tensor fiber at \(x\) as a continuous linear map between
the corresponding covariant tensor fibers.

The underlying total space of the mixed tensor bundle uses
\leanref{TensorRSModel}:
\begin{Verbatim}[breaklines=true,breakanywhere=true,fontsize=\small]
TotalSpace (TensorRSModel r s K E)
  (fun x : M => TensorRSSpace r s I x)
\end{Verbatim}
and, analogously, the covariant tensor bundle uses \leanref{Tensor0SModel}:
\begin{Verbatim}[breaklines=true,breakanywhere=true,fontsize=\small]
TotalSpace (Tensor0SModel s K E)
  (fun x : M => Tensor0SSpace s I x)
\end{Verbatim}
An older version of the project had an additional wrapper
\texttt{tensorRSBundle}; it was later removed as the inference API improved.
The current project equips the total spaces directly with the instances
\leanref{tensorRSBundle_topology},
\leanref{tensorRSBundle_fiber},
\leanref{tensorRSBundle_vector}, and
\leanref{tensorRSBundle_smooth} directly.  These provide, respectively, the
topology, local fiber-bundle trivializations, vector-bundle operations,
and smooth compatibility.

The distinction between the model fiber and the bundle is essential.
The fixed finite-dimensional space
\leanref{TensorRSModel}\,r\,s\,K\,E is the analytic model used in
local trivializations, whereas
\leanref{TensorRSSpace}\,r\,s\,I\,x is the geometric fiber that
varies with \(x\).  The bundle structure identifies these varying fibers
locally with the fixed model fiber, thereby making it possible to define
a smooth mixed tensor field as a smooth section over \(M\).

% AUTHOR-CHECK: Ask Yuan to identify the precise tensor-bundle type-checking
% problem referenced in the earlier draft, the relevant declarations, and the
% corresponding Zulip discussion.  Confirm whether the issue concerned
% point-dependent fibers, the mixed-tensor bundle construction, typeclass
% inference, valence-changing operations, or some combination of these before
% describing it as part of the project's development history.

\begin{remark} \textit{Why tensor bundles are formally demanding?}
The principal formal difficulty is that a tensor field, as used in calculus on manifold, has to be treated as a section of tensor
bundle, rather than an ordinary function taking values in one fixed vector
space.  Schematically, if \(\alpha\) is a covariant \(s\)-tensor field, then considering two different points
\[
  \alpha(x) : \texttt{Tensor0SSpace s I x},
  \qquad
  \alpha(y) : \texttt{Tensor0SSpace s I y},
\]
these values inhabit different
Lean types and disturb the inference process, as the fibers are allowed to depend on the base points.  Ordinary differential-geometric notation suppresses this
distinction and silently uses a local trivialization when tensors at nearby
points are compared or differentiated.  In Lean, the required identification
must be made explicit: locally, the tensor field is transported through a
bundle trivialization to the fixed model fiber
\(\texttt{Tensor0SModel s \(\mathbb{R}\) E}\), the calculation is performed
there, and compatibility with the geometric bundle must be established.
Attempting to combine values from different fibers without such a transport
shall produce a type mismatch.
\end{remark}

Our treatment of tensor bundles gives a unified account of tensors with different valences.  The
directional derivative \(\nabla_X\alpha\) has the same valence as \(\alpha\),
whereas the total covariant derivative \(\nabla\alpha\) has one additional
covariant slot.  Contraction, raising or lowering an index, permuting slots,
and passing between an invariant tensor and its coordinate representation
also involve type-level changes.  Moreover, identifications that are
mathematically canonical are often implemented as explicit isomorphisms rather
than being definitionally equal in Lean.  Expressions treated as
interchangeable on paper may therefore require transports, coercions,
reindexing maps, or bridge lemmas before they type-check.  This additional
bookkeeping is substantial, but it also prevents the formal development from
silently identifying different fibers or confusing operations having
genuinely different domains and codomains.

The base manifold data are carried by a model with corners \(I\) on a model
space \(E\) ($\mathbb{R}^n$ for finite-dimensional real manifolds), a
topological space \(M\), a charted-space structure, and manifold instances
encoding the regularity of chart transitions.  The theorem-facing closed
three-manifold hypothesis is the
predicate \leanref{isClosedThreeManifold}.
In the current Lean endpoint this augments the \texttt{mathlib} manifold
hypotheses with compactness, connectedness, boundarylessness, and the dimension
condition
\[
  \operatorname{finrank}_{\mathbb R} E = 3 .
\]

These pointwise fibers carry the topology, vector-space, and normed-space
structures used by the later estimates.  Curvature, Ricci tensors, trace-free Ricci tensors, and tensor
norms are defined on these actual fiber objects.

Tensor fields are then functions or sections over these fibers.  The metric is
a \leanref{SmoothRiemannianMetric}, displayed with its parameters as
\[
  \mathtt{SmoothRiemannianMetric}\ I\ M,
\]
that is, a smoothly varying inner product on the tangent spaces.  The metric
also gives a \((0,2)\)-tensor field through the tensor-bundle layer.  Curvature
objects used by the Hamilton endpoint are built from the Levi-Civita connection
of such a metric.  In particular, the pointwise Ricci tensor is evaluated by
\leanref{metricRicciAt}:
\[
  \mathtt{metricRicciAt}\ g\ x
  :
  \mathtt{Tensor0SSpace}\ 2\ I\ x,
\]
and the standard lowered Riemann tensor evaluator is
\leanref{metricRm04StdAt}:
\[
  \mathtt{metricRm04StdAt}\ g\ x\ X\ Y\ Z\ W
  =
  \Rm_{04}(X,Y,Z,W).
\]
The standard slot convention is
\[
  \Rm_{04}(X,Y,Z,W)=\langle R(X,Y)Z,W\rangle .
\]

With these objects in place, the initial positive-Ricci hypothesis becomes a
precise predicate.  For a smooth metric \(g\), the predicate
\leanref{positiveRicciMetric} is written
\[
  \mathtt{positiveRicciMetric}\ g
\]
and means
\[
  \forall x:M,\ \forall v\in T_xM,\ v\ne0
  \Longrightarrow
  0 <
  \mathtt{metricRicciAt}\ g\ x(v,v).
\]
In Lean the pair \((v,v)\) is supplied by the local tensor argument constructor
\leanref{vec2}.  The theorem hypothesis
\leanref{admitsPositiveRicci} is then the existence of a smooth Riemannian metric satisfying
\leanref{positiveRicciMetric}.  This is why the tensor-bundle and Ricci-tensor
construction is part of the theorem statement itself, not only part of the
evolution proof.

The metric conclusion is similarly theorem-facing.  For a smooth metric \(g\),
the predicate \leanref{constantPositiveSectionalCurvatureMetric} is written
\[
  \mathtt{constantPositiveSectionalCurvatureMetric}\ g
\]
and asserts the existence of a constant \(c>0\) such that, for every point and every
pair of tangent vectors \(X,Y\),
\[
  \mathtt{metricRm04StdAt}\ g\ x\ X\ Y\ Y\ X
  =
  c\bigl(
      g(X,X)g(Y,Y)-g(X,Y)^2
    \bigr).
\]
The predicate \leanref{admitsConstantPositiveSectionalCurvature} is the existence of such a smooth
metric.  The topological conclusion \leanref{isSphericalSpaceForm} is a separate
global-geometry predicate: it is represented by a finite free isometric
quotient model of the round three-sphere and a smooth equivalence with \(M\).

Thus the final statement \leanref{hamilton_positive_ricci} rests on several infrastructure
layers before any Ricci-flow argument begins:
the manifold package \leanref{isClosedThreeManifold}, the tensor model fibers and
bundles \leanref{Tensor0SModel}, \leanref{TensorRSModel},
\leanref{Tensor0SSpace}, and \leanref{TensorRSSpace}, smooth metrics,
Levi-Civita curvature producers, and the theorem-facing predicates
\leanref{positiveRicciMetric}, \leanref{admitsPositiveRicci},
\leanref{constantPositiveSectionalCurvatureMetric}, \leanref{admitsConstantPositiveSectionalCurvature}, and
\leanref{isSphericalSpaceForm}.  The rest of this section explains how the
connection, tensor-nabla, and coordinate-component layers supply the local
geometric calculations consumed by the Ricci-flow proof.

\subsection{Construction of the Levi-Civita connection}

We present the Levi-Civita connection as a detailed example of our general
methodology.  It is a mathematically familiar object, but it exposes one of the
central issues in any Lean Ricci-flow development: in ordinary mathematical
writing one suppresses the point, the metric, the domain, and the regularity
hypotheses whenever they are clear from context, whereas in Lean these
parameters must be supplied in a form from which elaboration can recover them.
The same infrastructure is then reused in the later Ricci-flow evolution
equations, where the project must move reliably between invariant tensor
notation, coordinate-frame components, and local analytic hypotheses.  We begin
with the several distinct operations that the single symbol $\nabla$ denotes.

\paragraph{Lean source.}
The project-local construction of the Levi--Civita connection is contained
principally in
\rffile{Geometry/Connection/LeviCivita/LeviCivitaChartLocal.lean},
\rffile{Geometry/Connection/LeviCivita/Defs.lean}, and
\rffile{Geometry/Connection/LeviCivita/Koszul.lean}.  These relative paths
link to the corresponding files in the release.
The principal declarations are
\leanref{chartLeviCivita}, \leanref{LeviCivita},
\leanref{LeviCivita_torsion_eq_zero},
\leanref{LeviCivita_isMetricCompatible},
\leanref{LeviCivita_unique}, and \leanref{koszul_identity}.

\subsubsection{Linear connections on \texorpdfstring{$TM$}{TM} and their actions on tensors}

We begin by separating several uses of the notation $\nabla$.  In differential
geometry, a linear connection on the tangent bundle is often written as a
bilinear operation on vector fields
\[
  \nabla : \mathfrak X(M) \times \mathfrak X(M) \to \mathfrak X(M),
  \qquad
  (X,Y) \mapsto \nabla_XY .
\]
It is $C^\infty(M)$-linear in the first argument, $\mathbb R$-linear in the
second argument, and satisfies the Leibniz rule
\[
  \nabla_X(fY) = X(f)Y + f\nabla_XY .
\]
The same connection induces covariant derivatives on tensor fields.  For a
covariant tensor field $\alpha \in \Gamma((T^*M)^{\otimes s})$, the directional
covariant derivative is characterized by
\[
  (\nabla_X\alpha)(Y_1,\ldots,Y_s)
  =
  X\bigl(\alpha(Y_1,\ldots,Y_s)\bigr)
  -
  \sum_{a=1}^s
  \alpha(Y_1,\ldots,\nabla_XY_a,\ldots,Y_s).
\]
Equivalently, the total covariant derivative is the $(0,s+1)$-tensor field
\[
  (\nabla\alpha)(X,Y_1,\ldots,Y_s)
  =
  (\nabla_X\alpha)(Y_1,\ldots,Y_s).
\]
Thus the same mathematical symbol $\nabla$ denotes three related
operations: the tangent-bundle connection, the induced directional covariant
derivative on tensors, and the total covariant derivative, which adds one lower
index.  In Lean these operations have different types and therefore have to be
implemented as separate, connected APIs.

\subsubsection{Linear connections in \texttt{mathlib} and the Levi-Civita connection}

The base \href{https://leanprover-community.github.io/mathlib4_docs/Mathlib/Geometry/Manifold/VectorBundle/CovariantDerivative/Basic.html#CovariantDerivative}{interface} is \texttt{mathlib}'s bundled tangent-bundle covariant
derivative
\[
  \texttt{CovariantDerivative I E (TangentSpace I : M -> Type \_)} .
\]
Its application order is
\[
  \texttt{cov Y x v} = (\nabla_vY)(x),
\]
where $Y$ is a vector field, $x\in M$, and $v\in T_xM$.  In other words, the
point $x$ and the tangent direction $v$ are explicit arguments of the API.  This
object is a connection on tangent-vector fields: it contains the additivity and
Leibniz rules required of a covariant derivative, but it is not by itself a
complete tensor-calculus package.  Its $C^\infty$-linearity in the direction
variable is expressed pointwise, through the value at $x$ and the tangent vector
$v$, rather than through a globally chosen vector field $X$.

For a smooth Riemannian metric $g$, the project defines the Levi-Civita
connection \leanref{LeviCivita} from
\leanref{leviCivitaConnectionOfMetric} as a particular instance of this
\texttt{mathlib} structure:
\begin{Verbatim}[breaklines=true,breakanywhere=true,fontsize=\small]
def LeviCivita (g : SmoothRiemannianMetric I M) :
    CovariantDerivative I E (TangentSpace I : M -> Type _) :=
  leviCivitaConnectionOfMetric (I := I) g
\end{Verbatim}
The input is the metric itself; the output is a bundled covariant derivative
on tangent-vector fields.  No torsion-free or compatibility hypothesis is
passed into this definition.  Those properties must be proved for the
chart-stitched construction and are later used to characterize it uniquely.
The classical characterization of this connection is the Koszul formula
\[
\begin{aligned}
2\langle \nabla_XY,Z\rangle
&=
X\langle Y,Z\rangle
+
Y\langle Z,X\rangle
-
Z\langle X,Y\rangle                                      \\
&\quad
-\langle X,[Y,Z]\rangle
+
\langle Y,[Z,X]\rangle
+
\langle Z,[X,Y]\rangle .
\end{aligned}
\]
In the formal development, the canonical Levi--Civita connection is constructed
directly from the Koszul formula.  Given a smooth Riemannian metric \(g\), the
project first defines the scalar expression
\leanref{koszulScalar} by
\[
\begin{aligned}
  K_g(X,Y,Z)
  ={}&
  X\langle Y,Z\rangle_g
  +Y\langle Z,X\rangle_g
  -Z\langle X,Y\rangle_g  \\
  &-\langle X,[Y,Z]\rangle_g
  +\langle Y,[Z,X]\rangle_g
  +\langle Z,[X,Y]\rangle_g .
\end{aligned}
\]
For the Levi--Civita connection this expression satisfies
\[
  2\langle \nabla_XY,Z\rangle_g=K_g(X,Y,Z).
\]
At a point \(x\), the construction turns
\[
  Z_x\longmapsto \frac12 K_g(X,Y,Z)(x)
\]
into a continuous covector
\leanref{koszulCovectorField}.  A noncomputable finite basis of \(T_xM\) is
used only to package this covector as a continuous linear map; the theorem
\leanref{koszulCovectorField_apply_of_mdiff} proves that the packaged map
agrees with the intrinsic Koszul scalar on every differentiable test field.
Applying the metric musical isomorphism produces the vector
\leanref{koszulNablaField}.  The descended value
\leanref{koszulNablaAt} interprets a tangent vector \(v\in T_xM\) by extending
it to the chart-constant field \leanref{tangentConstAt} and then evaluating
the field-level Koszul construction at \(x\).

The pointwise map in the differentiating direction is packaged as the
continuous linear map
\leanref{leviCivitaConnectionCandidateAt}.  Its agreement with the descended
Koszul value is recorded by
\leanref{leviCivitaConnectionCandidateAt_agreesWithDescended}.  Finally,
\leanref{leviCivitaConnectionOfMetric} assembles these pointwise maps into
\texttt{mathlib}'s bundled
\texttt{CovariantDerivative}; its additivity and Leibniz laws are proved
directly from the corresponding identities for the Koszul expression.  Thus
the connection is not introduced by choosing an object satisfying torsion
freeness and metric compatibility: it is an explicit metric-derived
connection whose defining action is the Koszul action.

The public name \leanref{LeviCivita} is definitionally this canonical
connection:
\begin{Verbatim}[breaklines=true,breakanywhere=true,fontsize=\small]
def LeviCivita (g : SmoothRiemannianMetric I M) :
    CovariantDerivative I E (TangentSpace I : M -> Type _) :=
  leviCivitaConnectionOfMetric (I := I) g
\end{Verbatim}
The identity
\leanref{LeviCivita_eq_leviCivitaConnectionOfMetric} is therefore proved by
reflexivity.  The name \leanref{metricCov}, used throughout the curvature and
Ricci-flow layers, is another compatibility wrapper around the same
Koszul-constructed connection.  Smoothness is not built into the pointwise
definition; it is established separately by
\leanref{leviCivitaConnectionOfMetric_contMDiffCovariantDerivativeLocally}.

There is also a coordinate realization of this construction.  In the chart
centered at a point, the metric is represented by its Gram matrix and inverse
Gram matrix.\footnote{If \(e_1,\ldots,e_n\) is the coordinate frame, the Gram
matrix is \(G_{ij}=g(e_i,e_j)\).  Its inverse \(G^{ij}\) represents the inverse
metric in the dual coordinate frame; it is not an additional metric.}
The project defines the chart Christoffel symbols
\leanref{chartChristoffel} by the classical half-inverse-Gram formula displayed
in the coordinate subsection below.  The chart-local candidate
\leanref{chartLeviCivita} differentiates the coordinate representative of a
tangent-vector field using the chart's Fr\'echet derivative and adds the
Christoffel correction term
(\leanref{chartLeviCivitaInnerCLM},
\leanref{christoffelCorrection}).  It is valid on the good chart region
\leanref{chartLeviCivitaGoodSet} around the chart center.

The overlap theorem
\leanref{chartLeviCivita_chart_overlap} proves that chart-local candidates
agree wherever their good regions overlap.  The auxiliary assignment
\leanref{leviCivitaStitched}, which evaluates at \(x\) the candidate associated
with the chart centered at \(x\), remains in the coordinate infrastructure.
It is no longer the definition of the public Levi--Civita connection.
Instead, the theorem
\leanref{LeviCivita_chart_apply} identifies the directly Koszul-constructed
global connection with the chart-local Christoffel candidate on every good
region.  Its proof uses the local uniqueness theorem
\leanref{koszul_local_uniqueness}: both connections are torsion-free and
metric-compatible there, and hence have the same action on differentiable
sections.  Consequently, Christoffel symbols are a derived coordinate
realization of the canonical connection rather than the data from which the
canonical connection is assembled.

The formal development proves the two characterizing properties of the
constructed connection.  Metric compatibility is the identity
\[
  X\langle Y,Z\rangle_g
  =
  \langle \nabla_XY,Z\rangle_g
  +
  \langle Y,\nabla_XZ\rangle_g,
\]
while torsion-freeness is
\[
  T^\nabla(X,Y)
  =
  \nabla_XY-\nabla_YX-[X,Y]
  =
  0.
\]
For the canonical object these are first established as
\leanref{leviCivitaConnectionOfMetric_isMetricCompatible} and
\leanref{leviCivitaConnectionOfMetric_isTorsionFree}.  In the compatibility
API, metric compatibility is expressed by the project predicate
\leanref{IsMetricCompatible}, while torsion-freeness is expressed through
\texttt{mathlib}'s torsion field by the equation
\(\texttt{cov.torsion}=0\).  The theorem-facing wrappers are
\leanref{LeviCivita_torsion_eq_zero} and
\leanref{LeviCivita_isMetricCompatible}:
\begin{Verbatim}[breaklines=true,breakanywhere=true,fontsize=\small]
theorem LeviCivita_torsion_eq_zero (g : SmoothRiemannianMetric I M) :
    (LeviCivita (I := I) g).torsion = 0

theorem LeviCivita_isMetricCompatible (g : SmoothRiemannianMetric I M) :
    IsMetricCompatible (LeviCivita (I := I) g) g
\end{Verbatim}
Both statements concern the same explicitly constructed connection and the
same input metric.  The first is an equality of torsion fields; the second
unfolds to the metric Leibniz identity for differentiable vector fields.
Their downstream role is not merely classificatory: together with uniqueness,
they identify the connection under pullback and thereby support the
naturality of the invariant curvature constructions.

The uniqueness theorem retains exactly the regularity required to compare two
connections on a section at one point:
\begin{Verbatim}[breaklines=true,breakanywhere=true,fontsize=\small]
theorem LeviCivita_unique (g : SmoothRiemannianMetric I M)
    (cov : CovariantDerivative I E (TangentSpace I : M -> Type _))
    (htor : cov.torsion = 0) (hmc : IsMetricCompatible cov g)
    {sigma : forall x : M, TangentSpace I x} {x : M}
    (hsigma : MDiffAt (T% sigma) x) (v : TangentSpace I x) :
    cov.toFun sigma x v = (LeviCivita (I := I) g).toFun sigma x v
\end{Verbatim}
Here \texttt{cov} is an arbitrary competing connection;
\texttt{htor} and \texttt{hmc} are the two characterizing hypotheses;
\texttt{hsigma} supplies differentiability only at the comparison point; and
\(v\) is the differentiating direction.  The conclusion identifies the two
covariant derivatives on that section and direction.  This pointwise form is
the strongest appropriate statement because a bundled
\texttt{CovariantDerivative} is not constrained on nondifferentiable section
inputs, and it is exactly the form later naturality arguments consume.

Finally, \leanref{koszul_identity} proves the Koszul formula for every
torsion-free, metric-compatible competing connection, and the project derives
\leanref{koszul_local_uniqueness} and \leanref{LeviCivita_unique} from that
identity.  The formal order is therefore: the Koszul scalar drives the direct
construction; metric compatibility, torsion-freeness, smoothness, and
uniqueness are proved afterward; and the Christoffel formula supplies the
coordinate representation of the already constructed canonical connection.
The reader might find this unnecessary since we prove the uniqueness via recovering the Koszul formula, but explicit definition makes the later calculation-unfolding much easier.
\subsection{Induced covariant derivatives on tensor fields in Lean}

Once a tangent-bundle connection \texttt{cov} has been constructed, the project
defines its induced action on tensor fields in a separate layer.  For covariant
tensor fields, the main pointwise directional definition is
\leanref{nabla0SFun}, evaluated as
\[
  \texttt{nabla0SFun s cov X alpha x}.
\]
This represents the value of $\nabla_X\alpha$ at $x$, where $\alpha$ is a
$(0,s)$-tensor field.  For mixed tensor fields, the corresponding definition is
\leanref{nablaRSFun}, evaluated as
\[
  \texttt{nablaRSFun r s cov X T x},
\]
which represents the value of $\nabla_XT$ at $x$ for a mixed $(r,s)$-tensor
field $T$.

These definitions are directional covariant derivatives: they preserve tensor
valence.  The total covariant derivative is a separate, typed construction:
for smooth compactly carried tensor fields the project uses \leanref{covGrad},
whose type is
\[
  \texttt{covGrad g r s : SmoothCcTensor g r s -> SmoothCcTensor g r (s+1)},
\]
which adds one covariant slot and instantiates the connection with
\leanref{LeviCivita}; its iterates \leanref{iteratedCovGrad} produce the
higher covariant derivatives consumed by the Sobolev-comparison layer.
Evaluating the added slot at a vector $X_x$ recovers the directional
derivative
\[
  (\nabla\alpha)_x(X_x,\cdots) = (\nabla_X\alpha)_x(\cdots).
\]

Smoothness is also kept separate from the raw pointwise definition.  The raw
functions \leanref{nabla0SFun} and \leanref{nablaRSFun} define pointwise
values.  The bundled tensor-field versions require the explicit smoothness
assumptions \leanref{Nabla0SRegular} and \leanref{NablaRSRegular}, written
\[
  \texttt{Nabla0SRegular}, \qquad
  \texttt{NablaRSRegular},
\]
while the typed total derivative \leanref{covGrad} instead carries smoothness
in its domain and codomain types.
This design avoids hiding analytic regularity work inside the definition of the
operator.  For the main Ricci-flow applications in this project, the relevant
regularity hypotheses are produced from smoothness of the metric and the
Levi-Civita connection, ultimately through the chart-local Christoffel
construction and its coordinate consequences.

For mixed tensors, the induced connection has the usual sign convention: upper
indices contribute with a plus sign and lower indices contribute with a minus
sign.  For a mixed tensor $T^a{}_{bc}$, the coordinate expression is
\[
  (\nabla_i T)^a{}_{bc}
  =
  \partial_i T^a{}_{bc}
  +
  \Gamma^a_{im} T^m{}_{bc}
  -
  \Gamma^m_{ib} T^a{}_{mc}
  -
  \Gamma^m_{ic} T^a{}_{bm}.
\]
The project proves
both the covariant coordinate projection theorem discussed below and the mixed
slotwise expansion \leanref{nablaRS_coordFrame_slots_of_smooth} in
\rffile{Geometry/Coordinates/NablaComponents/TensorRS/Formula.lean}.

\subsection{Coordinate Christoffel symbols}
The development first defines tensors, the Levi--Civita connection, and
covariant derivatives invariantly.  Components are evaluations of those same
objects, not parallel coordinate definitions: a covariant tensor uses
\leanref{component0S}, a mixed tensor uses \leanref{componentRS}, and a local
frame is evaluated through
\href{https://leanprover-community.github.io/mathlib4_docs/Mathlib/Geometry/Manifold/VectorBundle/LocalFrame.html#IsLocalFrameOn.toBasisAt}{\nolinkurl{IsLocalFrameOn.toBasisAt}}
together with the domain
witness.  Coordinate-chart wrappers \leanref{coordComponent0SAt} and
\leanref{coordComponentRSAt} specialize this pointwise basis construction.
Christoffel symbols are therefore the coordinate-frame components of the
already constructed connection, and coordinate identities are realization
theorems for invariant objects.

Only after the invariant connection and tensor-nabla layers are in place does
the development pass to local coordinates.  If $\{\partial_i\}$ is a
coordinate frame, the Christoffel symbols of a connection are defined by
\[
  \nabla_{\partial_i}\partial_j
  =
  \Gamma^k_{ij}\partial_k.
\]
For the Levi-Civita connection, the chart-level Christoffel symbols are the
classical expression
\[
  \Gamma^k_{ij}
  =
  \frac12
  \sum_\ell g^{k\ell}
  \left(
    \partial_i g_{j\ell}
    +
    \partial_j g_{i\ell}
    -
    \partial_\ell g_{ij}
  \right)
\]
by definition: this is the formula \leanref{chartChristoffel} that drives the
chart-local construction described in the preceding subsection.  The proved
content of the coordinate layer is the identification of invariant connection
coefficients with these chart quantities.  The frame-level Christoffel symbol
of an arbitrary connection is \leanref{christoffelAlongInFrame}, the $k$-th
frame coefficient of $\nabla_X\partial_j$; the theorem
\leanref{connCoeff_eq_christoffelAlong_coord} identifies the entries of the
chart connection endomorphism with these coefficients in the coordinate
frame, and \leanref{LeviCivita_chart_apply} identifies the values of
\leanref{LeviCivita} with the chart-local Christoffel-corrected derivative.
\subsection{A concrete checked example: the covariant-tensor coordinate formula}

As a representative worked example we take the general coordinate identity for
the covariant derivative of a covariant tensor of arbitrary valence,
formalized as \leanref{nabla0S_coordFrame_slots_of_smooth} in
\rffile{Geometry/Connection/Chart/NablaComponents/Basic.lean}.

\begin{Verbatim}[breaklines=true,breakanywhere=true,fontsize=\small]
theorem nabla0S_coordFrame_slots_of_smooth {s : Nat}
    (cov : CovariantDerivative I E (TangentSpace I : M -> Type _))
    (X : ContMDiffSection I E infinity (TangentSpace I : M -> Type _))
    (alpha : Tensor0SField (I := I) (M := M) (n := infinity) s)
    (x0 : M) (slots : Fin s -> CoordinateIdx E) :
    coordComponent0SAt (I := I) (nabla0SFun s cov X alpha x0) slots =
      coordDeriv0SAt (I := I) (fun x => X x) x0 (fun x => alpha x) slots -
        sum a : Fin s, sum k : CoordinateIdx E,
          christoffelAlongInFrame cov (coordinateFrameAt (I := I) x0)
            (coordinateFrameAt_isLocalFrame_one (I := I) x0)
            x0 (X x0) (slots a) k *
          coordComponent0SAt (I := I) (alpha x0)
            (Function.update slots a k)
\end{Verbatim}

The signature shows the entire mathematical scope.  It is valid for an
arbitrary connection \texttt{cov}, a smooth vector field $X$, and a smooth
covariant $s$-tensor $\alpha$ at a chosen chart center $x_0$.  The index map
\texttt{slots} selects one component.  The conclusion is the derivative of
that component minus one Christoffel correction for each covariant slot.  It
does not assume the Levi--Civita connection and it does not state a mixed-tensor
formula; those are, respectively, a later specialization and a broader
interface not claimed by this declaration.

Fix a coordinate chart on an $n$-dimensional manifold and let $\alpha$ be a
smooth covariant tensor field of type $(0,s)$.  A choice of coordinate indices
\[
  J : \operatorname{Fin}(s) \to \{1,\ldots,n\}
\]
selects a scalar component $\alpha_J$.  We prove the classical component
formula
\[
  (\nabla_X \alpha)_J
  =
  X\bigl(\alpha_J\bigr)
  -
  \sum_{a=1}^{s}\sum_{k}
  \Gamma^k(X,J_a)\,\alpha_{J[a\mapsto k]},
\]
where $J[a\mapsto k]$ replaces the $a$-th index of $J$ by $k$.  The sign is
the usual one: each covariant slot contributes a negative Christoffel term.
For a mixed $(r,s)$-tensor the contravariant slots contribute the corresponding
positive terms; the project proves that full formula separately in
\leanref{nablaRS_coordFrame_slots_of_smooth}.

Although this identity is classical, the formalization does not adopt it as the
definition of the covariant derivative: $\nabla$ is defined invariantly, and the
formula is \emph{proved} by projecting that invariant operator into a chart.
Fix the chart and identify its domain with an open set $U\subseteq\mathbb R^n$.
Over $U$ the tensor field $\alpha$ is the same data as a single smooth map
\[
  \check \alpha : U \longrightarrow \mathcal T^{(0,s)},
  \qquad
  \check \alpha(x)=\bigl(\alpha_J(x)\bigr)_{J},
\]
valued in the fixed finite-dimensional model fiber $\mathcal T^{(0,s)}$ of
covariant $s$-tensors on $\mathbb R^n$ (the space \leanref{Tensor0SModel}).  If
$e_1,\ldots,e_n$ is the coordinate basis, then
\[
  \alpha_J(x)
  =\check \alpha(x)
    \bigl(e_{J_1},\ldots,e_{J_s}\bigr);
\]
at the chart center this reading is the bridge lemma
\leanref{tensor0SModelAt_coordComponent0SAt}.  The connection, read in the
same chart, becomes matrix-valued: contracting the connection against a
direction $X$ gives an endomorphism $\Gamma(X)$ of $\mathbb R^n$, with entries
$\Gamma^p(X,q)$, the \emph{connection endomorphism in the chart}
(\leanref{connectionEndomorphismInChart}).  That the invariant
$\nabla_X \alpha$, restricted to the chart, \emph{is} the model-space
covariant derivative of $\check \alpha$ is the definition unfolded by
\leanref{nabla0SFun}.

On the model side the covariant derivative is transparent, because
$\mathcal T^{(0,s)}$ is a fixed vector space.  It is the sum of the ordinary
(Fr\'echet) derivative of the map $\check \alpha$ and the induced action of
$\Gamma(X)$ on the fiber,
\[
  \nabla_X\check \alpha = D\check \alpha(X) + \Gamma(X)\cdot\check \alpha ,
\]
and, since the fiber is assembled from $s$ covariant copies of
$(\mathbb R^n)^*$, the endomorphism $\Gamma(X)$ acts on it as a
derivation---one term per slot---by its dual action (the transpose, carrying
a negative sign) on each covariant slot.
Taking the $J$ coefficient of $\nabla_X\check \alpha$ therefore produces
exactly two kinds of term: the $J$ coefficient of $D\check \alpha(X)$, and,
for each tensor factor $a$, the dual-action term
$-\sum_k\Gamma^k(X,J_a)\,\alpha_{J[a\mapsto k]}$.
This slotwise product rule is the algebraic heart of the proof
(\leanref{covariantDeriv_tensor0SModelWithin_apply_basis_slots}), and it already
has the shape of the displayed formula.

It remains to read the two kinds of term as the coordinate quantities in the
statement.  The entries $\Gamma^p(X,q)$ of the connection endomorphism are the
frame-level Christoffel symbols contracted with $X$
(\leanref{connCoeff_eq_christoffelAlong_coord}, through
\leanref{christoffelAlongInFrame}).  Each basis coefficient
of $\check \alpha$ is, by definition, the coordinate component $\alpha_L$
(\leanref{tensor0SModelAt_coordComponent0SAt}), and replacing the relevant
tensor factor is exactly the index substitution
$J[a\mapsto k]$---in Lean this replacement is the combinator
\texttt{Function.update} acting on the index tuple.  The only
step that uses genuine analysis is the derivative term:
$D\check \alpha(X)$ is the Fr\'echet derivative of a vector-valued map on
$U\subseteq\mathbb R^n$, whereas $X(\alpha_J)$ is a directional derivative of a
scalar function on the manifold, and the bridge
\leanref{modelDeriv_eq_coordDeriv0SAt} proves that the two agree componentwise;
every remaining step is pointwise linear algebra on the fixed fiber
$\mathcal T^{(0,s)}$.  With both kinds of term matched, the two sides are
finite sums over the same index set $J$ and ranges of $k$, and the
identity follows by term-by-term comparison.  The smooth-input theorem
\leanref{nabla0S_coordFrame_slots_of_smooth} packages this argument under a
smoothness hypothesis on $\alpha$; its explicit form
\leanref{nabla0S_coordFrame_slots} carries out the same computation with the
derivative identification supplied as a separate hypothesis.

\section{Spectral PDE foundations and Ricci-flow short-time existence}
\label{sec:short-time-existence}
\label{subsec:ShortTimeExistence}% legacy label retained for old links

The gauge-reduction part of the construction follows DeTurck's method
\cite{MR697987,DeTurck2003}.  The fixed-background spectral-resolvent,
maximal-regularity, and forcing-space fixed-point implementation is
project-specific rather than a line-by-line transcription of one source.
Standard analytic context for closely related resolvent and quasilinear
parabolic methods is provided by Amann's abstract linear theory,
Lunardi's analytic-semigroup treatment, and the maximal-regularity framework
of Pr\"uss and Simonett
\cite{Amann1995,Lunardi1995,PrussSimonett2016}.
These references motivate the
analytic technology; they are not cited as proofs of the project-specific Lean
endpoint.
The short-time theorem is the most complete analytic result in the present
development.  It is also where the formalization makes its most substantial
choice of analytic technology.  Rather than treating existence as a black-box
consequence of a generic parabolic theorem, the proof constructs a compact
self-adjoint tensor resolvent, the associated spectral Sobolev scale, a
maximal-regularity solution operator, a nonlinear fixed point for the
Ricci--DeTurck equation, all-order space--time regularity, and the
time-dependent diffeomorphisms that remove the gauge.  The endpoint is a
theorem about Ricci flow from arbitrary smooth initial data on a closed
positive-dimensional manifold; it assumes neither dimension three nor any
curvature sign.

This section deliberately interleaves selected Lean signatures with their
mathematical interpretation.  A signature is displayed only when it exposes a
stable mathematical interface.  Several indispensable internal declarations
have much larger implementation-facing types; those are described in prose
rather than presented as if their current signatures were suitable public
interfaces.

The headline below retains the complete source signature, with mathematical
Unicode typeset in LaTeX.  In the shorter internal displays we suppress only
repetitive namespace qualifiers and use readable ASCII spellings such as
\texttt{Nat}, \texttt{Real}, \texttt{forall}, and \texttt{exists}.  Those
displays preserve the binders and conclusion relevant to the explanation but
are not intended as paste-ready source excerpts; the declaration links point
to the exact definitions in the release.

\subsection{The exact public endpoint}
\label{subsec:short-time-public-endpoint}

Mathematically, the result is the following classical theorem.

\begin{theorem}[Short-time existence for Ricci flow]
\label{thm:rf-short-time-existence}
Let $M$ be a closed smooth manifold of positive dimension and let $g_0$ be a
smooth Riemannian metric on $M$.  There are $T>0$ and a Ricci flow $g(t)$ on
$[0,T)$ such that $g(0)=g_0$.  The metric is jointly smooth in space and time,
including at the initial time in the one-sided sense.
\end{theorem}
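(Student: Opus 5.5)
The plan is DeTurck's gauge-fixing argument, organized around the fixed-background spectral machinery described in this section. Fix the background metric $\bar g = g_0$ and its Levi--Civita connection. Consider the Ricci--DeTurck equation
\[
  \partial_t g = -2\Ric(g) + \Lie_{W(g)} g,
  \qquad
  W^k = g^{ij}\bigl(\Gamma^k_{ij}(g)-\bar\Gamma^k_{ij}\bigr),
  \qquad g(0)=g_0 .
\]
Its right-hand side is quasilinear, $g^{ij}\bar\nabla_i\bar\nabla_j g + Q(g^{-1},\bar\nabla g)$, and its principal symbol at $g_0$ is $|\xi|^2_{g}\,\mathrm{Id}$ on symmetric $2$-tensors, hence strictly parabolic. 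This is the computation recorded by \leanref{deTurckRicciRHS_hasPrincipalSymbol_at_self} and \leanref{metricCovectorNormSq_pos}. The proof then has three stages: solve Ricci--DeTurck, upgrade the solution to joint smoothness, and pull back by diffeomorphisms to obtain Ricci flow.

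\textbf{Existence for Ricci--DeTurck.} Let $A = -\Delta_{\bar g} + 1$ be the rough Laplacian on symmetric $(0,2)$-tensors over the closed manifold $M$. It is self-adjoint and positive with compact resolvent, so it has an orthonormal eigenbasis, and the scale $H^s = \mathrm{dom}(A^{s/2})$ gives the spectral Sobolev spaces (\leanref{tensorHs}). These agree with covariant-derivative Sobolev norms built from \leanref{covGrad} and \leanref{iteratedCovGrad}. Writing $g = g_0 + h$, I split the equation as
\[
  \partial_t h + A h = F(h),
\]
where $F$ collects the lower-order terms together with the difference $(g^{ij}-\bar g^{ij})\bar\nabla_i\bar\nabla_j h$. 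The linear heat semigroup $e^{-tA}$ has explicit maximal regularity
\[
  L^2_t H^{s+1} \cap H^1_t H^{s-1},
\]
obtained diagonally on the eigenbasis. For $s$ large, $H^s$ embeds in $C^2$, so $F$ is a locally Lipschitz Nemytskii map on a small ball. For $T$ small, the quasilinear term is small because $h(0)=0$. A contraction-mapping fixed point then gives a solution on $[0,T)$ that stays close to $g_0$ and is therefore a metric; this is \leanref{quasilinear_maxreg_solution_of_nemytskii}.

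\textbf{Regularity.} I would bootstrap. Differentiate the equation in space by commuting with $A^{k/2}$, and in time by using the equation, and apply maximal regularity again at each level. This gives $h \in H^s$ for every $s$, uniformly on compact subintervals of $[0,T)$. Sobolev embedding then yields joint smoothness up to $t=0$ in the one-sided sense, since $g_0$ is smooth and the compatibility conditions hold to all orders automatically (\leanref{deTurckRicci_forcingBootstrap_symm}). I expect this stage to be the main obstacle. The delicate parts are the uniformity of higher-order estimates near $t=0$, where the data are smooth but the solution is only known to lie in the maximal-regularity class, and the matching between the spectral scale and covariant-derivative norms with time-dependent coefficients. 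My first approach would be to differentiate the fixed-point equation formally and solve the linearized systems for the derivatives in the same maximal-regularity class, then identify them with the actual derivatives.

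\textbf{Gauge removal.} Solve the ODE
\[
  \partial_t \varphi_t = -W(g(t))\circ\varphi_t, \qquad \varphi_0=\mathrm{id}.
\]
On a closed manifold this is a smooth, time-dependent, complete ODE, so $\varphi_t$ exists on $[0,T)$ as a jointly smooth family of diffeomorphisms (\leanref{conjugating_diffeo_family_jointsmooth}). Set $\tilde g(t)=\varphi_t^* g(t)$. By naturality of $\Ric$ under pullback, which rests on the uniqueness of the Levi--Civita connection \leanref{LeviCivita_unique}, we get
\[
  \partial_t\tilde g = \varphi_t^*\bigl(-2\Ric(g) + \Lie_W g\bigr) - \varphi_t^*\Lie_W g = -2\Ric(\tilde g),
\]
with $\tilde g(0)=g_0$ (\leanref{ricci_gauge_of_dt}, \leanref{ricci_flow_pde_at_zero}). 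Joint smoothness of $\tilde g$ follows from the joint smoothness of $g$ and of $\varphi$.
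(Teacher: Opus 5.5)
Your proposal follows essentially the same route as the paper. It fixes the DeTurck gauge with background $\bar g=g_0$ and uses the compact-resolvent spectral Sobolev scale with modewise maximal regularity. It then runs a contraction fixed point that uses smallness of the two-derivative coefficient near $g_0$ together with a short time interval, and bootstraps all-order regularity up to $t=0$. Finally it removes the gauge by solving $\partial_t\varphi_t=-W(g(t))\circ\varphi_t$, invokes Ricci naturality via Levi--Civita uniqueness, and closes the equation at $t=0$ one-sidedly.

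The one technical point you pass over is that $W(g(t))$ is defined only on the half-open interval $[0,T)$. The paper therefore first extends this vector field past $t=0$ (a Seeley extension in time) before applying smooth flow theory, which is how it obtains a jointly smooth diffeomorphism family up to the initial time.
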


The formal endpoint is
\leanref{ricci_flow_short_time_existence}, in
\rffile{Geometry/Flow/RicciFlow/ShortTime/Existence.lean}.  With only
typographical substitutions for Lean's mathematical Unicode, its signature is
as follows.

\begin{Verbatim}[commandchars=\\\{\},breaklines=true,breakanywhere=true,fontsize=\small]
theorem ricci_flow_short_time_existence
    {E : Type*} [NormedAddCommGroup E] [NormedSpace \(\mathbb R\) E]
      [FiniteDimensional \(\mathbb R\) E] [NeZero (Module.finrank \(\mathbb R\) E)]
    {H : Type*} [TopologicalSpace H] {I : ModelWithCorners \(\mathbb R\) E H}
    {M : Type*} [TopologicalSpace M] [ChartedSpace H M]
      [IsManifold I \(\infty\) M] [CompactSpace M]
      [I.Boundaryless] [T2Space M]
    (g\(_0\) : SmoothRiemannianMetric I M) :
    \(\exists\) T : \(\mathbb R\), 0 < T \(\wedge\)
      \(\exists\) g_fam : \(\mathbb R\) \(\to\) SmoothRiemannianMetric I M,
      g_fam 0 = g\(_0\) \(\wedge\)
      (\(\forall\) (x\(_0\) : M) (i j : Fin (Module.finrank \(\mathbb R\) E)),
        ContMDiffOn (\(\mathcal I\)(\(\mathbb R\), \(\mathbb R\)).prod I) \(\mathcal I\)(\(\mathbb R\)) \(\infty\)
          (fun p : \(\mathbb R\) \(\times\) M \(\Rightarrow\)
            Integral.Measure.chartGramMatrix
              (I := I) (g_fam p.1) x\(_0\) p.2 i j)
          (Set.Ico (0 : \(\mathbb R\)) T \(\times^{\mathrm s}\)
            (trivializationAt E (TangentSpace I) x\(_0\)).baseSet)) \(\wedge\)
      (\(\forall\) t \(\in\) Set.Ico (0 : \(\mathbb R\)) T, \(\forall\) x : M,
        \(\forall\) v w : TangentSpace I x,
        HasDerivWithinAt (fun s : \(\mathbb R\) \(\Rightarrow\)
          (g_fam s).inner x v w)
          ((-2 : \(\mathbb R\)) *
            DifferentialGeometry.Geometry.Curvature.ricciTensor
              (I := I) (g_fam t) x v w) (Set.Ici 0) t)
\end{Verbatim}

The hypotheses deserve to be read literally.  The model space $E$ is only a
finite-dimensional real normed space; the public theorem does not assume a
chosen inner product on $E$.  The instance
\texttt{NeZero (Module.finrank $\mathbb R$ E)} excludes dimension zero.  The
metric $g_0$, rather than the model space, supplies the fiberwise inner
products used by the tensor $L^2$ theory and the connection Laplacian.
Compactness and boundarylessness express the closed-manifold hypothesis, while
\texttt{T2Space} meets a library interface used by the construction.
Connectedness is absent, as it should be.

The typeclass \texttt{I.Boundaryless} says that the model with corners has full
range and supplies the boundaryless-manifold interface used by the proof.  In
particular, the theorem is a theorem for manifolds without boundary or
corners.

The conclusion has three mathematical clauses.  It gives $T>0$ and a family
of smooth Riemannian metrics with $g(0)=g_0$.  It then states joint smoothness
of every chart Gram-matrix entry on $[0,T)$ times the base set of the chosen
tangent-bundle trivialization; these base sets cover $M$.  Finally, for every
$t\in[0,T)$, $x\in M$, and $v,w\in T_xM$, it states
\[
  \frac{d}{dt}g(t)_x(v,w)=-2\Ric_{g(t)}(v,w).
\]
At $t=0$ the derivative is taken relative to \texttt{Set.Ici 0}, so it is the
right derivative; at an interior time it agrees with the ordinary derivative.
The Ricci tensor is constructed from the Levi--Civita connection of $g(t)$,
not supplied as separate data.  Although \texttt{g\_fam} has the total type
$\mathbb R\to\mathtt{SmoothRiemannianMetric}$, the theorem asserts no
regularity, initial-value, or evolution property outside $[0,T)$.

This is an existence theorem, not yet a general local theory.  It does not
assert uniqueness, continuous dependence, a quantitative lifetime, or maximal
continuation.  Of these, only the class-uniform quantitative lifetime and the
resulting maximal-continuation construction are required by the selected
Hamilton route: continuation restarts the equation from metrics that vary in a
controlled class, so the lifetime must be chosen before the restart metric.
General continuous dependence is not a separate dependency of the endpoint,
and none of these distinctions weakens the one-metric short-time conclusion.

\subsection{The class-uniform existence strengthening}
\label{subsec:uniform-existence}

The theorem above is metricwise: its lifetime is selected after the initial
metric.  The compactness and continuation development requires a stronger
quantifier order.  After fixing a background metric and the bounds defining a
controlled class of metrics, one must choose a common lifetime before the
initial metric varies.  Schematically, the required statement is
\[
  \exists \tau_0>0\ \ \forall g_0\in\mathcal G_3(\bar g,\Lambda),
  \qquad
  \text{the Ricci flow from }g_0\text{ exists on }[0,\tau_0).
\]
Here \(\mathcal G_3(\bar g,\Lambda)\) controls uniform metric equivalence and
the background-covariant derivatives through order three.

\begin{theorem}[Uniform short-time existence on a controlled metric class]
\label{bb:uniform-existence}
Let \(M^3\) be closed, let \(\bar g\) be a smooth background metric, and let
\(\Lambda\ge1\).  There is a number \(\tau_0>0\), depending only on the
background and the class bounds, such that every smooth metric \(g_0\) that is
\(\Lambda\)-uniformly equivalent to \(\bar g\) and whose prescribed
background-covariant derivatives through order three are bounded by
\(\Lambda\) admits a Ricci flow on \([0,\tau_0)\), jointly smooth up to the
initial time in the one-sided sense.
\end{theorem}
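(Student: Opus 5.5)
We prove the uniform statement by running the Ricci--DeTurck construction behind Theorem~\ref{thm:rf-short-time-existence} on a \emph{fixed} background $\bar g$ and tracking constants. The point is to avoid a background built from $g_0$. DeTurck's gauge uses $\bar g$ as reference: the vector field is $W^k=g^{ij}(\Gamma^k_{ij}(g)-\Gamma^k_{ij}(\bar g))$. We then solve
\[
  \partial_t g=-2\Ric(g)+\Lie_W g,\qquad g(0)=g_0 .
\]
This is a strictly parabolic quasilinear system. Its linearization at any metric $\Lambda$-equivalent to $\bar g$ has principal symbol $g^{pq}\xi_p\xi_q\,\mathrm{id}$, with ellipticity constants depending only on $\Lambda$; this is the content behind \leanref{deTurckRicciRHS_hasPrincipalSymbol_at_self}. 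All spectral objects are built once for $\bar g$: the connection Laplacian, its compact self-adjoint resolvent, the Sobolev scale $H^s(\bar g)$ and the maximal-regularity solution operator. Consequently their operator norms are constants depending only on $(M,\bar g)$.

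First, we write $g=\bar g+h$ with $h=h_0+u$, where $h_0=g_0-\bar g$. The class bound controls $\bar\nabla^j h_0$ for $j\le 3$ in $L^\infty$, hence in $H^3(\bar g)$ on a closed manifold, with norm at most $C(\bar g)\Lambda$. Next, we split the equation as $\bar\Delta u+N(u)$ with $u(0)=0$. Here $N$ collects the quasilinear remainder $(g^{ij}-\bar g^{ij})\bar\nabla_i\bar\nabla_j h$ and the lower-order terms. We estimate $N$ as a Nemytskii operator between the forcing and solution spaces used in \leanref{quasilinear_maxreg_solution_of_nemytskii}. On a ball of radius $R=R(\Lambda,\bar g)$, the metric stays $2\Lambda$-equivalent to $\bar g$ by Sobolev embedding $H^{3}\hookrightarrow C^1$ in dimension three. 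On the same ball, $N$ is Lipschitz with a constant $L(\Lambda,\bar g)$ in the maximal-regularity norms. The fixed-point map on $[0,\tau]$ then gains a factor $\tau^\theta$ for some $\theta>0$. This holds on the lower-order part and on the initial-data contribution $e^{t\bar\Delta}$-type terms. We choose $\tau_0$ with $C\tau_0^\theta L<1/2$ and $C\tau_0^\theta(\|N(0)\|)\le R/2$. This choice depends only on $\Lambda$ and $\bar g$, and is made before $g_0$ is chosen. Contraction then gives a solution on $[0,\tau_0)$ for every $g_0$ in the class.

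Third, we upgrade the solution. The regularity bootstrap (\leanref{deTurck_forcingBootstrap_symm}) and \leanref{exists_uniform_jointly_smooth_ricciDeTurck_metric_solution} give joint smoothness on $[0,\tau_0)$, one-sided at $t=0$. No uniformity is needed here, because smoothness is qualitative once the interval is fixed. Finally, we remove the gauge. We solve the ODE $\partial_t\varphi_t=-W(\varphi_t,t)$ with $\varphi_0=\mathrm{id}$, which exists on all of $[0,\tau_0)$ since $M$ is compact and $W$ is smooth there. Then $\varphi_t^*g(t)$ is a Ricci flow with initial value $g_0$, via \leanref{ricci_gauge_of_dt} and \leanref{conjugating_diffeo_family_jointsmooth}.

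The main obstacle is the quantitative fixed-point step. We must verify that every constant in the Nemytskii and maximal-regularity estimates depends only on $\Lambda$ and $\bar g$, and not on finer norms of $g_0$. Only three derivatives are controlled, so the regularity index of the fixed-point space must be chosen low enough, roughly $H^3$ data with $H^{2}$-valued forcing. This has to be compatible with the embeddings used to keep $g$ positive definite and with the nonlinearity. If the existing forcing space needs more regularity, I would first try interpolation of the class bound against the smoothing of $e^{t\bar\Delta}$. This accepts a blow-up rate $t^{-\beta}$ with $\beta<1$, which is still integrable and keeps $\tau_0$ uniform.
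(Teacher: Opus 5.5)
Your outer structure matches the paper's. The DeTurck field and all spectral objects are taken relative to the fixed $\bar g$. A common lifetime is chosen from $(\bar g,\Lambda)$ before $g_0$ varies; this is the role the paper assigns to \leanref{exists_uniform_jointly_smooth_ricciDeTurck_metric_solution}. Gauge removal on the common interval (\leanref{ricci_gauge_of_dt}) then finishes, and your last two steps are fine.

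The gap is in the quantitative fixed point, which is the whole content of the theorem. Freezing $\bar\Delta$ puts the term $(g_0^{pq}-\bar g^{pq})\bar\nabla_p\bar\nabla_q u$ into $N$. This term is linear in $u$, of full parabolic order, and its coefficient is of size comparable to $\Lambda$ rather than small. The maximal-regularity solution operator of $\partial_t-\bar\Delta$ gains no power of $\tau$ on a term of this order: its $L^2_tH^a\to L^2_tH^{a+2}$ bound is invariant under time scaling and sharp on modes with $\lambda_i\tau\gg1$. So $C\tau_0^\theta L<1/2$ can only be arranged for the lower-order part of $L$.

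Concretely, take $\Lambda\ge2$. Then $g_0=\Lambda^{-1}\bar g$ lies in the class, and the top-order part of $N$ is $(\Lambda-1)\bar\Delta u$. On high modes the fixed-point map acts on this term with norm close to $\Lambda-1$ for every $\tau$, so it is not a contraction. In the paper's one-metric construction this term is harmless only because the frozen Laplacian is $\Delta_{g_0}$. The top-order coefficient difference is then small near $t=0$, which is the mixed two-scale estimate. A background $\bar g\neq g_0$ destroys that smallness. The pointwise symbol theorem \leanref{deTurckRicciRHS_hasPrincipalSymbol_at_self} cannot replace it, since it provides no uniform estimate, as the paper itself notes.

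What is missing is a way to freeze the principal part at or near $g_0$ with every constant controlled by $(\bar g,\Lambda)$ alone. Two possibilities are:
\begin{itemize}
\item maximal-regularity or energy estimates for $\partial_t-g_0^{pq}\bar\nabla_p\bar\nabla_q$ that are uniform over the class, which the $\bar g$-spectral calculus does not give for free;
\item a finite-net argument: use precompactness of the $C^3$-bounded class in $C^{2,\alpha}$, take a frozen operator at each center, and let $\tau_0$ be the minimum of finitely many lifetimes.
\end{itemize}

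This must also be done at a regularity index compatible with only three controlled derivatives, so the one-metric exponent $a=4\dim M+10$ is unavailable. You flag this but do not resolve it. Note also that with your split $u(0)=0$, the forcing $\bar\nabla^2h_0$ lies only in $H^1$. Your ``$H^2$-valued forcing'' therefore requires treating $h_0$ as initial data in the trace space $H^3$ (via $e^{t\bar\Delta}h_0$), not as forcing.
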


This change of quantifiers is substantive and does not follow formally from
Theorem~\ref{thm:rf-short-time-existence}.  It is proved by
\leanref{ricci_flow_uniform_existence} in
\rffile{Geometry/Flow/RicciFlow/Extension/Construction.lean}.  The
fixed-background producer \leanref{exists_uniform_jointly_smooth_ricciDeTurck_metric_solution} selects the common lifetime
and constructs the class-uniform Ricci--DeTurck solutions; the gauge-removal
theorem \leanref{ricci_gauge_of_dt} converts each of them to Ricci flow while
preserving the common interval.  A fresh transitive axiom audit of
\leanref{ricci_flow_uniform_existence} reports only \texttt{propext},
\texttt{Classical.choice}, and \texttt{Quot.sound}.

\subsection{Why the DeTurck--spectral route?}
\label{subsec:short-time-route-choice}

Hamilton's original proof dealt directly with the weakly parabolic Ricci-flow
operator by applying the Nash--Moser inverse function theorem
\cite{MR664497}.  Reproducing that route would require a reusable
formal theory of tame Fr\'echet spaces, smoothing operators, tame maps, and a
Nash--Moser inverse theorem, followed by verification that the Ricci operator
fits that framework.  Such a foundation would be valuable, especially for
geometric equations where gauge fixing is unavailable, but it would be a major
independent formalization project.

The present development instead follows DeTurck's gauge reduction
\cite{MR697987,DeTurck2003}.  With a background metric $\bar g=g_0$, put
\[
  W(h)^k=h^{pq}\bigl(\Gamma(h)^k_{pq}-\Gamma(\bar g)^k_{pq}\bigr)
\]
and solve
\begin{equation}
  \partial_t h=-2\Ric(h)+\Lie_{W(h)}h,
  \qquad h(0)=g_0.
  \label{eq:ricci-deturck-short-time}
\end{equation}
The connection difference is tensorial, hence $W(h)$ is a global vector
field.  The gauge term cancels the non-parabolic part of the Ricci
linearization, leaving the principal symbol
$-\lvert\xi\rvert_{g_0}^2\operatorname{id}$ at the initial metric.

There remains a genuine choice of parabolic technology.  A classical
H\"older-space implementation would lead naturally to Schauder estimates.
The project chose a spectral Hilbert-space route because compact
self-adjoint-operator theory, Bochner integration, and Hilbert-space fixed
points offered a firm library substrate, while the required manifold-valued
Schauder theory was not yet available in the project.  The connection
Laplacian also diagonalizes the linear evolution, turning its main identities
into scalar convolution equations.  This choice is therefore both
mathematical and formal: it gives a direct path from compact resolvent to
maximal regularity and makes the modewise estimates auditable.

The choice has costs.  Much of the current PDE layer is specialized to tensor
fields over a compact Riemannian manifold, and its highest-level capstone is
Ricci-shaped rather than a polished theorem for arbitrary quasilinear systems.
The project will address that limitation by separating and generalizing the
reusable PDE interfaces and by developing Schauder estimates and related
H\"older-space machinery.  Those are future extensions, not prerequisites
silently assumed by the present theorem.  For this stage report, the honest
claim is a specialized but closed Ricci-flow endpoint supported by a
substantial spectral parabolic foundation.

\subsection{DeTurck's gauge reduction to a strictly parabolic equation}

Relative to the fixed background connection $\bar{\nabla}$, the Ricci--DeTurck flow system \eqref{eq:ricci-deturck-short-time} has
the local form
(see e.g.~\cite[(2.51)]{MR2274812})
\[
\partial_t h_{ij}
=
h^{pq}\bar{\nabla}_p\bar{\nabla}_q h_{ij}
+
F_{ij}(x,h,\bar{\nabla}h),
\]
where $F_{ij}$ is smooth in its arguments and contains no second derivatives of
$h$. Hence \eqref{eq:ricci-deturck-short-time} is a quasilinear strictly parabolic
system. Since $M$ is closed, standard parabolic theory yields $\varepsilon>0$ and a
unique smooth solution $h(t)$ on $[0,\varepsilon)$.
We formalize this standard parabolic theory in Lean.\smallskip

We now recall why the Ricci--DeTurck flow is geometrically equivalent to
the Ricci flow.
Now let $\varphi_t:M\to M$ be the family of diffeomorphisms solving
\begin{equation}\label{eq: dphi dt -W phi}
\partial_t\varphi_t=-W(h(t))\circ\varphi_t,
\qquad
\varphi_0=\operatorname{id}_M.
\end{equation}
Because $M$ is compact, this ODE has a smooth solution for all
$t\in[0,\varepsilon)$, and each $\varphi_t$ is a diffeomorphism. Define
\[
g(t) :=  \varphi_t^*h(t).
\]
Using the general identity
\[
\frac{d}{dt}\,\varphi_t^*T
=
\varphi_t^*\!\left(\partial_t T+\mathcal L_{X_t}T\right)
\quad\text{whenever}\quad
\partial_t\varphi_t=X_t\circ\varphi_t,
\]
with $X_t=-W(h(t))$, we obtain
\begin{align*}
\partial_t g(t)
&=
\varphi_t^*\!\left(\partial_t h+\mathcal L_{-W(h(t))}h\right) \\
&=
\varphi_t^*\!\left(-2\operatorname{Ric}(h)+\mathcal L_{W(h)}h-\mathcal L_{W(h)}h\right) \\
&=
-2\operatorname{Ric}(\varphi_t^*h)
=
-2\operatorname{Ric}(g(t)).
\end{align*}
Also $g(0)=g_0$. Therefore $g(t)$ is a Ricci flow with initial metric $g_0$,
which proves short-time existence.
Thus, starting from a solution of the Ricci--DeTurck flow, one obtains a solution of the Ricci flow by pulling back along the diffeomorphisms generated by \eqref{eq: dphi dt -W phi}.

Note that \eqref{eq: dphi dt -W phi} is equivalent to the harmonic map heat flow equation
\[
\partial_t \varphi_t = \Delta_{g(t),g_0}\varphi_t,
\qquad\text{where } g(t):=\varphi_t^*h(t)
\]
and where $\Delta_{g(t),g_0}$ is the map-Laplacian from $(M,g(t))$ to $(M,g_0)$
(see \cite[\S 4.3 of Chapter 3]{MR2061425} or \cite[p.~117]{MR2274812}).\smallskip

For uniqueness, let $g_1(t)$ and $g_2(t)$ be two Ricci flows on a common time
interval with $g_1(0)=g_2(0)=g_0$. For $i=1,2$, let
\[
\psi_i(t): (M,g_i(t))\to (M,g_0)
\]
solve the harmonic map heat flow
\[
\partial_t\psi_i=\Delta_{g_i(t),g_0}\psi_i,
\qquad
\psi_i(0)=\operatorname{id}_M.
\]
On a closed manifold,
by standard parabolic theory
these solutions exist for short time and remain
diffeomorphisms after shrinking the time interval if necessary. Define
\[
h_i(t) :=  (\psi_i(t)^{-1})^*g_i(t).
\]
By the converse direction of DeTurck's trick, each $h_i$ solves the same
Ricci--DeTurck flow \eqref{eq:ricci-deturck-short-time} with the same initial
metric $g_0$. By the standard uniqueness theorem for strictly parabolic systems, $h_1=h_2$.
Moreover, the harmonic map heat flow identity gives
\[
\partial_t\psi_i=-W(h_i(t))\circ\psi_i.
\]
Hence $\psi_1$ and $\psi_2$ satisfy the same ODE with the same initial
condition, so $\psi_1=\psi_2$, and therefore $g_1= \psi_1^* h_1 = \psi_2^* h_2 =  g_2$. This proves uniqueness.

Conversely, starting from a solution of the Ricci flow, one recovers a solution of the Ricci--DeTurck flow by solving the harmonic map heat flow with target $(M,g_0)$ and pushing the metric forward by the resulting diffeomorphisms.
The project does formalize the forward uniqueness statement used below, but the
checked implementation is not the harmonic-map heat-flow proof just sketched.
The theorem \leanref{ricci_flow_forward_unique} proves equality on a common
closed-open interval from joint chart-Gram smoothness and continuity, the
Ricci-flow equation, and equality at the initial time.  Its proof uses the
project's curvature--connection-difference energy route.  It is consumed both
by \leanref{flow_to_agree}, in the supremum construction, and by the
interior-restart extension argument.

\subsection{Compact resolvent and the tensor Sobolev scale}
\label{subsec:short-time-spectral-foundation}

Fix a smooth background metric $g$ and tensor type $(r,s)$.  The project first
constructs the variational resolvent of the connection Laplacian and then
composes with the intrinsic inclusion from the $H^1$ completion into tensor
$L^2$.  Rellich compactness makes this inclusion compact, while the
variational construction gives self-adjointness.  The public interface is
short enough to display in full:

\begin{Verbatim}[breaklines=true,breakanywhere=true,fontsize=\small]
theorem tensorResolventL2_isCompactOperator_isSelfAdjoint_intrinsic
    (g : SmoothRiemannianMetric I M) (r s : Nat) :
    IsCompactOperator (tensorResolventL2 (I := I) (M := M) g r s) /\
      IsSelfAdjoint (tensorResolventL2 (I := I) (M := M) g r s)
\end{Verbatim}

The two conclusions are exactly the hypotheses needed
to apply the compact self-adjoint spectral theorem.  They do not themselves
say that the resulting $L^2$ eigenvectors are smooth.  Smooth eigentensor
representatives are supplied later by a separate elliptic-regularity bridge;
keeping that step distinct avoids attributing regularity to the abstract
spectral theorem.

If $\lambda_i\ge0$ denotes the eigenvalue of $-\Delta_g$, the spectral weight
is $(1+\lambda_i)^\sigma$.  The resulting Sobolev object
\leanref{tensorHs} is deliberately transparent:

\begin{Verbatim}[breaklines=true,breakanywhere=true,fontsize=\small]
structure tensorHs (g : SmoothRiemannianMetric I M)
    (r s : Nat) (sigma : Real) where
  coeff : TensorEigenIdx (I := I) (M := M) g r s -> Real
  weighted_summable :
    Summable (fun i => tensorSobolevWeight (I := I) (M := M) i sigma *
      (coeff i) ^ 2)
\end{Verbatim}

A value of \texttt{tensorHs g r s $\sigma$} is thus not an unexplained
function-space token: it is a coefficient family together with the proof that
\(\sum_i(1+\lambda_i)^\sigma|u_i|^2<\infty\).  The exponent convention places
the entire Sobolev exponent on the squared coefficient.  Comparison theorems
connect these spectral norms to covariant-derivative Sobolev norms at the
orders used downstream, and Sobolev embedding then realizes sufficiently
regular coefficient families as tensor fields.  Polynomial eigenvalue-counting
bounds and smooth eigenrepresentatives are used in the all-order convergence
arguments; neither is hidden in the two displayed definitions.

\subsection{Modewise evolution and maximal regularity}
\label{subsec:short-time-maximal-regularity}

On the eigenbasis, the zero-initial-value heat equation
\[
  \partial_tu=\Delta_g u+f,
  \qquad u(0)=0,
\]
reduces to
\[
  \dot u_i=-\lambda_i u_i+f_i,
  \qquad
  u_i(t)=\int_0^t e^{-\lambda_i(t-s)}f_i(s)\,ds.
\]
The integral formula defines a \emph{mild solution}: it makes sense in the
ambient Hilbert spaces before a pointwise time derivative is known.  The
maximal-regularity argument proves that the resulting function has the stated
time and spatial derivatives, so the mild solution is a strong solution in
the maximal-regularity space.

The first formal identity exposes the scalar equation for every spectral
mode:

\begin{Verbatim}[breaklines=true,breakanywhere=true,fontsize=\small]
theorem maximalRegularityOp_solves_perMode {a : Real} (hT : 0 <= T)
    (h_compact : IsCompactOperator
      (tensorResolventL2 (I := I) (M := M) g r s))
    (f : timeL2 (tensorHs (I := I) (M := M) g r s a) T)
    (i : TensorEigenIdx (I := I) (M := M) g r s) :
    timeModeCoeff (maximalRegularityDerivField a hT f) i =
      (-TensorEigenIdx.lambda i) •
        timeModeCoeff (maximalRegularitySolField a hT f) i +
      timeModeCoeff f i
\end{Verbatim}

The parameters identify the Sobolev order $a$, the time interval, the compact
resolvent used to obtain the eigenbasis, the forcing $f$, and one eigenmode
$i$.  The conclusion is precisely the scalar ODE above.  Coefficient
injectivity then reassembles the modes into the Hilbert-space equation:

\begin{Verbatim}[breaklines=true,breakanywhere=true,fontsize=\small]
theorem maximalRegularityOp_solves {a : Real}
    (h_compact : IsCompactOperator
      (tensorResolventL2 (I := I) (M := M) g r s))
    (hT : 0 < T) (hT1 : T <= 1)
    (f : timeL2 (tensorHs (I := I) (M := M) g r s a) T) :
    TimeSobolev.timeH1.timeDeriv _ T (maximalRegularityOp a hT hT1 f) =
      timeScaleLaplacian a (maximalRegularitySolField a hT.le f) + f
\end{Verbatim}

This second theorem is the operator equation, not merely equality of formal
series.  The associated construction places the time derivative in
$L^2_tH^a$ and the solution in $L^2_tH^{a+2}$, with an $H^1_tH^a$ control and
zero initial trace.  Its trace estimate yields the intermediate control
\[
  H^1((0,T);H^a)\cap L^2((0,T);H^{a+2})
  \longrightarrow C^0([0,T];H^{a+1}),
\]
which is the norm needed for the quasilinear fixed point.

\subsection{The specialized nonlinear construction}
\label{subsec:short-time-nonlinear}

Writing $g=g_0+S$ and freezing the connection Laplacian of $g_0$, the
Ricci--DeTurck equation takes the form
\[
  \partial_tS=\Delta_{g_0}S+N(S),\qquad S(0)=0.
\]
The dependence of the principal coefficients on $S$ has not disappeared; it
is contained in $N$.  At the fixed exponent
$a=4\dim M+10$, the implementation proves a mixed two-scale estimate of the
schematic form
\[
 \|N(u)-N(v)\|_{H^a}
 \le C_1\max(\|u\|_{H^{a+1}},\|v\|_{H^{a+1}})
          \|u-v\|_{H^{a+2}}
     +C_2\|u-v\|_{H^{a+1}}.
\]
The coefficient of the two-derivative difference becomes small near the
initial metric, while shortening the time interval controls the remaining
term.

The actual proof chain is more specific than the phrase ``apply a locally
Lipschitz existence theorem'' would suggest.  The declaration
\leanref{deTurckSobolevNonlinearitySymm} first symmetrizes the perturbation,
radially scales it into the metric/Sobolev ball, and extends the resulting
map to the ambient Sobolev completion.  This produces a globally Lipschitz
extension of the geometric nonlinearity on the space used by the fixed point.
The theorem
\leanref{quasilinear_maxreg_solution_of_nemytskii} then performs the contraction
in the forcing space.  It uses a retraction onto a forcing ball, but proves
that the fixed point already lies in that ball; the retraction is therefore
inactive at the solution.  Subsequent smallness and representation lemmas
likewise prove that the radial metric scaling is inactive, identifying the
abstract fixed point with the genuine Ricci--DeTurck remainder.

This distinction matters for auditability.  A generic local-Lipschitz theorem
exists elsewhere in the development, but it is not the theorem used by the
final Ricci-flow chain.  The high-level declaration
\leanref{quasilinear_metric_short_time_existence_of_nemytskii_data} is also a
specialized capstone with a large Ricci-shaped signature and a strong
all-orders forcing-regularity input.  Its current statement is not displayed
here because it exposes internal data, contains hypotheses not consumed by
its proof body, and could misleadingly appear to be a general strict-parabolic
existence theorem.  The forcing-regularity input is not assumed by the public
Ricci-flow theorem: it is discharged for the symmetrized DeTurck nonlinearity
by \leanref{deTurckRicci_forcingBootstrap_symm}.  A cleaner public wrapper is a
priority for the planned PDE generalization.

\subsection{Symmetry and the metric cone}
\label{subsec:short-time-metric-realization}

The fixed point lives in a linear tensor space, whereas the equation is
defined on the open cone of positive-definite symmetric tensors.  The
implementation handles these two constraints explicitly.  Slot
symmetrization is built into \leanref{ccTensor02Symm}; compatibility lemmas
relate it to the connection Laplacian and to metric realization.  Positivity
is reduced to the following fiberwise bound, defined in
\rffile{Analysis/Spectral/Intrinsic/MetricRealization/PosDefPerturbation.lean}:

\begin{Verbatim}[breaklines=true,breakanywhere=true,fontsize=\small]
def metricCauchySchwarzBound (g : SmoothRiemannianMetric I M)
    (h : forall x : M,
      TangentSpace I x ->L[Real] TangentSpace I x ->L[Real] Real)
    (delta : Real) : Prop :=
  forall (x : M) (v w : TangentSpace I x),
    |h x v w| <= delta * sqrt (g.inner x v v) * sqrt (g.inner x w w)
\end{Verbatim}

The definition states a uniform relative operator bound, not merely component
smallness in one chart.  Its key consequence is

\begin{Verbatim}[breaklines=true,breakanywhere=true,fontsize=\small]
theorem perturbedInner_self_lower_bound
    (g : SmoothRiemannianMetric I M)
    (h : forall x : M,
      TangentSpace I x ->L[Real] TangentSpace I x ->L[Real] Real)
    {delta : Real} (hdelta : metricCauchySchwarzBound g h delta) (x : M)
    (v : TangentSpace I x) :
    (1 - delta) * g.inner x v v <= perturbedInner g h x v v
\end{Verbatim}

Thus $\delta<1$ makes $g+h$ positive on every nonzero vector.  A separate
Sobolev-to-pointwise estimate produces the required fiberwise bound from the
small spectral perturbation; positivity is not inferred directly from an
unexplained Sobolev norm.  Finally,
\leanref{tensorSectionRealizeMetric_inner} states that the constructed metric
has inner product exactly
\[
  g_x(v,w)+\operatorname{sym}(S)_x(v,w).
\]
Consequently the analytic perturbation, after the proved-inactive radial
scaling, represents the geometric metric used in the DeTurck equation.

\subsection{Principal symbol and the DeTurck solution}
\label{subsec:short-time-deturck}

The coordinate calculation that justifies the gauge choice is packaged by
\leanref{deTurckRicciRHS_hasPrincipalSymbol_at_self}; its conclusion uses
\leanref{HasPrincipalSymbol} with coefficient \leanref{deTurckSymbolCoeff}:

\begin{Verbatim}[breaklines=true,breakanywhere=true,fontsize=\small]
theorem deTurckRicciRHS_hasPrincipalSymbol_at_self [I.Boundaryless]
    (g0 g_bg : SmoothRiemannianMetric I M) :
    HasPrincipalSymbol (I := I) (deTurckRicciRHS (I := I) g_bg) g0
      (isotropicSymbol
        (fun x : M =>
          TangentSpace I x ->l[Real] TangentSpace I x ->l[Real] Real)
        (deTurckSymbolCoeff (I := I) g0))
\end{Verbatim}

The two metrics have different roles: $g_0$ is the point at which the operator
is linearized, and $g_{\mathrm{bg}}$ is the background metric in the DeTurck
vector field.  The coefficient satisfies
\[
  \mathtt{deTurckSymbolCoeff}(g_0,x,\xi)
       =-\lvert\xi\rvert_{g_0}^{2},
\]
and \leanref{metricCovectorNormSq_pos} proves
$\lvert\xi\rvert_{g_0}^2>0$ for $\xi\ne0$, hence strict negativity of the
coefficient.  The proof constructs the chart second-order part, evaluates it on
quadratic test perturbations whose value and first jet vanish at the base
point, and verifies the cancellation between the Ricci and Lie-derivative
terms.

One terminological limitation should be explicit.  The outer definition of
\leanref{IsStrictlyParabolicMetricRHS} is an existential wrapper around
\leanref{HasPrincipalSymbol}.  The latter's second-order specification does
include the pointwise identity
$\sigma(x,\xi)t=-\lvert\xi\rvert_{g_0}^2t$ and positivity of the squared norm
for $\xi\ne0$; these clauses are discharged using
\leanref{deTurckSymbolCoeff_apply} and
\leanref{metricCovectorNormSq_pos}.  What the wrapper does not provide is a
separate uniform-neighborhood parabolic estimate.  Moreover, the symbol
theorem and the nonlinear existence engine are logically distinct:
the latter freezes $\Delta_{g_0}$ and estimates the remaining nonlinear terms
explicitly rather than deriving a black-box existence result from the
predicate alone.

After the fixed point, metric realization, and all-order bootstrap, the
geometric output is concise:

\begin{Verbatim}[breaklines=true,breakanywhere=true,fontsize=\small]
theorem deTurckRicci_solution_with_jointReg
    (g0 g_bg : SmoothRiemannianMetric I M) :
    exists T : Real, exists g_DT : Real -> SmoothRiemannianMetric I M,
      IsQuasilinearMetricParabolicSolution
        (deTurckRicciRHS (I := I) g_bg) g0 T g_DT /\
      JointChartGramSmooth (I := I) T g_DT
\end{Verbatim}

The first conjunct includes $T>0$, the initial value, and the
Ricci--DeTurck equation on $[0,T)$.  The second gives joint chartwise
regularity on the closed time slab used internally by the gauge construction.
The final public Ricci-flow theorem retains only the assertion on the natural
half-open interval $[0,T)$; it does not claim an endpoint value at its terminal
time.

\subsection{All-order regularity and removal of the gauge}
\label{subsec:short-time-gauge-removal}

The maximal-regularity fixed point initially controls two spatial derivatives
in $L^2$ and one time derivative.  To obtain a smooth metric rather than only
a strong Sobolev solution, the proof bootstraps the forcing.  For every mode,
all time derivatives satisfy differentiated scalar convolution identities;
weighted summable majorants at arbitrarily high spectral orders then justify
termwise differentiation.  Sobolev realization converts these coefficient
series into a tensor field jointly smooth in $(t,x)$, including at $t=0$.
This is an internal all-order spectral argument, not an appeal to an unstated
boundary regularity theorem.

Let $g_{\mathrm{DT}}(t)$ be the resulting solution and solve
\[
  \partial_t\Phi_t=-W(g_{\mathrm{DT}}(t))\circ\Phi_t,
  \qquad \Phi_0=\operatorname{id}_M.
\]
Because the vector field is initially defined on a one-sided time interval,
the ODE layer uses a Seeley extension in the time variable before applying
the standard smooth flow theory.  This extends the auxiliary vector field to
an open interval; it does not extend the Ricci--DeTurck metric or a Ricci flow
to negative original time.  Forward and reverse evolution maps give mutually
inverse smooth maps, and
\leanref{conjugating_diffeo_family_jointsmooth} packages the jointly smooth
family of diffeomorphisms.  Its implementation-facing signature is large, so
the paper records its mathematical output rather than reproducing the entire
type.

Define $g(t)=\Phi_t^*g_{\mathrm{DT}}(t)$.  Pullback differentiation gives
\begin{align*}
  \partial_t(\Phi_t^*g_{\mathrm{DT}})
  &=\Phi_t^*\bigl(\partial_tg_{\mathrm{DT}}
       -\Lie_{W(g_{\mathrm{DT}})}g_{\mathrm{DT}}\bigr)\\
  &=-2\Phi_t^*\Ric(g_{\mathrm{DT}})
   =-2\Ric(\Phi_t^*g_{\mathrm{DT}}).
\end{align*}
The last equality is proved through the connection: the transported
connection is torsion-free and metric-compatible, Levi--Civita uniqueness
identifies it with the pullback metric's connection, and curvature and Ricci
naturality follow.  This is why the differential-geometric foundation of
Section~\ref{sec:formal-differential-geometry} is an actual dependency of the
analytic theorem rather than preliminary notation.

The displayed calculation first yields the equation for $t>0$.  The initial
time is closed by the separate one-sided calculus theorem
\leanref{ricci_flow_pde_at_zero}:

\begin{Verbatim}[breaklines=true,breakanywhere=true,fontsize=\small]
theorem ricci_flow_pde_at_zero
    (g_fam : Real -> SmoothRiemannianMetric I M) {T : Real} (hT : 0 < T)
    (x : M) (v w : TangentSpace I x)
    (h_cont : ContinuousOn (fun s => (g_fam s).inner x v w) (Set.Ico 0 T))
    (h_ric_cont : ContinuousWithinAt
      (fun s => (-2) * ricciTensor (I := I) (g_fam s) x v w)
      (Set.Ioi 0) 0)
    (h_interior : forall t in Set.Ioo (0 : Real) T,
      HasDerivWithinAt (fun s => (g_fam s).inner x v w)
        ((-2) * ricciTensor (I := I) (g_fam t) x v w) (Set.Ici 0) t) :
    HasDerivWithinAt (fun s => (g_fam s).inner x v w)
      ((-2) * ricciTensor (I := I) (g_fam 0) x v w) (Set.Ici 0) 0
\end{Verbatim}

Its assumptions are exactly the data needed for the limiting argument:
continuity of the scalar metric component, right-continuity of the Ricci
right-hand side, and the interior PDE.  It concludes the right derivative at
zero.  Together with $\Phi_0=\operatorname{id}$ and
$g_{\mathrm{DT}}(0)=g_0$, this supplies every clause of the public theorem.

\section{Evolution equations and three-dimensional curvature algebra}
\label{sec:local-tensor-geometry}

This section records the project-local geometric calculations: the standing
Riemannian conventions, the Ricci-flow evolution identities, and the
three-dimensional curvature algebra.  These are the tensor-calculus and
coordinate/local-frame components that feed the analytic consumers of the
next section.

\subsection{Foundational geometric conventions}

For basic Riemannian geometry, see \S~\ref{subsec:BasicRiemGeom}.  For basic
tensor calculus, see \S~\ref{subsec:TensorCalculus}.

The preceding differential-geometric infrastructure section describes the
project's local Lean APIs for the Levi-Civita connection and induced tensor
covariant derivatives.  The following list fixes the textbook conventions used
in the remaining proof; it is not an additional unproved assumption.  Many of
these local pieces are supplied by the project infrastructure above, the
appendix gives their textbook formulas, and each later proof paragraph records
the exact Lean scope of the result it cites.

We use the standard smooth Riemannian tensor calculus associated to a metric
$g$, with following conventions.
\begin{enumerate}[label=(\arabic*)]
  \item The Levi-Civita connection $\nabla$ is defined on vector fields and
  extends to arbitrary tensors by linearity, the Leibniz rule, and commuting
  with contractions.

  \item The connection is torsion-free and metric-compatible:
  \[
  \nabla g=0.
  \]

  \item The Riemann curvature tensor is defined by the commutator of covariant
  derivatives:
  \[
  \Rm(X,Y)Z
  =
  \nabla_X\nabla_YZ
  -\nabla_Y\nabla_XZ
  -\nabla_{[X,Y]}Z.
  \]

  \item The Ricci tensor and scalar curvature are contractions of $\Rm$:
  \[
  \Ric_{ij}=g^{k\ell}R_{kij\ell},
  \qquad
  R=g^{ij}\Ric_{ij}.
  \]

  \item We use the algebraic symmetries of $\Rm$, the first and second Bianchi
  identities, and the contracted Bianchi identity.

  \item We use the standard commutator identities for covariant derivatives
  acting on tensors.

  \item Norms, traces, divergences, contractions, and the rough Laplacian
  \[
  \Delta T=g^{ij}\nabla_i\nabla_jT
  \]
  are formed using $g$ and $\nabla$.
\end{enumerate}

\subsection{Evolution equations}

 In this section we display the usual geometric
formulas and consequences of the Ricci-flow equation.  We explain their
checked local-component representations only where the formalization requires an
explicit choice of frame, time domain, or regularity hypothesis.  The proof of
short-time existence was given in
Section~\ref{subsec:ShortTimeExistence}; the maximum-principle consumers and
maximal-flow interface appear in Section~\ref{sec:analytic-consumers}.

The basic definitions and structure of Ricci flow are collected in
\rffile{Geometry/Flow/RicciFlow/Solution/Basic.lean}, and the evolution identities below live in
the files under \rfdir{Geometry/Flow/RicciFlow/Evolution}.

\begin{definition}[Ricci-flow solution in the project]
\label{def:project-ricci-flow-solution}
Let \(D\) be a real time interval.  A candidate
\leanref{SolutionOn} \(D\) consists of a one-parameter family \(g(t)\) of
Riemannian metrics; its Levi--Civita connection, Riemann tensor, Ricci tensor,
and scalar curvature are derived from \(g(t)\), rather than supplied as
independent data.  The predicate \leanref{IsSolutionOn} holds when the metric
and its derived connection have the required interval-wise smoothness,
\[
  \frac{\partial}{\partial t}g(t)=-2\Ric(g(t))
  \qquad (t\in D),
\]
and the scalar curvature, Ricci tensor, Riemann tensor, and Ricci-norm fields
carry the continuity and differentiability used by the later evolution and
maximum-principle APIs.  Thus the mathematical equation is the usual Ricci
flow equation; the additional fields in \leanref{IsSolutionOn} are the
project's theorem-facing regularity package for that metric solution.
\end{definition}

\subsubsection{Metric and connection variation}

\begin{lemma}[Evolution of the inverse metric]
\label{lem:evol-inverse-metric}
Along Ricci flow,
\[
  \partial_t g^{ij}=2\Ric^{ij}.
\]
\end{lemma}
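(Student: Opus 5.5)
Differentiate the identity $g^{ik}g_{kj}=\delta^i_j$ in time. In a fixed coordinate chart (or a time-independent local frame), the components $g_{ij}(t,x)$ are the chart Gram-matrix entries, jointly smooth in $t$ by the regularity package in \leanref{IsSolutionOn}. The inverse components $g^{ij}$ are the entries of the inverse Gram matrix. Since matrix inversion is smooth on invertible matrices, $g^{ij}$ is differentiable in $t$. The product rule then gives
\[
  (\partial_t g^{ik})\,g_{kj}+g^{ik}\,\partial_t g_{kj}=0 .
\]
Contracting with $g^{j\ell}$ yields $\partial_t g^{i\ell}=-g^{ik}g^{j\ell}\partial_t g_{kj}$. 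This is the matrix identity $\partial_t(G^{-1})=-G^{-1}(\partial_tG)G^{-1}$, which is just the Fréchet derivative of inversion.

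Next, substitute the Ricci flow equation $\partial_t g_{kj}=-2\Ric_{kj}$, valid at every $t\in D$. This gives $\partial_t g^{i\ell}=2g^{ik}g^{j\ell}\Ric_{kj}$. By the convention that indices are raised with the evolving metric, the right-hand side is exactly $2\Ric^{i\ell}$. Here $\Ric$ is symmetric, so the order of the contracted slots does not matter.

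In the formal setting I would state the result in a chart frame, as the inverse-Gram version of \leanref{evol_inverse_metric_inFrame}. I would then prove it by applying mathlib's derivative-of-inverse lemma (\texttt{HasFDerivAt} of \texttt{Ring.inverse}, or the $\mathrm{GL}$ version) composed with the time derivative of the Gram matrix. At the initial time the derivative is taken within $[0,\infty)$, so the one-sided chain rule must be used instead.

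The main obstacle is not the algebra but the bookkeeping. First, the pointwise \texttt{HasDerivWithinAt} statements for each $g(v,w)$ must be converted into a derivative of the Gram matrix as a matrix-valued function. Second, the raised Ricci components $\Ric^{ij}$ defined via the inverse Gram matrix must be shown to agree with the contraction appearing in the product-rule output. I would handle both by working entrywise with finite sums over the coordinate index set.
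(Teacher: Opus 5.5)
Your proposal is correct and follows essentially the same route as the paper: differentiate $g^{ia}g_{aj}=\delta^i_j$ in a time-independent frame, substitute $\partial_t g_{aj}=-2\Ric_{aj}$, and contract with the inverse metric to obtain $2g^{ia}g^{jb}\Ric_{ab}=2\Ric^{ij}$; in Lean this is \texttt{coordInvEvol} differentiating the coordinate inverse, which feeds the fixed-frame \texttt{HasDerivWithinAt} statement \texttt{evol\_inverse\_metric\_inFrame}. You also justify explicitly that the inverse Gram entries are differentiable, via the derivative of matrix inversion composed with the one-sided time derivative at $t=0$; the paper's informal proof takes that step for granted.
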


\begin{proof}
Differentiate the inverse identity
\[
  g^{ia}g_{aj}=\del^i_j.
\]
Since the frame is fixed in time,
\[
  (\partial_t g^{ia})g_{aj}
  +g^{ia}(\partial_t g_{aj})=0.
\]
Substituting the Ricci-flow equation \(\partial_tg_{aj}=-2\Ric_{aj}\) and
contracting with the inverse metric gives
\[
  \partial_tg^{ij}
  =
  2g^{ia}g^{jb}\Ric_{ab}
  =
  2\Ric^{ij}.
\]

The Lean development follows almost verbatim.  For a coordinate frame,
\leanref{coordInvEvol} differentiates the canonical coordinate inverse and
identifies its entries with the inverse-metric components.  The theorem
\leanref{evol_inverse_metric_inFrame} in
\rffile{Geometry/Flow/RicciFlow/Evolution/Metric/Evolution.lean} gives the corresponding
fixed-frame statement, producing a \texttt{HasDerivWithinAt} statement.
\end{proof}

\begin{lemma}[Evolution of the Levi-Civita connection]
\label{lem:evol-christoffel}
Along Ricci flow, the Christoffel symbols in a fixed local frame satisfy
\[
  \partial_t\Gamma^k_{ij}
  =
  -g^{k\ell}
  \left(
    \nabla_i\Ric_{j\ell}
    +
    \nabla_j\Ric_{i\ell}
    -
    \nabla_\ell\Ric_{ij}
  \right).
\]
\end{lemma}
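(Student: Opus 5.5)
The plan is to use the general first-variation formula for the Levi-Civita connection and then specialize to $h=\partial_tg=-2\Ric$. Write $h_{ij}=\partial_tg_{ij}$. The time derivative of two connections at different times is a difference of connections, so $\partial_t\Gamma^k_{ij}$ is a genuine $(1,2)$-tensor even though $\Gamma$ itself is not. Consequently it suffices to compute it at a single point $x$, in a frame chosen conveniently at a single time $t_0$.

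I will choose normal coordinates for $g(t_0)$ centered at $x$. There $\partial_\ell g_{ij}(t_0,x)=0$ and $\Gamma(t_0,x)=0$, so partial and covariant derivatives of $h$ agree at $x$. Differentiating the chart formula
\[
\Gamma^k_{ij}=\tfrac12 g^{k\ell}\bigl(\partial_ig_{j\ell}+\partial_jg_{i\ell}-\partial_\ell g_{ij}\bigr)
\]
in $t$ gives two contributions. The first involves $\partial_tg^{k\ell}$, which is given by Lemma~\ref{lem:evol-inverse-metric}; it multiplies the bracket, and that bracket vanishes at $x$. The second is $\tfrac12 g^{k\ell}(\partial_i h_{j\ell}+\partial_j h_{i\ell}-\partial_\ell h_{ij})$, where I commute $\partial_t$ with the spatial partials using joint smoothness of the chart Gram entries. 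At $x$ this equals
\[
\tfrac12 g^{k\ell}\bigl(\nabla_ih_{j\ell}+\nabla_jh_{i\ell}-\nabla_\ell h_{ij}\bigr).
\]
Both sides of the resulting identity are tensors, so it holds in every fixed frame. Substituting $h=-2\Ric$ gives exactly the claimed formula.

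An alternative that avoids normal coordinates is to differentiate the Koszul identity $2\langle\nabla_XY,Z\rangle=K_g(X,Y,Z)$ in $t$ for time-independent vector fields. This gives
\[
2h(\nabla_XY,Z)+2g(\partial_t\nabla_XY,Z)=X h(Y,Z)+Yh(Z,X)-Zh(X,Y)-h(X,[Y,Z])+h(Y,[Z,X])+h(Z,[X,Y]).
\]
Rewriting the right-hand side with torsion-freeness and $\nabla g=0$, the terms regroup into $(\nabla_Xh)(Y,Z)+(\nabla_Yh)(X,Z)-(\nabla_Zh)(X,Y)+2h(\nabla_XY,Z)$, and the $2h(\nabla_XY,Z)$ terms cancel. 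This route suits the Lean setting better, since \leanref{koszul_identity} is already available and no normal-coordinate construction is needed.

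I expect the main obstacle to be regularity and bookkeeping rather than algebra. Interchanging $\partial_t$ with spatial derivatives requires the jointly smooth chart Gram matrices supplied by \leanref{IsSolutionOn}. Identifying the chart derivative $\partial_\ell$ of a component with the covariant-derivative component requires the coordinate formula \leanref{nabla0S_coordFrame_slots_of_smooth}. If normal coordinates are unavailable, I would instead keep the Christoffel correction terms explicitly, since they cancel pairwise in the symmetric combination.
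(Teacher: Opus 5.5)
Your proposal is correct, but your primary argument takes a different route from the paper. The paper's proof is essentially your ``alternative''.

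The paper never picks special coordinates. With $h=\partial_tg$ and $A(X,Y)=\partial_t(\nabla^t_XY)$ for time-independent $X,Y$, it differentiates the Koszul formula to get the invariant identity $2g(A(X,Y),Z)=(\nabla_Xh)(Y,Z)+(\nabla_Yh)(X,Z)-(\nabla_Zh)(X,Y)$. It then substitutes $h=-2\Ric$, evaluates on a fixed frame, and raises the last index at the very end. The formal version freezes the metric at the differentiating time and differentiates the lowered pairing $g(t)\bigl((\nabla^s-\nabla^t)_{e_i}e_j,e_\ell\bigr)$ in $s$, so the $2h(\nabla_XY,Z)$ term you cancel by hand never appears.

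Your normal-coordinate computation is shorter on paper: the bracket multiplying $\partial_tg^{k\ell}$ vanishes, and $\partial_ih_{j\ell}=\nabla_ih_{j\ell}$ at the center. It pays for this with two extra ingredients: existence of $g(t_0)$-normal coordinates, and a tensoriality argument to carry the identity from that one chart to an arbitrary fixed frame. The invariant argument needs neither. The Koszul route gives a frame-independent first-variation formula for an arbitrary $h$ in one step, and it matches how the project defines the Levi--Civita connection from the Koszul scalar. That is why it is the one formalized.

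One small correction to your fallback remark. In a general chart, only four of the six Christoffel correction terms cancel within the symmetric combination. The leftover $g^{k\ell}\Gamma^m_{ij}h_{m\ell}$ cancels against $\tfrac12(\partial_tg^{k\ell})(\partial_ig_{j\ell}+\partial_jg_{i\ell}-\partial_\ell g_{ij})=-g^{k\ell}h_{\ell m}\Gamma^m_{ij}$. So Lemma~\ref{lem:evol-inverse-metric} is genuinely needed in that version.
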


\begin{proof}
Let
\[
  h=\partial_tg
  \qquad\text{and}\qquad
  A(X,Y)=\partial_t\bigl(\nabla^t_XY\bigr),
\]
where \(X\) and \(Y\) are independent of time.  Although the Christoffel symbols
themselves are not tensorial, the difference of two connections is a tensor;
consequently \(A\) is a well-defined \((1,2)\)-tensor.  Differentiating the
Koszul formula gives the invariant metric-variation identity
\[
  2g\bigl(A(X,Y),Z\bigr)
  =
  (\nabla_Xh)(Y,Z)
  +
  (\nabla_Yh)(X,Z)
  -
  (\nabla_Zh)(X,Y).
\]
For Ricci flow, \(h=-2\Ric\), and therefore
\[
  g\bigl(A(X,Y),Z\bigr)
  =
  -(\nabla_X\Ric)(Y,Z)
  -(\nabla_Y\Ric)(X,Z)
  +(\nabla_Z\Ric)(X,Y).
\]
Taking \(X=e_i\), \(Y=e_j\), and \(Z=e_\ell\) in a fixed local frame yields the
lowered formula
\[
  g\bigl(A(e_i,e_j),e_\ell\bigr)
  =
  -\nabla_i\Ric_{j\ell}
  -\nabla_j\Ric_{i\ell}
  +\nabla_\ell\Ric_{ij}.
\]
Raising the final index gives the stated evolution equation.

The formal proof follows this order.  It freezes the metric at the
differentiating time and differentiates the lowered pairing
\[
  g(t)\!\left(
    (\nabla^s-\nabla^t)_{e_i}e_j,e_\ell
  \right).
\]
A finite-difference form of the Koszul identity identifies its derivative with
the three covariant derivatives of Ricci appearing above.  Only after this
lowered identity has been established does Lean apply the inverse metric to
recover the \(k\)-th connection coefficient.  This separation makes the
tensorial connection variation explicit and avoids any implicit identification
of vectors, covectors, and coordinate components.

The lower-level calculation is developed in
\rffile{Geometry/Flow/RicciFlow/Evolution/Connection/Pairing.lean}, and the raising and
assembly steps are in \rffile{Geometry/Flow/RicciFlow/Evolution/Connection/Evolution.lean}.
The theorem \leanref{evol_christoffel_inFrame} states the result as a
within-time derivative of the fixed-frame Christoffel component on the interval
carrier; its local space--time regularity hypothesis records the mixed
time--space differentiability needed to differentiate the metric components.
The invariant derivation above is the textbook calculation; it is not repeated
in the background appendix.
\end{proof}

These metric and connection variation identities are the inputs for the
subsequent evolution equations for the Ricci tensor, scalar curvature, and the
pinching quantities used in Hamilton's argument.

\subsubsection{Ricci, scalar curvature, and Ricci norm}

The preceding variation formulas are the input for the curvature evolution
identities.  The Ricci evolution equation is obtained by differentiating the curvature
trace through the connection variation, while the scalar and norm evolution equations are
obtained by tracing and contracting the Ricci evolution equation together with the
inverse-metric evolution.

\begin{lemma}[Evolution of Ricci]
\label{lem:evol-ricci}
Along Ricci flow,
\[
\partial_t\Ric_{ij}
=
\Delta\Ric_{ij}
+2R_{ik\ell j}\Ric^{k\ell}
-2\Ric_i{}^k\Ric_{kj}.
\]
Equivalently,
\[
(\partial_t-\Delta)\Ric_{ij}
=
2R_{ik\ell j}\Ric^{k\ell}
-2\Ric_i{}^k\Ric_{kj}.
\]
Here the order of the curvature slots is fixed by
\(R_{ijkl}=\langle R(e_i,e_j)e_k,e_l\rangle\) and
\(\Ric_{ij}=g^{k\ell}R_{kij\ell}\).  Thus
\(R_{ik\ell j}=R_{kij\ell}\) after the two skew symmetries.  The alternative
ordering $R_{ikj\ell}$ would have the opposite trace and is not compatible
with the scalar evolution formula below.
\end{lemma}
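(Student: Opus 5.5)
The plan is to use the variation of the Levi-Civita connection from Lemma~\ref{lem:evol-christoffel}. We differentiate the Ricci tensor as a trace of the Riemann tensor, then identify the resulting second-order terms with $\Delta\Ric$ plus curvature terms via the commutator identities and the contracted Bianchi identity.

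\textbf{Step 1: first variation of Ricci.} Write $A=\partial_t\Gamma$, the tensor appearing in the proof of Lemma~\ref{lem:evol-christoffel}. From $\Rm(X,Y)Z=\nabla_X\nabla_YZ-\nabla_Y\nabla_XZ-\nabla_{[X,Y]}Z$ one has the standard identity $\partial_t\Rm(X,Y)Z=(\nabla_XA)(Y,Z)-(\nabla_YA)(X,Z)$. Tracing in the slot fixed by $\Ric_{ij}=g^{k\ell}R_{kij\ell}$ gives the Palatini formula
\[
\partial_t\Ric_{ij}=\nabla_kA^k_{ij}-\nabla_iA^k_{kj}.
\]
The trace uses only the $(1,3)$ tensor, so no inverse-metric derivative enters. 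Inserting $A^k_{ij}=-g^{k\ell}(\nabla_i\Ric_{j\ell}+\nabla_j\Ric_{i\ell}-\nabla_\ell\Ric_{ij})$ yields
\[
\partial_t\Ric_{ij}=\Delta\Ric_{ij}-\nabla_k\nabla_i\Ric_j{}^k-\nabla_k\nabla_j\Ric_i{}^k+\nabla_i\nabla_jR .
\]
Here I use $A^k_{kj}=-\nabla_jR$, since the two Ricci-gradient terms cancel under the trace.

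\textbf{Step 2: commute and apply Bianchi.} Rewrite $\nabla_k\nabla_i\Ric_j{}^k=\nabla_i\nabla_k\Ric_j{}^k+[\nabla_k,\nabla_i]\Ric_j{}^k$. By the contracted Bianchi identity, $\nabla_k\Ric_j{}^k=\tfrac12\nabla_jR$. The two such terms then contribute $-\nabla_i\nabla_jR$, which cancels the $+\nabla_i\nabla_jR$. The commutators are given by the standard Ricci identity for a $(0,2)$ tensor. Each commutator produces one $\Ric\ast\Ric$ term and one $\Rm\ast\Ric$ term. Summing the two symmetric copies gives $2R_{ik\ell j}\Ric^{k\ell}-2\Ric_i{}^k\Ric_{kj}$, once the slots are rearranged using the skew symmetries and pair symmetry noted in the statement ($R_{ik\ell j}=R_{kij\ell}$).

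\textbf{Main obstacle.} I expect the hard step to be the sign and slot bookkeeping in Step 2 under the convention $\Rm_{04}(X,Y,Z,W)=\langle R(X,Y)Z,W\rangle$, $\Ric_{ij}=g^{k\ell}R_{kij\ell}$. I would fix it first by checking on the round sphere, where $\Ric=(n-1)g$ is parallel. There the right side must equal $-2(n-1)g$ times $\partial_t$-consistent scaling: with $R_{ik\ell j}=g_{ij}g_{k\ell}-g_{i\ell}g_{kj}$, the right side is $2(n-1)\cdot(n-1)g\cdot\tfrac{1}{n-1}\ldots$ giving $2(n-1)g-2(n-1)^2g/(n-1)\cdot\ldots$. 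I would carry out this check explicitly to pin down the commutator sign, and also confirm that tracing recovers $\partial_tR=\Delta R+2|\Ric|^2$ using Lemma~\ref{lem:evol-inverse-metric}, as the statement's remark demands.

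On the formal side I would work in a fixed coordinate frame at a point, reusing \leanref{evol_christoffel_inFrame} and the coordinate projection \leanref{nabla0S_coordFrame_slots_of_smooth}. The Palatini step becomes differentiating the chart expression $\partial\Gamma-\partial\Gamma+\Gamma\Gamma-\Gamma\Gamma$ in time, with the required mixed time--space regularity coming from \leanref{IsSolutionOn}.
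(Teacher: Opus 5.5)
Your proposal is correct and follows the paper's proof essentially step for step: the Palatini variation $\partial_t\Ric_{ij}=\nabla_kA^k_{ij}-\nabla_iA^k_{kj}$, then substitution of the Christoffel evolution (with $A^k_{kj}=-\nabla_jR$), then cancellation of $\nabla_i\nabla_jR$ via the contracted Bianchi identity, and finally the Ricci commutator identity, which produces $2R_{ik\ell j}\Ric^{k\ell}-2\Ric_i{}^k\Ric_{kj}$. One caution about your sanity checks: on the shrinking round sphere the correct expectation is $\partial_t\Ric_{ij}=0$ (the Ricci tensor is invariant under constant scaling), the two reaction terms cancel there, and their trace $2\norm{\Ric}^2-2\norm{\Ric}^2$ vanishes in general, so both tests detect the $R_{ik\ell j}$ versus $R_{ikj\ell}$ slot error but not an overall sign error in the commutator contribution, which you must instead fix by applying $[\nabla_a,\nabla_b]\beta_k{}^l=R_{abm}{}^l\beta_k{}^m-R_{abk}{}^m\beta_m{}^l$ directly.
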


\begin{proof}
Let
\[
  A^k{}_{ij}:=\partial_t\Gamma^k_{ij}.
\]
Differentiating the coordinate curvature trace gives the first-variation formula
\[
  \partial_t\Ric_{ij}
  =
  \nabla_k A^k{}_{ij}
  -
  \nabla_i A^k{}_{kj}.
\]
Substituting the Christoffel evolution from Lemma~\ref{lem:evol-christoffel}
expresses the right-hand side in terms of second covariant derivatives of
\(\Ric\).  Since the Levi-Civita connection is metric-compatible, derivatives of
the raised index are rewritten using \(\nabla g^{-1}=0\).  The remaining
second-derivative terms are then rearranged by the contracted Bianchi identity
and the Ricci commutator identity: the second derivatives combine to
\[
  \Delta\Ric_{ij},
\]
and the commutator terms give
\[
  2R_{ik\ell j}\Ric^{k\ell}
  -
  2\Ric_i{}^k\Ric_{kj}.
\]

In Lean this calculation is exposed as a coordinate-frame component theorem
rather than as a single invariant tensor equality.  The endpoint
\leanref{coordRicciEvol} in
\rffile{Geometry/Flow/RicciFlow/Evolution/Ricci/CoordinateIdentities.lean} proves the
within-time derivative statement for the coordinate component
\(\Ric_{ij}(t,x_0)\) at the center of a coordinate frame.  The formal proof is
organized in the same mathematical order: it differentiates the Christoffel
trace formula for Ricci, substitutes the connection variation, rewrites the
raised-index terms using metric compatibility, and then applies the contracted
commutator package to reduce the expanded expression to the displayed heat
equation.
\end{proof}

\begin{definition}[Lichnerowicz Laplacian]
\label{def:lichnerowicz-laplacian}
For a symmetric $2$-tensor $h$, define
\[
(\Delta_Lh)_{ij}
=
\Delta h_{ij}
+2R_{ik\ell j}h^{k\ell}
-\Ric_i{}^kh_{kj}
-\Ric_j{}^kh_{ki}.
\]
\end{definition}

\begin{corollary}[Ricci evolves by the Lichnerowicz heat equation]
\label{cor:ricci-lichnerowicz}
Along Ricci flow,
\[
\partial_t\Ric=\Delta_L\Ric.
\]
\end{corollary}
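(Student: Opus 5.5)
The plan is to substitute $h=\Ric$ into Definition~\ref{def:lichnerowicz-laplacian} and compare the result term by term with the right-hand side of Lemma~\ref{lem:evol-ricci}.

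By the definition,
\[
(\Delta_L\Ric)_{ij}
=
\Delta\Ric_{ij}
+2R_{ik\ell j}\Ric^{k\ell}
-\Ric_i{}^k\Ric_{kj}
-\Ric_j{}^k\Ric_{ki}.
\]
The last term can be rewritten using the symmetry of the Ricci tensor, $\Ric_{ab}=\Ric_{ba}$, together with the fact that index raising by $g^{-1}$ commutes with this swap. This gives
\[
\Ric_j{}^k\Ric_{ki}
=g^{ka}\Ric_{ja}\Ric_{ki}
=g^{ka}\Ric_{ik}\Ric_{aj}
=\Ric_i{}^k\Ric_{kj}.
\]
The two quadratic terms therefore combine to $-2\Ric_i{}^k\Ric_{kj}$. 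The resulting expression is exactly the right-hand side of Lemma~\ref{lem:evol-ricci}, which yields $\partial_t\Ric=\Delta_L\Ric$ componentwise.

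The only point needing care is the curvature-slot convention. The term $2R_{ik\ell j}\Ric^{k\ell}$ must appear with identical index placement in both statements, and it does, since the definition was written with the same convention $R_{ijkl}=\langle R(e_i,e_j)e_k,e_l\rangle$.

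In the formal setting, the main obstacle is bookkeeping rather than mathematics. One must check that the Lichnerowicz right-hand side (e.g.\ \leanref{lichnerowiczRHSInFrame}) is built from the same frame components, contractions, and Laplacian as the coordinate statement \leanref{coordRicciEvol}. The step I would do first is a frame-level lemma identifying the two quadratic contractions, proved via symmetry of the Ricci components and of the inverse Gram matrix. The corollary then follows by rewriting \leanref{coordRicciEvol} with this lemma and closing with \texttt{ring}.
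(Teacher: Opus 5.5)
Your proposal is correct and matches the paper's proof: substitute $h=\Ric$ into the definition of $\Delta_L$, use the symmetry of $\Ric$ (and of $g^{-1}$) to merge the two quadratic terms into $-2\Ric_i{}^k\Ric_{kj}$, and recognize the right-hand side of Lemma~\ref{lem:evol-ricci}. Your planned frame-level lemma also mirrors the formal organization, since the paper keeps this step as a separate algebraic rewriting layer built around \leanref{lichnerowiczRHSInFrame}.
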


\begin{proof}
Apply the Ricci evolution formula with \(h=\Ric\) in the definition of
\(\Delta_L\).  The two Ricci-action terms in \(\Delta_L\Ric\) agree after using
the symmetry of \(\Ric\):
\[
  -\Ric_i{}^k\Ric_{kj}
  -
  \Ric_j{}^k\Ric_{ki}
  =
  -2\Ric_i{}^k\Ric_{kj}.
\]
Thus the Lichnerowicz formula is exactly the Ricci evolution equation written
with the standard curvature-linearization operator.

The Lean rewrite is kept as a separate algebraic layer in
\rffile{Geometry/Flow/RicciFlow/Evolution/Ricci/Lichnerowicz.lean}.  It introduces the
component right-hand side \leanref{lichnerowiczRHSInFrame} and proves that the
Ricci evolution producer is equivalent to the \(\partial_t\Ric=\Delta_L\Ric\)
component statement.
\end{proof}

\begin{lemma}[Evolution of scalar curvature]
\label{lem:evol-scalar}
Along Ricci flow,
\[
\partial_tR=\Delta R+2\norm{\Ric}^2.
\]
Equivalently,
\[
(\partial_t-\Delta)R=2\norm{\Ric}^2.
\]
\end{lemma}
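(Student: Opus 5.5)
We obtain the scalar evolution by tracing the Ricci evolution of Lemma~\ref{lem:evol-ricci} against the evolving inverse metric, using Lemma~\ref{lem:evol-inverse-metric} for the time derivative of $g^{ij}$. Work in a fixed (time-independent) local frame and write $R=g^{ij}\Ric_{ij}$. The product rule in time gives
\[
  \partial_tR
  =
  (\partial_tg^{ij})\Ric_{ij}
  +
  g^{ij}\,\partial_t\Ric_{ij}.
\]

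\textbf{Inverse-metric term.} By Lemma~\ref{lem:evol-inverse-metric} we have $\partial_tg^{ij}=2\Ric^{ij}$. Hence the first term equals $2\Ric^{ij}\Ric_{ij}=2\norm{\Ric}^2$.

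\textbf{Ricci term.} Substitute Lemma~\ref{lem:evol-ricci} into the second term. Three contractions are needed:
\begin{itemize}[leftmargin=2em]
  \item Since $\nabla g^{-1}=0$, the trace commutes with the rough Laplacian, so $g^{ij}\Delta\Ric_{ij}=\Delta R$.
  \item With the stated slot conventions, $g^{ij}R_{ik\ell j}=\Ric_{k\ell}$. This is exactly the trace identity recorded after Lemma~\ref{lem:evol-ricci}, namely $R_{ik\ell j}=R_{kij\ell}$ combined with $\Ric_{k\ell}=g^{ij}R_{ik\ell j}$. It gives $2g^{ij}R_{ik\ell j}\Ric^{k\ell}=2\norm{\Ric}^2$.
  \item The last term contracts to $-2g^{ij}\Ric_i{}^k\Ric_{kj}=-2\norm{\Ric}^2$.
\end{itemize}
The two reaction terms therefore cancel, leaving $g^{ij}\partial_t\Ric_{ij}=\Delta R$. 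Adding the inverse-metric term yields $\partial_tR=\Delta R+2\norm{\Ric}^2$.

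\textbf{Main obstacle.} The main obstacle, particularly formally, is the commutation of trace and Laplacian together with the sign of the curvature trace. In the frame version, $\Delta R$ must be identified with $g^{ij}$ applied to the components of $\Delta\Ric$. At the frame center this requires the component formula for $\nabla$ of a $(0,2)$-tensor and metric compatibility $\nabla g^{-1}=0$; both are available from the coordinate-formula layer (\leanref{nabla0S_coordFrame_slots_of_smooth}) and from the Levi-Civita compatibility theorem. The sign check for the curvature trace is delicate because the opposite slot ordering would produce $-4\norm{\Ric}^2$ instead of cancellation. I would establish the scalar identity pointwise at the center of a coordinate frame, matching the form of \leanref{coordRicciEvol}. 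The product-rule step then becomes a finite sum of \texttt{HasDerivWithinAt} products, which is routine.
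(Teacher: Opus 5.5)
Your proposal is correct and follows the paper's proof essentially line for line. Both differentiate $R=g^{ij}\Ric_{ij}$, obtain $2\norm{\Ric}^2$ from the inverse-metric evolution, and trace the Ricci evolution so that $g^{ij}\Delta\Ric_{ij}=\Delta R$ and the two zero-order terms cancel; your explicit check that $g^{ij}R_{ik\ell j}=\Ric_{k\ell}$ is the ``contracted-curvature identity'' the paper invokes without detail.
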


\begin{proof}
Since
\[
  R=g^{ij}\Ric_{ij},
\]
we differentiate the trace:
\[
  \partial_tR
  =
  (\partial_tg^{ij})\Ric_{ij}
  +
  g^{ij}\partial_t\Ric_{ij}.
\]
The inverse-metric evolution gives
\[
  (\partial_tg^{ij})\Ric_{ij}
  =
  2\Ric^{ij}\Ric_{ij}
  =
  2\norm{\Ric}^2.
\]
Tracing the Ricci evolution gives
\[
  g^{ij}\Delta\Ric_{ij}
  +
  2g^{ij}R_{ik\ell j}\Ric^{k\ell}
  -
  2g^{ij}\Ric_i{}^k\Ric_{kj}.
\]
The trace of the rough Laplacian becomes \(\Delta R\), and the two zero-order
trace terms cancel by the usual contracted-curvature identities.  Hence
\[
  \partial_tR=\Delta R+2\norm{\Ric}^2.
\]

The Lean development packages this trace calculation through
\leanref{scalar_evolution_of_smooth_solution} in \rffile{Geometry/Flow/RicciFlow/Evolution/Scalar/Basic.lean}.
Internally, the proof separates the raw trace derivative of \(R=g^{ij}\Ric_{ij}\)
from the contracted-Bianchi reduction that identifies the Ricci-Hessian trace
with the scalar Laplacian.  The endpoint is stated for the smooth Ricci-flow
solution view and produces the same equation in the scalar evolution API.
\end{proof}

\begin{lemma}[Evolution of the Ricci norm]
\label{lem:evol-ricci-norm}
Along Ricci flow,
\[
(\partial_t-\Delta)\norm{\Ric}^2
=
-2\norm{\nabla\Ric}^2
+4R_{ik\ell j}\Ric^{ij}\Ric^{k\ell}.
\]
\end{lemma}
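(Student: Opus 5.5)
We expand $\norm{\Ric}^2=g^{ia}g^{jb}\Ric_{ij}\Ric_{ab}$ in a fixed local frame and differentiate in time by the product rule, following the pattern used for the scalar-curvature evolution. Three kinds of terms arise. Two come from differentiating the inverse metrics, and by Lemma~\ref{lem:evol-inverse-metric} each contributes
\[
  2\Ric^{ia}g^{jb}\Ric_{ij}\Ric_{ab}=2\Ric^{ia}\Ric_i{}^{b}\Ric_{ab}=2\tr(\Ric^3),
\]
so together they give $4\tr(\Ric^3)$. The remaining two terms come from differentiating the Ricci factors, and by symmetry they combine to $2\langle\Ric,\partial_t\Ric\rangle$.

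Substituting Lemma~\ref{lem:evol-ricci} into this last term gives
\[
  2\langle\Ric,\Delta\Ric\rangle+4R_{ik\ell j}\Ric^{k\ell}\Ric^{ij}-4\Ric^{ij}\Ric_i{}^k\Ric_{kj}.
\]
The final term is $-4\tr(\Ric^3)$, which cancels exactly the inverse-metric contribution. This leaves
\[
  \partial_t\norm{\Ric}^2=2\langle\Ric,\Delta\Ric\rangle+4R_{ik\ell j}\Ric^{ij}\Ric^{k\ell}.
\]
We then use the Bochner-type identity for the rough Laplacian of a squared norm,
\[
  \Delta\norm{\Ric}^2=2\langle\Ric,\Delta\Ric\rangle+2\norm{\nabla\Ric}^2 .
\]
It follows from applying $\nabla$ twice to the full contraction and using $\nabla g=0$ and $\nabla g^{-1}=0$, so that covariant derivatives commute with the contractions. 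Subtracting this identity yields the stated formula. The index convention $R_{ik\ell j}$ is inherited unchanged from Lemma~\ref{lem:evol-ricci}, so no slot reordering is needed.

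For the Lean version we would work pointwise at the centre of a coordinate frame, as in \leanref{coordRicciEvol}. We combine the within-time derivatives from \leanref{evol_inverse_metric_inFrame} and \leanref{coordRicciEvol} through the product rule for \texttt{HasDerivWithinAt} over the finite index sums. The cancellation of the cubic terms is then a finite-sum rearrangement using the symmetry of $\Ric$.

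The main obstacle is the spatial identity $\Delta\norm{\Ric}^2=2\langle\Ric,\Delta\Ric\rangle+2\norm{\nabla\Ric}^2$ in the formal setting. The difficulty is that the left-hand side is the Laplacian of a scalar function, while $\Delta\Ric$ and $\nabla\Ric$ are built from the induced tensor covariant derivatives \leanref{nabla0SFun}. Bridging the two requires the coordinate formula \leanref{nabla0S_coordFrame_slots_of_smooth} twice, and the Christoffel terms must cancel against derivatives of $g^{ij}$ via metric compatibility. We would first prove a general lemma: for any smooth symmetric $(0,2)$-tensor field $h$, the squared norm $\norm{h}^2$ is smooth and satisfies this identity. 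We would prove it at the frame centre, and then instantiate it with $h=\Ric(t)$, using the regularity fields packaged in \leanref{IsSolutionOn}.
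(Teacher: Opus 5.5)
Your proposal is correct and follows the paper's argument essentially step for step. You expand $\norm{\Ric}^2=g^{ia}g^{jb}\Ric_{ij}\Ric_{ab}$, differentiate using Lemmas~\ref{lem:evol-inverse-metric} and~\ref{lem:evol-ricci} so that the $\tr(\Ric^3)$ terms cancel, and subtract the product-rule identity $\Delta\norm{\Ric}^2=2\ip{\Delta\Ric}{\Ric}+2\norm{\nabla\Ric}^2$; your Lean plan likewise matches the paper's description of \leanref{ricciHeatSmooth}, which is assembled from inverse-metric evolution, Ricci evolution, symmetry of $g^{-1}$ and $\Ric$, and the Laplacian expansion of the Ricci norm.
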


\begin{proof}
The norm is the full metric contraction
\[
  \norm{\Ric}^2
  =
  g^{ia}g^{jb}\Ric_{ij}\Ric_{ab}.
\]
Differentiating in time gives two kinds of terms.  The derivative of the two
inverse metrics is handled by Lemma~\ref{lem:evol-inverse-metric}, and the
derivative of the two Ricci factors by Lemma~\ref{lem:evol-ricci}.  The
quadratic Ricci terms created by differentiating the inverse metrics cancel the
quadratic Ricci terms coming from the Ricci evolution.

For the Laplacian, the standard product rule gives
\[
  \Delta\norm{\Ric}^2
  =
  2\ip{\Delta\Ric}{\Ric}
  +
  2\norm{\nabla\Ric}^2.
\]
Subtracting this from the time derivative leaves the negative gradient term
\(-2\norm{\nabla\Ric}^2\) and the curvature reaction
\(4R_{ik\ell j}\Ric^{ij}\Ric^{k\ell}\).

The Lean endpoint is \leanref{ricciHeatSmooth} in
\rffile{Geometry/Flow/RicciFlow/Evolution/Ricci/NormEvolution.lean}.  It builds the norm evolution from
four reusable pieces: inverse-metric evolution, Ricci evolution, symmetry of the
inverse metric and Ricci tensor, and the Laplacian expansion of the Ricci norm.
The resulting proposition is \leanref{RicciNormHeatEquationOn}, stated
for the canonical component functions \leanref{ricciNorm}, \leanref{ricciNormLap},
\leanref{ricciGradSq}, and \leanref{ricciReact}.
\end{proof}

\subsection{Three-dimensional curvature algebra}

In dimension three, the Ricci tensor controls the full Riemann curvature tensor.
This is one of the special algebraic facts that makes Hamilton's proof work:
the tensor maximum-principle calculations and the Ricci pinching estimates can
be reduced to explicit eigenvalue algebra.  In the Lean development, this part
of the proof is also a useful separation point.  The purely finite-dimensional
curvature algebra is implemented separately, and the Ricci-flow preservation
arguments then consume that algebra through pointwise and solution-level
interfaces.

\begin{lemma}[Three-dimensional curvature identities]
\label{lem:3d-curvature-identities}
In dimension $3$, the Riemann tensor is determined algebraically by the Ricci
tensor and scalar curvature:
\[
R_{ijkl}
=
\Ric_{i\ell}g_{jk}-\Ric_{j\ell}g_{ik}
-\Ric_{ik}g_{j\ell}+\Ric_{jk}g_{i\ell}
-\frac R2\left(g_{i\ell}g_{jk}-g_{j\ell}g_{ik}\right).
\]
If $\Ric$ is diagonal with eigenvalues $\lambda_1,\lambda_2,\lambda_3$, then, in
the sectional-numerator convention $K_{ij}=\Rm_{04}(e_i,e_j,e_j,e_i)$,
\[
K_{ij}=R_{ijji}=\frac{\lambda_i+\lambda_j-\lambda_k}{2},
\]
where $\{i,j,k\}=\{1,2,3\}$.
\end{lemma}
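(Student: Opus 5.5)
The plan is to show that the right-hand side of the displayed formula is an algebraic curvature tensor with the same Ricci contraction as \(\Rm\), and then to prove that in dimension three an algebraic curvature tensor is determined by its Ricci contraction. Write
\[
  P_{ijkl}
  =\Ric_{i\ell}g_{jk}-\Ric_{j\ell}g_{ik}-\Ric_{ik}g_{j\ell}+\Ric_{jk}g_{i\ell}
   -\tfrac R2\bigl(g_{i\ell}g_{jk}-g_{j\ell}g_{ik}\bigr).
\]

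\textbf{Step 1: \(P\) is an algebraic curvature tensor.} Using only the symmetry of \(\Ric\) and \(g\), one checks directly that \(P\) is skew in \((i,j)\), skew in \((k,\ell)\), and symmetric under the pair swap \((ij)\leftrightarrow(k\ell)\). The first Bianchi identity follows because \(P\) is a Kulkarni--Nomizu type product of symmetric 2-tensors.

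\textbf{Step 2: \(P\) has the right trace.} With the convention \(\Ric_{jk}=g^{i\ell}R_{ijk\ell}\) (the form \(\Ric_{ij}=g^{k\ell}R_{kij\ell}\) after relabeling), contract \(P\) with \(g^{i\ell}\) in dimension \(3\). The five groups of terms contribute
\[
R\,g_{jk},\qquad -\Ric_{jk},\qquad -\Ric_{jk},\qquad 3\Ric_{jk},\qquad -\tfrac R2(3-1)g_{jk}.
\]
These sum to \(\Ric_{jk}\). Hence \(W:=\Rm-P\) is an algebraic curvature tensor whose Ricci contraction vanishes. The symmetries of \(\Rm\) used here are item (5) of the standing conventions.

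\textbf{Step 3 (main obstacle): \(W=0\).} I expect this step to be the hardest, since it is where dimension three enters essentially. Work at a point in a \(g\)-orthonormal frame \(e_1,e_2,e_3\). Because \(W\) is skew in each pair, the only possibly nonzero components are determined by the six values
\[
W_{1221},\quad W_{1331},\quad W_{2332},\quad W_{1223},\quad W_{1332},\quad W_{2113}.
\]
Equivalently, view \(W\) as a symmetric operator on \(\Lambda^2\cong\mathbb R^3\). In this 3-dimensional case the first Bianchi identity imposes no further constraint.

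The vanishing of the diagonal Ricci entries then gives a linear system on the sectional components:
\[
W_{1221}+W_{1331}=0,\qquad W_{1221}+W_{2332}=0,\qquad W_{1331}+W_{2332}=0.
\]
Its only solution is zero. Each off-diagonal Ricci entry is a single mixed component, for example \(0=\sum_i W_{i12i}=W_{3123}\) up to sign, so the mixed components vanish as well.

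In the formal setting I would carry this out as a finite component computation over \(\mathrm{Fin}\,3\). The skew and pair symmetries reduce everything to those six entries, after which a linear-arithmetic step closes the goal.

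\textbf{Step 4: the eigenvalue formula.} Choose an orthonormal frame diagonalizing \(\Ric\), so that \(\Ric_{ij}=\lambda_i\delta_{ij}\), \(g_{ij}=\delta_{ij}\), and \(R=\lambda_1+\lambda_2+\lambda_3\). Substitute \(k=j\) and \(\ell=i\) with \(i\ne j\) into the identity just proved. This gives
\[
K_{ij}=R_{ijji}=\lambda_i+\lambda_j-\tfrac R2=\tfrac{\lambda_i+\lambda_j-\lambda_k}{2}.
\]
This step is routine once the identity is available.
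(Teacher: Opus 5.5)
Your proposal is correct and follows essentially the same route as the paper. Both arguments form the residual $\Rm-P$, show that it inherits the skew/skew/pair symmetries, and use dimension three to reduce to the six block components. Both then verify that these six components vanish and substitute into a Ricci eigenframe for the sectional-curvature formula. The only difference is bookkeeping. You certify the six components by first showing the residual has zero Ricci contraction and then inverting the trace on those six entries. The paper instead expands each residual component directly in terms of the Ricci and scalar traces. The paper assumes only skew/skew/pair symmetry, which is all your Step~3 actually uses, so your first-Bianchi remark in Step~1 is not needed.
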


\begin{proof}[Lean-facing algebra]
The dimension-three curvature algebra is project-local and finite-dimensional.
The
pure algebra lives in \rffile{Geometry/Curvature/DimensionThree/CurvatureAlgebra.lean}.  Lean
works with an abstract four-index array
\[
  R_{ijkl},\qquad i,j,k,l\in \operatorname{Fin} 3,
\]
assuming only the algebraic curvature symmetries: skewness in the first pair,
skewness in the second pair, and pair symmetry.  It defines Ricci and scalar
traces by explicit finite sums over the three basis vectors and then forms the
residual obtained by subtracting the displayed Riemann-from-Ricci expression.

The proof does not use a Weyl decomposition.  Instead, Lean proves that this
residual has the same skew/skew and pair symmetries as the original curvature
array.  In dimension three, these symmetries reduce the verification to the six
ordered block components
\[
  (01,01),\ (01,02),\ (01,12),\ (02,02),\ (02,12),\ (12,12).
\]
The project checks these six components directly by expanding the Ricci and
scalar traces and using the curvature symmetries.  The theorem
\leanref{displayedRiemannFromRicci3D_of_algebraic_curvature_symmetries}
then promotes those six component checks to the full four-index identity.

The realized bridge lives in \rffile{Geometry/Curvature/DimensionThree/RiemannFromRicci.lean}.  It
transfers the finite \(\operatorname{Fin} 3\) component identity to a pointwise
orthonormal-frame statement for the lowered curvature tensor
\[
  \Rm_{04}(X,Y,Z,W)=\langle R(X,Y)Z,W\rangle .
\]
The bridge records the necessary trace and slot-convention data through
\leanref{RiemannFromRicci3DTraceDataAt} and
\leanref{rm04Comp_displayedRiemannFromRicci3D_at}.  Finally,
\rffile{Geometry/Curvature/DimensionThree/RicciControlsRm.lean} handles the Ricci-eigenbasis
sectional-curvature consequence: after diagonalizing Ricci with eigenvalues
\(\lambda_1,\lambda_2,\lambda_3\), substituting into the realized formula gives
\[
  K_{12}=\frac{\lambda_1+\lambda_2-\lambda_3}{2},
  \quad
  K_{13}=\frac{\lambda_1+\lambda_3-\lambda_2}{2},
  \quad
  K_{23}=\frac{\lambda_2+\lambda_3-\lambda_1}{2}.
\]
\end{proof}

A particularly difficult neighboring example, used in the
three-dimensional curvature-evolution was the Uhlenbeck reaction match in
\rffile{Geometry/Curvature/DimensionThree/UhlReaction3.lean}.
Uhlenbeck's trick is useful for proving the Hamilton--Ivey estimate, which in turn has the fundamental application of showing that 3-dimensional singularity models have nonnegative curvature operator. However, it is not used in the original proof of Hamilton's theorem. We use Uhlenback's trick to simplify the proof of Shi's derivative of curvature estimates. In \cite{MR862046}, Hamilton re-proved his 3-dimensional Ricci pinching improves estimate using its tensor formulation, with the aid of Uhlenbeck's trick, as a warm-up for his more complicated 4-dimensional calculations.
For a symmetric array
\(R\colon\operatorname{Fin}3\times\operatorname{Fin}3\to\mathbb R\), we
identify the reaction obtained by differentiating the
Riemann-from-Ricci formula with the quadratic \(B\)-tensor reaction and the
Ricci drift:
\begin{theorem}[Three-dimensional reaction match ]
\label{thm:dim3-reaction-match}
For every \(a,b,c,d\in\operatorname{Fin}3\),
\[
  \operatorname{KNQ}(R)_{abcd}+G(R)_{abcd}
  =
  -2B^\#(R)_{abcd}-\operatorname{drift}(R)_{abcd}.
\]

\end{theorem}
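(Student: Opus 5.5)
Since the statement is a finite polynomial identity in the six independent entries $R_{ab}$ of a symmetric array over $\operatorname{Fin}3$, I would prove it by direct algebraic expansion rather than by any geometric argument. The plan is to unfold the four operators with the background metric taken to be the identity, so that indices are raised and lowered freely and all traces are finite sums over three basis vectors. Here $\operatorname{KNQ}(R)$ is the quadratic term from substituting the Ricci reaction into the Riemann-from-Ricci formula of Lemma~\ref{lem:3d-curvature-identities}, $G(R)$ is the correction from the metric factors $g_{jk}$ evolving by $-2\Ric$, $B^\#(R)$ is the Uhlenbeck quadratic $B$-tensor reaction of the curvature array $\Rm(R)$ built from $R$ by that same formula, and $\operatorname{drift}(R)$ is the Ricci drift term.

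\emph{Reduction to six components.} First I would prove that both sides of the identity are algebraic curvature arrays: skew in $(a,b)$, skew in $(c,d)$, and symmetric under the pair swap. On the left this follows from the Kulkarni--Nomizu shape of the Riemann-from-Ricci expression, whose products of symmetric $2$-tensors are automatically algebraic curvature tensors. On the right, $B^\#$ of an algebraic curvature tensor is again one, and the drift term is a Kulkarni--Nomizu product as well. I would then reuse the mechanism of \leanref{displayedRiemannFromRicci3D_of_algebraic_curvature_symmetries}: in dimension three an array with these symmetries is determined by its six block components
\[
(01,01),\ (01,02),\ (01,12),\ (02,02),\ (02,12),\ (12,12).
\]
So it suffices to check the identity on those six components, and this promotes the six checks to all $81$ index tuples.

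\emph{Component checks.} For each of the six components, I would expand both sides by unfolding the finite sums over $\operatorname{Fin}3$. Both sides become explicit quadratic polynomials in $R_{00},R_{01},R_{02},R_{11},R_{12},R_{22}$, and I would close each one by \texttt{ring}. As a sanity check before committing, I would diagonalize: when $R$ is diagonal with eigenvalues $\lambda_i$, $\Rm$ has sectional numerators $(\lambda_i+\lambda_j-\lambda_k)/2$. In that case the $(01,01)$ component of $-2B^\#$ should reduce to Hamilton's familiar reaction for the curvature-operator eigenvalues, namely $\mu_1^2+\mu_2\mu_3$-type terms, and it should match the left side.

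\emph{Main obstacle.} I expect the hardest step to be not the algebra but the bookkeeping. Sign and slot conventions have to be pinned down ($R_{ik\ell j}$ versus $R_{ikj\ell}$, as warned in Lemma~\ref{lem:evol-ricci}), together with the normalization of $B^\#$, for example the factor $\tfrac12$ and which contraction is used. The expanded $B^\#$ is a double sum over $3^4$ terms of products of two Riemann-from-Ricci expressions, so a direct \texttt{ring} may be slow. I would mitigate this by first simplifying $\Rm(R)$ to its six independent components, written as linear forms in $R$, and substituting those before expanding.
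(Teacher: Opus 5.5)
Your proposal is correct, but it does not take the route the paper reports. You keep component checking and make it affordable by a symmetry reduction. Once $\operatorname{KNQ}$, $G$, $B^\#$ and the drift are each shown to be skew in both pairs and pair-symmetric, the difference of the two sides is a symmetric bilinear form on $\Lambda^2\mathbb R^3$. For $B^\#$ and the drift these symmetry lemmas use only pair symmetry of $\Rm(R)$ and reindexing of finite sums, so they are dimension-free and need no Bianchi identity. The six block components then determine the difference, and six \texttt{ring} normalizations replace the $81$ cases. The paper reports that those $81$ cases were individually checkable but collectively hit a verification-time wall. The paper's successful proof abandons componentwise expansion altogether. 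It derives closed forms for the curvature action and for the $B$-tensor, isolates the genuinely three-dimensional input as a $3\times3$ adjugate/minor identity, and obtains the theorem as a linear combination of these pieces. Your route reuses the existing six-block promotion mechanism and needs no new conceptual lemma. Its three-dimensional content, however, stays buried in six sizeable polynomial normalizations, since each $B^\#$ component is a double sum of products of curvature entries. It therefore stands or falls with \texttt{ring} performance, which your pre-substitution of $\Rm(R)$ as linear forms in the six entries sensibly mitigates. The paper's route is more robust to verification time. It also shows where dimension three enters: there the Lie-algebra square in Hamilton's quadratic reaction is an adjugate on $\Lambda^2$, which is the source of the $\mu_1^2+\mu_2\mu_3$ form you use only as a sanity check. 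Finally, it leaves reusable closed-form lemmas for the curvature action and the $B$-tensor.
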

Here \(\operatorname{KNQ}\) is the Kulkarni--Nomizu expression formed from the
Ricci and scalar reaction terms, \(G\) contains the
\(\partial_tg=-2\Ric\) cross-terms, and \(B^\#\) is Hamilton's four-term
combination of the quadratic \(B\)-tensor.  A direct proof expands
\(3^4=81\) indexed component cases.  Individual cases were checkable, but the
full expansion hit a verification-time wall.  The successful proof instead
derived closed forms for the curvature action and the \(B\)-tensor, isolated
the \(3\times3\) adjugate/minor identity, and assembled the displayed theorem
by linear combination.  This episode sharpened the practical lesson from the
pinching proof: localization to linear algebra helps, but a
structural linear-algebra decomposition is still preferable to dozens of
independent component verifications.

\section{Maximum principles and Ricci pinching}
\label{sec:analytic-consumers}

This section records the scalar and tensor maximum-principle arguments and the
Ricci-preservation and pinching estimates that consume them.
The local tensor identities from Section~\ref{sec:local-tensor-geometry} supply
the differential-geometric inputs for these consumers.

\subsection{Maximum principles}

This section records the maximum-principle layer used by the later Ricci-flow
arguments.  The scalar weak maximum principle is a project-local comparison
package consumed by the scalar-curvature lower-bound and finite-time arguments,
and the tensor weak maximum principle is a project-local section-backed package
for symmetric two-tensors over a time-dependent metric background.  We record the scalar strong maximum principle for later uses, but it is not currently used by the selected Hamilton endpoint.

\begin{theorem}[Scalar weak maximum principle: supersolutions]
\label{thm:scalar-wmp-super}
Let $(M,g(t))$ be a smooth one-parameter family of metrics on a closed
manifold, and let $u:M\times[0,T]\to\mathbb R$ be smooth. Suppose
\[
\partial_tu
\ge
\Delta_{g(t)}u+
\ip{X}{\nabla u}_{g(t)}+F(u,t),
\]
where $X$ is a smooth time-dependent vector field and $F$ is continuous and
locally Lipschitz in $u$, uniformly on compact subsets of the value--time
domain relevant to the comparison.

Let $c(t)$ solve
\[
c'=F(c,t).
\]
If
\[
u(\cdot,0)\ge c(0),
\]
then
\[
u(x,t)\ge c(t)
\]
for all $(x,t)\in M\times[0,T]$.
\end{theorem}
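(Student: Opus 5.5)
To prove Theorem~\ref{thm:scalar-wmp-super}, I will reduce to a statement about the difference $w=u-c$ and run a perturbed first-time argument. Since $c$ is spatially constant, $w$ satisfies
\[
\partial_t w \ge \Delta_{g(t)} w + \ip{X}{\nabla w}_{g(t)} + F(u,t)-F(c,t).
\]
The compactness of $M\times[0,T]$ makes $u$ bounded, and $c$ is continuous. So the pairs $(u(x,t),t)$ and $(c(t),t)$ lie in a fixed compact subset of the value--time domain. On that set $F$ is Lipschitz in its first variable with some constant $L$, giving $F(u,t)-F(c,t)\ge -L|w|$. On the region where $w<0$ this reads $F(u,t)-F(c,t)\ge Lw$. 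This is exactly the ``reaction difference lower bound on the negative region'' that the paper's scalar layer records.

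Next I remove the reaction term by an exponential change of variables. Set $v=e^{-Lt}w$, which is where the zero-order term goes away. Wherever $v<0$ we then get
\[
\partial_t v\ge \Delta v+\ip{X}{\nabla v}.
\]
To obtain strict inequalities I fix $\varepsilon>0$ and consider $v_\varepsilon=v+\varepsilon(1+t)$. The claim is that $v_\varepsilon>0$ on $M\times[0,T]$. At $t=0$ we have $v_\varepsilon\ge\varepsilon>0$. Suppose the claim fails. The set where $v_\varepsilon\le0$ is closed and nonempty, so it has a first time $t_0>0$, attained at some point $x_0$. At $(x_0,t_0)$ we have $v_\varepsilon=0$, hence $v<0$ there, and $x_0$ is a spatial minimum, so $\nabla v=0$ and $\Delta v\ge 0$. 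Also $\partial_t v_\varepsilon\le 0$, since $v_\varepsilon>0$ at earlier times; this is a one-sided derivative from the left.

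These facts combine into the contradiction:
\[
0\ge \partial_t v_\varepsilon=\partial_t v+\varepsilon\ge \Delta v+\varepsilon>0.
\]
Therefore $v_\varepsilon>0$ for every $\varepsilon>0$. Letting $\varepsilon\to0$ gives $v\ge 0$, hence $w\ge0$, which is $u\ge c$.

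The main obstacle is the minimum-point calculus at the first bad time, together with the bookkeeping of domains. The first-bad-time argument needs three ingredients. It needs a compactness and continuity argument producing $(x_0,t_0)$ with $t_0>0$. It needs the spatial first- and second-order conditions at a minimum on a closed manifold, expressed through the metric-dependent Laplacian; I would compute these in a chart centered at $x_0$, where $\Delta v=g^{ij}(\partial_i\partial_j v-\Gamma^k_{ij}\partial_k v)$, and use positive semidefiniteness of the Hessian. It needs the sign of the one-sided time derivative at $t_0$.

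A second subtlety is that $F$ is only locally Lipschitz. The constant $L$ must therefore be fixed in advance on a compact neighborhood containing both the range of $u$ and the graph of $c$. This is what allows the reaction-difference bound to hold uniformly at every candidate first bad time.
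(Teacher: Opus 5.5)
Your proposal is correct and takes essentially the same approach as the paper. Both pass to $w=u-c$, bound the reaction difference below by $Kw$ on the negative region using a Lipschitz constant on the compact value set, weight by $e^{-Kt}$, and reach a contradiction at a negative minimum from $\nabla w=0$, $\Delta w\ge 0$, $\partial_t w\le 0$; the only difference is cosmetic, in that you get strictness from $\varepsilon(1+t)$ and a first-bad-time argument, while the paper uses a strict form of the weighted inequality at a negative minimum over a compact positive-time slab.
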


\begin{proof}[Lean-facing calculation]
The scalar weak maximum-principle layer is project-local in
\rffile{Analysis/Parabolic/MaximumPrinciple/Scalar/Weak.lean}.  The supersolution endpoint is
\leanref{scalar_wmp_super_theorem_7_1}, built from the regular positive-time
comparison theorem
\leanref{scalar_wmp_supersolutions_of_lipschitz_on_value_set_of_regular_positive_time}.
This is a proof layer used by later Ricci-flow scalar estimates, not a hidden
global assumption.

Lean introduces the comparison function
\[
  w=u-c
\]
and then the weighted barrier
\[
  e^{-Kt}w,
\]
where $K$ is a Lipschitz constant for $F$ on the compact value set relevant to
the comparison argument.  The lemmas
\leanref{reaction_difference_lower_bound_on_negative_region} and
\leanref{negative_region_parabolic_lower_bound} convert the nonlinear reaction
comparison into a linear lower bound on the negative region of $w$.  The contradiction argument is isolated in
\leanref{strict_barrier_nonnegative_of_positive_time}.  If the weighted barrier
were negative, compactness of a closed positive-time slab would provide a point
where it attains its negative minimum.  At that minimizing point one has
\[
  \nabla w=0,
  \qquad
  \Delta w\ge0,
  \qquad
  \partial_tw\le0,
\]
while the strict weighted inequality gives the opposite sign.  This
contradiction proves $w\ge0$, hence $u\ge c$.
\end{proof}

\begin{corollary}[Scalar weak maximum principle: subsolutions]
\label{thm:scalar-wmp-sub}
In the setting of Theorem~\ref{thm:scalar-wmp-super}, suppose instead that
\[
\partial_tu
\le
\Delta_{g(t)}u+
\ip{X}{\nabla u}_{g(t)}+F(u,t).
\]
If $u(\cdot,0)\le c(0)$ and $c'=F(c,t)$, then
\[
u(x,t)\le c(t)
\]
for all $(x,t)\in M\times[0,T]$.
\end{corollary}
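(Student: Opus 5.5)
The plan is to deduce the subsolution statement from Theorem~\ref{thm:scalar-wmp-super} by reflection rather than repeating the barrier argument. Set
\[
  v:=-u,\qquad \tilde c:=-c,\qquad \tilde F(y,t):=-F(-y,t).
\]
Negating the assumed inequality and using linearity of $\Delta_{g(t)}$ and of $\ip{X}{\nabla\,\cdot\,}_{g(t)}$ gives
\[
  \partial_t v\ \ge\ \Delta_{g(t)}v+\ip{X}{\nabla v}_{g(t)}-F(u,t)
  \ =\ \Delta_{g(t)}v+\ip{X}{\nabla v}_{g(t)}+\tilde F(v,t).
\]
So $v$ is a supersolution for the reflected reaction $\tilde F$. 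Likewise $\tilde c'=-F(c,t)=\tilde F(\tilde c,t)$, so $\tilde c$ solves the reflected ODE, and $v(\cdot,0)=-u(\cdot,0)\ge -c(0)=\tilde c(0)$.

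Next I would check that $\tilde F$ meets the hypotheses of Theorem~\ref{thm:scalar-wmp-super}. It is continuous because $F$ is. It is locally Lipschitz in the value variable with the same constants: if $F$ has Lipschitz constant $K$ on a compact set $S$ of value--time pairs, then $\tilde F$ has constant $K$ on the reflected set $\{(-y,t):(y,t)\in S\}$. The relevant compact value--time domain for $(v,\tilde c)$ is exactly the reflection of the one for $(u,c)$, since $v$ and $\tilde c$ take the negated values. Smoothness of $v$ and of the vector field $X$ carries over unchanged. Applying the theorem then gives $v\ge\tilde c$ on $M\times[0,T]$, that is, $u\le c$.

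The mathematics here is trivial. The only real obstacle is on the formal side: the Lean statement of the supersolution endpoint (\leanref{scalar_wmp_super_theorem_7_1}) phrases its Lipschitz hypothesis on a specific value set built from the ranges of $u$ and $c$. One has to show that this set, formed for $(v,\tilde c)$, is the image under $y\mapsto -y$ of the set formed for $(u,c)$. One also has to transport the regularity fields (continuity, differentiability, the Laplacian and gradient terms of $-u$) through negation. I would first prove a small lemma saying the parabolic operator commutes with negation, and a lemma saying value sets are reflected, and then invoke the supersolution theorem directly. This appears to be the route behind the declaration \leanref{scalar_wmp_sub_theorem_7_2}.
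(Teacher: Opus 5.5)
Your reflection argument ($v=-u$, $\tilde c=-c$, $\tilde F(y,t)=-F(-y,t)$, with the Lipschitz constant carried over to the reflected value set) is correct and is essentially the paper's proof: \leanref{scalar_wmp_sub_theorem_7_2} applies the supersolution theorem to $-u$ through the same scalar WMP API. The paper does not describe the Lean-level value-set and regularity bookkeeping, so your account of that part is plausible but goes beyond what the paper states.
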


\begin{proof}[Lean-facing calculation]
The subsolution theorem is the wrapper
\leanref{scalar_wmp_sub_theorem_7_2} in
\rffile{Analysis/Parabolic/MaximumPrinciple/Scalar/Weak.lean}.  Lean obtains it by applying the
supersolution theorem to $-u$, rewritten through the same scalar WMP API.
\end{proof}

\begin{corollary}[Scalar curvature lower bound]
\label{cor:scalar-lower-bound}
Let $n\ge1$, and let $g(t)$ be a Ricci flow on a closed $n$-manifold. Let
\[
c_0=\inf_MR(\cdot,0).
\]
Then, as long as the denominator is positive,
\[
R(x,t)
\ge
\frac{c_0}{1-\frac2n c_0t}.
\]
In particular, if $R(\cdot,0)>0$, then $R(\cdot,t)>0$ for as long as the
solution exists.
\end{corollary}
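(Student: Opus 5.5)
The plan is to combine the scalar evolution equation of Lemma~\ref{lem:evol-scalar} with the supersolution form of the weak maximum principle, Theorem~\ref{thm:scalar-wmp-super}.

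First, I would turn the scalar equation into a differential inequality. The Ricci tensor splits into its trace part and its trace-free part $\Ric-\frac Rn g$, and these are orthogonal. Hence
\[
  \norm{\Ric}^2=\norm{\Ric-\tfrac Rn g}^2+\frac{R^2}{n}\ge \frac{R^2}{n}.
\]
Lemma~\ref{lem:evol-scalar} then gives
\[
  \partial_tR\ge \Delta R+\frac2n R^2,
\]
which is the supersolution hypothesis of Theorem~\ref{thm:scalar-wmp-super} with $X=0$ and $F(u,t)=\frac2n u^2$. This $F$ is smooth, so it is locally Lipschitz on every compact value set, which is what the comparison argument uses.

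Second, I would construct the comparison function. The ODE $c'=\frac2n c^2$ with $c(0)=c_0$ has the explicit solution
\[
  c(t)=\frac{c_0}{1-\frac2n c_0 t}.
\]
This is valid on every interval where the denominator stays positive, and the case $c_0\le 0$ is included, where the denominator never vanishes. One checks directly that $c'=\frac2n c^2$. Since $c_0=\inf_M R(\cdot,0)$ and $M$ is compact, we have $R(\cdot,0)\ge c(0)$.

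Third, I would apply the principle on closed slabs. Fix any $T'$ strictly below both the existence time and the blow-up time of the denominator. Theorem~\ref{thm:scalar-wmp-super} is stated on a closed interval $[0,T']$, and the solution is smooth there, so it gives $R\ge c$ on $M\times[0,T']$. Exhausting the interval by such $T'$ gives the bound wherever the denominator is positive.

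The positivity statement follows by cases. If $R(\cdot,0)>0$, then compactness gives $c_0>0$, so $c(t)>0$ and hence $R>0$. This holds as long as $t<\frac{n}{2c_0}$. A solution cannot exist beyond that time anyway, because $R\ge c\to\infty$ while $R$ stays finite at each time of a smooth solution. So $R>0$ for as long as the solution exists.

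The main obstacle is the first step in the formal setting. There, $\norm{\Ric}^2\ge R^2/n$ has to be proved for the project's Ricci-norm and trace objects, by Cauchy--Schwarz for the trace pairing $R=\ip{\Ric}{g}$ with $\norm{g}^2=n$. In addition, the regularity package of \leanref{IsSolutionOn} must be matched to the hypotheses of the maximum-principle API, which relies on the positive-time slab machinery. I would first attempt a pointwise orthonormal-frame proof that reduces the inequality to $\sum\lambda_i^2\ge\frac1n(\sum\lambda_i)^2$.
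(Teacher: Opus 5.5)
Your proposal is correct and follows essentially the same route as the paper. You combine the scalar evolution equation with the trace inequality $\norm{\Ric}^2\ge R^2/n$ to get $\partial_tR\ge\Delta R+\frac2nR^2$, and then apply the supersolution weak maximum principle against the explicit barrier $c(t)=c_0/(1-\frac2n c_0t)$, which the paper calls \texttt{scalarLowerBarrier}. Your closed-slab exhaustion and your argument that the flow cannot persist past $n/(2c_0)$ (which relies on continuity of $R$ in time up to that moment, as in the paper's compact-slab argument for the finite-time corollary) are faithful elaborations of that same argument.
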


\begin{proof}[Lean-facing calculation]
The scalar lower-bound route is project-local in
\rffile{Geometry/Flow/RicciFlow/Preservation/ScalarLowerBound.lean}.  The ODE-comparison core is
\leanref{scalar_curvature_lower_bound_of_parabolic_inequality}; the Ricci-flow
producer wrappers are
\leanref{scalar_curvature_lower_bound_of_scalarEvolution} and
\leanref{scalar_curvature_lower_bound_of_scalarEvolution_initialMinimum}.

The calculation starts from the scalar evolution equation supplied by
\leanref{scalar_evolution_of_smooth_solution}:
\[
  \partial_tR=\Delta R+2\norm{\Ric}^2.
\]
The trace/norm inequality
\[
  \norm{\Ric}^2\ge \frac1n R^2
\]
gives the parabolic inequality
\[
  \partial_tR
  \ge
  \Delta R+\frac2nR^2.
\]
Lean names the comparison solution
\[
  c(t)=\frac{c_0}{1-\frac2n c_0t}
\]
by \leanref{scalarLowerBarrier}; its within-derivative identity is
\leanref{scalarLowerBarrier_hasDerivWithinAt}.  The scalar WMP applied to this
barrier gives the displayed lower bound.  The regularity hypotheses needed by
the WMP are kept explicit through \leanref{scalarRegOfSmooth}, and the Hamilton
endpoint uses this package through \leanref{hamilton_scalar_weak_maximum_principle_regularity_on_interval}.
\end{proof}

\begin{corollary}[Positive scalar curvature forces finite maximal time]
\label{cor:positive-scalar-finite-time}
Let $n\ge1$, and let $g(t)$ be a Ricci flow on a closed $n$-manifold with
\[
R(g(0))>0.
\]
Let $[0,T_{\max})$ be its maximal time interval of existence. Then
\[
T_{\max}
\le
\frac{n}{2\min_M R(g(0))}
<\infty .
\]
\end{corollary}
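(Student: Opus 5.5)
The plan is to combine the scalar lower bound of Corollary~\ref{cor:scalar-lower-bound} with the fact that a smooth Ricci flow on a closed manifold has finite, hence bounded, scalar curvature at every time it exists. Set $c_0=\min_M R(g(0))$. By compactness of $M$ and continuity of $R(\cdot,0)$ the minimum is attained, and by hypothesis $c_0>0$. Put $T^*=\frac{n}{2c_0}$. We argue by contradiction and suppose $T_{\max}>T^*$.

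Under this assumption the flow exists at every $t\in[0,T^*)$. For each such $t$ the denominator $1-\frac2n c_0 t$ is positive, so Corollary~\ref{cor:scalar-lower-bound} applies and gives, for all $x\in M$,
\[
  R(x,t)\ \ge\ \frac{c_0}{1-\frac2n c_0 t}.
\]
The right-hand side tends to $+\infty$ as $t\uparrow T^*$.

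On the other hand, $T^*<T_{\max}$ means the flow is also defined at the time $T^*$ itself. By the joint smoothness in $\mathcal{IsSolutionOn}$, $R$ is continuous on the compact slab $M\times[0,T^*]$ (or on a slab $M\times[0,T^*+\delta]$ inside the existence interval), and so it is bounded above there. This contradicts the blow-up of the lower barrier. Hence $T_{\max}\le T^*=\frac{n}{2\min_M R(g(0))}<\infty$.

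I expect the main obstacle to be the formal bookkeeping rather than the mathematics. Three points need care:
\begin{enumerate}[label=(\roman*)]
  \item The weak maximum principle, Theorem~\ref{thm:scalar-wmp-super}, is stated on closed intervals $[0,T]$. So the lower bound has to be applied on each $[0,T']$ with $T'<T^*$, rather than directly on $[0,T^*)$.
  \item Continuity of $R$ up to and including $T^*$ must be extracted from the regularity package, via \leanref{scalarRegOfSmooth}.
  \item "Maximal time" must be interpreted through the maximal-flow interface, so that $T_{\max}>T^*$ really yields a solution whose domain contains $T^*$.
\end{enumerate}
If $T_{\max}$ is defined as a supremum of existence times, I would first pick an actual solution defined on $[0,T')$ with $T'>T^*$ and run the argument above on it.
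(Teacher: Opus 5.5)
Your proposal is correct and follows the paper's proof: set $c_0=\min_M R(g(0))>0$ and use the barrier $c_0/(1-\frac2n c_0t)$ from Corollary~\ref{cor:scalar-lower-bound}. If the flow existed past $n/(2c_0)$, then $R$ would be bounded on the compact slab $M\times[0,n/(2c_0)]$, which contradicts the blow-up of that barrier. Your bookkeeping remarks also match the formal development, which proves the lifetime bound for every candidate segment (\leanref{flow_end_le}) before taking the supremum, namely applying the maximum principle on closed subintervals and arguing with an actual segment that extends past $n/(2c_0)$.
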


\begin{proof}[Lean-facing assembly]
This is a downstream scalar consumer once maximal-flow data are supplied.  The
maximal-flow package is produced by the checked Hamilton assembly using the
closed uniform-existence theorem described in
Subsection~\ref{subsec:uniform-existence}.  The scalar lower-bound and endpoint
argument are encoded in
\rffile{Geometry/Flow/RicciFlow/Estimates/FiniteTime/Scalar.lean} and then packaged for the
Hamilton endpoint by \leanref{hamilton_extinction_time_bound} in
\rffile{Geometry/Flow/RicciFlow/DimensionThree/PositiveRicci/Flow.lean}.

Set
\[
  c_0:=\min_M R(g(0))>0.
\]
Corollary~\ref{cor:scalar-lower-bound} gives
\[
  R(x,t)\ge \frac{c_0}{1-\frac2n c_0t}
\]
whenever the flow exists and the denominator is positive.  The barrier on the
right blows up as
\[
  t\nearrow \frac{n}{2c_0}.
\]
The theorem \leanref{scalar_endpoint_le_blowupTime_of_lower_barrier_bound}
turns this explicit barrier blow-up into an endpoint bound: if the maximal flow
existed smoothly past $n/(2c_0)$, then $R$ would be bounded on the compact
space--time slab
\[
  M\times\left[0,\frac{n}{2c_0}\right],
\]
contradicting the lower bound.  Thus
\[
  T_{\max}\le \frac{n}{2c_0}.
\]
\end{proof}

The tensor maximum principle used by Hamilton must be formulated for a
time-dependent metric background.  The current Lean route includes this
metric dependence in the formal setup and follows the corrected
time-dependent barrier argument, rather than reducing to a static-metric
shortcut; compare Hamilton's tensor maximum-principle framework
\cite{MR862046}.

\begin{theorem}[Hamilton weak maximum principle for symmetric $2$-tensors]
\label{thm:hamilton-tensor-wmp}
Let $(M,g(t))$ be a smooth one-parameter family of metrics on a closed
manifold, and let $S(t)$ be a smooth family of symmetric $2$-tensors satisfying
\[
(\partial_t-\Delta)S_{ij}
\ge
X^k\nabla_kS_{ij}+N_{ij}(S,g,t),
\]
where $X$ is a smooth time-dependent vector field and $N$ is a smooth
fiberwise algebraic symmetric $2$-tensor expression.
Here all covariant derivatives and the rough Laplacian are taken with respect
to \(g(t)\), and \(A\ge B\) means that \(A-B\) is nonnegative in
quadratic-form order.

Assume the null-eigenvector condition: whenever $S\ge0$ at a point and
$S(v,v)=0$, one has
\[
N(S,g,t)(v,v)\ge0.
\]
If $S(\cdot,0)\ge0$, then $S(\cdot,t)\ge0$ for all $t$.
\end{theorem}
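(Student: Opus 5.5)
We argue by perturbation and contradiction, at a first minimum point of the lowest eigenvalue. The main obstacle is that the null-eigenvector condition only controls $N(S)$ at exact zeros. To obtain a strict inequality we must therefore compare $N(S)$ with $N(S+\varepsilon\phi(t) g)$. This requires $N$ to be locally Lipschitz in $S$, uniformly on the compact range of $(S,g,t)$, and we would verify that as the first step. Smoothness of $N$ as a fiberwise algebraic expression, together with compactness of $M\times[0,T]$, gives a constant $C$ with
\[
\bigl|N(S+h,g,t)(v,v)-N(S,g,t)(v,v)\bigr|\le C\,|h|_{g}\,|v|_g^2
\]
for $|h|\le 1$ and all relevant $(S,g,t)$.

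We then set $S_\varepsilon=S+\varepsilon e^{Kt}g(t)$, with $K$ to be chosen. Since the metric moves, we compute
\[
(\partial_t-\Delta)\bigl(\varepsilon e^{Kt}g\bigr)=\varepsilon e^{Kt}\bigl(Kg+\partial_tg\bigr),
\]
using $\nabla g=0$. The term $\nabla_X g$ also vanishes. Because $\partial_t g$ is bounded on the compact slab, choosing $K$ large (depending on $C$ and $\sup|\partial_tg|_g$) gives the strict inequality
\[
(\partial_t-\Delta)S_\varepsilon(v,v)>X^k\nabla_kS_\varepsilon(v,v)+N(S_\varepsilon,g,t)(v,v)
\]
at any point where $S_\varepsilon\ge0$ and $S_\varepsilon(v,v)=0$. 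To see this, note that $S=S_\varepsilon-\varepsilon e^{Kt}g$ satisfies the assumed inequality. The Lipschitz bound then moves $N(S)$ to $N(S_\varepsilon)$ at cost $C\varepsilon e^{Kt}|g|\,|v|^2$, which is absorbed by $K\varepsilon e^{Kt}|v|^2$. We only need this for $\varepsilon e^{KT}$ small, so that the perturbation stays in the Lipschitz region.

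Next we show $S_\varepsilon>0$ on $M\times[0,T]$ for every small $\varepsilon$. At $t=0$, $S_\varepsilon\ge\varepsilon g>0$. If positivity failed, compactness would give a first time $t_0>0$ and a point $x_0$ with $S_\varepsilon\ge0$ on $[0,t_0]$ and a unit null vector $v$ at $(x_0,t_0)$. We extend $v$ to a local field $V$, with $\nabla V=0$ at $x_0$ with respect to $g(t_0)$. Following the corrected Hamilton argument, we make $V$ time-independent and use $g(t)$-normal coordinates at $t_0$.

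Consider $f(x,t)=S_\varepsilon(V,V)$. It is $\ge0$ near $(x_0,t)$ for $t\le t_0$, and $f(x_0,t_0)=0$. At that point $\nabla f=0$ and $\Delta f\ge0$, and in this frame these become $\nabla S_\varepsilon(v,v)=0$ and $\Delta S_\varepsilon(v,v)\ge0$. The inequality $\partial_t f\le0$ becomes $(\partial_t S_\varepsilon)(v,v)\le0$, since $V$ is time-independent. The time derivative of the tensor components in a fixed frame is exactly what enters the PDE, so the metric's time dependence adds nothing here beyond what was absorbed into $K$. Together these contradict the strict inequality above, because the left side is $\le0$ while the right side is $\ge0$ by the null-eigenvector condition applied to $S_\varepsilon$.

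Finally, letting $\varepsilon\to0$ gives $S\ge0$ on all of $M\times[0,T]$. In Lean we expect to route the first-time and compactness step through the scalar slab-minimum machinery behind \leanref{strict_barrier_nonnegative_of_positive_time}. The step I expect to be delicate is the bookkeeping showing that $\partial_t$, $\Delta$ and $\nabla$ of the scalar $S_\varepsilon(V,V)$ at the minimum agree with the tensorial quantities when $g$ depends on $t$.
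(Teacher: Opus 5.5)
Your proposal is correct and is essentially the paper's argument. It uses the barrier $S+\varepsilon e^{Kt}g(t)$ and a fiberwise Lipschitz bound that transfers the null-eigenvector condition from $S$ to the perturbed tensor. A large $K$ absorbs both that error and the retained $(\partial_t-\Delta)g=\partial_t g$ term; a first-null contradiction then follows via a locally extended, time-independent null vector, and you finish with the limit $\varepsilon\downarrow0$. One detail you did not mention, the term $2S_\varepsilon(\Delta V,v)$ in $\Delta f$, vanishes because $v$ lies in the kernel of the semidefinite form $S_\varepsilon$ at $(x_0,t_0)$.
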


\begin{proof}[Lean-facing calculation]
The tensor WMP is a project-local section-backed package in
\rfdir{Analysis/Parabolic/MaximumPrinciple/Tensor}.  The theorem-facing statements are
\leanref{wmp_section_sec}, \leanref{tensor_wmp}, and
\leanref{hamilton_tensor_wmp_section}.

The proof uses Hamilton's positive barrier
\[
  S_\eta=S+\eta e^{At}g.
\]
The first-null geometry and the scalar test function are handled by the
compactness and first-null layers, while \leanref{strictCert_sec} packages the
strict certificate used at the first null point.  At that point the scalar test
\[
  \phi=S_\eta(v,v)
\]
satisfies
\[
  \phi=0,
  \qquad
  \nabla\phi=0,
  \qquad
  \Delta\phi\ge0,
  \qquad
  \partial_t\phi\le0.
\]
The null vector is extended locally so that the scalar second-derivative test
is valid at the contact point.  The null-eigenvector condition applies to
$N(S_\eta,g,t)$, not directly to $N(S,g,t)$; smooth fiberwise dependence gives
a Lipschitz bound for $N(S)-N(S_\eta)$.  The certificate also retains the term
$(\partial_t-\Delta)g=\partial_tg$.  Choosing $A$ large absorbs both errors in
the positive $A\eta e^{At}g$ contribution.  Together with the vanishing drift
and the sign of the Laplacian, this yields the first-null contradiction.
Finally, the barrier-limit theorem sends $\eta\downarrow0$, giving $S\ge0$ on
the whole time interval.
\end{proof}

The scalar strong maximum principle and forward uniqueness for complete
bounded-curvature Ricci flows are important classical tools in the wider
Hamilton--Perelman program, especially in the analysis and classification of
limit flows.  They are recorded here for context, but neither lies on the
selected proof path for Hamilton's positive-Ricci
three-manifold theorem.

\begin{theorem}[Scalar strong maximum principle]
\label{bb:scalar-strong-mp}
Let $(N,h(t))$, $t\in[a,b]$, be a complete connected Ricci flow background
with the regularity and bounded-geometry assumptions needed in the blow-up
limit. Let $u\ge0$ be a smooth solution or supersolution of a scalar parabolic
inequality of the form
\[
\partial_tu\ge \Delta_{h(t)}u+\ip{X}{\nabla u}+cu,
\]
where $X$ and $c$ are smooth and locally bounded.

If $u(\cdot,t_0)$ is not identically zero for some $t_0\in[a,b)$, then
\[
u>0
\]
on $N\times(t_0,b]$.
\end{theorem}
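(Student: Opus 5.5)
The plan is the classical Harnack-chain argument: a local, quantitative positivity estimate on small parabolic cylinders, combined with a propagation step driven by connectedness. The local estimate rests on the weak maximum principle (Theorem~\ref{thm:scalar-wmp-super}) and on Hopf-type barrier comparisons. The bounded-geometry hypothesis is what makes every constant uniform.

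First I would remove the zero-order term. On any compact time window, replace $u$ by $\tilde u=e^{-Kt}u$ with $K\ge\sup\abs{c}$ on the relevant region. Then $\tilde u\ge0$ satisfies $\partial_t\tilde u\ge\Delta\tilde u+\ip{X}{\nabla\tilde u}+(c-K)\tilde u$ with $c-K\le0$. Because $\tilde u\ge0$, the term $(c-K)\tilde u$ can then be handled by barriers of the form $\varepsilon\,e^{-Lt}\psi$, and positivity of $u$ is equivalent to positivity of $\tilde u$.

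The second step is spreading positivity in space. Choose $x_0$ with $u(x_0,t_0)>0$; by continuity $u\ge\delta>0$ on a small ball $B_{h(t_0)}(x_0,r)\times\{t_0\}$. For a nearby point $y$ and a later time $t_1\in(t_0,b]$, I would build an explicit subsolution $\underline{w}$ on an annular or cylindrical domain $\Omega\times[t_0,t_1]$. It should vanish on the parabolic boundary except on the initial ball, where $\underline{w}\le\delta$. A standard choice is $\underline{w}=\varepsilon\bigl(e^{-\alpha\rho^2}-e^{-\alpha R^2}\bigr)e^{-Lt}$, with $\rho$ a smoothed distance to $x_0$ taken in a chart where $h(t)$ is uniformly equivalent to a fixed metric. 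Choosing $\alpha$ large makes $\underline{w}$ a subsolution, by comparison with $\nabla\rho$, $\Delta\rho$ and $\partial_th$. The local weak maximum principle, i.e.\ the compact-domain analogue of Theorem~\ref{thm:scalar-wmp-super} with Dirichlet boundary data, then gives $\tilde u\ge\underline{w}>0$ on the interior of $\Omega$ for $t\in(t_0,t_1]$. Injectivity radius and curvature bounds make $r$, $R$, $\alpha$ and $L$ depend only on the geometry and on $t_1-t_0$. The result is that positivity at one point spreads, immediately after $t_0$, to a neighborhood of fixed size.

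The third step is propagation. Fix $(y,t)$ with $t>t_0$. Since $N$ is connected, join $x_0$ to $y$ by a path covered by finitely many uniform-size balls. Split $(t_0,t]$ into as many subintervals as there are balls and apply the local step iteratively, each time starting from the positivity region produced at the end of the previous subinterval. This yields $u(y,t)>0$. I expect the main obstacle to be the barrier construction under a genuinely time-dependent metric. Both $\partial_t\rho$, which involves $\Ric$, and $\Delta_{h(t)}\rho$, which needs a Hessian comparison, must be controlled uniformly, so the argument must be carried out in fixed coordinate charts coming from the bounded-geometry data. I would try first to work entirely in a single chart with a Euclidean-distance barrier, so that only $C^1$ bounds on $h(t)$ in that chart are needed.
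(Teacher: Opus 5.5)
The paper states this theorem only as classical background, off the checked proof path, and gives no proof, so your argument has to stand on its own. Its architecture is sound: a local positivity step chained along a compact path, using connectedness. But the local step as you specify it does not spread positivity, and this is a genuine gap.

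The comparison $\tilde u\ge\underline w$ on $\Omega\times[t_0,t_1]$ requires $\tilde u(\cdot,t_0)\ge\underline w(\cdot,t_0)$ on all of $\Omega$. Your barrier $\varepsilon(e^{-\alpha\rho^2}-e^{-\alpha R^2})e^{-Lt}$ is strictly positive on the whole of $\{\rho<R\}$ at $t=t_0$. It vanishes only on $\{\rho=R\}$, not ``on the parabolic boundary except on the initial ball''. This leaves two cases:
\begin{itemize}
\item If $\{\rho<R\}$ is larger than the ball where you know $u(\cdot,t_0)\ge\delta$, the initial inequality is the very positivity you are trying to prove.
\item If it is not larger, the positivity set of $\underline w$ is that same ball for every $t$. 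The comparison then only shows that positivity persists in time on the initial ball.
\end{itemize}
An annulus has the same defect on its initial slice. In fact no barrier of the form $\varphi(x)e^{-Lt}$ can work: being $\le0$ outside the ball at $t_0$ forces it to be $\le0$ there at all times.

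Spreading requires a barrier whose positivity set grows in time; this is exactly the infinite-speed-of-propagation content of the strong maximum principle. By contrast, the time dependence of $h(t)$, which you flag as the main obstacle, is harmless. In a fixed chart with the time-independent function $f=\abs{x}^2$ you only need local bounds on $h$, $h^{-1}$ and the Christoffel symbols.

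Within your framework, take for instance
\[
  \underline w=\varepsilon e^{-Lt}\Bigl(e^{-\alpha f/R(t)^2}-e^{-\alpha}\Bigr)_+ ,
\]
with $\{f<R(t_0)^2\}$ inside the region where $u(\cdot,t_0)\ge\delta$, and $R$ increasing linearly to the target radius. The moving radius adds $2\alpha fR'R^{-3}e^{-\alpha f/R^2}\ge0$ (times $\varepsilon e^{-Lt}$) to $\partial_t\underline w$. Check the subsolution inequality in two regions:
\begin{itemize}
\item On $\{f\ge R^2/4\}$, this term, together with the drift, $c$, and $\Delta_{h(t)}f$ terms, is dominated by the term $\alpha^2R^{-4}\abs{\nabla f}^2e^{-\alpha f/R^2}$ of $\Delta\underline w$. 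This holds once $\alpha$ is large relative to $RR'$ and the local bounds on $h$, $X$ and $c$.
\item On $\{f\le R^2/4\}$, where $\underline w\ge\frac12\varepsilon e^{-Lt}e^{-\alpha/4}$, everything is absorbed by $-L\underline w$ once $L$ is large.
\end{itemize}
Because $\tilde u\ge0$, a negative minimum of $\tilde u-\underline w$ can only occur where $\underline w>0$ is smooth. So the Dirichlet weak maximum principle on a fixed cylinder applies. Since any finite expansion speed $R'$ is allowed, your chain then works on arbitrarily short time subintervals.

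Two alternative routes also work. One is Nirenberg's classical argument: a zero at $(y,t_1)$ forces $u(\cdot,t_1)\equiv0$ via a Hopf lemma on space-time balls, and a paraboloid barrier propagates the zero set back to $t_0$. The other is the weak parabolic Harnack inequality.

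Finally, everything is local along a compact path, so completeness and bounded geometry play no role. Only local bounds on $h$, $X$ and $c$ are used.
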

\begin{theorem}[Forward uniqueness for complete bounded-curvature Ricci flow]
\label{bb:complete-rf-uniqueness}
Let $h_1(t)$ and $h_2(t)$, $t\in[t_0,t_1]$, be complete Ricci flows
with uniformly bounded curvature on each compact time subinterval and with
$h_1(t_0)=h_2(t_0)$.  Under the standard hypotheses of the complete
bounded-curvature uniqueness theorem, $h_1(t)=h_2(t)$ for
$t\in[t_0,t_1]$.
\end{theorem}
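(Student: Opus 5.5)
The plan is to prove Theorem~\ref{bb:complete-rf-uniqueness} by the energy route already used for the closed case in \leanref{ricci_flow_forward_unique}, namely Kotschwar's curvature--connection-difference method, localized with cutoff functions to handle noncompactness. Write $g=h_1$, $\tilde g=h_2$, and consider the differences
\[
  h=g-\tilde g,\qquad A=\nabla-\tilde\nabla,\qquad S=\Rm-\widetilde{\Rm},
\]
all of which vanish at $t_0$. Shi's derivative estimates (already needed for the maximal-flow layer) give bounds on $\nabla\Rm$ and $\tilde\nabla\widetilde{\Rm}$ on $[t_0+\delta,t_1]$. Near $t_0$ I would use the standard weighted bounds $|\nabla\Rm|\le C(t-t_0)^{-1/2}$ together with equality of the initial data, so that $|A|$ is integrable in time. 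Uniform equivalence of $g$ and $\tilde g$ follows from the bounded-curvature hypothesis by integrating $\partial_t g=-2\Ric$.

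The key steps are as follows. First, derive the evolution system
\[
  \partial_t h=-2(\Ric-\widetilde{\Ric}),\qquad \partial_t A=\tilde g^{-1}*\tilde\nabla S+\cdots,\qquad (\partial_t-\Delta)S=\Div U+\text{lower order},
\]
where the lower-order terms are bounded by $C(|h|+|A|+|S|)$ with coefficients depending on the curvature bounds, and $U=\tilde g^{-1}*A*\widetilde{\Rm}+\cdots$. These identities follow from Lemma~\ref{lem:evol-christoffel} and the Riemann evolution, computed via the connection-difference tensor.

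Second, define the energy
\[
  \mathcal E(t)=\int_N\bigl(t^{-\alpha}|h|^2+t^{-\beta}|A|^2+|S|^2\bigr)\,\phi_R\,d\mu_{g(t)},
\]
with $\phi_R$ a cutoff adapted to a distance function. Since bounded curvature gives $|\nabla d|\le1$ and a distance-like function with bounded Hessian (Tam's or Shi's exhaustion), one has $|\nabla\phi_R|\le C/R$. Differentiate, integrate by parts the divergence and Laplacian terms, and absorb the resulting $|\nabla S|^2$ terms with the good term coming from $\Delta$. This should yield
\[
  \mathcal E'\le C\mathcal E+\frac{C}{R}\,(\text{bounded quantities}).
\]
Letting $R\to\infty$ and applying Gronwall with $\mathcal E(t_0)=0$ gives $\mathcal E\equiv0$, hence $h=0$.

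The main obstacle is twofold. First, justifying the finiteness of the unweighted integrals and the $R\to\infty$ limit on a complete noncompact $N$. For this I would use exponential-decay weights $e^{-d^2}$-type instead of compact cutoffs if volume growth is only bounded via Bishop--Gromov. Second, the singular time weights near $t_0$, which require $A=O((t-t_0)^{1/2})$; this I would obtain by integrating the $\partial_tA$ identity against the Shi bounds. In Lean, I expect the distance-function regularity on noncompact limits to be the costliest interface.
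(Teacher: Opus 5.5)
The paper gives no proof of this statement. It records it as classical background (Chen--Zhu), leaves the ``standard hypotheses'' unspecified, and says it lies off the checked proof path. The only formal uniqueness result, \leanref{ricci_flow_forward_unique}, covers the closed-manifold case, using the same curvature--connection-difference energy you start from. So there is no paper argument to match.

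What you propose is Kotschwar's energy proof. Compared with Chen--Zhu's route through the harmonic map heat flow into a complete noncompact target, it never has to solve an auxiliary PDE on $N$. Its local part is sound:
\begin{itemize}
\item the $(h,A,S)$ system;
\item the divergence form of the $\tilde\nabla A*\widetilde{\Rm}$ and $h*\tilde\nabla^2\widetilde{\Rm}$ terms;
\item absorption of $(t-t_0)^{-\beta}\langle A,\tilde\nabla S\rangle$ by $-2|\nabla S|^2$ and by the good term $-\beta(t-t_0)^{-\beta-1}|A|^2$;
\item $\mathcal E\to0$ as $t\downarrow t_0$, from $|h|=O(t-t_0)$ and $|A|=O((t-t_0)^{1/2})$, once $0<\alpha<2$ and $0<\beta<1$.
\end{itemize}
The time weights must be powers of $t-t_0$, not of $t$.

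\textbf{The localization step fails as you wrote it.}
\begin{itemize}
\item With a compact cutoff, integration by parts produces errors such as $\int|S|\,|\nabla S|\,|\nabla\phi_R|$ and $\int|S|\,|U|\,|\nabla\phi_R|$. At best these are $\eps\int|\nabla S|^2\phi_R+CR^{-1}\int_{B_{2R}\setminus B_R}(|S|^2+|U|^2)$.
\item The densities are only known to be bounded, not integrable. Bounded curvature gives only the Bishop--Gromov bound $\Vol B_{2R}\le Ce^{cR}$.
\item So your ``bounded quantities'' can be of size $R^{-1}e^{cR}$, and Gronwall followed by $R\to\infty$ gives nothing.
\item Your fallback, a static $e^{-d^2}$ weight, does not close either. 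Its gradient $2d\,e^{-d^2}$ produces $\int d^2|S|^2e^{-d^2}$, which is not controlled by $C\mathcal E$.
\end{itemize}

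\textbf{A weight that works.} Use $e^{-\lambda\rho}$, with:
\begin{itemize}
\item $\rho$ a smoothed distance function for the common initial metric;
\item $|\nabla\rho|\le C$ uniformly in $t$, by metric equivalence;
\item $\lambda$ larger than the exponential volume-growth rate.
\end{itemize}
Then every integral is finite. The cutoff boundary terms now vanish as $R\to\infty$, which justifies the integrations by parts. The weight-gradient errors are bounded by $\eps\int|\nabla S|^2e^{-\lambda\rho}$ plus $C(1+\lambda^2)\mathcal E$, with the $U$-contributions handled as in the compact case. This gives $\mathcal E'\le C\mathcal E$, and Gronwall closes. A time-dependent Gaussian $e^{-\eta}$ with $\eta=\rho^2/(a(T-t))$ also works, taking $a$ large enough that $\partial_t\eta\ge C|\nabla\eta|^2$.

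\textbf{Shi's estimate is not already available.} On a complete noncompact flow, the bound $|\nabla\Rm|\le C(t-t_0)^{-1/2}$ is Shi's theorem. The project's maximal-flow layer does not supply it: that layer uses global Bando--Shi estimates on closed manifolds. The noncompact case needs Shi's local estimates or a maximum principle on complete manifolds.
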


Indeed, the checked theorem \leanref{round_at_zero_of_smooth_cgh} uses a shorter terminal-slice
argument.  Smooth pointed convergence transfers the basepoint normalization
\[
  R(g_\infty(0))(x_\infty)=1,
\]
while improved pinching gives \(\Ric^\circ(g_\infty(0))=0\).  Thus the terminal
slice is Einstein.  On the connected boundaryless limit, the contracted Bianchi
identity makes its scalar curvature spatially constant; the basepoint value
then makes that constant positive.  No scalar strong maximum principle,
backward propagation, or complete-flow forward uniqueness is needed for this
endpoint.  Those results remain useful expository background, not deferred
inputs of the current theorem.
\subsection{Preservation of Ricci positivity and Ricci pinching}

The next step applies the maximum-principle layer to the three-dimensional
curvature algebra.  There are two related but distinct preservation statements.
The nonnegative-Ricci endpoint is the \(\del=0\) preservation result for
\(S=\Ric\).  The shifted pinching theorem is the strict tensor WMP application
for
\[
  S=\Ric-\del Rg,\qquad 0<\del<1/3.
\]
The distinction is important in the Lean organization: the nonnegative case is
packaged separately, while the strict shifted theorem uses the first-null
algebra available only below the boundary value \(\del=1/3\).

\begin{lemma}[Preservation of nonnegative Ricci curvature in dimension $3$]
\label{lem:preserve-ricci-nonnegative}
Let $g(t)$ be a Ricci flow on a closed three-manifold. If
\[
\Ric(g(0))\ge0,
\]
then
\[
\Ric(g(t))\ge0
\]
for all times for which the flow exists.
\end{lemma}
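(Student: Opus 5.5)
We apply Theorem~\ref{thm:hamilton-tensor-wmp} with $S=\Ric$, so that the hypothesis $\Ric(g(0))\ge0$ is exactly the initial condition $S(\cdot,0)\ge0$. By Lemma~\ref{lem:evol-ricci},
\[
(\partial_t-\Delta)\Ric_{ij}=2R_{ik\ell j}\Ric^{k\ell}-2\Ric_i{}^k\Ric_{kj}.
\]
So $\Ric$ satisfies the hypothesis of the tensor weak maximum principle with drift $X=0$ and the algebraic reaction $N_{ij}(S,g)=2R_{ik\ell j}S^{k\ell}-2S_i{}^kS_{kj}$. Here $\Rm$ is replaced, via Lemma~\ref{lem:3d-curvature-identities}, by the expression in $S$ and $g$ obtained from the three-dimensional Riemann-from-Ricci formula. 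This substitution is what makes $N$ a fiberwise algebraic (indeed polynomial, hence smooth and locally Lipschitz) function of $(S,g)$, as Theorem~\ref{thm:hamilton-tensor-wmp} requires.

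The main step is the null-eigenvector condition. Suppose $S\ge0$ at a point and $S(v,v)=0$ with $v\ne0$. Diagonalize $S$ in a $g$-orthonormal frame $e_1,e_2,e_3$ with eigenvalues $\lambda_1,\lambda_2,\lambda_3\ge0$, and take $v=e_1$ with $\lambda_1=0$. Then $S_1{}^kS_{k1}=\lambda_1^2=0$. In the eigenbasis, $R_{1k\ell1}S^{k\ell}=\sum_k \lambda_k R_{1kk1}=\sum_k\lambda_kK_{1k}$, where the sectional curvatures are those of Lemma~\ref{lem:3d-curvature-identities}. This gives
\[
N(v,v)=2\bigl(\lambda_2K_{12}+\lambda_3K_{13}\bigr)
=\lambda_2(\lambda_2-\lambda_3)+\lambda_3(\lambda_3-\lambda_2)
=(\lambda_2-\lambda_3)^2\ge0,
\]
using $\lambda_1=0$. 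The null-eigenvector condition therefore holds, and Theorem~\ref{thm:hamilton-tensor-wmp} yields $\Ric(g(t))\ge0$ on every compact subinterval. Exhausting the existence interval by such subintervals gives the statement for all times of existence.

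The main obstacle is the bookkeeping of the curvature slot conventions. Lemma~\ref{lem:evol-ricci} warns that $R_{ik\ell j}$ must be read as $R_{kij\ell}$; with the opposite ordering the sign of the key term flips, and the computation above fails. I would first verify the sign on the round sphere, where $\Ric=2g$ and $N=8g-8g=0$, as a consistency check. I would then state the null computation as a pure $\operatorname{Fin}3$ eigenvalue identity, following the pattern of the curvature algebra, and transfer it to the geometric fiber through the orthonormal-frame bridge.

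A secondary obstacle is that the tensor weak maximum principle is stated on a closed time interval with full regularity, whereas the flow may only be defined on $[0,T)$. I would handle this by applying the principle on each $[0,T']$ with $T'<T$, where the regularity package of \texttt{IsSolutionOn} supplies the smoothness hypotheses.
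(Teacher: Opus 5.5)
Your proposal is correct and follows essentially the same route as the paper: you apply the tensor weak maximum principle to $S=\Ric$ with the reaction term from Lemma~\ref{lem:evol-ricci}, and you verify the null-eigenvector condition through the three-dimensional identity $K_{ij}=(\lambda_i+\lambda_j-\lambda_k)/2$, which gives $N_{11}=(\lambda_2-\lambda_3)^2\ge0$. The only difference is packaging: the paper's Lean endpoint reaches this result through the nonnegative-$\delta$ pinching wrapper at $\delta=0$ rather than a standalone application, and that does not change the mathematics.
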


\begin{proof}[Lean-facing calculation]
The preservation route is project-local in the Ricci-flow evolution layer, in
\rffile{Geometry/Flow/RicciFlow/Preservation/RicciPinching.lean}.
The smooth-solution endpoint for the nonnegative case is
\leanref{ricci_nonnegative_of_closed_solution_wmp_data}, and the Hamilton endpoint consumes it through
\leanref{hamilton_rescaled_ricci_nonnegative}.

Lean applies the tensor weak maximum principle to \(S=\Ric\).  By
Lemma~\ref{lem:evol-ricci},
\[
(\partial_t-\Delta)\Ric_{ij}=N_{ij},
\qquad
N_{ij}=2R_{ik\ell j}\Ric^{k\ell}-2\Ric_i{}^k\Ric_{kj}.
\]
At a point, diagonalize \(\Ric\) with eigenvalues
\(\lambda_1,\lambda_2,\lambda_3\).  If \(\Ric\ge0\) and \(e_1\) is a null
eigenvector, then \(\lambda_1=0\), and the three-dimensional curvature
identity gives
\[
N_{11}=(\lambda_2-\lambda_3)^2\ge0.
\]
This is the direct null-eigenvector calculation for the nonnegative Ricci
preservation theorem.

For code reuse, the final endpoint \leanref{ricci_nonnegative_of_closed_solution_wmp_data} is
packaged through the nonnegative-\(\del\) wrapper
\leanref{pinch_sol_closed_nonneg} at \(\del=0\).  This wrapper uses the same
parabolic producer \leanref{pinchParabolic} and barrier regularity input
\leanref{pinchBarrierReg}, but it is not the strict shifted pinching theorem
\leanref{pinch_sol_closed} evaluated at \(\del=0\).
\end{proof}

\begin{lemma}[Preservation of Ricci pinching in dimension $3$]
\label{lem:preserve-ricci-pinching}
Let $g(t)$ be a Ricci flow on a closed three-manifold. Fix
\[
0<\del<\frac13.
\]

If
\[
\Ric(g(0))\ge \del R(g(0))g(0),
\]
then
\[
\Ric(g(t))\ge \del R(g(t))g(t)
\]
for all times for which the flow exists.
\end{lemma}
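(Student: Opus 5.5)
We apply the tensor weak maximum principle (Theorem~\ref{thm:hamilton-tensor-wmp}) to the symmetric $2$-tensor
\[
  S=\Ric-\del R\,g .
\]
The first step is to compute $(\partial_t-\Delta)S$. For this we use Lemma~\ref{lem:evol-ricci} for $\Ric$ and Lemma~\ref{lem:evol-scalar} for $R$. Since $\nabla g=0$, we have $\Delta(Rg)=(\Delta R)g$, and since $\partial_tg=-2\Ric$,
\[
  (\partial_t-\Delta)(Rg)=2\norm{\Ric}^2g-2R\Ric .
\]
Together these give
\[
  (\partial_t-\Delta)S_{ij}=N_{ij}:=2R_{ik\ell j}\Ric^{k\ell}-2\Ric_i{}^k\Ric_{kj}-2\del\norm{\Ric}^2g_{ij}+2\del R\Ric_{ij}.
\]
This has no gradient terms, so the drift is $X=0$. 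Moreover $N$ is a fiberwise algebraic expression in $(S,g)$: in dimension three, Lemma~\ref{lem:3d-curvature-identities} expresses $\Rm$ through $\Ric$ and $g$, and $\Ric$ is recovered from $S$ because $\tr S=(1-3\del)R$ and $\del<1/3$, so
\[
  \Ric=S+\frac{\del}{1-3\del}(\tr S)\,g .
\]
This is exactly where the strict bound $\del<1/3$ enters the formulation.

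The second step is the null-eigenvector condition. Fix a point where $S\ge0$ and $S(v,v)=0$. Diagonalize $\Ric$ in a $g$-orthonormal frame with eigenvalues $\lambda_1,\lambda_2,\lambda_3$, ordered so that $v=e_1$ and $\lambda_1=\del R$. Since $S\ge0$, we also have $\lambda_2,\lambda_3\ge\del R$. By the sectional-curvature formula of Lemma~\ref{lem:3d-curvature-identities},
\[
  2R_{1k\ell1}\Ric^{k\ell}=2(K_{12}\lambda_2+K_{13}\lambda_3)
  =(\lambda_1+\lambda_2-\lambda_3)\lambda_2+(\lambda_1+\lambda_3-\lambda_2)\lambda_3,
\]
which simplifies to $\lambda_1(\lambda_2+\lambda_3)+(\lambda_2-\lambda_3)^2$. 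Substituting into $N_{11}$ gives
\[
  N_{11}=(\lambda_2-\lambda_3)^2+\lambda_1(\lambda_2+\lambda_3)-2\lambda_1^2-2\del\textstyle\sum\lambda_i^2+2\del R\lambda_1 .
\]
We then substitute $\lambda_1=\del(\lambda_1+\lambda_2+\lambda_3)$, equivalently $(1-\del)\lambda_1=\del(\lambda_2+\lambda_3)$. The claim to verify is that the result is nonnegative whenever $\lambda_2,\lambda_3\ge\lambda_1$, and we aim for an expression of the form
\[
  N_{11}=(\lambda_2-\lambda_3)^2\cdot(\text{positive factor})+(\text{nonnegative multiple of }(\lambda_2-\lambda_1)(\lambda_3-\lambda_1)) .
\]
The reaction is homogeneous of degree two in the $\lambda_i$ and symmetric in $\lambda_2,\lambda_3$. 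The plan is to rewrite it in the variables $a=\lambda_2-\lambda_1\ge0$ and $b=\lambda_3-\lambda_1\ge0$. Solving the null relation gives $\lambda_1=\del(a+b)/(1-3\del)$, and $N_{11}$ becomes a quadratic form in $(a,b)$. It then suffices to check that this form is copositive on the quadrant $a,b\ge0$ by inspecting its coefficients for $0<\del<1/3$; this corresponds to the project lemmas \leanref{shiftNull3} and \leanref{shiftReact3_nonneg}. The degenerate case $R=0$ gives $\lambda_1=0$ and reduces to Lemma~\ref{lem:preserve-ricci-nonnegative}.

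The third step is to check the remaining hypotheses of Theorem~\ref{thm:hamilton-tensor-wmp} and conclude. The initial condition $S(\cdot,0)\ge0$ is the hypothesis of the lemma. Smoothness of $N$ in its fiber arguments follows from the algebraic formula above, and smoothness of $S$ comes from the solution package \leanref{IsSolutionOn}. Theorem~\ref{thm:hamilton-tensor-wmp} then gives $S(t)\ge0$, which is the claim.

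I expect the main obstacle to be the second step: the sign bookkeeping in $N_{11}$, and in particular the correct contribution $2\del R\Ric$ coming from $\partial_tg$, followed by the copositivity check. The realization of $N$ as a Lipschitz fiberwise function of $S$ is mostly bookkeeping once $\del<1/3$ is used as above. If the direct copositivity check is messy, I would instead prove the stronger strict statement (\leanref{strict_pinch_sol_lt}) and pass to the limit in the initial pinching.
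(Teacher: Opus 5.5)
Your proposal is correct and takes essentially the paper's route: apply the tensor weak maximum principle to $S=\Ric-\del Rg$ with reaction $N_{ij}-2\del(\norm{\Ric}^2g_{ij}-R\Ric_{ij})$, and check the null-eigenvector condition in a Ricci eigenframe using the three-dimensional sectional-curvature formula. If you finish your substitution, the terms $-2\lambda_1^2$ and $2\del R\lambda_1$ cancel, and the reaction collapses to the paper's closed form $\del^2(1-3\del)R^2+(1-\del)(\lambda_2-\lambda_3)^2$, which in your variables is $(1-\del)(a-b)^2+\frac{\del^2}{1-3\del}(a+b)^2$; this is positive definite (not merely copositive) for $0<\del<1/3$, so neither the separate $R=0$ case nor the strict-initial-data fallback is needed.
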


\begin{proof}[Lean-facing calculation]
This is the strict shifted WMP application for
\[
S=\Ric-\del Rg,
\qquad 0<\del<1/3.
\]
The solution-level endpoint is \leanref{pinch_sol_closed} in
\rffile{Geometry/Flow/RicciFlow/Preservation/RicciPinching.lean}.  Strict initial positivity
is supplied by the wrappers \leanref{pinch_init_sol_lt} and
\leanref{strict_pinch_sol_lt}.  The parabolic producer and barrier regularity
inputs are \leanref{pinchParabolic} and \leanref{pinchBarrierReg}.

Using Lemmas~\ref{lem:evol-ricci} and~\ref{lem:evol-scalar}, together with
\(\partial_tg=-2\Ric\) and \(\Delta g=0\), one obtains
\[
(\partial_t-\Delta)S_{ij}
=
N_{ij}
-2\del\left(\norm{\Ric}^2g_{ij}-R\Ric_{ij}\right),
\]
where \(N\) is the Ricci reaction tensor from the preceding nonnegative Ricci
calculation.  At a null eigenvector of \(S\), diagonalize \(\Ric\) and write
\(\lambda_1=\del R\).  The formalized first-null reaction is

\[
  \del^2(1-3\del)R^2
  +(1-\del)(\lambda_2-\lambda_3)^2.
\]
This expression is nonnegative for \(0<\del<1/3\).

The algebraic stack for this first-null calculation is
\leanref{shiftScal3_eq}, \leanref{shiftNull3},
\leanref{pinchShiftNull_ge}, \leanref{shiftReact3_nonneg},
\leanref{shiftNAt}, \leanref{shiftNRaw}, and
\leanref{shiftNRaw_null_symm}.  Together with the parabolic and regularity
producers, it feeds \leanref{PinchFlowWMPData.ofShiftNClosed}, which supplies
the tensor-WMP data used by \leanref{pinch_sol_closed}.

The boundary value \(\del=1/3\) corresponds to a rigidity boundary, but that
endpoint is not needed for the preservation statement used in Hamilton's proof
and is not asserted here as a closed Lean result.
\end{proof}

\begin{corollary}[Strict positivity gives uniform pinching]
\label{cor:strict-positive-gives-pinching}
If $M^3$ is closed and $\Ric(g_0)>0$, then there exists $\del>0$, depending
only on $g_0$, such that
\[
\Ric(g_0)\ge\del R(g_0)g_0.
\]
Consequently, along the Ricci flow from $g_0$,
\[
\Ric(g(t))\ge\del R(g(t))g(t).
\]
\end{corollary}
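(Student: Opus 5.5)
The plan is a compactness argument at $t=0$, followed by Lemma~\ref{lem:preserve-ricci-pinching}.

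\textbf{Initial pinching.} Let $SM=\{(x,v): x\in M,\ v\in T_xM,\ g_0(v,v)=1\}$ be the unit sphere bundle of $g_0$. Since $M$ is closed, $SM$ is compact. Both functions $(x,v)\mapsto \Ric(g_0)_x(v,v)$ and $x\mapsto R(g_0)(x)$ are continuous, so we may set
\[
  m:=\min_{SM}\Ric(g_0)(v,v)>0,\qquad \Lambda:=\max_M R(g_0)>0,
\]
where $m>0$ uses the hypothesis $\Ric(g_0)>0$ and $\Lambda>0$ because $R$ is a trace of a positive form. Put $\del:=\min\{m/(2\Lambda),\,1/6\}$, which depends only on $g_0$ and satisfies $0<\del<1/3$. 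For a unit vector $v$ we then get
\[
  \Ric(v,v)\ge m>\del\Lambda\ge \del R.
\]
Extending by homogeneity gives $\Ric(g_0)-\del R(g_0)g_0\ge0$, and in fact the inequality is strict on nonzero vectors. This strict form is what the wrappers \leanref{pinch_init_sol_lt} and \leanref{strict_pinch_sol_lt} consume.

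\textbf{Propagation.} Since $0<\del<1/3$, Lemma~\ref{lem:preserve-ricci-pinching} (formally \leanref{pinch_sol_closed}) applies to the Ricci flow from $g_0$. It yields $\Ric(g(t))\ge\del R(g(t))g(t)$ on the whole existence interval, and the constant $\del$ is unchanged.

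\textbf{Main obstacle.} I expect the hard part to be the compactness step in the formal setting rather than the mathematics. The difficulty is that $SM$ is a subset of the total space of the tangent bundle. Continuity of $(x,v)\mapsto\Ric_x(v,v)$ has to be extracted from smoothness of the metric Ricci tensor through bundle trivializations, and compactness of the unit bundle needs local triviality together with compactness of $M$.

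I would first try to avoid the sphere bundle altogether. One route is to work chartwise with the Gram matrices on a finite cover by trivialization base sets, taking the minimal eigenvalue of $g_0^{-1}\Ric$ relative to $g_0$. This function is continuous on each closed chart piece, and positivity follows from positive definiteness. The alternative route argues by contradiction: take a sequence of unit vectors with $\Ric(v_k,v_k)\to0$, then pass to a convergent subsequence of base points and of coordinate vectors in a fixed chart near the limit point. Either route gives a positive uniform lower bound, after which the remaining steps are the routine algebra above.
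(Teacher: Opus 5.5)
Your proof is correct and follows the paper's route: compactness of $M$ yields a positive initial pinching constant below $1/3$, and Lemma~\ref{lem:preserve-ricci-pinching} then propagates it along the flow. The one difference is cosmetic. You bound $\min_{SM}\Ric(g_0)$ and $\max_M R(g_0)$ separately, whereas the paper minimizes the pointwise ratio $\min_{|v|=1}\Ric_{g_0}(v,v)/R(g_0)$; your choice gives a cruder but equally valid $\del$, with the strict initial inequality built in.
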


\begin{proof}[Lean-facing calculation]
Compactness of \(M\) and continuity of the least Ricci-to-scalar ratio supply a
uniform initial constant.  Mathematically, since \(\Ric(g_0)>0\), the function
\[
x\mapsto \min_{\norm{v}_{g_0}=1}\frac{\Ric_{g_0}(v,v)}{R(g_0)}
\]
is positive and continuous on the compact manifold \(M\).  Its minimum is
therefore positive.  Reducing this minimum if necessary gives a constant
\(\del\) with \(0<\del<1/3\) and
\[
\Ric(g_0)\ge \del R(g_0)g_0.
\]
The shifted pinching preservation theorem then gives the asserted inequality
for all later times on which the flow exists.

In Lean, the initial pinching selector is represented by the
\leanref{PinchInitLt} family.  The wrappers \leanref{pinch_init_wmp_lt} and
\leanref{strict_pinch_sol_lt} package the selected strict constant and feed it
to the shifted WMP endpoint.
\end{proof}

\subsection{The improved Ricci pinching quantity}

Hamilton's preserved lower pinching inequality \(\Ric \ge \del Rg\) is not
by itself enough to force the blow-up limit to be Einstein.  The improved
pinching estimate supplies the missing decay.  It bounds the scale-sensitive
quantity
\[
  \frac{\norm{\Ric^\circ}^2}{R^2}
  \le C R^{-\eps},
\]
so that after parabolic rescaling at points where \(R\to\infty\), the
trace-free Ricci tensor disappears in the limit \cite{MR664497}.

\subsubsection{Definition and evolution of the pinching quantity}

\begin{definition}[Trace-free Ricci tensor]
\label{def:tracefree-ricci}
In dimension $3$, define
\[
\Ric^\circ
=
\Ric-\frac13Rg.
\]
Then
\[
\norm{\Ric^\circ}^2
=
\norm{\Ric}^2-\frac13R^2.
\]
\end{definition}

\begin{definition}[Hamilton's improved pinching quantity]
\label{def:pinching-P}
Assume $R>0$ and fix $0<\eps<1$. Define
\[
P
=
\frac{\norm{\Ric}^2-\frac13R^2}{R^{2-\eps}}
=
\frac{\norm{\Ric^\circ}^2}{R^{2-\eps}}.
\]
\end{definition}

The quantity \(P\) is scale-invariant only in the limiting case
\(\eps=0\).  The nonzero exponent is intentional: it is precisely what
produces the decay factor \(R_i^{-\eps}\) after parabolic blow-up at points
where the scalar curvature scale \(R_i\) tends to infinity.

\begin{definition}[The quartic reaction polynomial $Q$]
\label{def:Q}
In dimension $3$, define
\[
Q
:=
2\norm{\Ric}^4
+R^4
-5R^2\norm{\Ric}^2
+4R\tr(\Ric^3),
\]
where
\[
\tr(\Ric^3)=\Ric_i{}^j\Ric_j{}^k\Ric_k{}^i.
\]
Every term is homogeneous of degree four in the Ricci eigenvalues.  Some Lean
identifiers retain the historical word \texttt{Cubic}; that name refers to the
\(\tr(\Ric^3)\) contribution rather than to the degree of $Q$.

\end{definition}

\begin{lemma}[Evolution of the trace-free Ricci norm]
\label{lem:evol-tracefree-ricci-norm}
In dimension $3$, along Ricci flow and wherever $R>0$,
\[
(\partial_t-\Delta)\norm{\Ric^\circ}^2
=
-2\norm{\nabla\Ric}^2
+\frac23\norm{\nabla R}^2
+
\frac{4\norm{\Ric}^2\norm{\Ric^\circ}^2-2Q}{R}.
\]
\end{lemma}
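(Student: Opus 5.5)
The plan is to reduce the statement to the two evolution equations already proved and one pointwise algebraic identity in dimension three. By Definition~\ref{def:tracefree-ricci}, $\norm{\Ric^\circ}^2=\norm{\Ric}^2-\frac13R^2$, so I will compute $(\partial_t-\Delta)$ of each term separately.

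First, Lemma~\ref{lem:evol-ricci-norm} gives
\[
(\partial_t-\Delta)\norm{\Ric}^2=-2\norm{\nabla\Ric}^2+4R_{ik\ell j}\Ric^{ij}\Ric^{k\ell}.
\]
Next, Lemma~\ref{lem:evol-scalar} together with the chain rule $\Delta R^2=2R\Delta R+2\norm{\nabla R}^2$ gives
\[
(\partial_t-\Delta)R^2=4R\norm{\Ric}^2-2\norm{\nabla R}^2.
\]
Subtracting one third of the second identity from the first produces the gradient terms $-2\norm{\nabla\Ric}^2+\frac23\norm{\nabla R}^2$ exactly as stated. It leaves the zero-order part
\[
4R_{ik\ell j}\Ric^{ij}\Ric^{k\ell}-\tfrac43R\norm{\Ric}^2 .
\]
Multiplying through by $R>0$ and comparing with $4\norm{\Ric}^2\norm{\Ric^\circ}^2-2Q$, after expanding $Q$ from Definition~\ref{def:Q}, reduces the claim to the pointwise identity
\[
R_{ik\ell j}\Ric^{ij}\Ric^{k\ell}
=
-\tfrac12R^3+\tfrac52R\norm{\Ric}^2-2\tr(\Ric^3).
\]
This is where the hypothesis $R>0$ enters: it is needed only to divide by $R$.

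To prove this identity I will work at a point in an orthonormal frame diagonalizing $\Ric$, with eigenvalues $\lambda_1,\lambda_2,\lambda_3$. In that frame only the terms with $i=j$ and $k=\ell$ survive, so the left side becomes
\[
\sum_{i\ne k}K_{ik}\lambda_i\lambda_k .
\]
Lemma~\ref{lem:3d-curvature-identities} then gives the left side as
\[
\sum_{i<k}(\lambda_i+\lambda_k-\lambda_m)\lambda_i\lambda_k ,
\]
where $\{i,k,m\}=\{1,2,3\}$. Both sides are now symmetric cubic polynomials in the $\lambda$'s. I will verify equality by expressing each side in terms of the power sums $p_1=R$, $p_2=\norm{\Ric}^2$ and $p_3=\tr(\Ric^3)$. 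As a sanity check, when $\lambda_1=\lambda_2=\lambda_3=\lambda$ both sides equal $3\lambda^3$.

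The main obstacle is bookkeeping rather than ideas. The slot convention of $R_{ik\ell j}$ must match the one fixed in Lemma~\ref{lem:evol-ricci}, since the opposite ordering flips the sign of the trace. The diagonal-frame reduction must also be phrased invariantly: in Lean I would state the cubic identity for the abstract $\operatorname{Fin}3$ curvature array. That array is first obtained from the Riemann-from-Ricci formula, and only afterwards specialized to the realized tensor. This avoids a spectral-theorem step inside the evolution proof.
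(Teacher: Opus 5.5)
Your proposal is correct and follows essentially the paper's route: split $\norm{\Ric^\circ}^2=\norm{\Ric}^2-\frac13R^2$, take the heat and gradient terms from the Ricci-norm and scalar evolutions, and reduce the reaction to the three-dimensional Riemann-from-Ricci algebra. Your key identity $R_{ik\ell j}\Ric^{ij}\Ric^{k\ell}=-\frac12R^3+\frac52R\norm{\Ric}^2-2\tr(\Ric^3)$ is right, and it also follows by substituting the Riemann-from-Ricci formula directly in any orthonormal frame, which confirms your remark that no diagonalization step is needed.
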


\begin{proof}[Lean-facing calculation]
The textbook-facing endpoint is \leanref{trace_free_ricci_norm_sq_heat_equation_of_smooth_solution} in
\rffile{Geometry/Flow/RicciFlow/Preservation/Pinching/SolutionEvolution.lean}; the
smooth-solution wrapper is \leanref{trace_free_ricci_norm_sq_heat_equation_of_solution}.  The canonical scalar field
for the trace-free Ricci norm is \leanref{traceFreeRicciNormSq}.  The proof begins with
\[
\norm{\Ric^\circ}^2=\norm{\Ric}^2-\frac13R^2.
\]
The Ricci-norm evolution and scalar-curvature evolution from the previous
sections provide the heat terms.  Differentiating the displayed identity gives
the gradient contribution
\[
-2\norm{\nabla\Ric}^2+\frac23\norm{\nabla R}^2.
\]
The remaining reaction term is then rewritten using the three-dimensional
Riemann-from-Ricci and eigenvalue algebra.  In the textbook-facing form this
reaction is exactly
\[
\frac{4\norm{\Ric}^2\norm{\Ric^\circ}^2-2Q}{R}.
\]
\end{proof}

\begin{lemma}[Quotient evolution identity]
\label{lem:quotient-evolution}
Let $\varphi$ and $\psi$ be smooth space--time functions with $\psi>0$, and work
on a region where $\varphi>0$. For constants $\alpha,\beta$, one has
\begin{align*}
(\partial_t-\Delta)\left(\frac{\varphi^\alpha}{\psi^\beta}\right)
&=
\alpha\frac{\varphi^{\alpha-1}}{\psi^\beta}(\partial_t-\Delta)\varphi
-
\beta\frac{\varphi^\alpha}{\psi^{\beta+1}}(\partial_t-\Delta)\psi
\\
&\quad
-
\alpha(\alpha-1)\frac{\varphi^{\alpha-2}}{\psi^\beta}\norm{\nabla\varphi}^2
-
\beta(\beta+1)\frac{\varphi^\alpha}{\psi^{\beta+2}}\norm{\nabla\psi}^2
\\
&\quad
+
2\alpha\beta\frac{\varphi^{\alpha-1}}{\psi^{\beta+1}}
\ip{\nabla\varphi}{\nabla\psi}.
\end{align*}
In the special case \(\alpha=1\) used below, the formal theorem only needs
\(\varphi\ge0\) instead of \(\varphi>0\); the
\(\norm{\nabla\varphi}^2\) term vanishes.
\end{lemma}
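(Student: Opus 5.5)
The plan is to derive the identity from the chain rule, the product rule, and the product rule for the Laplacian. Write $f=\varphi^\alpha\psi^{-\beta}$ and treat it as $F(\varphi,\psi)$ with $F(a,b)=a^\alpha b^{-\beta}$. This $F$ is smooth on $\{a>0,\ b>0\}$. When $\alpha=1$ it is in fact linear in $a$, hence smooth on $\{a\ge0,\ b>0\}$. That is exactly why the special case only needs $\varphi\ge0$.

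First I compute the time derivative by the chain rule:
\[
\partial_t f=F_a\,\partial_t\varphi+F_b\,\partial_t\psi,
\qquad
F_a=\alpha\varphi^{\alpha-1}\psi^{-\beta},
\qquad
F_b=-\beta\varphi^\alpha\psi^{-\beta-1}.
\]
Next I compute the spatial gradient, $\nabla f=F_a\nabla\varphi+F_b\nabla\psi$. Taking the divergence gives the second-order chain rule
\[
\Delta f=F_a\Delta\varphi+F_b\Delta\psi+F_{aa}\norm{\nabla\varphi}^2+2F_{ab}\ip{\nabla\varphi}{\nabla\psi}+F_{bb}\norm{\nabla\psi}^2 .
\]
I would justify this pointwise, using $\Delta u=g^{ij}\nabla_i\nabla_j u$ and the fact that the Hessian of $F\circ(\varphi,\psi)$ is computed by the ordinary multivariable chain rule. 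Equivalently, one can take the trace in an orthonormal frame.

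Subtracting the two formulas gives
\[
(\partial_t-\Delta)f=F_a(\partial_t-\Delta)\varphi+F_b(\partial_t-\Delta)\psi-F_{aa}\norm{\nabla\varphi}^2-2F_{ab}\ip{\nabla\varphi}{\nabla\psi}-F_{bb}\norm{\nabla\psi}^2 .
\]
The remaining step is to substitute the second partials:
\[
F_{aa}=\alpha(\alpha-1)\varphi^{\alpha-2}\psi^{-\beta},\qquad
F_{ab}=-\alpha\beta\varphi^{\alpha-1}\psi^{-\beta-1},\qquad
F_{bb}=\beta(\beta+1)\varphi^\alpha\psi^{-\beta-2}.
\]
These produce exactly the five displayed terms, including the sign $+2\alpha\beta$ on the cross term. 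For $\alpha=1$ we have $F_{aa}=0$, so the $\norm{\nabla\varphi}^2$ term drops out and no negative power of $\varphi$ ever appears.

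I expect the main obstacle to be the second-order chain rule for $\Delta$ rather than the algebra, especially in a formal setting. The cleanest route is to expand $\Delta(\varphi^\alpha\psi^{-\beta})$ by the Leibniz rule
\[
\Delta(uv)=u\Delta v+v\Delta u+2\ip{\nabla u}{\nabla v}
\]
with $u=\varphi^\alpha$ and $v=\psi^{-\beta}$. One then applies the one-variable rule
\[
\Delta(h\circ\varphi)=h'(\varphi)\Delta\varphi+h''(\varphi)\norm{\nabla\varphi}^2
\]
to each factor.

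The first-order time derivative at an initial time is handled one-sidedly, in the same way as elsewhere in the paper. Regularity of $f$ on the region $\varphi>0$ (or $\varphi\ge0$ when $\alpha=1$), $\psi>0$ follows from the smoothness of $F$ there.
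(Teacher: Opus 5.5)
Your proposal is correct and follows essentially the same route as the paper. You write the quotient as $F(\varphi,\psi)$ with $F(a,b)=a^\alpha b^{-\beta}$, apply the first-order chain rule in time and the second-order chain rule $\Delta F(\varphi,\psi)=F_a\Delta\varphi+F_b\Delta\psi+F_{aa}\norm{\nabla\varphi}^2+2F_{ab}\ip{\nabla\varphi}{\nabla\psi}+F_{bb}\norm{\nabla\psi}^2$, substitute the second partials (your sign $+2\alpha\beta$ on the cross term, from $F_{ab}=-\alpha\beta\varphi^{\alpha-1}\psi^{-\beta-1}$, checks out), and dispose of the $\alpha=1$ case via $F_{aa}=0$, exactly as the paper does.
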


\begin{proof}[Lean-facing calculation, with scope note]
The quotient calculus layer is project-local in
\rffile{Geometry/Flow/RicciFlow/Preservation/Pinching/QuotientEvolution.lean}.
The theorem \leanref{quotHeat} proves the positive-region
arbitrary-exponent identity.  The Hamilton-ready theorem
\leanref{quotHeat1_of_nonneg} is the \(\alpha=1\) specialization, where the
numerator only has to be nonnegative because
\[
  \alpha(\alpha-1)\varphi^{\alpha-2}\norm{\nabla\varphi}^2=0
  \qquad (\alpha=1).
\]
Thus the later pinching proof does not rely on an unproved arbitrary-exponent
nonnegative-numerator quotient theorem.

The proof differentiates
\(F(\varphi,\psi)=\varphi^\alpha\psi^{-\beta}\) and uses the product and chain
rules for the realized parabolic operator:
\[
\Delta(F(\varphi,\psi))
=F_\varphi\Delta\varphi+F_\psi\Delta\psi
+F_{\varphi\varphi}\norm{\nabla\varphi}^2
+2F_{\varphi\psi}\ip{\nabla\varphi}{\nabla\psi}
+F_{\psi\psi}\norm{\nabla\psi}^2.
\]
The theorem \leanref{quotHeatDiv} is the display-form wrapper that rewrites the
result in the usual quotient notation.
\end{proof}

\begin{lemma}[Evolution of Hamilton's pinching quantity]
\label{lem:evol-pinching-P}
Let $g(t)$ be a Ricci flow on a closed three-manifold, and assume $R>0$. For
$0<\eps<1$, the quantity
\[
P=\frac{\norm{\Ric^\circ}^2}{R^{2-\eps}}
\]
satisfies
\begin{align}
\partial_tP
&=
\Delta P
+
\frac{2(1-\eps)}{R}\ip{\nabla R}{\nabla P}
\nonumber\\
&\quad
-
\frac{2}{R^{4-\eps}}
\norm{R\nabla\Ric-\nabla R\otimes\Ric}^2
-
\frac{\eps(1-\eps)}{R^{4-\eps}}
\norm{\Ric^\circ}^2\norm{\nabla R}^2
\nonumber\\
&\quad
+
\frac{2}{R^{3-\eps}}
\left(
\eps\norm{\Ric}^2\norm{\Ric^\circ}^2-Q
\right).
\label{eq:evol-pinching-P}
\end{align}
\end{lemma}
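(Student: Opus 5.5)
Write $\varphi=\norm{\Ric^\circ}^2\ge0$ and $\psi=R>0$, so that $P=\varphi/\psi^{2-\eps}$. Apply the quotient identity of Lemma~\ref{lem:quotient-evolution} with $\alpha=1$ and $\beta=2-\eps$. This is exactly the nonnegative-numerator case \leanref{quotHeat1_of_nonneg}, so no positivity of $\varphi$ is needed. It gives
\[
(\partial_t-\Delta)P=\frac{(\partial_t-\Delta)\varphi}{R^{2-\eps}}-(2-\eps)\frac{\varphi\,(\partial_t-\Delta)R}{R^{3-\eps}}-(2-\eps)(3-\eps)\frac{\varphi\norm{\nabla R}^2}{R^{4-\eps}}+2(2-\eps)\frac{\ip{\nabla\varphi}{\nabla R}}{R^{3-\eps}}.
\]
Next I would substitute the two heat equations already available. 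Lemma~\ref{lem:evol-tracefree-ricci-norm} gives $(\partial_t-\Delta)\varphi$, and Lemma~\ref{lem:evol-scalar} gives $(\partial_t-\Delta)R=2\norm{\Ric}^2$. The zero-order terms combine as
\[
\frac{4\norm{\Ric}^2\varphi-2Q}{R^{3-\eps}}-\frac{2(2-\eps)\norm{\Ric}^2\varphi}{R^{3-\eps}}=\frac{2(\eps\norm{\Ric}^2\varphi-Q)}{R^{3-\eps}},
\]
which is already the reaction term of \eqref{eq:evol-pinching-P}.

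The remaining work is to reorganize the gradient terms. I would trade $\nabla\varphi$ for $\nabla P$ using
\[
\nabla P=\frac{\nabla\varphi}{R^{2-\eps}}-(2-\eps)\frac{\varphi\nabla R}{R^{3-\eps}}.
\]
I split the cross term $2(2-\eps)\ip{\nabla\varphi}{\nabla R}/R^{3-\eps}$ into a part $\frac{2(1-\eps)}{R}\ip{\nabla R}{\nabla P}$ plus a remainder. That remainder is $\frac{2}{R^{3-\eps}}\ip{\nabla\varphi}{\nabla R}$ plus explicit multiples of $\varphi\norm{\nabla R}^2/R^{4-\eps}$.

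Then I expand the square
\[
\norm{R\nabla\Ric-\nabla R\otimes\Ric}^2=R^2\norm{\nabla\Ric}^2-R\ip{\nabla\norm{\Ric}^2}{\nabla R}+\norm{\Ric}^2\norm{\nabla R}^2.
\]
Here I use $\ip{\nabla\Ric}{\nabla R\otimes\Ric}=\tfrac12\ip{\nabla\norm{\Ric}^2}{\nabla R}$. I also rewrite $\nabla\varphi=\nabla\norm{\Ric}^2-\tfrac23R\nabla R$ and $\norm{\Ric}^2=\varphi+\tfrac13R^2$.

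Collecting terms in the three quantities $\norm{\nabla\Ric}^2$, $\ip{\nabla\norm{\Ric}^2}{\nabla R}$ and $\norm{\nabla R}^2$ (the last split into its $\varphi$ part and its $R^2$ part) should then match them one by one.
\begin{itemize}
\item The $-2\norm{\nabla\Ric}^2/R^{2-\eps}$ term is exactly the leading term of $-\frac{2}{R^{4-\eps}}\norm{R\nabla\Ric-\nabla R\otimes\Ric}^2$.
\item The $\frac23\norm{\nabla R}^2/R^{2-\eps}$ term must cancel against the $\tfrac13R^2\norm{\nabla R}^2$ piece of the square and the $-\tfrac23R\nabla R$ piece of $\nabla\varphi$.
\item What remains of the $\varphi\norm{\nabla R}^2$ coefficients must sum to $-\eps(1-\eps)$.
\end{itemize}
A quick check of that last coefficient: $-(2-\eps)(3-\eps)+2(2-\eps)^2-2(2-\eps)+2=-\eps+\eps^2$, which is consistent.

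The main obstacle is this coefficient bookkeeping in the gradient terms, which is routine but error-prone. In Lean I would expose it as a pure ring identity. The identity would be stated in the scalar unknowns $a=\norm{\nabla\Ric}^2$, $b=\ip{\nabla\norm{\Ric}^2}{\nabla R}$, $c=\norm{\nabla R}^2$, together with $R$, $\varphi$ and $\eps$, and discharged by \texttt{field\_simp} and \texttt{ring}. It would be separated from the geometric identities, which supply the gradient of $\norm{\Ric}^2$ and the norm of the mixed tensor $R\nabla\Ric-\nabla R\otimes\Ric$ (cf.\ \leanref{ricciGradCoupleSq}, \leanref{ricciMixed_eq_gradNorm}).

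Finally, since $R>0$, the powers $R^{-\eps}$ are smooth, so all the chain-rule steps are legitimate pointwise on the closed manifold.
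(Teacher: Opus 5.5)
Your proposal is correct and follows the paper's route. You apply the $\alpha=1$ quotient identity with $\varphi=\norm{\Ric^\circ}^2$, $\psi=R$, $\beta=2-\eps$, substitute the trace-free Ricci and scalar heat equations, and complete the gradient terms into $\norm{R\nabla\Ric-\nabla R\otimes\Ric}^2$ via $\ip{\nabla\Ric}{\nabla R\otimes\Ric}=\tfrac12\ip{\nabla\norm{\Ric}^2}{\nabla R}$; your zero-order reaction and the $-\eps(1-\eps)$ coefficient both check out.
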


\begin{proof}[Lean-facing calculation]
The textbook-facing wrapper is \leanref{pinch_quotient_evolution_of_heat_equations}.  The solution-level
assembly theorem \leanref{pinch_quotient_evolution_of_solution_data} supplies the raw quotient setup,
quotient regularity, and positivity data from the solution package, and
\leanref{pinch_quotient_evolution_of_tensor_sections} removes manual Ricci-section inputs.  The calculation
applies the \(\alpha=1\) quotient identity with
\[
  \varphi=\norm{\Ric^\circ}^2,
  \qquad
  \psi=R,
  \qquad
  \beta=2-\eps.
\]
The heat equations for \(\varphi\) and \(\psi\) are the trace-free Ricci norm
evolution and the scalar-curvature evolution.

The gradient terms are then completed into the square
\[
  \norm{R\nabla\Ric-\nabla R\otimes\Ric}^2.
\]
In Lean this square is represented by \leanref{ricciGradCoupleSq}.  The mixed
term needed for the square completion is related to the gradients of
\(\norm{\Ric}^2\) and \(\norm{\Ric^\circ}^2\) by
\leanref{ricciMixed_eq_gradNorm} and \leanref{ricciMixed_eq_tfGrad}.  After
these gradient rewrites, the remaining zero-order reaction is the last line of
\eqref{eq:evol-pinching-P}.  This is a quotient-calculus and tensor-algebra
producer, not an additional maximum-principle argument.
\end{proof}

\subsubsection{The algebraic estimate for \texorpdfstring{$Q$}{Q}}

\begin{lemma}[Factorization of \texorpdfstring{$Q$}{Q} in Ricci eigenvalues]
\label{lem:Q-factorization}
Let $\lambda_1,\lambda_2,\lambda_3$ be the eigenvalues of $\Ric$ at a point,
and set $R=\lambda_1+\lambda_2+\lambda_3$. Then

\[
Q
=
\sum_{\substack{\{i,j,k\}=\{1,2,3\}\\ i<j}}
(\lambda_i-\lambda_j)^2(R-2\lambda_k)^2.
\]
Equivalently,
\begin{align*}
Q
&=
(\lambda_1-\lambda_2)^2(R-2\lambda_3)^2
+(\lambda_1-\lambda_3)^2(R-2\lambda_2)^2
\\
&\quad
+(\lambda_2-\lambda_3)^2(R-2\lambda_1)^2.
\end{align*}
\end{lemma}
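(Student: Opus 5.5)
The plan is to reduce the identity to a polynomial identity in three real variables and then verify it by symmetric-function bookkeeping.

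\textbf{Reduction to eigenvalues.} At the given point, $\Ric$ is symmetric with respect to $g$, so the spectral theorem supplies a $g$-orthonormal basis $e_1,e_2,e_3$ in which $\Ric=\operatorname{diag}(\lambda_1,\lambda_2,\lambda_3)$. In this basis the invariants entering Definition~\ref{def:Q} become power sums:
\[
R=p_1,\qquad \norm{\Ric}^2=p_2,\qquad \tr(\Ric^3)=p_3,
\qquad p_m:=\lambda_1^m+\lambda_2^m+\lambda_3^m .
\]
This uses that traces and norms formed with $g$ are basis-independent. Hence
\[
Q=2p_2^2+p_1^4-5p_1^2p_2+4p_1p_3,
\]
and the lemma becomes an identity between two polynomials in $\lambda_1,\lambda_2,\lambda_3$.

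\textbf{Symmetric-function argument.} Both sides are symmetric homogeneous quartics. The right-hand side is symmetric because permuting the indices permutes its three summands. The space of symmetric quartics in three variables has dimension $4$, with basis $e_1^4$, $e_1^2e_2$, $e_2^2$, $e_1e_3$ in the elementary symmetric polynomials. It therefore suffices to express both sides in this basis, for instance using Newton's identities $p_2=e_1^2-2e_2$ and $p_3=e_1^3-3e_1e_2+3e_3$. Equivalently, one can check agreement at four points whose evaluation vectors on this basis are linearly independent. Two sanity checks already match:
\begin{itemize}
\item At $(1,0,0)$: $Q=2+1-5+4=2$, and the right-hand side is $1+1+0=2$.
\item At $(1,1,0)$: $Q=8+16-40+16=0$, and the right-hand side is $0+0+0=0$.
\end{itemize}
Adding, say, $(1,1,1)$ (both sides vanish) and $(1,-1,0)$ would complete a determining set, provided the determinant check succeeds. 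The direct expansion is safer and is what I would actually carry out. Writing $R-2\lambda_k=\lambda_i+\lambda_j-\lambda_k$ makes each summand a product of two explicit quadratic squares, and the expansion is a finite computation.

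\textbf{Formal implementation and main obstacle.} In Lean I would first state the identity for arbitrary reals $\lambda_1,\lambda_2,\lambda_3$, in the spirit of \leanref{hamiltonCubicQ3} and \leanref{hamiltonCubicQ3_factorized}, and close it with \texttt{ring} after unfolding $R$, $p_2$, $p_3$. The pointwise lemma then follows by rewriting the tensor invariants through the eigenbasis.

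The main obstacle is not the algebra but this transfer step. One must establish $\norm{\Ric}^2=\sum\lambda_i^2$ and $\tr(\Ric^3)=\sum\lambda_i^3$ for the $g$-self-adjoint endomorphism $\Ric^\sharp$. That requires a $g$-orthonormal diagonalizing frame at the point, obtained from the spectral theorem on the fiber inner-product space, and compatibility of the project's contraction definitions with frame sums. I would first prove frame-independence of these traces in an arbitrary orthonormal frame, and then specialize to the eigenframe.
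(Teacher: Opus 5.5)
Your proposal is correct and is essentially the paper's proof: you write $R$, $\norm{\Ric}^2$, $\tr(\Ric^3)$ as the power sums $p_1,p_2,p_3$ of the eigenvalues and verify the resulting three-variable quartic identity by direct expansion, which in Lean is a \texttt{ring} closure comparing \leanref{hamiltonCubicQ3} with \leanref{hamiltonCubicQFactorized3}. Your optional interpolation check also goes through: both sides take the values $2,0,0,8$ at $(1,0,0),(1,1,0),(1,1,1),(1,-1,0)$, and the evaluation matrix of $e_1^4,e_1^2e_2,e_2^2,e_1e_3$ at these four points is nonsingular.
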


\begin{proof}[Lean-facing algebra]
The factorization is \leanref{hamiltonCubicQ3_factorized} in
\rffile{Geometry/Curvature/DimensionThree/PinchingAlgebra.lean}.  Lean defines both
\leanref{hamiltonCubicQ3} and its factorized form
\leanref{hamiltonCubicQFactorized3}.  After expanding in the three Ricci
eigenvalues, the identity is a finite three-variable polynomial identity:
\[
Q=2\norm{\Ric}^4+R^4-5R^2\norm{\Ric}^2+4R\tr(\Ric^3),
\]
where
\[
\norm{\Ric}^2=\sum_i\lambda_i^2,
\qquad
\tr(\Ric^3)=\sum_i\lambda_i^3.
\]
\end{proof}

\begin{lemma}[Algebraic lower bound for $Q$]
\label{lem:Q-lower-bound}
Suppose $R>0$ and
\[
\Ric\ge\del Rg
\]
for some $\del>0$. Then
\[
Q\ge 2\del^2\norm{\Ric}^2\norm{\Ric^\circ}^2.
\]
\end{lemma}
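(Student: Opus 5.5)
The plan is to reduce the statement to a polynomial inequality in the Ricci eigenvalues, using the factorization of Lemma~\ref{lem:Q-factorization}, and to keep all three squared terms of that factorization, because a cruder estimate loses the constant $2$. At a point, diagonalize $\Ric$ in a $g$-orthonormal frame with eigenvalues ordered $\lambda_1\ge\lambda_2\ge\lambda_3$. The hypothesis $\Ric\ge\del Rg$ gives $\lambda_3\ge\del R>0$, so every eigenvalue is positive. Introduce the nonnegative gaps
\[
  a=\lambda_1-\lambda_2,\qquad b=\lambda_2-\lambda_3,\qquad m=\lambda_3 .
\]
The three linear factors of the factorization then become
\[
  R-2\lambda_3=m+a+2b,\qquad R-2\lambda_2=m+a,\qquad R-2\lambda_1=m+b-a .
\]
Hence
\[
  Q=a^2(m+a+2b)^2+(a+b)^2(m+a)^2+b^2(m+b-a)^2 .
\]
On the other side, $\norm{\Ric^\circ}^2=\tfrac13\Sigma$ with $\Sigma:=a^2+b^2+(a+b)^2$. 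Also $\del\le m/R$, since $\del R\le\lambda_3$. Consequently it suffices to prove the homogeneous, $\del$-free inequality
\[
  3R^2\,Q\;\ge\;2m^2\norm{\Ric}^2\,\Sigma ,
\]
for $m,a,b\ge0$, where $R=3m+2b+a$ and $\norm{\Ric}^2=\lambda_1^2+\lambda_2^2+\lambda_3^2$.

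The second step is to split this inequality using $R^2\ge\norm{\Ric}^2$. That bound holds because all eigenvalues are positive: $R^2-\norm{\Ric}^2=2\sum_{i<j}\lambda_i\lambda_j\ge0$. It is therefore enough to show the stronger, simpler estimate
\[
  Q\;\ge\;\tfrac23\,m^2\,\Sigma ,
\]
which contains no $\norm{\Ric}^2$ at all.

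The first two terms of $Q$ are handled directly. Since $a,b\ge0$, we have $(m+a+2b)^2\ge m^2$ and $(m+a)^2\ge m^2$. So those two terms contribute at least $m^2\bigl(a^2+(a+b)^2\bigr)$.

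The third term is where the naive proof breaks, and I expect it to be the main obstacle. Its factor $m+b-a$ can vanish or become negative when $\lambda_1$ is large. If that term is simply dropped, one only recovers the constant $\tfrac32$, because $b^2\le(a+b)^2$ then has to absorb a full $b^2$. The plan is to treat it by cases.

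In the case $a\le b$, we have $m+b-a\ge m$. All three terms are then at least $m^2$ times their gap squares, which gives $Q\ge m^2\Sigma\ge\tfrac23m^2\Sigma$.

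In the case $a>b$, drop the third term. Use instead the surplus in the second term,
\[
  (a+b)^2(m+a)^2\ge(a+b)^2(m^2+2ma+a^2),
\]
together with $b<a$. The target then reduces to
\[
  m^2a^2+m^2(a+b)^2\ge\tfrac23m^2\bigl(a^2+b^2+(a+b)^2\bigr),
\]
that is, $\tfrac13a^2+\tfrac13(a+b)^2\ge\tfrac23b^2$. This holds because $a\ge b$ and $a+b\ge b$.

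If the clean case split should fail at a boundary configuration, the fallback is the following. Verify the full polynomial inequality $3R^2Q-2m^2\norm{\Ric}^2\Sigma\ge0$ on the orthant $m,a,b\ge0$ by expanding it and exhibiting it as a sum of monomials with nonnegative coefficients plus explicit squares. In Lean this is a \texttt{nlinarith}/\texttt{positivity} certificate, supplied with the products $m\,a$, $m\,b$, $a\,b$ and the squares above as hints.

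Finally, the argument is transported back to tensors. The unordered-eigenvalue form follows from the ordered one by symmetry of both sides under permutation of $(\lambda_1,\lambda_2,\lambda_3)$. The pointwise frame statement then yields the tensor inequality through the same eigenvalue realization used for Lemma~\ref{lem:Q-factorization}.
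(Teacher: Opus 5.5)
Your reduction is sound and parallels the paper's structure: order the eigenvalues, use $\del\le m/R$ and $\norm{\Ric}^2\le R^2$, and reduce to a bound $Q\ge c\,m^2\Sigma$. The gap is an arithmetic slip in the third linear factor, and your case split depends on it. With $\lambda_1=m+b+a$, $\lambda_2=m+b$, $\lambda_3=m$ one gets $R-2\lambda_1=\lambda_2+\lambda_3-\lambda_1=m-a$, not $m+b-a$. Hence
\[
  Q=a^2(m+a+2b)^2+(a+b)^2(m+a)^2+b^2(m-a)^2 .
\]
Your case $a>b$ is unaffected, since it discards the third term. But your case $a\le b$ rests on $m+b-a\ge m$. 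The correct requirement, $(m-a)^2\ge m^2$, fails whenever $0<a<2m$. At $m=a=b=1$ (eigenvalues $3,2,1$) the third term is $0$, not at least $b^2m^2$. So that case is unproved. The fallback polynomial expansion is only announced, and it would have to be redone with the corrected $Q$.

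The repair is your ``surplus'' idea, applied to the third term rather than in the other case, and it makes the case split unnecessary:
\[
  b^2(m-a)^2+(a+b)^2(m+a)^2-m^2\bigl(b^2+(a+b)^2\bigr)
  =a^2b^2+(a+b)^2(a^2+2am)-2amb^2\ge0,
\]
because $(a+b)^2\ge b^2$. Together with $a^2(m+a+2b)^2\ge m^2a^2$, this gives $Q\ge m^2\Sigma=3m^2\norm{\Ric^\circ}^2$ for all $a,b,m\ge0$.

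This is precisely what the paper proves, via the identity $Q-m^2\Sigma=2a^2(a^2+3ab+2am+3b^2+4bm)$. It then concludes $Q\ge3\del^2R^2\norm{\Ric^\circ}^2\ge2\del^2\norm{\Ric}^2\norm{\Ric^\circ}^2$. The bound $Q\ge m^2\Sigma$ is stronger than your target $\tfrac23m^2\Sigma$ (it even yields the constant $3$ in place of $2$), so your reduction goes through with room to spare once this step is fixed.
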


\begin{proof}[Lean-facing algebra]
The lower bound is proved in
\rffile{Geometry/Curvature/DimensionThree/PinchingAlgebra.lean}.  The ordered-eigenvalue theorem
is \leanref{hamiltonCubicQ3_lower_bound_ordered_nonnegative_eigenvalues}.  The
Hamilton-ready nonnegative reaction form is \leanref{PinchEigen3.q_sub_nonneg},
and the unordered version is \leanref{PinchEigen3Unordered.q_sub_nonneg}.

Order the eigenvalues as
\[
  \lambda_1\ge\lambda_2\ge\lambda_3
\]
and set
\[
  z=\lambda_3,
  \qquad
  b=\lambda_2-\lambda_3,
  \qquad
  a=\lambda_1-\lambda_2.
\]
Then \(a,b,z\ge0\),
\[
  \lambda_1=z+b+a,
  \qquad
  \lambda_2=z+b,
  \qquad
  \lambda_3=z.
\]
Using the factorization of \(Q\), Lean proves the polynomial identity
\begin{align*}
&Q-z^2\left(a^2+(a+b)^2+b^2\right) \\
&\qquad
=
2a^2\left(a^2+3ab+2az+3b^2+4bz\right)
\ge0.
\end{align*}
Thus
\[
  Q\ge z^2\left(a^2+(a+b)^2+b^2\right).
\]
Since
\[
\norm{\Ric^\circ}^2
=
\frac13\left(
(\lambda_1-\lambda_2)^2
+(\lambda_1-\lambda_3)^2
+(\lambda_2-\lambda_3)^2
\right),
\]
this gives \(Q\ge3z^2\norm{\Ric^\circ}^2\).  The pinching hypothesis implies
\(z\ge\del R\), and nonnegativity of the eigenvalues gives
\(\norm{\Ric}^2\le R^2\).  Therefore
\[
Q
\ge
3\del^2 R^2\norm{\Ric^\circ}^2
\ge
2\del^2\norm{\Ric}^2\norm{\Ric^\circ}^2.
\]
The flow-facing bridge \leanref{cubicQ_pinchOn} translates this eigenvalue
inequality into the \(cubicQAt\) notation used by the pinching evolution.
\end{proof}

\begin{corollary}[Improved Ricci pinching estimate]
\label{cor:improved-ricci-pinching}
Let $g(t)$ be a Ricci flow on a closed three-manifold with $\Ric(g_0)>0$.
Then there exist constants $\eps>0$ and $C<\infty$, depending only on $g_0$,
such that
\[
\frac{\norm{\Ric^\circ}^2}{R^{2-\eps}}\le C
\]
for all times for which the flow exists.
Equivalently,
\[
\frac{\norm{\Ric^\circ}^2}{R^2}\le CR^{-\eps}.
\]
\end{corollary}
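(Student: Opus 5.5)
Apply the scalar weak maximum principle (Theorem~\ref{thm:scalar-wmp-sub}) to the quantity $P$ of Definition~\ref{def:pinching-P}, using the evolution equation \eqref{eq:evol-pinching-P} of Lemma~\ref{lem:evol-pinching-P}. First fix $\del$. Since $\Ric(g_0)>0$ and $M$ is compact, Corollary~\ref{cor:strict-positive-gives-pinching} gives $\del\in(0,1/3)$ with $\Ric(g_0)\ge\del R(g_0)g_0$. By Lemma~\ref{lem:preserve-ricci-pinching} this inequality persists as $\Ric(g(t))\ge\del R(g(t))g(t)$ for all $t$ in the existence interval. Also $R>0$ throughout, by Corollary~\ref{cor:scalar-lower-bound}, since $R(g_0)>0$. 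Hence $P$ is a well-defined smooth function on the whole space--time domain.

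Next choose $\eps\in(0,1)$ with $\eps\le2\del^2$. In \eqref{eq:evol-pinching-P} the two gradient-square terms are nonpositive and may be discarded. The reaction term is
\[
  \frac{2}{R^{3-\eps}}\bigl(\eps\norm{\Ric}^2\norm{\Ric^\circ}^2-Q\bigr).
\]
By Lemma~\ref{lem:Q-lower-bound} we have $Q\ge2\del^2\norm{\Ric}^2\norm{\Ric^\circ}^2\ge\eps\norm{\Ric}^2\norm{\Ric^\circ}^2$, so this term is also $\le0$. This leaves the subsolution inequality
\[
  \partial_tP\le\Delta P+\ip{X}{\nabla P},
  \qquad
  X=\frac{2(1-\eps)}{R}\nabla R,
\]
with $F\equiv0$. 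Set $C:=\max_M P(\cdot,0)$, which is finite by compactness and depends only on $g_0$, $\del$, $\eps$. Corollary~\ref{thm:scalar-wmp-sub} with the constant comparison solution $c\equiv C$ then gives $P\le C$ on every compact slab $M\times[0,T']$ with $T'$ less than the maximal time. Exhausting the half-open existence interval by such slabs yields the bound for all times, and dividing by $R^{\eps}>0$ gives the equivalent form $\norm{\Ric^\circ}^2/R^2\le CR^{-\eps}$.

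The main obstacle is the regularity bookkeeping for the maximum principle rather than the algebra. The drift $X$ involves $1/R$, so it is smooth only because $R$ is bounded below by a positive constant on each compact slab. One must also verify that $P$ meets the smoothness and continuity hypotheses of Theorem~\ref{thm:scalar-wmp-super} up to $t=0$ in the one-sided sense, and that the gradient-square terms are genuinely nonnegative as realized scalar fields. I would handle this first by restricting to closed slabs $[0,T']$, where positive lower bounds on $R$ and smoothness of $g$ are available, and only then pass to the union over $T'$. A secondary point is that Lemma~\ref{lem:Q-lower-bound} is stated with ordered eigenvalues, so the pointwise unordered form, together with the bridge $Q$-notation identification, must be invoked at each space--time point.
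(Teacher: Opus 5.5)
Your proposal is correct and follows the paper's proof essentially step for step. You use the preserved pinching $\Ric\ge\del Rg$ from Corollary~\ref{cor:strict-positive-gives-pinching}, a choice $0<\eps\le 2\del^2$ (the Lean proof takes $\eps=\min\{1/2,\del^2\}$), the $Q$ lower bound together with the completed-square terms to get the drifted subsolution inequality, and the scalar weak maximum principle on compact slabs with $C=\sup_M P(\cdot,0)$. The only difference is cosmetic: the formal proof applies the supersolution form of the maximum principle to $C-P$ via a sign-change identity, while you invoke the subsolution corollary directly, and these are equivalent.
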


\begin{proof}[Lean-facing calculation]
The endpoint file is
\rffile{Geometry/Flow/RicciFlow/Preservation/Pinching/Estimate.lean}.  The final solution-level theorem
is \leanref{exists_pinching_estimate_of_smooth_solution}.  It is a domain-aware scalar-WMP consumer over
the evolution identities and three-dimensional algebra above.  The Hamilton
endpoint packages the result as \leanref{hamiltonPinchingEstimate}; this is
produced by \leanref{hamilton_pinching_implies_pinch_estimate}.

By Corollary~\ref{cor:strict-positive-gives-pinching}, there exists
\(\del>0\), depending only on \(g_0\), such that
\[
\Ric(g(t))\ge\del R(g(t))g(t)
\]
for all times.  Choose \(0<\eps\le2\del^2\); in Lean this is implemented as
\[
  \eps=\min\{1/2,\del^2\},
\]
which gives \(0<\eps<1\) and \(\eps\le2\del^2\).  The algebraic lower bound
for \(Q\) then makes the zero-order term in \eqref{eq:evol-pinching-P}
nonpositive:
\[
  \eps\norm{\Ric}^2\norm{\Ric^\circ}^2-Q\le0.
\]
The two completed-square gradient terms in \eqref{eq:evol-pinching-P} are also
nonpositive.  Hence \(P\) satisfies the drifted scalar subsolution inequality
\[
\partial_tP
\le
\Delta P+
\frac{2(1-\eps)}{R}\ip{\nabla R}{\nabla P}.
\]

The Lean proof turns this into a maximum-principle estimate in several checked
consumer steps.  The theorem \leanref{pinchQuotient_parabolic_nonpos} proves
the drifted inequality for the quotient on each compact slab.  The theorem
\leanref{pinchQuot_slab_bound} applies the scalar weak maximum principle to
\(C-P\), using the sign-change identity \leanref{parabolic_const_sub}.
The initial bound is \leanref{pinchQuotient_initial_bound}.  The positivity and
regularity inputs for the quotient are \leanref{pinchQuotient_space_pos},
\leanref{pinchQuotient_grad_pos}, and the slab-continuity producer
\leanref{ricciNorm_slabCont}.  The resulting estimate is
\[
  P(\cdot,t)\le \sup_M P(\cdot,0)=:C,
\]
which is the displayed improved pinching inequality.
\end{proof}

\section{Maximal flow, finite-time singularity, and the blow-up sequence}
\label{sec:maximal-flow-blowup}

\subsection{Candidate segments, supremum gluing, and the maximal endpoint}

The formal maximal-flow argument has two logically separate layers.  First,
\leanref{exists_max_flow} constructs a finite maximal solution by taking the
supremum of the lifetimes of all compatible solution segments starting from one
fixed metric.  This construction uses the one-metric short-time theorem,
forward uniqueness, and a scalar-curvature upper bound for every candidate
lifetime.    Second,
\leanref{extends_of_rmBounded} uses class-uniform existence and an interior
restart to show that bounded curvature would extend the maximal solution.  The
second layer is what converts maximality into curvature unboundedness.

Fix a smooth three-dimensional metric \(g_0\) with positive scalar curvature.
For each \(T>0\), the structure \leanref{FlowTo} records a candidate segment on
\([0,T)\).  Besides a \leanref{SolutionOn} object and a proof of
\leanref{IsSolutionOn}, it stores the initial identity, joint chartwise
space--time smoothness on the closed-open interval, and the Ricci-flow equation
as a derivative within \(\operatorname{Ici}(0)\):
\begin{Verbatim}[breaklines=true,breakanywhere=true,fontsize=\small]
structure FlowTo (g0 : SmoothRiemannianMetric I M) (T : Real) where
  hT : 0 < T
  S : SolutionOn (I := I) (M := M)
    (RealTimeInterval.closedOpen 0 T hT)
  isSol : IsSolutionOn (I := I) S
  start : S.family.metric 0 = g0
  joint : forall (x0 : M)
      (i j : Fin (Module.finrank Real E)),
    ContMDiffOn
      ((modelWithCornersSelf Real Real).prod I)
      (modelWithCornersSelf Real Real) top
      (fun p : Real × M =>
        Integral.Measure.chartGramMatrix
          (I := I) (S.family.metric p.1) x0 p.2 i j)
      (Set.prod
        (Set.Ico 0 T)
        (trivializationAt E
          (TangentSpace I) x0).baseSet)
  pde : forall t,
    Membership.mem t (Set.Ico 0 T) ->
    forall x : M,
    forall v w : TangentSpace I x,
    HasDerivWithinAt
      (fun s : Real =>
        (S.family.metric s).inner x v w)
      ((-2 : Real) * ricciTensor
        (I := I) (S.family.metric t) x v w)
      (Set.Ici 0) t
\end{Verbatim}
The namespace parameters are shown because the interval and manifold are part
of the dependent solution type.  The two final fields are retained because
forward uniqueness and the later interval-changing arguments consume precisely
this one-sided initial-time regularity.  For a candidate \(P\), write
\(g_P(t):=\mathtt{P.S.family.metric}\,t\).

\begin{proposition}[Uniform bound for every candidate lifetime]
\label{prop:candidate-lifetime-bound}
Let \(M^3\) be closed and let \(g_0\) have positive scalar curvature.  Set
\[
  c_0:=\min_M R(g_0)>0.
\]
If \(P\) is any candidate in \textnormal{\leanref{FlowTo}}\((g_0,T)\), then
\[
  T\le \frac{3}{2c_0}.
\]
No maximality or curvature-blow-up assumption is used.
\end{proposition}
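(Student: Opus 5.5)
The plan is to apply the scalar lower bound of Corollary~\ref{cor:scalar-lower-bound} directly to the candidate segment $P$ on compact subslabs, and then argue by contradiction using only the finiteness of scalar curvature at each time of $[0,T)$. Suppose, for contradiction, that $T>\frac{3}{2c_0}$. The barrier $c(t)=c_0/(1-\frac23 c_0 t)$ then has its blow-up time $t_*=\frac{3}{2c_0}$ lying strictly inside $[0,T)$.

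\textbf{Step 1: bound on compact slabs.} For each $T'<\min(T,t_*)$, I restrict $P$ to the slab $M\times[0,T']$. The fields of \leanref{FlowTo} supply the needed regularity: the one-sided initial-time derivative, joint chart-Gram smoothness on $[0,T)$, and the solution package \leanref{IsSolutionOn}. From these I produce the scalar evolution $(\partial_t-\Delta)R=2\norm{\Ric}^2$ on the slab via \leanref{scalar_evolution_of_smooth_solution}, together with the regularity package \leanref{scalarRegOfSmooth}. The trace inequality $\norm{\Ric}^2\ge\frac13R^2$ gives $\partial_tR\ge\Delta R+\frac23R^2$. The comparison \leanref{scalar_curvature_lower_bound_of_scalarEvolution_initialMinimum}, using the initial minimum $c_0$ of $R(g_0)=R(g_P(0))$, then yields
\[
R_{g_P}(x,t)\ge \frac{c_0}{1-\frac23c_0t}\qquad\text{for } t\in[0,T'].
\]
Since $T'$ is arbitrary, this holds on $[0,t_*)$.

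\textbf{Step 2: contradiction at an interior time.} Now pick a point $x$ and consider $t\mapsto R_{g_P}(x,t)$ on the compact interval $[0,t_*]\subset[0,T)$. This function is continuous, because the curvature continuity fields of \leanref{IsSolutionOn} and the joint smoothness on $[0,T)$ provide it. It is therefore bounded on $[0,t_*]$. However, the lower barrier tends to $+\infty$ as $t\nearrow t_*$. This is exactly the situation handled by \leanref{scalar_endpoint_le_blowupTime_of_lower_barrier_bound}, which concludes that the endpoint is at most $t_*$. That contradicts $T>t_*$, so $T\le\frac{3}{2c_0}$. No maximality enters anywhere: the argument uses only the candidate's own interval.

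\textbf{Main obstacle.} I expect the hardest step to be the interface work in Step 1. The scalar weak maximum principle is stated on closed slabs $M\times[0,T']$ with derivatives taken within the time set. Meanwhile, \leanref{FlowTo} gives data on the half-open interval with derivatives within $\operatorname{Ici}(0)$. Each subslab must therefore be transported into a \leanref{SolutionOn} on a closed interval, for instance through \leanref{hamilton_scalar_weak_maximum_principle_regularity_on_interval}. We must also check that derivatives within $\operatorname{Ici}(0)$ restrict correctly to derivatives within $[0,T']$. This restriction is immediate at interior times and at $t=0$, and at $t=T'$ it is only a restriction of a two-sided derivative.
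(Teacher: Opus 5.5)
Your proof is correct and is essentially the paper's argument: use $g_P(0)=g_0$ to transport $c_0$, combine the scalar evolution with $\norm{\Ric}^2\ge\frac13R^2$ to get $(\partial_t-\Delta)R\ge\frac23R^2$, compare with the barrier $c_0/(1-\frac23c_0t)$ via the scalar weak maximum principle, and set the barrier's pole at $3/(2c_0)$ against the boundedness of $R$ on a compact slab inside $[0,T)$. The only difference is which wrapper closes the last step: the paper invokes \leanref{finiteTime3D}, whereas you cite \leanref{scalar_endpoint_le_blowupTime_of_lower_barrier_bound}, which encodes the same contradiction between a bounded scalar curvature and a barrier whose pole lies before the endpoint.
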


\begin{proof}[Lean-facing route]
This is \leanref{flow_end_le} in
\rffile{Geometry/Flow/RicciFlow/Estimates/FiniteTime/Solution.lean}.
The initial identity \(g_P(0)=g_0\) transports the fixed minimum \(c_0\) to the
candidate solution.  From \leanref{IsSolutionOn}, the proof obtains smoothness,
the scalar evolution equation
\[
  \partial_tR=\Delta R+2|\Ric|^2,
\]
and the three-dimensional trace estimate
\[
  |\Ric|^2\ge \frac13R^2.
\]
The scalar weak maximum principle compares \(R\) with
\[
  c(t)=\frac{c_0}{1-\frac23c_0t}.
\]
The pole of this barrier is \(3/(2c_0)\), and the theorem
\leanref{finiteTime3D} yields the stated bound.  Since \(c_0\) depends only on
the fixed initial metric, the same upper bound applies to every candidate
segment.  Thus the endpoint set used below is bounded before any maximal object
has been constructed.
\end{proof}

The seed theorem \leanref{flow_to_seed} applies the one-metric short-time
producer \leanref{ricci_flow_short_time_existence} to obtain at least one candidate.  The
compatibility theorem \leanref{flow_to_agree} applies
\leanref{ricci_flow_forward_unique}: if \(P\) and \(Q\) have endpoints \(T\)
and \(U\), then
\[
  g_P(t)=g_Q(t)
  \qquad\text{for }0\le t<\min\{T,U\},
\]
where \(g_P\) and \(g_Q\) denote the two candidates' metric families.
The equality here is equality of the metric families.  Since the connection,
Ricci tensor, and the other geometric fields in \leanref{SolutionOn} are
canonically derived from the metric, this is the compatibility needed for the
gluing construction.

\begin{theorem}[Finite maximal flow by supremum and local reconstruction]
\label{bb:maximal-rf-interval}
Let \(M^3\) be closed and nonempty, and let \(g_0\) have positive scalar
curvature.  Then there exist \(\omega>0\) and a Ricci-flow solution
\[
  g_{\max}(t),\qquad 0\le t<\omega,
\]
with \(g_{\max}(0)=g_0\), such that
\textnormal{\leanref{IsMaximalAtEndpoint}} holds: there is no solution on
\([0,\omega+\varepsilon)\), \(\varepsilon>0\), agreeing with it on
\([0,\omega)\).
\end{theorem}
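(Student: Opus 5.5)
Define $\omega$ as the supremum of the candidate lifetimes,
\[
  \omega:=\sup\{T>0 : \textnormal{\leanref{FlowTo}}(g_0,T)\ \text{is nonempty}\},
\]
and glue all candidates into one family. First, \leanref{flow_to_seed} shows the set is nonempty, so $\omega>0$. Proposition~\ref{prop:candidate-lifetime-bound} gives the uniform bound $T\le 3/(2c_0)$ for every candidate, so $\omega$ is finite.

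Next I construct $g_{\max}$ pointwise in time. For $0\le t<\omega$, choose a candidate $P$ with endpoint $T>t$ (possible by the definition of the supremum) and set $g_{\max}(t):=g_P(t)$. By \leanref{flow_to_agree}, which rests on \leanref{ricci_flow_forward_unique}, any two candidates agree on the common interval $[0,\min\{T,U\})$, so this value does not depend on the choice of $P$. In Lean I would define the family using \texttt{Classical.choose} on the existence of such a candidate, and set it to $g_0$ outside $[0,\omega)$. Then $g_{\max}(0)=g_0$ follows from the \texttt{start} field.

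Then I verify that $g_{\max}$ is itself a solution on $[0,\omega)$, i.e.\ that it yields a \leanref{FlowTo}$(g_0,\omega)$ object. Every required property is local in time. Near any $t<\omega$, fix one candidate $P$ with endpoint $T\in(t,\omega]$. On the relatively open set $[0,T)$, the glued family coincides with $g_P$. Hence the joint chart-Gram smoothness, the one-sided derivative condition (the \texttt{pde} field), and the regularity fields of \leanref{IsSolutionOn} transfer from $P$ by restriction to a neighborhood. Joint smoothness on $[0,\omega)\times U$ then follows because $\mathtt{ContMDiffOn}$ is local: the sets $[0,T)\times U$ form an open cover of it.

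I expect this transfer to be the main obstacle formally. \leanref{IsSolutionOn} is stated for a dependent interval type, so the gluing must rebuild the derived connection and curvature fields and show they agree with $P$'s on each subinterval. My first attempt would be a congruence lemma: two \leanref{SolutionOn} objects whose metrics agree on a neighborhood have the same derived fields there, because those fields are canonically computed from the metric.

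Finally, maximality (\leanref{IsMaximalAtEndpoint}) follows immediately. Suppose a solution on $[0,\omega+\varepsilon)$ agrees with $g_{\max}$ on $[0,\omega)$. That solution starts at $g_0$, so (after packaging it with the \leanref{FlowTo} fields, using \leanref{flow_to_extend}) it is a candidate with lifetime $\omega+\varepsilon>\omega$, contradicting the definition of the supremum. The hypothesis that $M$ is nonempty is used to make $c_0=\min_M R(g_0)$ well defined. Note that the agreement hypothesis is not even needed for this contradiction, because the mere existence of any longer candidate already violates the supremum.
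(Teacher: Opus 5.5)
Your proposal follows the paper's proof essentially step for step: the supremum of \leanref{FlowTo} lifetimes, a choice-based pointwise gluing made choice-independent by forward uniqueness, local transfer of the joint chart-Gram smoothness and one-sided PDE from a single candidate whose endpoint exceeds $t$, and \leanref{flow_to_extend} for maximality. The one implementation difference is that the paper needs no congruence lemma: it transfers only these metric-level facts and rebuilds \leanref{IsSolutionOn} with \leanref{solutionOn_of_joint}. Your closing note is wrong, however. Agreement on $[0,\omega)$ is exactly what gives the extension the initial value $g_0$. It also supplies the one-sided joint smoothness and PDE near $t=0$ that a \leanref{FlowTo} candidate must store but a bare \leanref{IsSolutionOn} extension need not provide, and that is why \leanref{flow_to_extend} consumes it.
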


\begin{proof}(Actual construction in \textnormal{\leanref{exists_max_flow}}.)
Define
\[
  \mathcal T
  :=\{T\in\mathbb R:\operatorname{Nonempty}(\mathtt{FlowTo}(g_0,T))\}.
\]
The seed theorem makes \(\mathcal T\) nonempty, and
Proposition~\ref{prop:candidate-lifetime-bound} makes it bounded above.  Lean
sets
\[
  \omega:=\sup\mathcal T.
\]
A seed endpoint \(T_0\in\mathcal T\) satisfies \(0<T_0\le\omega\), so
\(\omega>0\).

For each \(t\in[0,\omega)\), the defining property of the supremum gives a
candidate with endpoint strictly larger than \(t\).  The private structure
\leanref{FlowCover} packages such an endpoint \(T_t\), a candidate \(P_t\), and
the inequality \(t<T_t\).  After choosing one cover at every time, the metric
family is defined pointwise by
\[
  g_{\max}(t):=g_{P_t}(t)
  \qquad (0\le t<\omega),
\]
and is assigned the harmless value \(g_0\) outside this interval so that it is
a total Lean function.  The key comparison identity in the proof is the local statement
\texttt{hmax\_eq}: whenever \(Q\) is any candidate with endpoint \(U\) and \(t<U\),
\[
  g_{\max}(t)=g_Q(t).
\]
This follows from \leanref{flow_to_eq}, hence ultimately from forward
uniqueness, and makes the pointwise definition independent of every choice.

The proof does not take a union of the dependent \leanref{SolutionOn}
structures.  It glues only the metric family and reconstructs the solution
package locally.  Given a space--time point \((t,x)\) with \(t<\omega\), choose
the cover \(P_t\) with endpoint \(T_t>t\).  On the open time neighborhood
\(( -\infty,T_t)\), intersected with the maximal domain, \texttt{hmax\_eq}
identifies \(g_{\max}\) with this single candidate.  The candidate's
\leanref{FlowTo.joint} field therefore transfers joint chartwise smoothness to the
glued family.  Similarly, eventual equality in
\(\operatorname{nhdsWithin}(t,\operatorname{Ici}(0))\) transports the
candidate's \texttt{HasDerivWithinAt} statement and proves the Ricci-flow
PDE for \(g_{\max}\).  The constructor \leanref{solutionOn_of_joint} then
rebuilds \leanref{IsSolutionOn} and all of its derived geometric fields from
these metric-level facts.

It remains to prove maximality.  Suppose the resulting solution extended past
\(\omega\).  The theorem \leanref{flow_to_extend} repackages that extension as
a new \leanref{FlowTo} candidate with endpoint \(\omega+\varepsilon\).  In
this repackaging, times below \(\omega\) use the old candidate's one-sided
joint regularity and PDE, while times at or beyond \(\omega\) are positive and
therefore use the extended solution's interior regularity.  Agreement on the
old interval transports the relevant metric identities.  Hence
\(\omega+\varepsilon\in\mathcal T\), contradicting
\(\omega=\sup\mathcal T\) and \(\varepsilon>0\).  This proves
\leanref{IsMaximalAtEndpoint}.
\end{proof}

\begin{remark}[Why two-sided smoothness is the main difficulty]
The main difficulty is not the order-theoretic step of taking the supremum of
candidate lifetimes.  Short-time existence from a metric prescribed at time
\(s\) naturally gives a flow on a forward interval
\[
  [s,s+\varepsilon),
\]
with one-sided time regularity at \(s\).  This is the correct behavior at the
initial time \(0\), but an interior time \(s\in(0,\omega)\) of the maximal flow
must have a single smooth representative on a two-sided neighborhood
\[
  (s-\delta,s+\delta).
\]
Constructing unrelated forward solutions at individual times would not by
itself provide such a representative, and a direct piecewise definition would
leave a seam whose compatibility had not been proved.

The supremum construction avoids any explicit matching of time jets.  Given
\(s\in(0,\omega)\), the inequality \(s<\sup\mathcal T\) provides one candidate
with endpoint \(T>s\).  Choosing \(\delta>0\) with
\[
  (s-\delta,s+\delta)\subset(0,T)
\]
shows that this candidate is already smooth on the entire required
neighborhood.  Forward uniqueness, encoded by the local identity \texttt{hmax\_eq}, identifies
the pointwise glued family with that candidate throughout the neighborhood.
Thus two-sided smoothness comes from local identification with one segment
that extends beyond \(s\), not from separately gluing forward germs at \(s\).
At \(s=0\), the same argument deliberately retains only the one-sided
regularity stored in \leanref{FlowTo}.

The same asymmetry is decisive in the bounded-curvature continuation argument.
One restarts at a time \(t_*<\omega\), not at \(\omega\).  The old solution
supplies the past-hand flow, the restarted solution supplies the future-hand
flow, and forward uniqueness identifies them on the nonempty overlap
\([t_*,\omega)\).  Because the restarted lifetime crosses \(\omega\), the time
\(\omega\) is an interior positive time of the restarted solution and hence
has a genuine two-sided smooth neighborhood.  The final gluing theorem uses
this local restarted representation around \(\omega\); it does not prove
smoothness by matching derivatives across an isolated endpoint.  
\end{remark}

\subsection{Finite maximal time, bounded-curvature extension, and curvature blow-up}

The finite value of \(\omega\) has already been built into
Theorem~\ref{bb:maximal-rf-interval}: every candidate endpoint was bounded by
\(3/(2c_0)\) before the supremum was taken.  One
must also prove that a maximal solution cannot have bounded curvature.  This is the
point at which the class-uniform short-time theorem enters.

The interval-changing predicates are defined in
\rffile{Geometry/Flow/RicciFlow/Extension/MaximalTime.lean}.  The predicate
\leanref{ExtendsPastEndpoint} supplies \(\varepsilon>0\), a solution on
\([\alpha,\omega+\varepsilon)\), and agreement with the original solution on
\([\alpha,\omega)\).  The agreement package \leanref{SolutionAgreesOn}
records equality of the metric, connection, and Ricci fields.  Finally,
\leanref{IsMaximalAtEndpoint} is the negation of this extension predicate.

\begin{theorem}[Three-dimensional bounded-curvature extension criterion]
\label{bb:rf-extension-criterion}
Let \(S\) be a Ricci flow on a closed three-manifold over a finite interval
\([\alpha,\omega)\).  Suppose \(\mathcal R(t)\) is a time-dependent
lowered curvature field realizing the solution connection and
\[
  |\mathcal R(t)|^2_{g(t),x}\le K
  \qquad
  (\alpha\le t<\omega,\ x\in M)
\]
for one constant \(K\).  Then \textup{\leanref{ExtendsPastEndpoint}} holds: for some
\(\varepsilon>0\), \(S\) extends to a smooth Ricci flow on
\([\alpha,\omega+\varepsilon)\).
\end{theorem}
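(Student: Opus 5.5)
The plan is to follow the architecture already announced: bounded curvature gives a uniform class bound on the metrics $g(t)$, $t\in[\alpha,\omega)$. The class-uniform existence theorem (Theorem~\ref{bb:uniform-existence}) then gives one lifetime $\tau_0$ that works for all of them. Restarting at a time $t_*\in(\omega-\tau_0,\omega)$ produces a flow crossing $\omega$, and forward uniqueness glues it to the old flow.

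\textbf{Step 1 (uniform metric equivalence).} Fix the background $\bar g:=g(\alpha)$. From $|\mathcal R|^2\le K$ and dimension three we get $|\Ric|\le C(K)$. Integrating $\partial_t g=-2\Ric$ in time gives
\[
e^{-2C(\omega-\alpha)}\bar g\le g(t)\le e^{2C(\omega-\alpha)}\bar g
\]
(the interface \leanref{metricEquiv_of_ricBound}). This is where we use that $\mathcal R$ realizes the solution connection, so that its norm controls the genuine Ricci tensor.

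\textbf{Step 2 (derivative bounds).} Next we bound the $\bar g$-covariant derivatives of $g(t)$ through order three, uniformly in $t$. We first obtain Shi-type bounds $|\nabla^m\Rm|_{g(t)}\le C_m$ for $m\le 3$ on $[\alpha',\omega)$ with $\alpha'$ slightly above $\alpha$. These come from the evolution of $|\nabla^m\Rm|^2$ together with the scalar weak maximum principle (Theorem~\ref{thm:scalar-wmp-super}), applied to Shi's quantities $t^m|\nabla^m\Rm|^2+\dots$ and assembled from \leanref{hell_of_soln}, \leanref{ric_quad_le_of_soln}, and \leanref{shiCovBound_of_soln}. Because $g(\alpha)$ is smooth on a compact manifold, one could even avoid the $t$-weights. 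Then we write $\nabla-\bar\nabla=A(t)$ with $\partial_tA=g^{-1}*\nabla\Ric$, and integrate in time to bound $A$, $\bar\nabla A$, and $\bar\nabla^2A$. This converts $\bar\nabla^k g$ for $k\le 3$ into polynomial expressions in $A$, its derivatives, and the $g(t)$-curvature derivatives. The output is a single $\Lambda$ with $g(t)\in\mathcal G_3(\bar g,\Lambda)$ for all $t\in[\alpha',\omega)$ (\leanref{extendInputs_of_soln}).

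\textbf{Step 3 (restart and glue).} Let $\tau_0$ come from Theorem~\ref{bb:uniform-existence} for $(\bar g,\Lambda)$, and pick $t_*\in(\max(\alpha',\omega-\tau_0/2),\omega)$. The flow $\tilde g$ from $g(t_*)$ exists on $[t_*,t_*+\tau_0)$, and $t_*+\tau_0>\omega$. Forward uniqueness (\leanref{ricci_flow_forward_unique}) identifies $\tilde g$ with $g$ on $[t_*,\omega)$. We then define the extension as $g$ on $[\alpha,t_*]$ and $\tilde g$ on $[t_*,\omega+\varepsilon)$, with $\varepsilon=t_*+\tau_0-\omega$. Smoothness at $t_*$ is two-sided because $t_*<\omega$ is interior to the old flow and the two flows agree on an open set to its right. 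Smoothness at $\omega$ is also fine, since $\omega$ is interior to the restarted flow. We package this with \leanref{extend_construction_of_restart} and \leanref{solutionOn_of_joint}, and the result is exactly \leanref{ExtendsPastEndpoint}.

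\textbf{Main obstacle.} I expect Step 2 to be the hardest part: obtaining the class-$\mathcal G_3$ bound from a zeroth-order curvature bound. This needs the Shi derivative estimates with careful bookkeeping of the time-dependent metric, plus the conversion from $g(t)$-covariant derivatives to fixed-background derivatives. I would first try the unweighted version on $[\alpha,\omega)$, using smooth initial data. The argument would apply the maximum principle to $|\nabla\Rm|^2+B|\Rm|^2$, whose reaction terms close thanks to $\partial_t|\Rm|^2\le\Delta|\Rm|^2-2|\nabla\Rm|^2+C|\Rm|^3$, and then iterate to orders two and three.
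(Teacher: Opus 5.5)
Your proposal is correct and follows the paper's route in \leanref{extends_of_rmBounded} essentially step for step. A Ricci bound from the realized curvature bound gives uniform equivalence with \(g(\alpha)\), and moving Shi estimates give the order-three background bounds on a tail. The common lifetime \(\tau_0\) is fixed before the interior restart time \(t_*\in(\omega-\tau_0/2,\omega)\) is chosen, and forward uniqueness on \([t_*,\omega)\) justifies the gluing.

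The differences are cosmetic. You switch branches at \(t_*\) rather than at \(\omega\), which is equivalent because the two flows coincide on \([t_*,\omega)\). You also spell out the conversion to background derivatives via \(A=\nabla-\bar\nabla\), which the paper leaves inside \leanref{shiCovBound_of_soln}. One labeling slip: \leanref{hell_of_soln} is the metric-equivalence step, not part of the derivative bounds.
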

In the Hamilton application one takes \(\mathcal R=S.\mathtt{base.rm04}\),
whose realization property is \textup{\leanref{rm04Realizes_metric}}.

\begin{proof}[Actual route in \textup{\leanref{extends_of_rmBounded}}]
The proof has four stages.

\smallskip
\noindent\emph{1. Produce one controlled metric class near the endpoint.}
The squared-curvature bound and the realization theorem first give a quadratic
Ricci bound by \leanref{ric_quad_le_of_soln}.  Integrating
\[
  \partial_t\log g(t)(v,v)
  =-2\frac{\Ric_{g(t)}(v,v)}{g(t)(v,v)}
\]
then yields a uniform equivalence constant \(\Lambda_1\) between the tail
metrics and the fixed background \(g(\alpha)\); this is
\leanref{hell_of_soln}, based on \leanref{metricEquiv_of_ricBound}.  The same
curvature bound, together with the moving Shi estimates, gives a second
constant \(\Lambda_2\) and background-covariant derivative bounds through
order three,
\[
  |\nabla_{g(\alpha)}^{a}g(t)|\le \Lambda_2,
  \qquad 0\le a\le3,
\]
on a possibly shorter tail.  This is \leanref{shiCovBound_of_soln}.  The
producer \leanref{extendInputs_of_soln} packages these two conclusions.

\smallskip
\noindent\emph{2. Choose the common lifetime before choosing the restart
metric.}
Apply \leanref{ricci_flow_uniform_existence} with the fixed background
\(g(\alpha)\) and the single class constant
\[
  \Lambda:=\max\{\Lambda_1,\Lambda_2\}.
\]
It produces \(\tau_0>0\) before the restart metric varies in the controlled
class.  Only after \(\tau_0\) is fixed does
\leanref{ricci_flow_interior_restart} choose
\[
  t_*=
  \max\!\left\{
    \max\{t_1,t_2\},
    \max\!\left\{\omega-\frac{\tau_0}{2},
                  \frac{\alpha+\omega}{2}\right\}
  \right\}.
\]
Consequently
\[
  \alpha<t_*<\omega,
  \qquad
  \omega<t_*+\tau_0,
\]
and \(g(t_*)\) lies in the class for which the common lifetime was selected.
The uniform theorem therefore supplies a restarted Ricci flow \(r(u)\) on
\([0,\tau_0)\) with \(r(0)=g(t_*)\).

\smallskip
\noindent\emph{3. Identify the old and restarted flows on their overlap.}
Shift the restart to original time by \(u=t-t_*\).  The original family and
\(r(t-t_*)\) are both Ricci flows on \([t_*,\omega)\), have the same value at
\(t_*\), and satisfy the chartwise smoothness, continuity, and one-sided PDE
hypotheses required by \leanref{ricci_flow_forward_unique}.  Hence
\[
  r(t-t_*)=g(t)
  \qquad (t_*\le t<\omega).
\]
This overlap equality is the compatibility input for the gluing step.

\smallskip
\noindent\emph{4. Glue past the endpoint and reconstruct the solution package.}
Set
\[
  \varepsilon:=t_*+\tau_0-\omega>0
\]
and define, through \leanref{gluedFamily},
\[
  g_{\mathrm{ext}}(t)=
  \begin{cases}
    g(t),&t<\omega,\\
    r(t-t_*),&t\ge\omega.
  \end{cases}
\]
The formula is smooth near \(\omega\) because, on the whole overlap
\([t_*,\omega)\), both branches equal the same restarted flow.  The theorem
\leanref{extend_construction_of_restart} proves joint smoothness on the open
interior, continuity up to the left endpoint, and the Ricci-flow PDE on the
larger closed-open interval.  The constructor
\leanref{isSolutionOn_of_extendData} then builds the extended
\leanref{SolutionOn} object, and equality of the metric transports equality of
the derived connection and Ricci fields.  This gives
\leanref{ExtendsPastEndpoint}.
\end{proof}

\begin{lemma}[Curvature unboundedness at the maximal endpoint]
\label{lem:finite-time-curvature-blow-up}
Let \(S\) be the maximal flow produced above.  Then
\[
  \forall K\in\mathbb R\;\exists t\in[0,\omega)\;\exists x\in M,
  \qquad K<|\Rm(S(t))|_x^2.
\]
\end{lemma}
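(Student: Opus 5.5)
The plan is a proof by contradiction that combines Theorem~\ref{bb:maximal-rf-interval} with the extension criterion Theorem~\ref{bb:rf-extension-criterion}. Suppose the conclusion fails. Then there is a constant \(K\in\mathbb R\) with
\[
  |\Rm(S(t))|_x^2\le K\qquad\text{for all }t\in[0,\omega),\ x\in M.
\]
This is the only way to negate the nested quantifiers \(\forall K\,\exists t\,\exists x\), and it is the form in which the project would phrase \leanref{rmBounded_of_not_unbounded}. Note that \(K\ge0\) automatically, as long as \(M\) is nonempty; in any case the extension criterion only needs one real constant.

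Next I check the hypotheses of Theorem~\ref{bb:rf-extension-criterion} with \(\alpha=0\) and the finite endpoint \(\omega\) from Theorem~\ref{bb:maximal-rf-interval}. The curvature field is \(\mathcal R=S.\mathtt{base.rm04}\), and it realizes the solution connection by \leanref{rm04Realizes_metric}. The squared norm in the criterion must be the same quantity as \(|\Rm(S(t))|_x^2\) in the lemma. If the two are defined by different evaluators (the rm04 field versus the canonical metric curvature), I need a bridge identity showing that they agree pointwise. The realization property should give exactly this, because both are computed from the Levi--Civita connection of \(g(t)\).

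The criterion then yields \(\varepsilon>0\) and a solution on \([0,\omega+\varepsilon)\) agreeing with \(S\) on \([0,\omega)\). That is precisely \leanref{ExtendsPastEndpoint}. But \leanref{IsMaximalAtEndpoint}, established in Theorem~\ref{bb:maximal-rf-interval}, is defined as the negation of this predicate, so we have a contradiction. This is the content of \leanref{rmUnbounded_of_maximal}.

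I expect the main obstacle to be interface matching rather than mathematics. Three things must line up. First, the maximal solution's interval type must match what the extension predicate expects: the closed-open interval starting at \(0\), so that \(\alpha=0\). Second, the solution package passed to the criterion must be the same dependent object whose maximality was proved. Third, the norm conventions must agree, as discussed above. The substantive analytic work, namely the uniform existence, Shi bounds, restart and gluing, has already been absorbed into Theorem~\ref{bb:rf-extension-criterion}. I would therefore first state a small wrapper that converts the negated unboundedness into the uniform bound hypothesis of \leanref{extends_of_rmBounded}, and then close the proof by applying maximality to its output.
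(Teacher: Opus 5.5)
Your proposal is correct and follows essentially the same route as the paper: you negate the unboundedness to get a single global bound (\leanref{rmBounded_of_not_unbounded}), apply the extension criterion \leanref{extends_of_rmBounded} with $\mathcal R=S.\mathtt{base.rm04}$ (realized via \leanref{rm04Realizes_metric}) to obtain \leanref{ExtendsPastEndpoint}, and contradict \leanref{IsMaximalAtEndpoint}. The interface points you flag, namely the interval starting at $\alpha=0$, identity of the solution package, and agreement of the curvature norms, are exactly the bookkeeping the formal proof \leanref{rmUnbounded_of_maximal} handles.
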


\begin{proof}[Lean-facing assembly]
This is \leanref{rmUnbounded_of_maximal}.  If the quantified conclusion failed,
\leanref{rmBounded_of_not_unbounded} would produce one global squared-curvature
bound.  Theorem~\ref{bb:rf-extension-criterion}, implemented by
\leanref{extends_of_rmBounded}, would then give an extension past \(\omega\),
contradicting \leanref{IsMaximalAtEndpoint}.  The formal conclusion is the
quantified unboundedness statement above; no limit or limsup formulation is
needed by the downstream package.
\end{proof}

The Hamilton-facing theorem \leanref{hamilton_finite_time_flow_exists_on_closed_open} follows this
exact order.  Positive Ricci curvature first gives positive scalar curvature;
\leanref{exists_max_flow} constructs \((\omega,S_{\max})\) and its maximality;
\leanref{rmUnbounded_of_maximal} then proves curvature unboundedness; and only
then is \leanref{HamiltonFiniteTimeFlow} assembled.  The final package stores the time
interval, the smooth solution, its initial value, and the quantified curvature
unboundedness.  It does not retain the intermediate maximality witness.

At the proof-term level, the relevant dependency split is
\begin{center}
\small
\begin{minipage}{0.94\linewidth}
\raggedright
\leanref{ricci_flow_short_time_existence} \(+\) \leanref{flow_end_le} \(+\)
\leanref{ricci_flow_forward_unique}\\
\hfill \(\longrightarrow\) \leanref{exists_max_flow};\\[3pt]
\leanref{extendInputs_of_soln} \(+\) \leanref{ricci_flow_uniform_existence}
\(+\) \leanref{ricci_flow_forward_unique}\\
\hfill \(+\) \leanref{extend_construction_of_restart}
\(\longrightarrow\) \leanref{extends_of_rmBounded};\\[3pt]
\leanref{exists_max_flow} \(+\) \leanref{extends_of_rmBounded}\\
\hfill \(\longrightarrow\) \leanref{rmUnbounded_of_maximal}
\(\longrightarrow\) \leanref{hamilton_finite_time_flow_exists_on_closed_open}.
\end{minipage}
\end{center}

\begin{corollary}[Nonnegative Ricci controls full curvature in dimension $3$]
\label{cor:ricci-controls-rm}
There exists a universal constant $C_3$ such that on any three-dimensional
Riemannian manifold with $\Ric\ge0$,
\[
\norm{\Rm}\le C_3R.
\]
In particular, one may take a coarse constant such as $C_3=100$.
\end{corollary}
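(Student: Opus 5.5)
The plan is to reduce the statement to pointwise linear algebra at a single point $x$, using the three-dimensional curvature identity of Lemma~\ref{lem:3d-curvature-identities}. At $x$ I choose a $g$-orthonormal frame $e_1,e_2,e_3$ diagonalizing $\Ric$, with eigenvalues $\lambda_1,\lambda_2,\lambda_3$. The hypothesis $\Ric\ge0$ gives $\lambda_i\ge0$, and $R=\lambda_1+\lambda_2+\lambda_3$. Then each $\lambda_i\le R$, and the $\lambda_i$ together determine every component of $\Rm$ through the displayed Riemann-from-Ricci formula.

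Next I bound $\norm{\Rm}^2=\sum_{ijkl}R_{ijkl}^2$ in this frame. Substituting the diagonal Ricci tensor into the formula shows that the only nonzero components, up to the algebraic symmetries, are the sectional numerators
\[
K_{ij}=\tfrac12(\lambda_i+\lambda_j-\lambda_k).
\]
All mixed components vanish because every term of the formula contains a factor $\Ric_{ab}$ or $g_{ab}$ with $a\ne b$ in those cases. Each $K_{ij}$ appears exactly four times in the full sum, namely $R_{ijji},R_{jiij},R_{ijij},R_{jiji}$. Hence
\[
\norm{\Rm}^2=4\sum_{i<j}K_{ij}^2 .
\]
Since $\lambda_k\ge0$ and $\lambda_i+\lambda_j\le R$, we have $\abs{K_{ij}}\le\tfrac12 R$. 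This gives $\norm{\Rm}^2\le 4\cdot3\cdot\tfrac14R^2=3R^2$, so $\norm{\Rm}\le\sqrt3\,R\le 100R$, and $C_3=100$ is admissible with a large margin.

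A coarser argument avoids the vanishing analysis entirely. Every component in the orthonormal frame is a combination of at most eight terms, each bounded by $\tfrac32R$ using $\abs{\Ric_{ab}}\le R$ and $\abs{g_{ab}}\le1$. That gives $\abs{R_{ijkl}}\le 12R$ termwise, hence $\norm{\Rm}\le 81^{1/2}\cdot12R=108R$. This is slightly too big for the constant $100$, so I will use the sharper diagonal count.

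The main obstacle is the formal bridge rather than the estimate. One must realize $\norm{\Rm}$, which is defined invariantly on the tensor fiber, as the orthonormal-frame sum of squares of $\Rm_{04}$ components. One must also supply the diagonalizing orthonormal frame for the symmetric bilinear form $\Ric$ relative to $g$. I would first try to reuse the bridge \leanref{rm04Comp_displayedRiemannFromRicci3D_at} together with the Ricci-eigenbasis consequence in \leanref{normSqLeOfFirstTrace}. That reduces the claim to the finite $\operatorname{Fin}3$ inequality above, which can be closed by nonlinear arithmetic on $\lambda_i\ge0$.
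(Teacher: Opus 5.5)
Your proposal is correct and follows the paper's route. You diagonalize $\Ric$ in an orthonormal frame, use the three-dimensional Riemann-from-Ricci identity to get $K_{ij}=\tfrac12(\lambda_i+\lambda_j-\lambda_k)$ with $\abs{K_{ij}}\le R/2$, and use that $\norm{\Rm}^2$ is a universal multiple of $\sum_{i<j}K_{ij}^2$; you simply make the paper's unspecified multiple explicit ($4$), which yields $\norm{\Rm}\le\sqrt3\,R$, well within the coarse constant $100$.
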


\begin{proof}[Lean-facing algebra]
The pointwise dimension-three bound is project-local.  The scalar-blow-up proof
consumes it internally in \leanref{hamilton_scalar_blowup}; \leanref{hamilton_rescaled_curvature_bound} later
applies the related estimate to the rescaled flow.  The proof uses the lowered
curvature convention
\[
  \Rm_{04}(X,Y,Z,W)=\langle R(X,Y)Z,W\rangle
\]
and the first-trace route \leanref{normSqLeOfFirstTrace}.

At a point, diagonalize \(\Ric\) with eigenvalues
\(\lambda_1,\lambda_2,\lambda_3\ge0\).  By the three-dimensional
Riemann-from-Ricci identity, the principal sectional curvatures satisfy
\[
K_{ij}=\frac{\lambda_i+\lambda_j-\lambda_k}{2}.
\]
Hence
\[
\abs{K_{ij}}\le\frac{\lambda_i+\lambda_j+\lambda_k}{2}=\frac R2.
\]
Since \(\norm{\Rm}^2\) is a fixed universal multiple of the sum of the squares
of the three principal sectional curvatures in dimension three, this gives
\(\norm{\Rm}\le C_3R\) for a universal constant.  The Hamilton file uses the
coarse constant \(100\), which is more than sufficient for the blow-up
argument.
\end{proof}

\begin{corollary}[Scalar blow-up under nonnegative Ricci curvature]
\label{cor:finite-time-scalar-blow-up}
Let $g(t)$ be a maximal Ricci flow on a closed three-manifold with finite
maximal time $T_{\max}$.  If $\Ric(g(t))\ge0$ throughout the flow, then
\[
  \sup_{M\times[0,T_{\max})} R(g(t))=\infty.
\]
\end{corollary}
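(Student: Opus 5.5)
We argue by contradiction, combining Lemma~\ref{lem:finite-time-curvature-blow-up} with Corollary~\ref{cor:ricci-controls-rm}. Suppose instead that
\[
  \sup_{M\times[0,T_{\max})} R(g(t)) = K_0 < \infty .
\]
We then derive a uniform bound on the full curvature tensor from this scalar bound, which the maximality of the flow rules out.

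\emph{Step 1 (full curvature bound).} The hypothesis $\Ric(g(t))\ge0$ holds at every point of $M\times[0,T_{\max})$. So Corollary~\ref{cor:ricci-controls-rm} applies pointwise, slice by slice, and gives
\[
  \norm{\Rm(g(t))}_x \le C_3\,R(g(t))(x) \le C_3 K_0 .
\]
Since $\Ric\ge0$ forces $R\ge0$, both sides are nonnegative, and squaring gives
\[
  \norm{\Rm}^2 \le C_3^2K_0^2
\]
uniformly on the whole space--time slab.

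\emph{Step 2 (contradiction).} Lemma~\ref{lem:finite-time-curvature-blow-up} says that for every real $K$ there are $t\in[0,T_{\max})$ and $x\in M$ with $K<\norm{\Rm(g(t))}_x^2$. We apply it with $K=C_3^2K_0^2$, which contradicts Step~1. Equivalently, the uniform bound would feed Theorem~\ref{bb:rf-extension-criterion} and extend the flow past $T_{\max}$, against maximality. Therefore the supremum is infinite.

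The main point needing care is the interface between the two results. The maximal flow in Lemma~\ref{lem:finite-time-curvature-blow-up} is stated for the particular lowered curvature field $S.\mathtt{base.rm04}$. The bound in Corollary~\ref{cor:ricci-controls-rm} must be stated for the same field and in the same norm convention, which is where \leanref{rm04Realizes_metric} and the first-trace route \leanref{normSqLeOfFirstTrace} enter. The other thing to handle is that $\Ric\ge0$ is really available at every time of the maximal flow, not only in the limit. This comes from Lemma~\ref{lem:preserve-ricci-nonnegative} in the Hamilton setting, or directly from the hypothesis as stated here. Once these are matched, the argument is just a sup-bound followed by negation of the quantified unboundedness.
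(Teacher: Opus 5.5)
Your argument is correct and is essentially the paper's proof. You assume a finite scalar bound, apply Corollary~\ref{cor:ricci-controls-rm} pointwise to obtain a uniform bound on $\norm{\Rm}^2$, and contradict the quantified unboundedness of Lemma~\ref{lem:finite-time-curvature-blow-up}; as in \leanref{hamilton_scalar_blowup}, the extension criterion need not be rerun, since maximality has already been turned into curvature unboundedness.
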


\begin{proof}[Lean-facing assembly]
If the scalar curvature were bounded above, then
Corollary~\ref{cor:ricci-controls-rm} would bound the full curvature, contrary
to Lemma~\ref{lem:finite-time-curvature-blow-up}.  The Hamilton-facing
consumer \leanref{hamilton_scalar_blowup} starts from the already-produced
\leanref{HamiltonFiniteTimeFlow.curvUnbounded} field and combines it with preserved
nonnegative Ricci curvature and the dimension-three curvature-control layer.
It does not rerun the extension argument.  Thus maximality plus extension
supplies full-curvature blow-up; the two additional ingredients are essential
for the scalar statement.
\end{proof}

\begin{definition}[Parabolic rescaling]
\label{def:parabolic-rescaling}
Let $g(t)$ be a Ricci flow defined on $[0,T)$. Given a point-time
$(x_i,t_i)$ with $0<t_i<T$ and a scale $R_i>0$, define the rescaled Ricci
flow
\[
g^{R_i}(s)=R_i\,g\left(t_i+\frac{s}{R_i}\right).
\]
It is defined for
\[
s\in[-R_it_i,R_i(T-t_i)).
\]
Negative values of the rescaled variable \(s\) correspond to original times
\(t_i+s/R_i<t_i\), which still lie in the original interval \([0,T)\).  They
do not assert that the original Ricci flow exists at negative time.
Under this rescaling,
\[
R(g^{R_i})(x,s)=R_i^{-1}R(g)\left(x,t_i+\frac{s}{R_i}\right),
\]
and, for a constant scaling $Qg$,
\[
\norm{\Rm}_{Qg}=Q^{-1}\norm{\Rm}_g,
\qquad
\norm{\Ric}_{Qg}=Q^{-1}\norm{\Ric}_g,
\qquad
R_{Qg}=Q^{-1}R_g.
\]
The scale-invariant ratio
\[
\frac{\norm{\Ric^\circ}^2}{R^2}
\]
is unchanged.  The standard change of time and constant metric scaling preserve
the Ricci-flow equation, so each \(g^{R_i}(s)\) is again a Ricci flow on its
rescaled time interval.
\end{definition}

\begin{lemma}[Point selection and scalar normalization]
\label{lem:point-selection-rescaling}
Let $g(t)$, $t\in[0,T_{\max})$, be the maximal Ricci flow from a closed
three-manifold with $\Ric(g_0)>0$, and suppose that the finite-time scalar
blow-up established above holds.  Then there exist points and times
$(x_i,t_i)$ and positive numbers
\[
R_i:=R(x_i,t_i)
\]
such that
\[
0<t_i<T_{\max},
\qquad
R_it_i\longrightarrow\infty.
\]
In particular, $R_i\to\infty$.  The rescaled flows $g^{R_i}(s)$ satisfy
\[
R(g^{R_i})(x_i,0)
=
\max_{M\times[-R_it_i,0]}R(g^{R_i})
=
1.
\]
\end{lemma}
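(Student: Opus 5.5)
The plan is to use a standard "max over the past slab" point selection. Scalar blow-up (Corollary~\ref{cor:finite-time-scalar-blow-up}) applies because Ricci nonnegativity is preserved (Lemma~\ref{lem:preserve-ricci-nonnegative}). It gives $\sup_{M\times[0,T_{\max})}R=\infty$. Choose a sequence of times $\tau_i\nearrow T_{\max}$, and on each compact slab $M\times[0,\tau_i]$ use joint continuity of $R$ to pick a maximizer $(x_i,t_i)$. Set $R_i:=R(x_i,t_i)=\max_{M\times[0,t_i]}R$. The inequality ``$\le$'' is automatic, because $(x_i,t_i)$ maximizes over the larger slab $[0,\tau_i]\supseteq[0,t_i]$. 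Scalar blow-up, together with the slabs exhausting $[0,T_{\max})$, forces $\max_{M\times[0,\tau_i]}R\to\infty$, hence $R_i\to\infty$. Passing to a subsequence if necessary, we may take the $R_i$ strictly increasing.

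First I would establish $t_i>0$ and $R_it_i\to\infty$. Positivity of $t_i$ follows once $R_i>\max_M R(\cdot,0)$, which holds for all large $i$; discard the finitely many other indices. For the product I would use the scalar evolution inequality $\partial_tR\le\Delta R+2|\Ric|^2\le\Delta R+2R^2$, where $|\Ric|^2\le R^2$ comes from nonnegative eigenvalues. The scalar weak maximum principle (Corollary~\ref{thm:scalar-wmp-sub}) compares $R$ with the ODE solution $c'=2c^2$, $c(0)=C_0:=\max_M R(\cdot,0)$, which gives $R(x,t)\le C_0/(1-2C_0t)$ while $t<1/(2C_0)$. Evaluating at $(x_i,t_i)$ yields $R_i(1-2C_0t_i)\le C_0$, so $t_i\ge \frac{1}{2C_0}\bigl(1-C_0/R_i\bigr)$. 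Thus $t_i$ is bounded below by a positive constant for large $i$ (if instead $t_i\ge 1/(2C_0)$ this is immediate), and $R_it_i\to\infty$ follows.

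The normalization is then a scaling computation using Definition~\ref{def:parabolic-rescaling}. We have $R(g^{R_i})(x,s)=R_i^{-1}R(x,t_i+s/R_i)$, and $s\in[-R_it_i,0]$ corresponds exactly to original times in $[0,t_i]$. Hence $\max_{M\times[-R_it_i,0]}R(g^{R_i})=R_i^{-1}\max_{M\times[0,t_i]}R=1$, and the basepoint value is $R_i^{-1}R_i=1$.

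The main obstacle is the upper barrier for $R$. It requires the subsolution form of the weak maximum principle with the locally Lipschitz reaction $F(u)=2u^2$, applied on a slab $[0,t]$ with $t<1/(2C_0)$, and it requires exporting the bound $|\Ric|^2\le R^2$ from the eigenvalue algebra under $\Ric\ge0$. A simpler alternative I would try first is to avoid the barrier altogether. Since $T_{\max}$ is finite and every $t_i\le T_{\max}$, we only need $t_i$ bounded away from $0$. This follows because the flow is smooth on $M\times[0,T_{\max}/2]$, so $R$ is bounded there by some $K$. Once $R_i>K$, the maximizer must satisfy $t_i>T_{\max}/2$, which gives $R_it_i\ge R_iT_{\max}/2\to\infty$ directly. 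I would adopt this compactness route in the formal proof.
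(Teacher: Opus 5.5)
Your proposal is correct and follows essentially the paper's argument. Both select $(x_i,t_i)$ as running-maximum points of $R$: you take maximizers on the slabs $M\times[0,\tau_i]$, while the paper takes the times at which the running maximum $m(t)$ reaches levels $A_i\to\infty$. Both obtain the normalization by the same scaling computation, and both get $R_it_i\to\infty$ from boundedness of $R$ on an earlier compact slab, which is the compactness route you adopt. The barrier alternative you sketch is valid but unnecessary, and finiteness of $T_{\max}$ plays no role in this step; only $T_{\max}>0$ and $R_i\to\infty$ are used.
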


\begin{proof}[Lean-facing assembly]
The point-selection theorem is \leanref{hamilton_exists_blowup_point_sequence}.  It takes finite-time
and scalar-blow-up packages as inputs.  In the Hamilton assembly,
\leanref{hamilton_extinction_time_bound} supplies the first and
\leanref{hamilton_scalar_blowup} the second.
Its conclusion is packaged by \leanref{HamiltonBlowup} and
\leanref{hamiltonBlowupPointSelection}.  Point selection is therefore recorded as a downstream
consumer.

Corollary~\ref{cor:finite-time-scalar-blow-up} gives
\(\sup_{M\times[0,T_{\max})}R=\infty\).  Let
\[
  m(t):=\max_{M\times[0,t]}R.
\]
Then \(m(t)\to\infty\) as \(t\nearrow T_{\max}\).  Choose levels
\(A_i\to\infty\), choose times \(t_i\) where the running maximum reaches
\(A_i\), and choose points \(x_i\) with
\[
  R(x_i,t_i)=m(t_i)=A_i.
\]
Set \(R_i=R(x_i,t_i)\).  The parabolic rescaling in
Definition~\ref{def:parabolic-rescaling} then gives
\[
  R(g^{R_i})(x_i,0)=1,
  \qquad
  \max_{M\times[-R_it_i,0]}R(g^{R_i})=1.
\]
Moreover, $t_i\to T_{\max}>0$: on every earlier compact time slab the smooth
curvature is bounded, so levels $A_i\to\infty$ cannot first be reached there.
Consequently $R_it_i\to\infty$.
\end{proof}

\section{Noncollapsing and Compactness}
\label{sec:global-interfaces}

The normalized blow-up sequence constructed above has scale-independent
curvature control on each fixed backward time slab.
Generally speaking, curvature control alone
does not prevent the sequence from collapsing to a lower-dimensional object.
The two companion developments described in this section rule out that
degeneration and then extract a smooth pointed limit.  We retain enough of the
mathematics to explain how they enter Hamilton's argument, while reserving the
full formal constructions for separate articles.

\subsection{No-local-collapsing and injectivity control}

Perelman's no-local-collapsing theorem supplies a scale-invariant lower volume
bound at every curvature-controlled scale \(0<r\le\rho\) \cite{Perelman1}.
The project's Hamilton interface uses a full \(\lvert\Rm\rvert\le r^{-2}\)
bound on a backward parabolic neighborhood.  Perelman's original finite-time
no-local-collapsing theorem is stronger in a different direction: its
local-collapsing formulation uses only a scalar curvature upper bound on the time-\(t\)
ball, without requiring the corresponding backward-parabolic bound.  In the
form used here, if a backward parabolic neighborhood of radius \(r\) satisfies

\[
  |\Rm|\le r^{-2},
\]
then the ball at its top time obeys
\[
  \Vol B(x,r)\ge \kappa r^3
\]
for one positive constant \(\kappa\) determined before the point and scale are
chosen.  This assertion is invariant under parabolic rescaling.  It therefore
applies to the point-selected flows \(g^{R_i}\), although their scaling factors
\(R_i\) diverge.

This is the precise place where noncollapsing enters the blow-up proof.  Choose
a fixed \(r_0>0\) small enough that the normalized curvature bound controls the
parabolic neighborhood of \((x_i,0)\) at scale \(r_0\).  Since
\(R_it_i\to\infty\), that neighborhood lies in the domain of every sufficiently
late rescaled flow.  No-local-collapsing gives

\[
  \Vol_{g^{R_i}(0)} B(x_i,r_0)\ge \kappa r_0^3.
\]

Combining this volume lower bound with the curvature bound on a slightly larger
ball yields a uniform positive lower bound for
\(\operatorname{inj}_{g^{R_i}(0)}(x_i)\).  Thus the sequence has a nondegenerate
normal-coordinate scale at its basepoints.  In the formal development,
\leanref{no_local_open} proves the original-flow theorem.  The proof of
\leanref{exists_hamilton_vol} applies it, transports the estimate through the
Hamilton rescalings, and selects one fixed radius and one fixed \(\kappa\) for
the whole canonical source sequence; \leanref{flowInj_of_vol} then performs the
curvature--volume injectivity step.

The proof of the companion noncollapsing theorem follows Perelman's entropy
strategy.  A closed-manifold Sobolev inequality and Jensen's inequality first
give a quantitative lower bound for the \(\mathcal W\)-functional.  The conjugate heat
potential supplies the monotonicity needed to transport this bound along the
flow.  If a curvature-controlled ball had arbitrarily small normalized volume,
a cutoff supported on that ball would produce a test density with \(\mathcal W\)-value
below the uniform lower bound, a contradiction.  The formal proof separates
the initial-time small-ball estimate from the positive-time entropy argument,
then joins them in the quantitative cutoff/contradiction theorem underlying
\leanref{no_local_open}.

\subsection{Metric and Ricci-flow compactness}

Once the basepoint injectivity radius is controlled, compactness is obtained in
two stages.  First, fix a finite backward interval \([-A,0]\).  The point-selection
estimate gives a uniform curvature bound there, and the project's moving
Bando--Shi estimates give bounds for every covariant derivative of curvature
on slightly smaller time slabs \cite{Bando1987RealAnalyticity,Shi1989a}.
Because the source manifolds here are closed, global Bando estimates suffice;
Shi's local estimates remain essential in noncompact or genuinely local
geometrization arguments.  At time zero the injectivity and curvature data
produce uniform normal-coordinate charts.  Volume comparison and packing
control how many charts are needed on each bounded region, while quantitative
transition estimates make the charts compatible.  Arzel\`a--Ascoli extraction
on a nested exhaustion, followed by a diagonal argument, produces a complete
pointed limiting metric.  This metric part of the construction is exported as
\leanref{metricCompactness}.

The second stage upgrades time-zero metric convergence to convergence of the
Ricci flows.  The all-order curvature bounds control the pulled-back metric
coefficients on compact space--time sets, and the Ricci-flow equation controls
their time derivatives.  Compatible subsequences on the nested space and time
windows are then assembled into one smooth limit flow.  Completeness is
transported to every limit time-slice, and the comparison embeddings realize
smooth pointed Cheeger--Gromov--Hamilton convergence.  The formal definition
uses embeddings on an exhaustion by compact domains, eventually in the
sequence; this is equivalent to the usual compact-open smooth pointed
formulation.  The complete flow
upgrade is exported as \leanref{compactnessSol}; its proof combines the project-local
normal-chart package, the metric compactness construction, the open-window Shi
estimates, and a canonical diagonal gluing of the local limits.  This is the
formal counterpart of the classical compactness theorem
\cite{HamiltonCompactness,MR2302600}.

The checked Hamilton assembly does not apply this construction on an increasing
family of time windows.  Its source sequence and flow upgrade instead live on
the fixed closed backward window \([-r_0^2,0]\).  Smooth pullback convergence at
the terminal slice supplies the scalar-curvature convergence and
trace-free Ricci decay needed by
\leanref{hamilton_constant_positive_sectional_curvature_of_pinching}, which
combines the fixed-radius volume data, injectivity control, metric compactness,
the closed-window flow upgrade, and the terminal curvature-transfer
conclusions.

\subsection{The exact statements used}

We now state the two endpoints that the final assembly consumes, first as
ordinary mathematical statements and then in the form in which they are proved
in Lean.  Stating them separately makes clear where the two companion projects
end and where the Hamilton-specific argument resumes.

\subsubsection{The noncollapsing endpoint}

Recall the setting of Lemma~\ref{lem:point-selection-rescaling}: $g(t)$,
$0\le t<\omega$, is the finite maximal flow on the closed connected
three-manifold $M$; the point-selected data are $(x_i,t_i)$ with
$R_i=R(x_i,t_i)$; and
\[
  g_i(s):=R_i\,g\!\left(t_i+\frac{s}{R_i}\right)
\]
are the basepoint-scalar-normalized rescalings.  Fix the window scale
$r_0=\tfrac1{10}$.  Preserved Ricci nonnegativity together with
Corollary~\ref{cor:ricci-controls-rm} gives the scale-independent bound
$\abs{\Rm_{g_i}}\le100$, and $R_it_i\to\infty$ places the backward window
$[-r_0^{2},0]$ inside the domain of $g_i$ for all large $i$.

\begin{theorem}[Uniform noncollapsing along the blow-up sequence]
\label{thm:ham3-volume-endpoint}
In the situation above there exist a radius $r\in(0,r_0]$ and a constant
$\kappa>0$, both independent of $i$, such that for every $i$ the following two
statements hold.
\begin{enumerate}[label=\textup{(\roman*)},leftmargin=2.4em]
  \item \emph{(Curvature control.)}  For every $s\in[-r^{2},0]$ and every
  $x\in M$ with $\operatorname{dist}_{g_i(s)}(x_i,x)<r$,
  \[
    r^{4}\,\abs{\Rm_{g_i}(s)}^{2}_{x}\le1 ;
  \]
  that is, $\abs{\Rm_{g_i}}\le r^{-2}$ on the backward parabolic neighbourhood
  of $(x_i,0)$ at scale $r$.
  \item \emph{(Noncollapsing.)}  The time-zero basepoint ball satisfies
  \[
    \Vol_{g_i(0)}B_{g_i(0)}(x_i,r)\ \ge\ \kappa\,r^{3}.
  \]
\end{enumerate}
\end{theorem}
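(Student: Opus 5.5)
The plan is to obtain both clauses from scale-independent curvature control on the rescaled flows, combined with Perelman's no-local-collapsing theorem applied to the original flow and transported through the rescaling.

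\emph{Curvature control.} Preserved Ricci nonnegativity (Lemma~\ref{lem:preserve-ricci-nonnegative}) and Corollary~\ref{cor:ricci-controls-rm} give $\abs{\Rm_{g_i}}\le 100\,R(g_i)$ pointwise. Lemma~\ref{lem:point-selection-rescaling} bounds $R(g_i)\le1$ on $M\times[-R_it_i,0]$. Since $R_it_i\to\infty$, the window $[-r_0^2,0]$ lies in this range for all large $i$, so $\abs{\Rm_{g_i}}\le100$ there.

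\emph{Choice of $r$.} Take $r=r_0=1/10$, so that $r^{-2}=100$. Then $r^4\abs{\Rm}^2\le 10^{-4}\cdot 10^4=1$ holds at every point of $M$, and in particular on every ball of radius $r$ about $x_i$. This settles (i) with no need to track the time-dependent distance. The finitely many early indices, for which the window might not fit, are discarded by reindexing the sequence. This is legitimate because only the tail matters downstream.

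\emph{Noncollapsing.} For (ii), the idea is to unscale. The ball $B_{g_i(0)}(x_i,r)$ equals $B_{g(t_i)}(x_i,rR_i^{-1/2})$, and the backward neighbourhood at scale $r$ for $g_i$ corresponds to the backward neighbourhood of $(x_i,t_i)$ at scale $\rho_i=rR_i^{-1/2}$ for $g$. On that neighbourhood, clause (i) rescales to $\abs{\Rm_g}\le\rho_i^{-2}$. The window $[t_i-\rho_i^2,t_i]$ lies inside $[0,\omega)$, again because $R_it_i\ge r^2$ for large $i$.

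Applying the original-flow theorem \leanref{no_local_open} at the point $(x_i,t_i)$ and scale $\rho_i$ gives $\Vol_{g(t_i)}B\ge\kappa\rho_i^3$. Here $\kappa$ depends only on $g_0$, the finite time bound $\omega\le 3/(2c_0)$, and an upper bound on admissible scales; since $\rho_i\le r\sqrt{1/\min R(g_0)}$ is uniformly bounded, that upper bound is available. Volume in dimension three scales by $R_i^{3/2}$, so this becomes $\Vol_{g_i(0)}B_{g_i(0)}(x_i,r)\ge\kappa r^3$.

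\emph{Main obstacle.} I expect the hardest step to be matching the hypotheses of \leanref{no_local_open} exactly. That theorem is stated for a scale bounded by some $\rho$ fixed in advance and for the original flow's time domain, so $\kappa$ must be chosen before $i$. This requires verifying that all $\rho_i$ lie below one fixed $\rho$ and that every backward window lies in $[0,\omega)$. The remaining checks are transport identities under rescaling: balls, distances, volumes and $\abs{\Rm}^2$. These are routine but tedious, and in Lean they are exactly what \leanref{exists_hamilton_vol} packages.
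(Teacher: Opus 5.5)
Your proposal is correct and takes essentially the same route as the paper. The steps match: the scale-independent bound $\abs{\Rm_{g_i}}\le 100$ comes from preserved Ricci nonnegativity, Corollary~\ref{cor:ricci-controls-rm}, and the normalization $R(g_i)\le 1$; the radius is $r=r_0=\tfrac1{10}$; you reindex so the backward window fits at every index, which is the paper's shift by $N$; and you apply \leanref{no_local_open} to the original flow at scale $rR_i^{-1/2}$ and rescale back, which is what \leanref{exists_hamilton_vol} packages.
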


In Lean the sequence $(M,g_i,x_i)$ is the object \leanref{hamiltonSourceSequence}, a
\leanref{PointedFlowSeq} whose common time interval is the closed backward
window $[-r_0^{2},0]$ and whose $i$-th term is the rescaled solution based at
$x_{N+i}$.  The index shift by $N$ is what turns the ``for all large $i$''
hypotheses into conclusions valid at \emph{every} index.  The pair
$(\kappa,r)$ together with its positivity is packaged as
\leanref{FlowerScaleVolData}, and conclusions (i) and (ii) are exactly the two
fields \leanrefas{IsFlowerScaleVolBound.curvature}{curvature} and
\leanrefas{IsFlowerScaleVolBound.noncollapsed}{noncollapsed} of
\leanref{IsFlowerScaleVolBound}, expressed through
\leanref{FlowMetricBall.IsRmControlled} and
\leanref{FlowMetricBall.IsKappaNoncollapsed}.  The exponent in (ii) is
$\operatorname{finrank}_{\mathbb R}E$, which the closed-three-manifold
hypothesis fixes to $3$.  The endpoint \leanref{exists_hamilton_vol} consumes
\leanref{hamiltonRiemannCurvatureBound}, \leanref{hamiltonWindow}, and the
radius \leanref{hamilton_reference_radius}:

\begin{Verbatim}[breaklines=true,breakanywhere=true,fontsize=\small]
theorem exists_hamilton_vol
    {omega : Real} (h0omega : 0 < omega)
    (hM : isClosedThreeManifold (I := I) (M := M))
    {g0 : SmoothRiemannianMetric I M}
    (P : HamiltonFiniteTimeFlow (I := I) (M := M) g0)
    (hD : P.D = RealTimeInterval.closedOpen 0 omega h0omega)
    (Q : HamiltonBlowup M)
    (hsel : hamiltonBlowupPointSelection (I := I) P Q)
    (hrm : hamiltonRiemannCurvatureBound (I := I) P Q)
    (hwindow : hamiltonWindow (I := I) P Q hamilton_reference_radius) :
    exists V : FlowerScaleVolData (I := I)
        (hamiltonSourceSequence (I := I) h0omega P hD Q hsel hwindow),
      IsFlowerScaleVolBound (I := I) V
\end{Verbatim}

The hypotheses \leanref{hamiltonRiemannCurvatureBound} and \leanref{hamiltonWindow} are precisely the
two scale-independent facts recalled above, and \leanref{hamilton_reference_radius} is the
constant $r_0=\tfrac1{10}$.  The proof applies \leanref{no_local_open} to the
original flow and transports its conclusion through the rescalings.

The next handoff is not a further noncollapsing hypothesis.  The
curvature--volume theorem \leanref{flowInj_of_vol} consumes the package above,
together with completeness and connectedness of the time-zero slices and the
time-zero bounded-geometry estimates, and returns a single $\iota_*>0$ with
\[
  \operatorname{inj}_{g_i(0)}(x_i)\ \ge\ \iota_*
  \qquad\text{for every }i .
\]
In Lean this conclusion is \leanref{FlowerScaleInjBound}, an abbreviation for
\leanref{BaseInjBound} applied to the time-zero sequence; its fields are one
radius $\rho>0$ and a proof of \leanref{HasInjRadiusAt} at every basepoint.
This is the exact interface required to enter metric compactness.

\subsubsection{The compactness endpoint}

\begin{theorem}[Smooth pointed Cheeger--Gromov--Hamilton compactness]
\label{thm:flow-compactness-endpoint}
Let $(M_i,g_i(t),x_i)$, $i\in\mathbb N$, be pointed Ricci flows on one common
open time interval $(\alpha,b)\ni0$, with every $M_i$ connected.  Assume
\begin{enumerate}[label=\textup{(\roman*)},leftmargin=2.4em]
  \item every time-slice $(M_i,g_i(t))$, $t\in(\alpha,b)$, is a complete metric
  space;
  \item the curvature is locally uniformly bounded: for every compact
  $[a,b']\subseteq(\alpha,b)$ there is $C\ge0$ such that
  $\abs{\Rm_{g_i}(t)}^{2}_{x}\le C$ for all $i$, all $t\in[a,b']$, and all
  $x\in M_i$;
  \item the basepoint injectivity radii are uniformly bounded below: there is
  $\iota>0$ with $\operatorname{inj}_{g_i(0)}(x_i)\ge\iota$ for every $i$.
\end{enumerate}
Then there are a pointed Ricci flow $(M_\infty,g_\infty(t),x_\infty)$ on the
same interval and a strictly increasing subsequence along which the flows
converge to it in the smooth pointed Cheeger--Gromov--Hamilton sense.
Moreover every time-slice of the limit is complete.
\end{theorem}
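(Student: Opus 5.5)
The plan follows the two-stage architecture already sketched in the preceding subsection: first extract a smooth pointed limit of the time-zero metrics, then upgrade it to a limit of flows on the whole interval.

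\emph{Stage one: all-order curvature bounds.}
Fix a compact window $[a,b']\subset(\alpha,b)$ containing $0$ in its interior, and enlarge it slightly to $[a',b'']\subset(\alpha,b)$. Hypothesis (ii) gives a uniform bound $\abs{\Rm_{g_i}}^2\le C$ on the larger window. The moving Bando--Shi estimates then give, for every $k$, uniform bounds $\abs{\nabla^k\Rm_{g_i}}\le C_k$ on $[a,b']$. Here the elapsed time $a-a'>0$ enters. Completeness in (i) is what allows the global (non-local) form of Shi's estimate, or its local version on balls.

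\emph{Stage two: the time-zero metric limit.}
At $t=0$, combine these bounds with the injectivity bound (iii). The bounded-geometry normal-data package yields normal coordinate charts of a uniform radius at every point within bounded distance of $x_i$. Their metric coefficients are controlled in every $C^k$ norm. By Bishop--Gromov and packing, each ball $B_{g_i(0)}(x_i,\rho)$ is covered by a number $N(\rho)$ of such charts independent of $i$, and transition maps are $C^k$-bounded. Arzel\`a--Ascoli on each $\rho$ and a diagonal argument over $\rho\to\infty$ produce $(M_\infty,g_\infty(0),x_\infty)$ with embeddings $\phi_i:U_i\to M_i$ on an exhaustion such that $\phi_i^*g_i(0)\to g_\infty(0)$ smoothly on compact sets. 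Completeness of the limit follows because limit balls are exhausted by the images of uniformly complete balls. This is exactly \leanref{metricCompactness}.

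\emph{Stage three: upgrading to flows.}
Pull back the whole flows by the same embeddings. On $K\times[a,b']$ with $K$ compact, the metrics $\phi_i^*g_i(t)$ are uniformly equivalent to $\phi_i^*g_i(0)$, by integrating $\partial_t g=-2\Ric$ with the curvature bound. Their spatial derivatives in the time-zero-limit charts are bounded by converting $\nabla^{g_i(t)}$-bounds on curvature into coordinate bounds through the evolution of the Christoffel symbols, Lemma~\ref{lem:evol-christoffel}. Time derivatives of all orders are bounded by repeatedly using the Ricci-flow equation. Arzel\`a--Ascoli in space--time then gives a subsequence converging in $C^\infty_{\mathrm{loc}}(M_\infty\times[a,b'])$. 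The limit satisfies $\partial_tg_\infty=-2\Ric(g_\infty)$, since $\Ric$ depends continuously on the $2$-jet. A second diagonal extraction over an exhaustion of $(\alpha,b)$ by compact windows, and over the spatial exhaustion, yields one subsequence and one limit flow. Completeness of every slice $g_\infty(t)$ follows from uniform equivalence with $g_\infty(0)$ on each compact window.

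\emph{Main obstacle.}
The hard step is stage three's passage from geometric derivative bounds to uniform coordinate $C^k$ bounds in the fixed time-zero limit charts, uniformly in $i$ and on a space--time domain. I would first prove, by induction on $k$, that $\partial_t\partial^k(\phi_i^*g_i)$ is a polynomial in lower-order derivatives and $\nabla^j\Rm$, so that uniform bounds follow by Gronwall from the time-zero bounds. Keeping the subsequence compatibility across the nested windows is, by comparison, bookkeeping.
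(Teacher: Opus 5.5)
Your proposal is correct and follows essentially the same two-stage route that the paper describes. First come moving Bando--Shi bounds on all covariant derivatives of curvature on slightly shrunken windows, uniform normal-coordinate charts at time zero with volume-comparison packing, compatible transitions, and an Arzel\`a--Ascoli/diagonal extraction (the metric-compactness stage); then the upgrade, in which all-order curvature bounds control the pulled-back metric coefficients on compact space--time sets, the Ricci-flow equation controls time derivatives, nested windows are glued diagonally, and completeness is transported to every slice. The one step you leave implicit is propagating the basepoint injectivity bound to points at bounded distance from $x_i$ (via the curvature bound, Bishop--Gromov, and Cheeger--Gromov--Taylor); that is exactly what the paper's bounded-geometry normal-data package supplies, so it is not a gap.
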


The Lean endpoint is \leanref{compactnessSol}.  Hypotheses (i)--(iii) are
\leanref{CompleteInput}, \leanref{CurvBoundInput}, and
\leanref{FlowerScaleInjBound}; the interval hypothesis says that \texttt{X.D} is
the \leanref{RealTimeInterval.openInterval} determined by $\alpha<0<b$.
The returned flow package and convergence predicate are
\leanref{PointedFlowData} and \leanref{SmoothCGHConverges}, respectively:

\begin{Verbatim}[breaklines=true,breakanywhere=true,fontsize=\small]
theorem compactnessSol
    {alpha b : Real} (h0 : (0 : Real) in Set.Ioo alpha b)
    (X : PointedFlowSeq (I := I))
    (hD : X.D = RealTimeInterval.openInterval alpha b 0 h0)
    (hcomplete : CompleteInput (I := I) X)
    (hcurv : CurvBoundInput (I := I) X)
    (hinj : FlowerScaleInjBound (I := I) X)
    (hconn : forall k : Nat,
      letI : TopologicalSpace (X.term k).M := (X.term k).topology
      ConnectedSpace (X.term k).M) :
    exists L : PointedFlowData (I := I) X.D,
      exists subseq : Nat -> Nat,
        StrictMono subseq /\
          Nonempty (SmoothCGHConverges (I := I) X L subseq) /\
            forall t : Real, t in X.D.carrier ->
              MetricComplete (I := I) (L.atTime (I := I) t)
\end{Verbatim}

The limit is a \leanref{PointedFlowData}: a manifold with a basepoint together
with a Ricci-flow solution on the same interval.  The conclusion therefore
produces a limit \emph{flow}, not merely a limit metric.  The mode of
convergence is \leanref{SmoothCGHConverges}, whose fields record pointed
Cheeger--Gromov convergence of the time-zero metrics, convergence of the
pulled-back scalar curvature and Ricci norm, and the space--time convergence
data on compact subsets.

\begin{remark}[What the Hamilton assembly actually calls]
\label{rem:closed-window-compactness}
Theorem~\ref{thm:flow-compactness-endpoint} is the general public statement,
formulated for an \emph{open} time window.  The Hamilton source sequence lives
on the \emph{closed} backward window $[-r_0^{2},0]$, so the assembly runs the
closed-window form of the same two-stage construction rather than
\leanref{compactnessSol} itself.  The time-zero metric stage is built by
\leanref{metricSeedOfBG} and \leanref{exists_bounded_geometry_normal_data} and packaged by
\leanref{MetricCompactSeed.higherRegularityCanonicalMetricCompactness} as a \leanref{CanonicalMetricCompactness}, whose
\texttt{mc} field is the \leanref{MetricCompactnessConclusion} carrying the
subsequence, the limiting pointed manifold, its completeness, and the
comparison maps.  The flow upgrade is then supplied by
\leanref{hamilton_flow_upgrade_of_metric_compactness}, which returns a \leanref{FlowUpgrade} over
that conclusion together with completeness of every limit time-slice.  This
closed-window construction supplies the compactness data needed by the
Hamilton endpoint; it is analogous to, but is not an invocation of, the public
open-window theorem \leanref{compactnessSol}.
\end{remark}

Once the limit flow is in hand, improved pinching transfers at the terminal
slice and forces
\[
  \Ric^\circ(g_\infty(0))\equiv0 .
\]
The basepoint scalar normalization and the contracted Bianchi identity then
show that the terminal limit metric is Einstein with positive constant scalar
curvature.  In dimension three this gives
\[
  \Ric(g_\infty(0))=\tfrac13 g_\infty(0),
  \qquad
  \sec(g_\infty(0))=\tfrac16 .
\]
Completeness and Bonnet--Myers imply compactness of the limit manifold and
hence globalize the convergence embedding, so the constant-curvature limit
metric can be transported back to the original manifold.

\section{Final Assembly}
\label{sec:conditional-assembly}

\begin{proof}[Assembly of Theorem~\ref{thm:main-hamilton-3d}]
Choose a smooth metric \(g_0\) with positive Ricci curvature on the closed
connected three-manifold \(M\).

\medskip
\noindent\textbf{1. Construct the finite maximal flow package.}
The one-metric short-time theorem supplies a seed segment.  The theorem
\leanref{flow_end_le} bounds the lifetime of every segment starting at
\(g_0\), and \leanref{ricci_flow_forward_unique} makes all such segments
compatible on their common intervals.  The supremum construction
\leanref{exists_max_flow} therefore gives
\[
  g(t),\qquad 0\le t<\omega, \qquad g(0)=g_0,
\]
with \(0<\omega<\infty\) and no extension past \(\omega\).  Maximality is
then combined with \leanref{extends_of_rmBounded} to obtain quantified
curvature unboundedness.  The class-uniform theorem enters only inside this
bounded-curvature extension implication, through an interior restart whose
common lifetime crosses \(\omega\).  The theorem
\leanref{hamilton_finite_time_flow_exists_on_closed_open} packages the solution, its initial value,
and curvature unboundedness as \leanref{HamiltonFiniteTimeFlow}; it does not store
maximality itself.  Its transitive axiom audit, like that of
\leanref{ricci_flow_uniform_existence}, contains only the three standard axioms
listed above.

\medskip
\noindent\textbf{2. Convert full-curvature blow-up to scalar blow-up.}
The endpoint bound is already part of Step~1; \leanref{hamilton_extinction_time_bound}
re-extracts it in the interface used by point selection.  Positive Ricci
curvature gives preserved nonnegative Ricci curvature, and in dimension three
this controls \(|\Rm|\) by the scalar curvature.  The quantified
full-curvature unboundedness stored in \leanref{HamiltonFiniteTimeFlow} therefore
implies
\[
  \sup_{M\times[0,\omega)}R=\infty.
\]
The corresponding Lean producers are \leanref{hamilton_extinction_time_bound},
\leanref{hamilton_ricci_nonnegative}, and \leanref{hamilton_scalar_blowup}.

\medskip
\noindent\textbf{3. Select and normalize a blow-up sequence.}
The point-selection theorem chooses \((x_i,t_i)\) and
\(R_i=R(x_i,t_i)\to\infty\) so that the rescaled flows
\[
  g_i(s):=R_i\,g\!\left(t_i+\frac{s}{R_i}\right)
\]
satisfy
\[
  R_{g_i}(x_i,0)=1,
  \qquad
  \sup_{M\times[-R_it_i,0]}R_{g_i}=1,
  \qquad
  R_it_i\longrightarrow\infty.
\]
This produces \leanref{HamiltonBlowup} and \leanref{hamiltonBlowupPointSelection} through
\leanref{hamilton_exists_blowup_point_sequence}.  Preserved Ricci nonnegativity then yields the
uniform Riemann-curvature bound and a common fixed backward window, represented
by \leanref{hamilton_rescaled_curvature_bound} and \leanref{hamilton_reference_radius_window}.

\medskip
\noindent\textbf{4. Apply the exact noncollapsing endpoint.}
Theorem~\ref{thm:ham3-volume-endpoint}, namely
\leanref{exists_hamilton_vol}, now produces one radius \(r_*>0\) and one
\(\kappa_*>0\) valid for every \(g_i\).  Thus the controlled time-zero balls
satisfy
\[
  \Vol_{g_i(0)}B_{g_i(0)}(x_i,r_*)\ge\kappa_*r_*^3.
\]
The curvature--volume bridge \leanref{flowInj_of_vol} converts this package into
one uniform injectivity-radius bound
\[
  \operatorname{inj}_{g_i(0)}(x_i)\ge\iota_*>0.
\]
This is the complete noncollapse-to-compactness handoff; no additional
noncollapsing assumption is introduced later.

\medskip
\noindent\textbf{5. Apply the exact compactness endpoint.}
The source sequence is complete and connected, the point-selection estimate
gives uniform curvature control, the specialized closed-window estimates give
the required curvature-derivative input, and Step~4 gives the time-zero
injectivity bound.  The checked Hamilton implementation uses the closed-window
construction directly: \leanref{metricCompactness} first constructs the
limiting time-zero metric, and
\leanref{hamilton_flow_upgrade_of_metric_compactness} supplies the compatible
flow upgrade.  We obtain a strictly increasing subsequence and a complete
pointed Ricci flow on \([-r_0^2,0]\),
\[
  (M_\infty,g_\infty(s),x_\infty),
\]
together with smooth pointed Cheeger--Gromov--Hamilton convergence on compact
space--time subsets of this fixed closed window.

\medskip
\noindent\textbf{6. Use pinching to identify the limit.}
Hamilton's improved pinching estimate on the original flow has the form
\[
  \frac{|\Ric^\circ|^2}{R^{2-\varepsilon}}\le C
  \qquad (0<\varepsilon<1).
\]
On the \(i\)-th normalized flow its effective coefficient is
\(C R_i^{-\varepsilon}\), which tends to zero.  Smooth pointed convergence
passes Ricci nonnegativity, the scalar normalization, and this estimate to the
limit.  Hence
\[
  \Ric^\circ(g_\infty(0))=0,
  \qquad
  R_{g_\infty(0)}(x_\infty)=1.
\]
The limit slice is Einstein; the contracted Bianchi identity makes its scalar
curvature constant.  The three-dimensional curvature identity then gives
\[
  \Ric(g_\infty(0))=\frac13 g_\infty(0),
  \qquad
  \sec(g_\infty(0))=\frac16.
\]
The exact curvature-transfer consumer is \leanref{round_at_zero_of_smooth_cgh}.

\medskip
\noindent\textbf{7. Return the round metric to the original manifold.}
The limit slice is complete and has constant positive Ricci curvature.
The Bonnet--Myers diameter/compactness
theorem implies \(M_\infty\) is compact.  Since constant positive sectional
curvature is known, a classical alternative is to use that the manifold is an isometric quotient of a standard 3-sphere, i.e,
a spherical space-form.  For all sufficiently large indices,
the pointed convergence embedding is therefore defined on all of
\(M_\infty\).  It is an embedding between manifolds of the same dimension, so
its image is open; compactness makes the image closed; connectedness of \(M\)
makes it surjective.  Thus it is a diffeomorphism.  Transporting
\(g_\infty(0)\) across this diffeomorphism gives a metric of constant positive
sectional curvature on \(M\).  This step is implemented by
\leanref{constant_positive_sectional_curvature_of_smooth_cgh} and \leanref{limit_to_orig}.

Combining Steps~1--7 gives \leanref{hamilton_admits_constant_positive_sectional_curvature}.  Finally the checked
constant-curvature/space-form equivalence \leanref{constant_positive_sectional_curvature_iff_spherical_space_form} yields
\leanref{hamilton_positive_ricci}, whose conclusion is both the constant-positive-
sectional-curvature metric and the spherical-space-form classification.
\end{proof}

At the proof-term level, the companion-project handoff is therefore exactly
\begin{center}
\begin{minipage}{0.94\linewidth}
\footnotesize\raggedright
\leanref{exists_hamilton_vol} \(\longrightarrow\)
\leanref{flowInj_of_vol} \(\longrightarrow\)
\leanref{hamilton_constant_positive_sectional_curvature_of_injectivity_radius_bound} \(\longrightarrow\)
\leanref{hamilton_constant_positive_sectional_curvature_of_pinching} \(\longrightarrow\)
\leanref{hamilton_admits_constant_positive_sectional_curvature} \(\longrightarrow\)
\leanref{hamilton_positive_ricci}.
\end{minipage}
\end{center}

\section{Development methodology and human oversight}
\label{sec:auto-formalization-workflow}

Language-model agents assisted both the Lean development and the preparation
of this manuscript.  Their use is methodologically relevant, but it is not a
premise of any mathematical theorem and it does not enlarge Lean's trusted
kernel.  We describe the workflow because it explains how a dependency tree of
this size was organized, how temporary frontier declarations were distinguished
from completed proofs, and where human mathematical judgment
remains indispensable.  The formal artifact and its trust boundary are
treated separately in Section~\ref{sec:artifact-verification}.

The short-time Ricci-flow development was produced while the authors were
iterating a prompt-level package called \texttt{auto-formalizing-skills}, whose
reference implementation is available at
\url{https://github.com/qinz1yang/auto-formalizing-skills}.  The package exposes
three principal commands.  \texttt{/prove} organizes a top-down proof attempt
around a specified headline declaration; \texttt{/check} runs an adversarial
search for false, vacuous, or overstrong statements; and \texttt{/handoff}
serializes the state of an unfinished run for later continuation.  The account
below describes the current documented protocol.  The protocol evolved during
the Ricci-flow work, so we do not claim retrospectively that every earlier
agent run obeyed every current rule.

\subsection{Separation of mathematical direction and proof execution}

A \texttt{/prove} run is organized in three tiers.  The \emph{leader} preserves
the headline theorem, the mathematical route, conventions, and decisions that
require author review.  A long-lived \emph{orchestrator} maintains the active
dependency frontier, checks returned work against the repository, and records
the tactical state on disk.  Short-lived \emph{workers} receive bounded tasks:
a prover attempts a Lean proof, a refuter attacks a proposed statement, and a
researcher searches the project and \texttt{mathlib} for existing declarations
and their exact types.

This is a division of context, not a hierarchy of trust.  A worker report is
not accepted because of the model that produced it.  The report must be
reconciled with the actual source, elaborated type, and compiler output.  The
leader likewise receives mathematical route summaries rather than raw proof
logs, so tactical detail does not displace the global proof architecture.
Persistent state files allow either tier to reconstruct its state after
context compaction; the repository, rather than a conversation transcript, is
the authoritative record.

\begin{figure}[htbp]
\centering
\begin{tikzpicture}[
  node distance=9mm,
  box/.style={draw=black!65,rounded corners=2pt,align=center,
    font=\small,text width=.68\textwidth,inner sep=6pt},
  arrow/.style={-{Stealth[length=2mm]},semithick}
]
\node[box,fill=blue!7] (leader) {\textbf{Leader:} headline, conventions,
  route choices, and author-facing decisions};
\node[box,fill=orange!10,below=of leader] (orch) {\textbf{Orchestrator:}
  dependency frontier, task dispatch, source verification, and durable tactical state};
\node[box,fill=green!7,below=of orch] (workers) {\textbf{Bounded workers:}
  proof construction, adversarial statement testing, and declaration search};
\draw[arrow] (leader) -- node[right,font=\scriptsize] {route and constraints} (orch);
\draw[arrow] (orch) -- node[right,font=\scriptsize] {exact task and signature} (workers);
\draw[arrow] (workers.west) to[bend right=28]
  node[left,font=\scriptsize,align=right] {result plus\\checkable evidence} (orch.west);
\draw[arrow] (orch.west) to[bend right=36]
  node[left,font=\scriptsize,align=right] {verified route\\summary} (leader.west);
\end{tikzpicture}
\caption{Separation of mathematical direction, tactical coordination, and
bounded proof work.  Evidence flows upward only after comparison with source
and Lean output.}
\label{fig:auto-formalization-three-tier}
\end{figure}

\subsection{Consumer-driven top-down descent}

The method begins with the desired headline signature rather than with a large
collection of preparatory lemmas.  The current node is first attacked directly.
If a prerequisite is missing, the parent proof is drafted with a typed hole;
Lean's proof state is then used to expose the exact proposition the parent
consumes.  That proposition becomes a named child declaration in its intended
source file, and the parent is compiled from the child before descent
continues.  During this transient stage the child may have a \texttt{sorry}
body.  The placeholder marks the current frontier: it shows that the parent
\emph{assembles} from the child, but it does not show that the child is true.

This distinction gives two separate certificates.  A compiling glue proof
certifies that the children jointly imply the parent.  Only a proof term for
each child certifies the mathematical leaves.  The frontier therefore moves
downward until every declaration reachable from the headline is either an
existing checked theorem or a newly completed proof.  Temporary placeholders
outside the final transitive dependency closure are a repository-quality
issue, but they are not axioms of the headline theorem.

Child statements are kept \emph{consumer-minimal}: their strength and domain
are read from the proof that uses them, rather than chosen for aesthetic
generality.  This reduces several recurrent failure modes---for example,
demanding a pointwise estimate when only an integrated estimate is available,
quantifying over a larger time interval than the hypotheses control, or
asserting separate regularity when only joint regularity has been established.
Before a new child is introduced, the workflow searches both the project and
\texttt{mathlib}; formalizing an already available theorem under a new name is
neither useful nor harmless in a large dependency graph.

The short-time proof illustrates this descent.  The headline reduces to a
jointly regular Ricci--DeTurck solution, a smooth family of conjugating
diffeomorphisms, pullback naturality, and a theorem closing the PDE at $t=0$.
The Ricci--DeTurck node reduces in turn to the spectral maximal-regularity
engine, nonlinear estimates, metric realization, all-order regularity, and the
geometric representation identity.  These are mathematical dependencies, not
chapters invented after the code was complete: their formal signatures were
fixed by how the parent proof uses them.

\subsection{Adversarial statement review}

Proving a formally stated theorem and checking that it is the intended theorem
are different tasks.  A refuter is therefore assigned the null hypothesis that
a proposed statement is false, vacuous, or mis-scoped.  It searches for
degenerate witnesses, unconstrained families, endpoint errors, missing
nonzero hypotheses, and conclusions stronger than the available estimates.
When possible, a claimed counterexample is expressed as runnable Lean source.
The orchestrator reruns it before any statement is changed.

Refutation is most valuable at the birth of a load-bearing signature.  It
cannot replace proof: failure to find a counterexample is not evidence of
truth.  Nor does it replace author review.  In this project, particularly
sensitive interfaces included one-sided time derivatives, chart-to-intrinsic
regularity transfers, positivity of metric perturbations, and limit packages
whose fields could accidentally restate a desired conclusion.  The final
semantic audit of the short-time theorem is given in
Section~\ref{sec:artifact-verification}.

\subsection{Mechanical acceptance gates}

The current protocol treats agent messages as proposals and uses Lean and the
repository as the acceptance layer.  Depending on the task, the orchestrator
checks the edited source, the exact elaborated declaration, a narrow module
build, the project build, and the transitive axiom report.  A refuter's claimed
counterexample is rerun; a researcher's claimed declaration is opened at its
source and its type checked.  Source diffs are inspected to detect accidental
changes outside the assigned task.

For a completed \texttt{/prove} headline, the documented terminal condition is
that the reachable placeholder frontier and the in-scope source sweep are
empty, a fresh warning-free build succeeds, the checkpoint working tree is
clean, and \texttt{\#print axioms} for the headline contains exactly
\[
  [\texttt{propext},\ \texttt{Classical.choice},\ \texttt{Quot.sound}].
\]
The role of these three standard axioms, and the difference between this
theorem-specific result and a repository-wide placeholder scan, are explained
in Section~\ref{sec:artifact-verification}.  In particular, the terminal check
does not infer mathematical meaning from a green build.

\subsection{Human responsibility and scope of automation}

The authors remain responsible for the target theorem, definitions,
conventions, route choices, imported results, and the claim that each formal
signature expresses the mathematics described in this paper.  They also
review which declarations are suitable public interfaces.  This matters here:
the proof-producing agents successfully closed a specialized short-time
endpoint, but some internal PDE capstones still have signatures too entangled
with implementation data to be presented as general theorems.  Kernel
acceptance cannot make such an interface mathematically well designed.

Automated assistance was used for source search, decomposition, proof attempts,
counterexample search, refactoring suggestions, and manuscript drafting.  The
release identifies the source revision discussed here, and
Subsection~\ref{subsec:automated-assistance-disclosure} identifies the
scope and nature of that assistance.  No
language-model output is part of the trusted proof object.  Once elaboration is
complete, Lean's kernel checks the resulting term using the same rules whether
the term was written by a human, generated by an agent, or assembled from both.

With $1,945,081$ tracked Lean lines in the source release, exhaustive human line-by-line
inspection is neither a realistic nor a meaningful acceptance criterion.
Assurance is therefore layered: Lean's kernel checks every proof term;
theorem-specific axiom reports expose transitive trust assumptions; dependency
and placeholder checks identify unfinished or unsafe boundaries; and the
identified source revision makes those checks repeatable.
Human review remains indispensable at a different level.  Authors must check
that formal definitions match the intended geometry, theorem statements have
the advertised quantifiers and hypotheses, conventions are coherent, and
vendored or generated sources have correct provenance.  The kernel can certify
a proof of the formal statement, but it cannot detect that the formal statement
is the wrong mathematical claim.
\section{Formal artifact, kernel verification, and trust boundary}
\label{sec:artifact-verification}

Formalization status is a claim about a particular declaration at a particular
source revision.  A successful build, a theorem-specific axiom report, a
repository-wide placeholder scan, and a human comparison between formal and
informal statements answer different questions.  This section records the
evidence supplied for the one-metric, class-uniform, maximal-flow, and Hamilton
endpoints and examines the main ways circular reasoning could be concealed by
an otherwise valid Lean term.  The source revision is identified in
Subsection~\ref{subsec:source-release}.

\subsection{Verified short-time and Hamilton endpoints}
\label{subsec:verified-short-time-endpoint}

At the current authoritative checkout, focused Lean elaboration and axiom
queries succeed for the advertised dependency chain.  For each of
\leanref{ricci_flow_short_time_existence},
\leanref{ricci_flow_uniform_existence},
\leanref{hamilton_finite_time_flow_exists_on_closed_open},
\leanref{hamilton_admits_constant_positive_sectional_curvature}, and
\leanref{hamilton_positive_ricci}, the corresponding command
\begin{center}
  \texttt{\#print axioms DECLARATION}
\end{center}
returns exactly
\begin{equation}
  [\texttt{propext},\ \texttt{Classical.choice},\ \texttt{Quot.sound}].
  \label{eq:short-time-axioms}
\end{equation}
Equation~\eqref{eq:short-time-axioms} is theorem-specific: each query reports
the axioms used transitively by the proof term of the named declaration.  The
uniform theorem changes the quantifier order by choosing one lifetime for an
entire controlled metric class; its clean report is therefore independent
evidence that this former frontier has actually been discharged, rather than
hidden behind the one-metric theorem.  The endpoint reports contain neither
\texttt{sorryAx} nor a project-defined axiom.

The three reported constants are standard parts of the classical foundation
used throughout \texttt{mathlib}.  Proposition extensionality
(\texttt{propext}) identifies logically equivalent propositions;
\texttt{Classical.choice} supplies a choice function from a proof of
nonemptiness; and \texttt{Quot.sound} identifies related representatives in a
quotient.  Their appearance means that the proof is classical and uses Lean's
quotient foundation.  It does not signal an unfinished mathematical lemma,
and it should not be paraphrased as ``axiom-free'' or ``constructive.''

A green editor marker or successful build gives a related but weaker piece of
evidence.  It shows that Lean elaborated the source and that the kernel accepted
the resulting terms relative to the imported environment.  It does not by
itself reveal whether the theorem depends on a placeholder or custom axiom;
that is why the transitive \texttt{\#print axioms} output is recorded
separately.  Conversely, the axiom report should always be attached to the
exact elaborated signature, since a clean proof of the wrong statement is not
evidence for the intended theorem.

\subsection{Theorem closure, import closure, and repository state}

The theorem-specific axiom report should not be conflated with two broader
questions about the environment in which the theorem was compiled.  A
declaration can have a clean transitive axiom set even when its imported
environment contains declarations that its proof never uses; still less does
that report classify unrelated files elsewhere in the repository.  Conversely,
a source-tree scan cannot establish either the precise dependency closure or
the mathematical adequacy of an advertised theorem.  These scopes must
therefore be checked separately rather than inferred from directory layout or
textual proximity.

Three levels of audit should consequently not be conflated:

\begin{enumerate}[leftmargin=2em]
  \item A \emph{theorem-specific audit} records the exact type and transitive
  axioms of one advertised endpoint.
  \item An \emph{import-closure audit} inspects all declarations made available
  while compiling that endpoint, whether or not its proof term uses them.
  \item A \emph{repository audit} searches every project source for
  placeholders, declared axioms, unsafe escape hatches, and generated or
  vendored code that requires separate provenance.
\end{enumerate}

Equation~\eqref{eq:short-time-axioms} answers the first question for the
short-time theorem.  It is not, by itself, a certification of the other two
scopes.

\subsection{Kernel verification and semantic circularity}
\label{subsec:kernel-circularity}

Lean rules out ordinary proof-term cycles.  A nonrecursive declaration can
refer only to constants already admitted to the environment; mutually
recursive declarations are accepted only through Lean's checked recursion
mechanisms.  Recursive definitions must pass the required termination or
productivity checks, and a theorem cannot be proved by an arbitrary
nonterminating self-call.  A hidden \texttt{sorry} is represented by
an axiom and appears transitively as \texttt{sorryAx}.  In these syntactic and
type-theoretic senses, kernel checking detects the usual forms of circular
proof construction.

It cannot detect \emph{semantic} circularity caused by a poorly designed
statement.  Lean correctly proves $P\to P$ by returning its hypothesis;
likewise, a structure supplied as an input may already contain a field
equivalent to the conclusion later projected from it.  The kernel also cannot
decide that a formal definition of ``Ricci flow,'' ``strictly parabolic,'' or
``Cheeger--Gromov limit'' is weaker than the mathematical concept intended by
the authors.  These are theorem-design and interpretation questions, and they
require a human audit of signatures and proof data.

We performed that semantic audit adversarially for the short-time chain.  The
headline assumes only the manifold structure, compactness and
boundarylessness, positive dimension, and the initial smooth metric.  It does
not assume a Ricci-flow family, a DeTurck solution, an existence theorem, or a
regularity conclusion.  The all-orders forcing condition consumed by the
specialized nonlinear capstone is proved for the actual symmetrized DeTurck
nonlinearity by \leanref{deTurckRicci_forcingBootstrap_symm}; it is not promoted
to a headline hypothesis.  The forcing-ball retraction and the radial
metric-cone scaling are both proved inactive at the constructed fixed point.
The metric representation theorem then identifies the abstract tensor
solution with the genuine DeTurck right-hand side.  Gauge removal uses an ODE
for diffeomorphisms and Ricci pullback naturality, and the $t=0$ equation is
proved from interior evolution and continuity by
\leanref{ricci_flow_pde_at_zero}.  We found no field or hypothesis in this
chain that merely packages the desired Ricci-flow conclusion back into the
proof.

The audit also corrected a tempting but false description of the proof route.
A generic locally-Lipschitz existence theorem is present in the wider
development, but it is not called by the final short-time chain.  The actual
producer uses the symmetrized, radially controlled Sobolev nonlinearity and the
forcing-space fixed point described in
Subsection~\ref{subsec:short-time-nonlinear}.  Similarly, the outer definition
of \leanref{IsStrictlyParabolicMetricRHS} packages a
\leanref{HasPrincipalSymbol} witness.  That inner specification contains the
pointwise negative-isotropic identity and positivity for nonzero covectors,
but not a separate uniform-neighborhood estimate.  The analytic solution is
obtained by explicit estimates, not by projecting existence from the outer
predicate.

\subsection{Automated assistance disclosure}
\label{subsec:automated-assistance-disclosure}

The workflow described in Section~\ref{sec:auto-formalization-workflow} used
language-model agents for declaration search, proof decomposition, Lean proof
attempts, adversarial statement testing, debugging and refactoring proposals.
Language-model assistance was also used in drafting and revising the
manuscript, including the interleaved explanations of formal signatures.  All
such material remains subject to author review against the source and the
mathematics.

The project record distinguishes code generation, search, review, and prose
assistance.
This disclosure concerns provenance and research practice; it does not add an
item to the proof's trusted computing base.  The authors retain responsibility
for the statements, proofs, status classifications, citations, and exposition.

\section{Conclusion}

This paper presents the mathematical architecture and completed
Lean proof of Hamilton's three-manifold theorem.  The
one-metric short-time Ricci-flow declaration is a closed,
theorem-specifically audited milestone; its proof develops a spectral Sobolev
and maximal-regularity engine, solves the Ricci--DeTurck equation, and removes
the gauge through a jointly smooth diffeomorphism family.  The development also
contains substantial differential-geometric, evolution, maximum-principle,
three-dimensional algebra, and pinching layers.

Companion formalization projects supply the no-local-collapsing and
Cheeger--Gromov--Hamilton compactness components used by the
Hamilton route.  Their detailed constructions and verification will be
reported separately.  In the present paper they enter only through the closed
interfaces needed to obtain and analyze the blow-up limit.

The class-uniform short-time existence theorem
\leanref{ricci_flow_uniform_existence} is a quantitative strengthening of the
metricwise theorem: it chooses a common lifetime before the initial metric
varies in a controlled $C^3$ class.  Its proof is now closed, and the resulting
maximal-flow, noncollapsing, compactness, and limit-transfer chain closes
\leanref{hamilton_positive_ricci}.  Fresh axiom queries for every major handoff report
only \texttt{propext}, \texttt{Classical.choice}, and \texttt{Quot.sound}.
Subsection~\ref{subsec:source-release} identifies the release against which
these statements and links were checked.

\section*{Acknowledgments}

This formalization is built on the Lean~4 proof assistant and the
\texttt{mathlib} library \cite{deMouraUllrichLean4,mathlib}, and it would not exist
without the sustained work of the \texttt{mathlib} community.  Beyond that
general debt, the development directly consumes several identifiable bodies of
work, and we wish to acknowledge their authors specifically; published
accounts and expository materials on the relevant differential-geometric
foundations include
\cite{GouezelHigherOrderCalculus,BordgCavalleriDGLean,RothgangDGMathlib,
MassotVanDoornNashSphereEversion}.  The module-level attributions below reflect
the source history at the release identified in
Subsection~\ref{subsec:source-release}.  The manifold,
charted-space, and smooth-map infrastructure on which every geometric object
here is built is due in large part to S\'ebastien Gou\"ezel, Floris van Doorn,
Heather Macbeth, Yury Kudryashov, and Patrick Massot; the tangent- and
vector-bundle layers to Floris van Doorn, Heather Macbeth, Nicol\`o Cavalleri,
S\'ebastien Gou\"ezel, Patrick Massot, and Michael Rothgang; and the bundled
covariant-derivative interface that our Levi-Civita construction instantiates
to Patrick Massot, Michael Rothgang, and Heather Macbeth.  The
Riemannian-metric modules underlying our smooth-metric type are due to
S\'ebastien Gou\"ezel.  The spectral theorem for compact self-adjoint
operators at the core of our spectral Sobolev engine is due to Heather
Macbeth, with the surrounding inner-product-space and adjoint theory due to
Fr\'ed\'eric Dupuis, Zhouhang Zhou, S\'ebastien Gou\"ezel, and others.  The
Picard--Lindel\"of and integral-curve theory consumed by the gauge-removal
step is due to Winston Yin and Yury Kudryashov.  The Bochner-integration and
$L^p$/$L^2$ layers used throughout the analytic development are due to R\'emy
Degenne, Zhouhang Zhou, Yury Kudryashov, and S\'ebastien Gou\"ezel, and the
multilinear-map and operator-norm infrastructure behind the tensor model
fibers to S\'ebastien Gou\"ezel, Sophie Morel, Yury Kudryashov, Jan-David
Salchow, and Jean Lo.  For the calculus and topology foundations we thank,
among many others, Jeremy Avigad, Johannes H\"olzl, Mario Carneiro, Gabriel
Ebner, and Anatole Dedecker.
We are also deeply grateful to the many contributors named above who
generously answered our questions, clarified the design and intended use of
their formalizations, and offered technical guidance through personal
correspondence; their direct help substantially shaped this development.
These lists are necessarily incomplete, and we
thank the many further \texttt{mathlib} contributors, maintainers, and
reviewers whose work we use on every page.

We thank Richard Bamler, Kevin Buzzard, Michael Douglas,
Bogdan Georgiev, Daniel Halpern-Leistner, Ayush Khaitan, Robert Koirala, Alex Kontorovich,
Will Li, Zilu Ma, Pietro Monticone, Michael Rothgang, and Xin Zhou for helpful discussions, technical assistance, and encouragement
in connection with this Lean formalization project.  We thank Xiaodong Cao,
Peng Lu, Henry Shin, and Gang Tian for their encouragement.  We also thank Sam Buss, Alex Cloninger,
Michael Freedman,
Kiran Kedlaya,
Rayan Saab,
and Terence Tao for early and influential
discussions about Lean and the importance of formalized mathematics.

We are grateful to Yueqing Feng, Jesse Han, Jared Lichtman, and Auguste Poiroux for their contributions to
an earlier Ricci-flow formalization effort.  We also thank Jack Lee for discussions about differential
geometry in Lean and for generously making the \LaTeX{} sources of his
textbooks available for related formalization work.

We are especially grateful to Jack McCarthy for his foundational contributions
during the early stages of this project.  His substantial work on tensor,
multilinear-algebra, and bundle infrastructure has become a cornerstone of the
present development.

We are also especially grateful to Scott Armstrong and Julia Kempe.  The project
vendors their formalization of interior
De~Giorgi--Nash--Moser theory \cite{ArmstrongKempeDGNM}, with import-path
adaptations, in the project's \texttt{External/} directory; it supplies the
Euclidean Sobolev substrate of the project-local Rellich--Kondrachov step in the
short-time existence engine.  The vendored source is based on upstream revision
\href{https://github.com/scottnarmstrong/DeGiorgi/commit/4c1b307}{\nolinkurl{4c1b307}};
its provenance and license are recorded in \rffile{External/README.md}, and its
local modifications in \rffile{External/DeGiorgi/MODIFICATIONS.md}.
Finally, we thank the Lean developers and the Lean Focused Research
Organization for the Lean proof assistant itself.
\clearpage

\clearpage
\appendix
\section{Background formulas and auxiliary derivations}

This appendix records textbook background and auxiliary derivations used by the
main text.  It is explanatory rather than a theorem-to-Lean status ledger: a
displayed formula here should not be read as a claim that the project contains a
separate final Lean theorem with exactly that statement.
Table~\ref{tab:status-at-a-glance} and
Section~\ref{sec:artifact-verification} give the authoritative manuscript-level
status summary.  The main-text proof paragraphs elaborate the mathematical
scope of individual results checked at the source revision identified by the
release.

Links below point to the relevant public \texttt{mathlib} documentation from their first occurrences.

\subsection{Basic Riemannian geometry}\label{subsec:BasicRiemGeom}

The fundamental hierarchy of structures in Riemannian geometry is
\[
g \;\longrightarrow\; \nabla \;\longrightarrow\; \Rm ,
\]
where the \href{https://leanprover-community.github.io/mathlib4_docs/Mathlib/Geometry/Manifold/VectorBundle/Riemannian.html}{Riemannian metric} determines the Levi-Civita \href{https://leanprover-community.github.io/mathlib4_docs/Mathlib/Geometry/Manifold/VectorBundle/CovariantDerivative/Basic.html}{connection} and the connection
determines the curvature tensor.

\subsubsection{Levi-Civita, Riemann, Ricci, and scalar curvature}

The Levi-Civita connection
$\nabla: \mathfrak{X}(M) \times \mathfrak{X}(M) \to \mathfrak{X}(M)$
is the unique linear connection on the tangent bundle $TM$ that is both
torsion-free and metric-compatible.  The project-local construction and its
relationship to the surrounding library API are described in the main text.
The \textbf{Koszul formula} for
$\nabla$ is
\begin{align}
 2g\left(  \nabla_{U}V,W\right)   &  =U\left(  g\left(  V,W\right)  \right)
+V\left(  g\left(  U,W\right)  \right)  -W\left(  g\left(  U,V\right)  \right)
\nonumber \\
&   -g\left( U,  \left[  V,W\right] \right) -g\left( V, \left[  U,W\right]
\right)    + g\left( W, \left[  U,V\right]  \right)   .   \label{eq:koszul-formula}
\end{align}
The standard derivation expands the first line of
\eqref{eq:koszul-formula}, uses metric compatibility, and cancels the
remaining terms with the torsion-free condition.

\begin{definition}[Christoffel symbols]
\label{def:Christoffel_symbols}
    Let $(M,g)$ be a Riemannian manifold and let $(U,\mathbf{x})$ be a
    local coordinate chart, with $\mathbf{x}=(x^1,\dots,x^n)$.  Let
    $\{\partial_i\}_{i=1}^n$ denote the induced coordinate frame on
    $TM|_U$, where $\partial_i:=\frac{\partial}{\partial x^i}$.

    If $\nabla$ is the Levi-Civita connection of $g$, the
    \textbf{Christoffel symbols} are the functions
    $\Gamma_{ij}^k:U\to\mathbb R$ defined by expanding the covariant
    derivative in this coordinate frame:
    \begin{equation}
        \nabla_{\partial_i} \partial_j = \Gamma_{ij}^k \partial_k,
    \end{equation}
    where the Einstein summation convention is implied over the index $k$.
\end{definition}

The Christoffel symbols can be expressed in terms of the first partial
derivatives of the metric components:
\begin{equation}\label{eq:ChristoffelSymbols0th}
        \Gamma_{ij}^k = \frac{1}{2} g^{kl} \left( \partial_i g_{jl} + \partial_j g_{il} - \partial_l g_{ij} \right) ;
    \end{equation}
this follows from \eqref{eq:koszul-formula} and the fact that $[\partial_i,\partial_j]=0$.

The Riemann curvature tensor is defined by
\begin{equation}\label{eq:riemann-curvature-definition}
   R(X,Y)Z := \nabla_X (\nabla_Y Z) - \nabla_Y (\nabla_X Z) - \nabla_{[X,Y]} Z .
\end{equation}

The components of the Riemann curvature tensor are defined by
\begin{equation}
    R_{ijk}^l \partial_l = R(\partial_i,\partial_j)\partial_k .
\end{equation}
Using
\begin{equation*}
    \nabla_{\partial_i}(\nabla_{\partial_j}\partial_k) = \nabla_{\partial_i}(\Gamma_{jk}^\ell \partial_\ell)
    = \partial_i \Gamma_{jk}^{\ell}
+ \Gamma_{jk}^p \nabla_{\partial_i}\partial_p ,
\end{equation*}
we compute that
    \begin{equation}\label{eq:riemann-curvature-components}
        R_{ijk}^{\ell} = \partial_i \Gamma_{jk}^{\ell} - \partial_j \Gamma_{ik}^{\ell} + \Gamma_{jk}^{p}\Gamma_{ip}^{\ell} - \Gamma_{ik}^{p}\Gamma_{jp}^{\ell}.
    \end{equation}

As a $(0,4)$-tensor, $\Rm$ is defined by
\begin{equation}
    \Rm(X,Y,Z,W) := \langle R(X,Y)Z,W \rangle .
\end{equation}
Its components are defined by
\begin{equation}
    R_{ijkl} :=\Rm(\partial_i,\partial_j,\partial_k,\partial_l) .
\end{equation}
Raising and lowering the final index gives
\begin{equation}
    R_{ijkl}=R_{ijk}^m g_{ml} \quad \text{and} \quad R_{ijk}^l = R_{ijkm} g^{ml} .
\end{equation}

The dimension-three Riemann-from-Ricci identity, together with its Lean-facing
status and curvature convention, is stated once in
Lemma~\ref{lem:3d-curvature-identities} and is not repeated here.

\subsection{Tensor calculus}\label{subsec:TensorCalculus}

\subsubsection{Tensors}

Many geometric quantities appearing in Riemannian geometry---such as the metric,
curvature, and their covariant derivatives---are naturally expressed as tensors.

\begin{definition}[$(r,s)$-tensor]
\label{def:rs_tensor}
    Let $M$ be a smooth manifold and let $p\in M$.  Let $T_pM$ be
    the tangent space and let $T_p^*M$ be the cotangent space at $p$.

    An \textbf{$(r,s)$-tensor} at $p$ is a multilinear map
    \begin{equation}
        T: \underbrace{T_p^* M \times \dots \times T_p^* M}_{r \text{ times}} \times \underbrace{T_p M \times \dots \times T_p M}_{s \text{ times}} \to \mathbb{R}.
    \end{equation}
    Thus $T$ takes $r$ covectors and $s$ vectors as input, returns a real
    number, and is linear in each argument.

    A \textbf{smooth $(r,s)$-tensor field} is a smooth assignment of such a
    multilinear map to every point $p\in M$.  Equivalently, it can be viewed
    as a $C^\infty(M)$-multilinear map from $r$ smooth $1$-forms and $s$
    smooth vector fields to smooth functions:
    \begin{equation}
        T: \Omega^1(M)^r \times \mathfrak{X}(M)^s \to C^\infty(M),
    \end{equation}
where $\Omega^1(M)$ denotes the space of smooth $1$-forms and
$\mathfrak{X}(M)$ the space of smooth vector fields on $M$.
\end{definition}

\subsubsection{Covariant differentiation of tensors}

Since the Levi-Civita connection is defined on vector fields, we first extend
the notion of covariant differentiation to covariant objects, beginning with
1-forms.

\begin{definition}[Covariant derivative of a $1$-form]
\label{def:cov_dev_1form}
    Let $\nabla$ denote the Levi-Civita connection on $(M,g)$.  Let
    $\omega\in\Omega^1(M)$ be a smooth $1$-form and let
    $X,Y\in\mathfrak{X}(M)$.  The \textbf{covariant derivative} of
    $\omega$ in the direction $X$, denoted $\nabla_X\omega$, is the
    $1$-form defined by
    \begin{equation}
        (\nabla_X \omega)(Y) = X(\omega(Y)) - \omega(\nabla_X Y).
    \end{equation}
Equivalently, this is the product rule for the scalar function
$\omega(Y)$, with the correction term accounting for the variation of the
input vector field $Y$.
\end{definition}

The same rule extends to any smooth $(r,s)$-tensor field $T$.  Since
$\nabla_X$ already acts on functions, vector fields, and $1$-forms, the
general formula is dictated by the Leibniz rule:

\begin{definition}[Covariant derivative of an $(r,s)$-tensor]
\label{def:cov_dev_general}
    We extend the covariant derivative to a smooth $(r,s)$-tensor field $T$
    by requiring that $\nabla_X$ act as a derivation with respect to tensor
    contraction.  Explicitly, for any $X\in\mathfrak{X}(M)$, $r$
    $1$-forms $\omega^1,\dots,\omega^r$, and $s$ vector fields
    $Y_1,\dots,Y_s$, the tensor $\nabla_XT$ is defined by
    \begin{align}
        (\nabla_X T)(\omega^1, \dots, \omega^r, Y_1, \dots, Y_s) &= X \big( T(\omega^1, \dots, \omega^r, Y_1, \dots, Y_s) \big) \nonumber \\
        &\quad - \sum_{k=1}^r T(\omega^1, \dots, \nabla_X \omega^k, \dots, \omega^r, Y_1, \dots, Y_s) \nonumber \\
        &\quad - \sum_{l=1}^s T(\omega^1, \dots, \omega^r, Y_1, \dots, \nabla_X Y_l, \dots, Y_s).
    \end{align}
This is the corresponding product rule for tensor evaluation: each input
slot contributes a correction term.

    The \textbf{total covariant derivative} $\nabla T$ is the $(r,s+1)$-tensor
    field obtained by viewing the differentiation direction as an additional
    vector input.  Covector inputs precede vector inputs, and within the vector
    inputs the differentiation slot comes first:
    \begin{equation}
        (\nabla T)(\omega^1,\dots,\omega^r;X,Y_1,\dots,Y_s)
        :=(\nabla_XT)(\omega^1,\dots,\omega^r;Y_1,\dots,Y_s).
    \end{equation}
\end{definition}

\begin{remark}
    Observe that if $T$ is a $(0,1)$-tensor (a 1-form), the general definition agrees with Definition \ref{def:cov_dev_1form}. In this case ($r=0, s=1$), the first summation is empty, and the formula reduces to
    \begin{equation}
        (\nabla_X T)(Y_1) = X(T(Y_1)) - T(\nabla_X Y_1),
    \end{equation}
    which is exactly the definition of the covariant derivative of a 1-form.
\end{remark}

Since estimates for higher derivatives of the metric in local coordinates
are obtained by controlling covariant derivatives of curvature, we introduce
iterated covariant derivatives of tensor fields.

\begin{definition}[Higher covariant derivatives]
\label{def:high_order_cov_dev}
    We define the $k$-th covariant derivative of a smooth $(r,s)$-tensor field $T$ recursively.

    For $k=0$, we set $\nabla^{(0)} T = T$. For $k \geq 1$, the $k$-th covariant derivative $\nabla^{(k)} T$ is the $(r, s+k)$-tensor field defined as the total covariant derivative of the $(k-1)$-th derivative:
    \begin{equation}
        \nabla^{(k)} T := \nabla (\nabla^{(k-1)} T).
    \end{equation}
    In terms of arguments, if $T$ is a $(0,s)$-tensor, this recursion implies that
    \begin{equation}
        (\nabla^{(k)} T)(X_1, \dots, X_k, Y_1, \dots, Y_s) = (\nabla_{X_1} (\nabla^{(k-1)} T))(X_2, \dots, X_k, Y_1, \dots, Y_s)
    \end{equation}
    for vector fields $X_1, \dots, X_k, Y_1, \dots, Y_s$.
    Note that under this convention, the \emph{first} argument $X_1$ corresponds to the \emph{outermost} (most recent) differentiation operation.
\end{definition}

\begin{remark}
We compute that
\begin{align*}
    (\nabla^{(2)}Z)(X,Y) & = (\nabla_X (\nabla Z)) (Y) \\
    & = \nabla_X( (\nabla Z)(Y)) - (\nabla Z) (\nabla_X Y) \\
    & = \nabla_X(\nabla_Y Z) - \nabla_{\nabla_X Y} Z
\end{align*}
for vector fields $X,Y,Z$.
Therefore we may rewrite the definition
\eqref{eq:riemann-curvature-definition} of the Riemann curvature tensor as
\begin{equation}
    R(X,Y)Z = (\nabla^{(2)}Z)(X,Y) - (\nabla^{(2)}Z)(Y,X) .
\end{equation}
\end{remark}

\subsubsection{Index notation for tensors}

A choice of local coordinates determines a coordinate basis for the
tangent bundle, allowing tensor fields to be expressed through their
components in index notation.

\begin{definition}[Tensor components and covariant derivatives]
\label{def:tensor_components}
    Let $\alpha$ be a smooth $(0,s)$-tensor field. The \textbf{components} of $\alpha$ in the local frame are the smooth functions defined by:
    \begin{equation}
        \alpha_{i_1 \cdots i_s} := \alpha (\partial_{i_1}, \ldots, \partial_{i_s}).
    \end{equation}
    The $k$-th covariant derivative $\nabla^{(k)} \alpha$ is a $(0, k+s)$-tensor field. We denote its components by the symbol $\nabla_{j_1}\cdots \nabla_{j_k} \alpha_{i_1 \cdots i_s}$, defined as the evaluation of the tensor on the coordinate basis:
    \begin{equation}
        \nabla_{j_1}\cdots \nabla_{j_k} \alpha_{i_1 \cdots i_s} := (\nabla^{(k)}\alpha) (\partial_{j_1}, \ldots, \partial_{j_k}, \partial_{i_1}, \ldots, \partial_{i_s}).
    \end{equation}
\end{definition}

\subsubsection{Ricci identities}

The non-commutativity of the covariant derivative is measured precisely by the Riemann curvature tensor. While this is often defined for vector fields, it extends naturally to all tensor fields. We first establish the action of curvature on 1-forms, then extend it to arbitrary tensors via the derivation principle.

\begin{lemma}[Action of curvature on $1$-forms]
\label{lem:curvature_on_1forms}
    Let $\omega \in \Omega^1(M)$ be a smooth $1$-form and let $X, Y, Z \in \mathfrak{X}(M)$. Then
    \begin{equation}\label{eq:ricci-identity-one-form}
        (\nabla_X \nabla_Y \omega)(Z) - (\nabla_Y \nabla_X \omega)(Z) - (\nabla_{[X,Y]} \omega)(Z) = - \omega(R(X,Y)Z).
    \end{equation}
\end{lemma}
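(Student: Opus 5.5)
The plan is to reduce the identity to the definition of the covariant derivative of a $1$-form (Definition~\ref{def:cov_dev_1form}) together with the curvature definition \eqref{eq:riemann-curvature-definition}. All three terms on the left-hand side are evaluated on $Z$ by repeatedly applying $(\nabla_X\eta)(W)=X(\eta(W))-\eta(\nabla_XW)$. Everything is then regrouped into a commutator of vector fields acting on the function $\omega(Z)$, plus terms involving $\omega$ evaluated on vector fields. Throughout, $\nabla_Y\omega$ is itself a $1$-form, so the $1$-form rule applies to it as well.

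The first step is the expansion of the second derivative. Applying the rule with $\eta=\nabla_Y\omega$ and then expanding $(\nabla_Y\omega)(Z)$ and $(\nabla_Y\omega)(\nabla_XZ)$ gives
\[
(\nabla_X\nabla_Y\omega)(Z)=X\bigl(Y(\omega(Z))\bigr)-X\bigl(\omega(\nabla_YZ)\bigr)-Y\bigl(\omega(\nabla_XZ)\bigr)+\omega(\nabla_Y\nabla_XZ).
\]
The expression for $(\nabla_Y\nabla_X\omega)(Z)$ follows by swapping $X$ and $Y$.

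The second step is to subtract the two expansions. The two middle terms are symmetric in $X$ and $Y$, so they cancel. What remains is
\[
[X,Y]\bigl(\omega(Z)\bigr)+\omega(\nabla_Y\nabla_XZ)-\omega(\nabla_X\nabla_YZ).
\]
The third step subtracts $(\nabla_{[X,Y]}\omega)(Z)=[X,Y](\omega(Z))-\omega(\nabla_{[X,Y]}Z)$. This removes the function-commutator term, and by linearity of $\omega$ the total becomes
\[
-\omega\bigl(\nabla_X\nabla_YZ-\nabla_Y\nabla_XZ-\nabla_{[X,Y]}Z\bigr)=-\omega(R(X,Y)Z).
\]

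No step is conceptually hard. The main obstacle is bookkeeping: checking that the cross terms really cancel, which depends on using the same convention (directional derivative first, correction term subtracted) at each nesting level. In a formal setting, the delicate point would be justifying that $\nabla_Y\omega$ is a smooth $1$-form, so that the defining rule can be applied a second time. It would also be necessary to confirm that $X(Y(f))-Y(X(f))=[X,Y](f)$ holds with the same bracket convention used in \eqref{eq:riemann-curvature-definition}.
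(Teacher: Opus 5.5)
Your proposal is correct and follows the same route as the paper. You expand both iterated derivatives with the $1$-form rule, cancel the symmetric mixed terms to reach $[X,Y](\omega(Z))-\omega(\nabla_X\nabla_YZ-\nabla_Y\nabla_XZ)$, then subtract the expansion of $(\nabla_{[X,Y]}\omega)(Z)$ and invoke the curvature definition; the only difference is that you write out explicitly the cancellation the paper leaves implicit.
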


\begin{proof}
By the definition of the dual connection, expanding the two second derivatives
and cancelling the first-derivative terms gives
\[
([\nabla_X,\nabla_Y]\omega)(Z)
=[X,Y](\omega(Z))
-\omega(\nabla_X\nabla_YZ-\nabla_Y\nabla_XZ).
\]
On the other hand,
\[
(\nabla_{[X,Y]}\omega)(Z)
=[X,Y](\omega(Z))-\omega(\nabla_{[X,Y]}Z).
\]
Subtracting and using the definition of $R(X,Y)Z$ proves the formula.
\end{proof}

\begin{remark}
We may rephrase equation \eqref{eq:ricci-identity-one-form} as
\begin{equation}
    (\nabla^{(2)} \omega)(X, Y, Z)
- (\nabla^{(2)} \omega)(Y, X, Z) = - \omega(R(X,Y)Z).
\end{equation}
Equation \eqref{eq:ricci-identity-one-form} says, in local coordinates, that
\begin{equation}\label{eq:ricci-identity-one-form-coordinates}
    \nabla_i \nabla_j \omega_k - \nabla_j \nabla_i \omega_k = - R_{ijk}^m \omega_m = - R_{ijk\ell}g^{\ell m} \omega_m .
\end{equation}
\end{remark}

Generalizing \eqref{eq:ricci-identity-one-form-coordinates} to covariant
tensors, we have:

\begin{theorem}[Ricci identity for $(0,s)$-tensors]
\label{thm:ricci_identity}
    Let $\alpha$ be a smooth $(0,s)$-tensor field. In local coordinates, the commutator of second covariant derivatives is given by:
    \begin{equation}
        \nabla_{i}\nabla_{j}\alpha_{k_{1}\cdots k_{s}} - \nabla_{j}\nabla_{i}\alpha_{k_{1}\cdots k_{s}}
        = - \sum_{q=1}^{s} \sum_{m=1}^n R_{ijk_{q}}^{m} \alpha_{k_{1}\cdots k_{q-1} m k_{q+1}\cdots k_{s}}.
    \end{equation}
\end{theorem}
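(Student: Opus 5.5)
The identity is tensorial and pointwise, so I would prove it at a fixed point $p$ from the invariant definitions. The route is to reduce to the $1$-form case, Lemma~\ref{lem:curvature_on_1forms}, using the derivation property of $\nabla$ from Definition~\ref{def:cov_dev_general}.

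\emph{Step 1: invariant commutator formula.} For smooth vector fields $X,Y,Z_1,\dots,Z_s$, I would first show that
\[
(\nabla^{(2)}\alpha)(X,Y,Z_1,\dots,Z_s)-(\nabla^{(2)}\alpha)(Y,X,Z_1,\dots,Z_s)
=-\sum_{q=1}^s \alpha(Z_1,\dots,R(X,Y)Z_q,\dots,Z_s).
\]
Expanding $\nabla^{(2)}$ exactly as in the remark following Definition~\ref{def:high_order_cov_dev}, the left side equals
\[
\bigl(\nabla_X\nabla_Y\alpha-\nabla_Y\nabla_X\alpha-\nabla_{\nabla_XY-\nabla_YX}\alpha\bigr)(Z_1,\dots,Z_s).
\]
Torsion-freeness lets me replace $\nabla_XY-\nabla_YX$ by $[X,Y]$, so this is $\bigl([\nabla_X,\nabla_Y]\alpha-\nabla_{[X,Y]}\alpha\bigr)(Z_1,\dots,Z_s)$.

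\emph{Step 2: evaluate that operator.} I would apply Definition~\ref{def:cov_dev_general} twice to $\nabla_X\nabla_Y\alpha$ and then antisymmetrize; this mirrors the proof of Lemma~\ref{lem:curvature_on_1forms}. The expansion produces three kinds of terms:
\begin{itemize}
\item the scalar terms $[X,Y](\alpha(\vec Z))$, which cancel against the corresponding term of $\nabla_{[X,Y]}\alpha$;
\item the mixed first-order terms, such as $Y$ acting on $\alpha(\dots,\nabla_X Z_q,\dots)$ paired against $X$ acting on $\alpha(\dots,\nabla_Y Z_q,\dots)$, which cancel by symmetry;
\item the cross terms with two different slots $q\neq r$ both differentiated, which are symmetric in $X$ and $Y$ and so cancel.
\end{itemize}
What survives is, for each slot $q$, the term $-\alpha(\dots,\nabla_X\nabla_YZ_q-\nabla_Y\nabla_XZ_q-\nabla_{[X,Y]}Z_q,\dots)=-\alpha(\dots,R(X,Y)Z_q,\dots)$, by \eqref{eq:riemann-curvature-definition}.

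As an alternative I could avoid the bookkeeping by induction on $s$, writing $\alpha$ locally as a sum of products $\beta\otimes\omega$ of a $(0,s-1)$-tensor and a $1$-form. The operator $[\nabla_X,\nabla_Y]-\nabla_{[X,Y]}$ is $C^\infty$-linear and acts as a derivation on tensor products: its second-order Leibniz cross terms $\nabla_X\beta\otimes\nabla_Y\omega+\nabla_Y\beta\otimes\nabla_X\omega$ cancel under antisymmetrization. The base case is then Lemma~\ref{lem:curvature_on_1forms}.

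\emph{Step 3: components.} I would set $X=\partial_i$, $Y=\partial_j$, $Z_q=\partial_{k_q}$. Coordinate fields commute, and Definition~\ref{def:tensor_components} identifies the left side with $\nabla_i\nabla_j\alpha_{k_1\cdots k_s}-\nabla_j\nabla_i\alpha_{k_1\cdots k_s}$. On the right, $R(\partial_i,\partial_j)\partial_{k_q}=R_{ijk_q}^m\partial_m$, and multilinearity of $\alpha$ then gives the stated sum.

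I expect the main obstacle to be the index bookkeeping in Step 2: tracking the order of the derivative slots under the convention of Definition~\ref{def:high_order_cov_dev}, where the first argument is the outermost differentiation, and verifying the cancellation of the cross terms. The inductive tensor-product argument is the cleaner route and is the one I would try first.
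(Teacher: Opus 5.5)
Your proposal is correct and follows essentially the paper's route: the paper sets $\mathcal R(X,Y)=[\nabla_X,\nabla_Y]-\nabla_{[X,Y]}$, observes that it acts as a derivation on the tensor algebra (your preferred inductive argument), reduces to Lemma~\ref{lem:curvature_on_1forms}, and then specializes to coordinate fields. Your Step~1 usefully makes explicit that torsion-freeness is what identifies the antisymmetrized $\nabla^{(2)}$ components with $\mathcal R(\partial_i,\partial_j)$; the paper leaves this implicit and cites only $[\partial_i,\partial_j]=0$.
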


\begin{proof}
Set
\[
\mathcal R(X,Y):=[\nabla_X,\nabla_Y]-\nabla_{[X,Y]}.
\]
Because the connection obeys the Leibniz rule, $\mathcal R(X,Y)$ acts as a
derivation on the tensor algebra.  Lemma~\ref{lem:curvature_on_1forms} therefore
gives the invariant formula
\[
(\mathcal R(X,Y)\alpha)(Z_1,\dots,Z_s)
=-\sum_{q=1}^s
\alpha(Z_1,\dots,R(X,Y)Z_q,\dots,Z_s).
\]
Taking $X=\partial_i$, $Y=\partial_j$, and $Z_q=\partial_{k_q}$, using
$[\partial_i,\partial_j]=0$ and
$R(\partial_i,\partial_j)\partial_{k_q}
=R_{ijk_q}{}^m\partial_m$, yields the stated coordinate identity.
\end{proof}

\begin{remark}[Ricci identity for $(r,s)$-tensors]
    More generally, if $\beta $ is any $\left(  r,s\right)  $-tensor field, one has the
commutator
\begin{align}
\left[  \nabla_{i},\nabla_{j}\right]  \beta_{k_{1}\cdots k_{s}}^{l_{1}%
\cdots l_{r}}  &  := \nabla_{i}\nabla_{j}\beta_{k_{1}\cdots k_{s}}^{l
_{1}\cdots l_{r}}-\nabla_{j}\nabla_{i}\beta_{k_{1}\cdots k_{s}}^{l_{1}\cdots l_{r}} \nonumber \\
&  =\sum_{p=1}^{r} \sum_{m=1}^n R_{ijm}^{l_{p}}\beta_{k_{1}\cdots k_{s}}^{l_{1}\cdots
l_{p-1}ml_{p+1}\cdots l_{r}}-\sum_{q=1}^{s} \sum_{m=1}^n R_{ijk_{q}}^{m}%
\beta_{k_{1}\cdots k_{q-1}mk_{q+1}\cdots k_{s}}^{l_{1}\cdots l_{r}}. \label{eq:ricci-identity-mixed-tensor}
\end{align}
\end{remark}

For example, for a $(1,1)$-tensor $\beta$, we have
\begin{equation}\label{eq:ricci-identity-one-one-tensor}
    \nabla_i\nabla_j \beta_k^l - \nabla_j \nabla_i \beta_k^l =
    R_{ijm}^l\beta_k^m - R_{ijk}^m \beta_m^l.
\end{equation}

\subsubsection{Inner products and norms of tensors}

\begin{definition}[Tensor inner product]
\label{def:tensor_inner_product}
    Let $(M, g)$ be a Riemannian manifold and let $\alpha, \beta$ be $(0,s)$-tensor fields.
    Their \textbf{inner product} is the scalar function $\langle \alpha, \beta \rangle \in C^\infty(M)$ defined at each point $p$ by contracting with respect to a local orthonormal basis $\{e_k\}_{k=1}^n$:
    \begin{equation}
        \langle \alpha , \beta \rangle := \sum_{k_1, \dots, k_s=1}^{n} \alpha (e_{k_1}, \dots, e_{k_s}) \beta (e_{k_1}, \dots, e_{k_s}).
    \end{equation}
    In general local coordinates, this corresponds to the full contraction:
    \begin{equation}
        \langle \alpha , \beta \rangle = g^{i_1 j_1} \dots g^{i_s j_s} \alpha_{i_1 \dots i_s} \beta_{j_1 \dots j_s}.
    \end{equation}
    The norm of a tensor is defined by
\begin{equation}
    |\alpha|^2 := \langle \alpha, \alpha \rangle ,
\end{equation}
so $|\alpha| = \sqrt{\langle \alpha, \alpha \rangle}$.
\end{definition}

For example, if $T$ is a $(0,s)$-tensor and $k\geq 0$, then
\begin{equation}
    |\nabla^{(k)}T|^2 = g^{i_1 j_1} \cdots g^{i_{s+k} j_{s+k}} \nabla_{i_1}\cdots \nabla_{i_k}T_{i_{k+1} \cdots i_{k+s}} \nabla_{j_1} \cdots \nabla_{j_k}T_{j_{k+1} \cdots j_{k+s}} .
\end{equation}

\subsection{The Laplace operator}

For the purposes of this appendix, the rough Laplacian is recorded as the
metric trace of the second covariant derivative.  The main text explains how
the project realizes such identities through invariant, coordinate, or local
frame interfaces when they are used in Lean.

\subsubsection{The rough Laplacian and the divergence}

The Laplacian is the fundamental second-order differential operator in
geometric analysis, obtained by taking the metric trace of the second
covariant derivative.

\begin{definition}[Rough Laplacian]
\label{def:rough_laplacian}
    Let $(M, g)$ be a Riemannian manifold and let $\alpha$ be a $(0,s)$-tensor field.
    The \textbf{rough Laplacian} $\Delta \alpha$ is defined by the metric trace of the second covariant derivative over its first two arguments:
    \begin{equation}
        \Delta \alpha := \operatorname{tr}_{1,2} \big( \nabla^{(2)} \alpha \big).
    \end{equation}
    Explicitly, if $\{e_k\}_{k=1}^n$ is a local orthonormal frame, this trace is given by
    \begin{equation}
        \operatorname{tr}_{1,2} \big( \nabla^{(2)} \alpha \big) (\cdot, \dots, \cdot) := \sum_{k=1}^n (\nabla^{(2)} \alpha)(e_k, e_k, \cdot, \dots, \cdot).
    \end{equation}
    In local coordinates, this corresponds to the contraction:
    \begin{equation}
        (\Delta \alpha)_{i_1 \dots i_s} = g^{jk} \nabla_j \nabla_k \alpha_{i_1 \dots i_s}.
    \end{equation}
\end{definition}

The divergence is a fundamental first-order operator in geometric
analysis, obtained by taking the metric trace of the covariant derivative.

\begin{definition}[Divergence]
\label{def:divergence}
    Let $(M, g)$ be a Riemannian manifold and let $\alpha$ be a $(0,s)$-tensor field with $s \ge 1$.
    The \textbf{divergence} of $\alpha$ is the $(0,s-1)$-tensor field defined by the metric trace of the covariant derivative $\nabla \alpha$ over its first two arguments:
    \begin{equation}
        \Div(\alpha) := \operatorname{tr}_{1,2} (\nabla \alpha).
    \end{equation}
    In local coordinates, this corresponds to contracting the derivative index with the first index of $\alpha$:
    \begin{equation}
        \big(\Div(\alpha) \big)_{i_2 \dots i_s} = g^{jk} \nabla_j \alpha_{k i_2 \dots i_s}.
    \end{equation}
With respect to an orthonormal frame $\{e_i\}_{i=1}^n$,
\begin{equation}
  \Div(\alpha) = \sum_{i=1}^n (\nabla_{e_i} \alpha ) (e_i,\cdot,\ldots,\cdot) .
\end{equation}
\end{definition}

\begin{remark}[Factorization of the Laplacian]
\label{rem:laplacian_div_grad}
    It follows directly from the definitions that the Laplacian factors as the divergence of the covariant derivative:
    \begin{equation}
        \Delta \alpha = \Div(\nabla \alpha).
    \end{equation}
    Note that this composition is well-typed: if $\alpha$ is a $(0,s)$-tensor, then $\nabla \alpha$ is a $(0,s+1)$-tensor (satisfying the rank requirement for divergence), and the resulting divergence maps it back to a $(0,s)$-tensor.
\end{remark}

\bibliographystyle{amsalpha}
\bibliography{references}

\end{document}